\newcommand{\blind}{0}
\newtheorem{theorem}{Theorem}
\newtheorem{example}[theorem]{Example}
\newtheorem{corollary}[theorem]{Corollary}
\newtheorem{definition}[theorem]{Definition}
\newtheorem{lemma}[theorem]{Lemma}
\newtheorem{proposition}[theorem]{Proposition}
\newtheorem{remark}[theorem]{Remark}
\newcommand{\rangleL}{{L^2(\mathbb{S}^2)}}
\newcommand{\rangleLL}{{L^2(\mathbb{S}^2 \times \mathbb{S}^2)}}
\newcommand{\rangleH}{{\mathcal{H}_p}}
\newcommand{\rangleHH}{{\mathbb{H}_p}} %\mathcal{H}_p \otimes \mathcal{H}_p
\newcommand{\Hp}{{\mathcal{H}_p}}
\newcommand{\Hq}{{\mathcal{H}_q}}
\newcommand{\Hprod}{{\mathbb{H}_p}}
\newcommand{\Hqprod}{{\mathbb{H}_q}} %\mathcal{H}_q\otimes\mathcal{H}_q}
\newcommand{\pen}{{\eta}}
\newcommand{\green}{\Psi}
\newcommand{\Y}{Y}
\DeclareMathOperator*{\argmin}{arg\,min}
\newcommand{\lag}{{h}}
\newcommand{\W}{W}
\newcommand{\pio}{{\Pi_1(p,q)}}
\newcommand{\pit}{{\Pi_2(p,q)}}
\newcommand{\piotemp}{{\Pi^s_1(p,q)}}
\newcommand{\pittemp}{{\Pi^s_2(p,q)}}
\newcommand{\num}{{N_\lag}}
\newcommand{\sumjk}{{\mathop{ \sum_{j=1}^r \sum_{k=1}^r}_{j\ne k \text{ if } \lag = 0}}}
\newcommand{\R}{\mathbb{R}}
\newcommand{\Sph}{\mathbb{S}}
\newcommand{\CL}{\EuScript{C}}
\newcommand{\CHq}{\mathscr{C}}
\newcommand{\pu}{p_U}
\newcommand{\U}{{\mathcal{U}}}
\newcommand{\X}{{\mathcal{X}}}
\newcommand{\remainder}{\mathbin{\%}}
\definecolor{matthieu}{rgb}{0.8, 0.5, 0.2}
\definecolor{julien}{RGB}{0,127,119}
\definecolor{victor}{rgb}{0.75,0, 0}
\definecolor{alessia}{rgb}{0.96, 0.0, 0.63}
\begin{document}

\begin{frontmatter}

\title{Functional Estimation of Anisotropic Covariance and Autocovariance Operators on the Sphere}

\runtitle{Covariance and Autocovariance Operators on the Sphere}

\begin{aug}
\author{\fnms{Alessia} \snm{Caponera}\ead[label=e1]{alessia.caponera@epfl.ch}},   
\author{\fnms{Julien} \snm{Fageot}\ead[label=e2]{julien.fageot@epfl.ch}}, 
\author{\fnms{Matthieu}  \snm{Simeoni}\ead[label=e3]{matthieu.simeoni@epfl.ch}}
\and
\author{\fnms{Victor M.} \snm{Panaretos}\ead[label=e4]{victor.panaretos@epfl.ch}}
\thankstext{t1}{Alessia Caponera and Victor Panaretos acknowledge support from SNSF Grant 200020\_207367. Julien Fageot acknowledges support from SNSF Grant P400P2\_194364. Matthieu Simeoni acknowledges support from SNSF Grants CRSII5\_193826 and 200021\_181978/2 .}

\runauthor{Caponera, Fageot, Simeoni \& Panaretos}

\affiliation{\'Ecole Polytechnique F\'ed\'erale de Lausanne}

\address{\'Ecole Polytechnique F\'ed\'erale de Lausanne\\ \printead{e1,e2,e3,e4} }

\end{aug}

\begin{abstract} 
We propose nonparametric estimators for the second-order central moments of possibly anisotropic spherical random fields, within a functional data analysis context. We consider a measurement framework where each random field among an identically distributed collection of spherical random fields is sampled at a few random directions, possibly subject to measurement error. The collection of random fields could be i.i.d. or serially dependent. Though similar setups have already been explored for random functions defined on the unit interval, the nonparametric estimators proposed in the literature often rely on local polynomials, which do not readily extend to the (product) spherical setting. We therefore formulate our estimation procedure as a variational problem involving a generalized Tikhonov regularization term. The latter favours smooth covariance/autocovariance functions, where the smoothness is specified by means of suitable Sobolev-like pseudo-differential operators. Using the machinery of reproducing kernel Hilbert spaces, we establish representer theorems that fully characterize the form of our estimators. We determine their uniform rates of convergence as the number of random fields diverges, both for the \emph{dense} (increasing number of spatial samples) and \emph{sparse} (bounded number of spatial samples) regimes. We moreover demonstrate the computational feasibility and practical merits of our estimation procedure in a simulation setting, assuming a fixed number of samples per random field. Our numerical estimation procedure leverages the sparsity and second-order Kronecker structure of our setup to reduce the computational and memory requirements by approximately three orders of magnitude compared to a naive implementation would require. \end{abstract}

\bigskip
\begin{keyword}[class=AMS]
\kwd[Primary ]{62G08}
\kwd[; secondary ]{62M}
\end{keyword}

\begin{keyword}
\kwd{functional data analysis}
\kwd{measurement error}
\kwd{representer theorem}
\kwd{sparse sampling}
\kwd{spherical random field}
\end{keyword}

\end{frontmatter}

\begin{comment}
\def\spacingset#1{\renewcommand{\baselinestretch}%
{#1}\small\normalsize} \spacingset{1}

\spacingset{1.5} % DON'T change the spacing!

\end{comment}

\setcounter{tocdepth}{1}
\tableofcontents

\newpage

%Comments can be made using the following macros \comm{general comments}, \ale{Alessia}, \ju{Julien}, \matthieu{Matthieu}, \victor{Victor}

\section{Introduction}
%- FDA setting, dense vs sparse
%In recent years, the statistical analysis of sparsely observed functional data has become an important research topic.
Functional Data Analysis (FDA) comprises a wide class of statistical methods for the analysis of collections of data modelled as functions. In a classical FDA setting (see ~\cite{RamSIl, Hsing}), we typically assume that we have access to a collection of \emph{fully observed} realizations of continuous-domain random processes, say $X_1(\cdot),\dots,X_n(\cdot)$, which can be modelled as random elements of some separable Hilbert space. Depending on the context, these realizations can be either independent and identically distributed ($\text{i.i.d.}$ functional data) or exhibit some form of serial dependence (functional time series).
In many practical setups however, this ``fully observed" context is not suitable. For example, environmental scientists monitor temperature, salinity and currents in the Earth’s oceans from spatial samples of such indicators collected by  drifting floats, such as the ones of the Argo fleet \cite{argo2000argo}. While originating from a latent continuous-domain process, the data in this context are not observed in their fully functional form, but rather come in the form of \emph{noisy} and \emph{sparse} spatial samples  \emph{irregularly} distributed on the surface of the Earth (see Figure \ref{fig:argo}). 
This suggests to consider the following measurement scheme
\begin{equation}
    \label{eq:firstregressionproblem}
\W_{ij} = X_i(U_{ij}) + \epsilon_{ij}, \qquad  i =1,\dots,n, \, j =1,\dots,r_i,
\end{equation}
where $U_{ij}$ are the sampling locations, $\epsilon_{ij}$ are noise disturbances. 
In this paper, we focus  specifically on latent processes $X_i$'s that constitute random fields over the \emph{$2D$ sphere} $\mathbb{S}^2$ and  address the functional data analysis problem of estimating their first and second-order moment structure. This problem will be studied for $X_i$'s that are independent replicates of some second-order process $X=\{X(u), \, u \in \Sph^2\}$ as well as for $X_i$'s that constitute a finite stretch of some stationary sequence $\X=\{X_t(\cdot), \, t \in \mathbb{Z}\}$. In either case, we will \emph{not} assume the process to be either strongly or weakly isotropic, hence our reference to an anisotropic setting.

\begin{figure}[h!]
    \centering
    \includegraphics[width=0.55\linewidth]{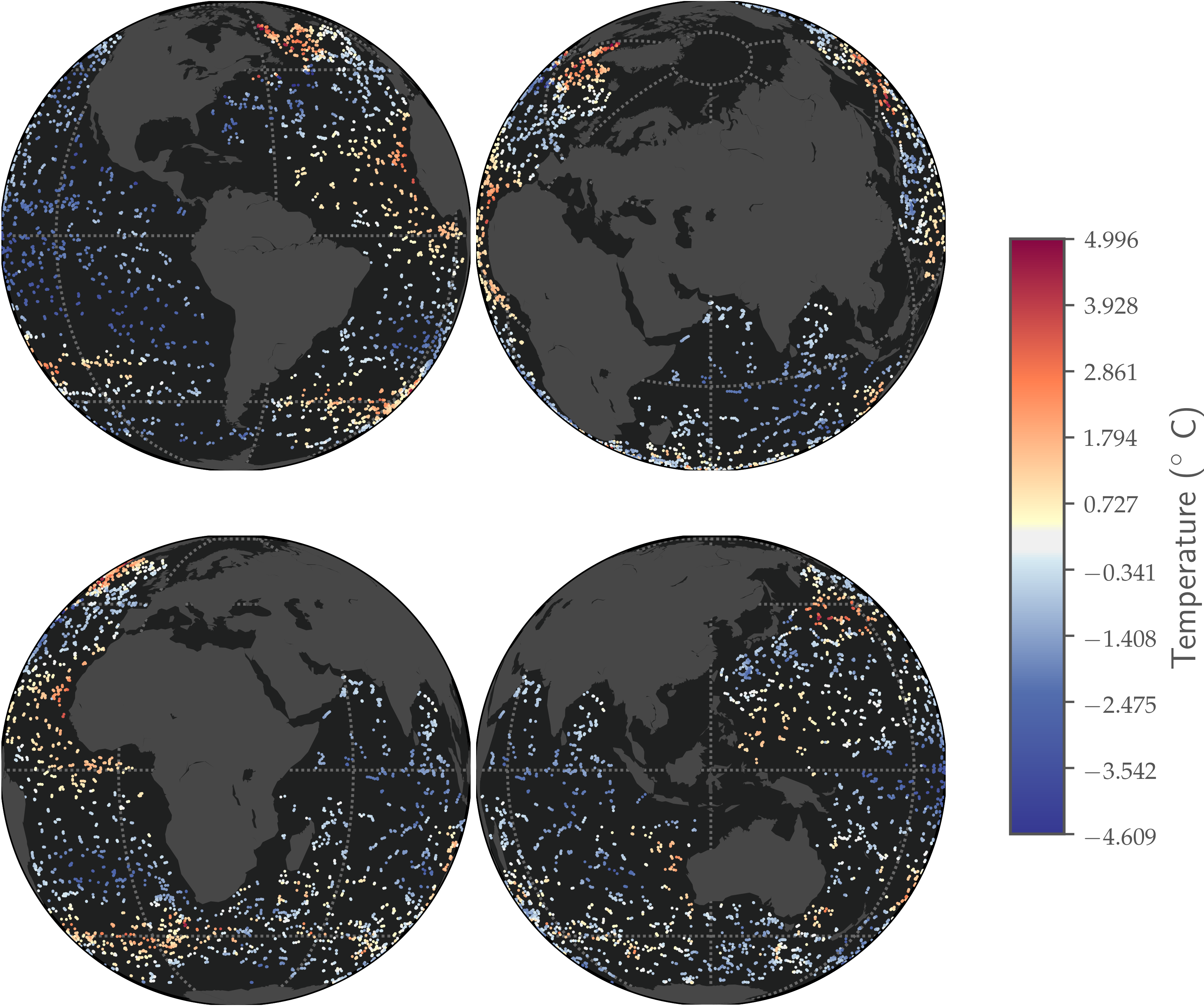}
    \caption{Sea surface temperature anomalies recorded by Argo floats in January 2011. Float locations are marked by dots coloured according to the recorded anomaly (red=warmer temperatures, blue=colder temperatures).}
    \label{fig:argo}
\end{figure}

\paragraph{Related Work in FDA.}
The model presented in Equation \eqref{eq:firstregressionproblem}, in the case of  $\text{i.i.d.}$ functional data $\{X_i\}$ on the domain $[0,1]$, has been treated in several works, as for instance in \cite{yao:2005b} with the aim of building a bridge between FDA and longitudinal studies, and also in \cite{hall2006,li2010,zhang2016sparse,caiYuan1,caiYuan2}.
In this setting, it is not possible to recover \emph{a priori} the entire signals by a pre-smoothing step, and hence the statistical estimators and procedures cannot be based on the intrinsically infinite dimensional inputs and techniques as in the classical FDA setting. However, having access to $n$ (partial) replications still makes inference possible, even if the number of points per replicate is small. The core point is that all the information coming from the measurements can be shared to estimate the main features of the underlying process, such as the mean and the covariance kernel. This idea of \emph{borrowing information} leads naturally to the implementation of smoothing techniques.
In the key paper \cite{yao:2005b}, the authors propose to estimate the covariance kernel
of the latent functional process by a local polynomial smoother, which is well defined in a 2-dimensional planar domain, and for such estimator they obtain rates of convergence. In the subsequent work \cite{rubin2020}, similar results for the lag-$h$ autocovariances and spectral density kernels have been established in the serially dependent setting. 
However, when the domain is the product sphere, standard local polynomial smoothing techniques do not straightforwardly extend. 
%see, however, https://www.tandfonline.com/doi/full/10.1080/01621459.2013.866567, I also uploaded the pdf in the overleaf files
Indeed, the covariance estimation task consists in estimating a function on a 4-dimensional curved surface (\emph{i.e.}, $\mathbb{S}^2 \times \mathbb{S}^2$) and ideally the selected smoothing procedure would naturally incorporate the manifold geometry of the product sphere.

There exists a vast literature on (usually time-dependent) sequences of spherical random fields aimed at the characterization and parametric estimation of covariance/autocovariance functions (see \cite{bergporcu,gneiting,JJ:15,jun,Porcu18,porcu,white2019, simeoni2021siml} and in particular \cite{PFN:20} for a comprehensive review). The great majority of such works have focused on \emph{isotropic} (in space) and \emph{stationary} (in time) processes. However, in particular for the analysis of climate data, there is an attempt to move from the assumption of isotropy to that of \emph{axial symmetry} (\emph{i.e.}, heterogeneous spatial dependence across latitudes), but always in a purely parametric framework (see, for instance, \cite{castruccio2013,axial} and the references therein).

Very recently, the problem of investigating (isotropic and stationary) serially-dependent spherical random fields and their second-order structure has been tackled from a FDA perspective by \cite{cm19, cdv19, SPHARMA} when fully observed functional time series are available.

Our aim is to bridge the gap between the idealised setting of these previous works and the many practical experiments which intrinsically feature irregular and/or sparse measurements of non-isotropic random fields on the sphere, such as the Argo data from Figure \ref{fig:argo}.
Hence, we focus on both the $\text{i.i.d.}$ and serially-dependent setting, specifically for functional data measured as defined in Equation \eqref{eq:firstregressionproblem}, without assuming any form of isotropy in space. We provide a methodology which intrinsically incorporates the structure of the (product) spherical domain and also allows to estimate the mean, the covariance, and the autocovariance functions in a completely nonparametric manner. This in particular has many advantages, such as the possibility of performing spatial kriging without any parametric restrictions.

In the following, we summarize the proposed strategy and the main contributions of our work.
%- Tikhonov regularization, RKHS and spherical Sobolev spaces \comm{general framework, not specific operator} \ju{ToDo Julien. Emphasis that we apply these things on mean and (more importantly) on the covariance.} \\

    \paragraph{Variational Methods for Mean and Covariance Estimation.} 
    Following the general approach of \cite{caiYuan1,caiYuan2}, our strategy for estimating the mean, covariance, and autocovariances of 
    %-- together with the autocovariances $(u,v)\in \Sph^2 \times \Sph^2 \mapsto C_h(u,v)$ at lags $h$ in the time-dependent case -- 
    the spherical random fields $X_i$ 
    is to define the respective estimators as solutions of regularized quadratic variational problems of the form
    \begin{equation} \label{eq:generaloptimpb}
        \min_{f\in\mathcal{H}} \mathcal{L} (\boldsymbol{y}, \Phi(f))  + \lambda \|f \|^2_{\mathcal{H}}. 
    \end{equation}
    In this setting, the loss functional $\mathcal{L} (\boldsymbol{y}, \Phi(f))$ constraints the measurement vector $\Phi(f)$ to be close to the data $\boldsymbol{y}$ -- in our case, the data are the noisy sampled values $W_{ij}$ in \eqref{eq:firstregressionproblem} for the mean estimation, or pairwise products $W_{ij}W_{i'j'}$ between these quantities for the covariance estimation. 
    The choice of the Hilbert space $\mathcal{H}$ and therefore the regularization $\|f \|_{\mathcal{H}}$ determines the smoothness of the estimators as functions over $\Sph^2$ or $\Sph^2 \times \Sph^2$. The parameter $\lambda > 0$ balances the role of the data-fidelity and the regularization. 
    
    Optimization problems of the form~\eqref{eq:generaloptimpb} have a rich history far beyond covariance estimation. The underlying Hilbert space $\mathcal{H}$ can be made of vectors or functions. They are known as Tikhonov regularization in inverse problems~\cite{Tikhonov1963solution,ito2014inverse,Unser2016representer} and ridge-regression in statistics~\cite{Hoerl1962ridge,saleh2019theory}.
    Quadratic regularized optimized problems over Hilbert \textit{function spaces} are well-known to be connected to splines since the pioneering works of Wahba in the 1970's~\cite{kimeldorf1971some}; see also~\cite{wahba1990spline}. 
    The specification of the form of the solutions of problems such as~\eqref{eq:generaloptimpb} is known as a representer theorem~\cite{scholkopf2001generalized,scholkopf2018learning}.
    Quadratic representer theorems for sequences~\cite{Unser2016representer}, continuous functions~\cite{gupta2018continuous} and periodic functions~\cite{badoual2018periodic} have been proposed. 
    The case of spherical functions is covered~\cite{simeoni2020functional, michel2012lectures}.
    For the mean and covariance estimation of random fields over $[0,1]$, we refer to~\cite{caiYuan1,caiYuan2}. To the best of our knowledge, this setting has never been proposed for \textit{spherical} random fields. 
    It is worth noting that, even if we restricted our attention to the sampling regression problem~\eqref{eq:firstregressionproblem}, we can easily generalize the approach to generalized linear measurements, as detailed for instance in~\cite{badoual2018periodic,simeoni2020functional,michel2012lectures}. Estimation of spherical random fields from generalized measurements is typically relevant in radio interferometric or acoustic imaging \cite{simeoni2021siml, simeoni2019graph,simeoni2019deepwave}.

    \paragraph{Contributions.}
    The specificity of our work is to apply quadratic regularized optimization problems to mean, covariance, and autocovariance estimation of spatially \textit{spherical} -- hence non Euclidean -- and \textit{serially-dependent} random fields. To the best of our knowledge, these ideas have not been applied in these contexts. {Our method appears, in fact, to be the first and only thus far to tackle non-parametric and non-isotropic covariance estimation with sparsely sampled functional data on the sphere.} %Note that our approach can be applied for both sparse and dense functional data. 
%    We moreover illustrate our methodology for the estimation of the mean and covariance of spherical Gaussian random field from their sparse samples on simulations. 
    Our main contributions are as follows. 

\begin{itemize}

%    \item \textit{Estimation Procedure over the Sphere:} We provide a method for estimating the mean and covariance kernel of  random fields defined over the sphere.
%    We therefore rely on spherical harmonics and go beyond Euclidean domains such as the 1D sphere $\Sph$ or the $d$-dimensional torus.
    
    \item \textit{Representer theorems for the estimators:} 
    We introduce a family of regularization-based estimators for the estimation of the mean, covariance, and autocovariance functions of spatially spherical and temporally stationary Gaussian random fields. 
    The regularizations for the first and second moments are given by $f \mapsto \| \mathscr{D} f \|_{\rangleL}^2$ and $g \mapsto \| (\mathscr{D} \otimes \mathscr{D}) g \|_{\rangleLL}^2$, where $\mathscr{D}$ is a smoothness-inducing pseudo-differential operator.
    For such regularizations, we obtain the general form of the mean and covariance estimators, expressed in terms of the regression data and the Green's functions of the regularizing pseudo-differential operators. 
%    The estimators are defined as optimal solutions of regularized optimization tasks as in \eqref{}. We provide a general analysis of such optimization problems and obtain in particular the general form of the solutions.
%    The estimator depends on the pseudo-differential operator, we are very general. Rely on the green's function. 
    
    \item \textit{Asymptotic Theory:}
    Our main result is to provide an asymptotic analysis of the performance of the mean, covariance, and autocovariance estimators obtained via the representer theorems. 
    The performance is expressed in terms of the smoothness properties of both the $X_i$ and their second-order structure.
    By distinguishing between the regularity of the random fields and that of their second-order structure, we provide a refined analysis relative to existing results on $[0,1]^2$, which for important classes of spherical random fields results in improved rates. 
%    Flexibility of the random model: $(p,q)$ stuff. Allows for improved bound for some important examples. Results work for both dense and sparse situations. 
    
    \item \textit{Serially Dependent Setting:} We extend our methods and theory to also cover the case of serially correlated random fields on $\mathbb{S}^2$, a.k.a. \emph{spherical functional time series}. To the best of our knowledge, in the context of %sparsely observed 
    functional time series, variational methods for estimating the mean and, especially, the sequence of lag $h$ autocovariance kernels have not been considered before. %For $[0,1]$-valued processes only the $\text{i.i.d.}$ case is covered (see \cite{caiYuan1,caiYuan2}).

   \item \textit{Computation:} We develop a practically feasible implementation of our estimation methodology, circumventing the computational  and memory challenges raised by the high dimensionality of the estimation task. More specifically, leveraging our theoretical results as well as sparse/second-order structure, we develop a \emph{computationally tractable} and \emph{memory-thrifty} implementation. The latter allows us to reduce the computational/memory requirements by two to three orders of magnitude approximately. We propose moreover a $K$-fold cross-validation procedure, both for evaluating the finite-sample performance of our estimator but also to optimally select the regularization parameter $\lambda$ in \eqref{eq:generaloptimpb}. {The simulation code is openly available and fully-reproducible, and is released in the form of a standalone Python 3 Jupyter notebook hosted on GitHub: \url{https://github.com/matthieumeo/sphericov/blob/main/companion_nb.ipynb}.} Instructions for installing the required dependencies and running the notebook are available at this link: \url{https://github.com/matthieumeo/sphericov#readme}.
   
\end{itemize}

\paragraph{Outline.} The rest of the paper is organized as follows. In Section~\ref{sec:background}, we introduce some preliminary concepts on spherical and tensorial Sobolev spaces and related pseudo-differential operators. In Section~\ref{sec:model}, we formally present the model and the proposed estimators for the mean and covariance functions in the $\text{i.i.d.}$ setting. For such estimators we provide representer theorems in Section~\ref{sec:representer} and the asymptotic analysis in Section~\ref{sec:asymptoticanalysis}. Our procedure is then implemented in Section~\ref{sec:simulations} for a simulated experiment. In Section~\ref{sec:temporal} the case of serially dependent spherical random fields is examined and rates of convergence are provided for the lag-$h$ autocovariance kernels.
The proofs of all formal statements are collected in Section~\ref{sec:proofsformal}.

\section{Mathematical Background}\label{sec:background}

We denote by $L^2(\Sph^2)$ the space of real-valued square-integrable functions over the 2D sphere. 
%The family of real spherical harmonics is denoted by $\{Y_{\ell,m}\}_{\ell  \in \mathbb{N}, - \ell \leq m \leq \ell}$. Spherical harmonics form  an orthonormal basis of $L^2(\Sph^2)$, hence
It is well-known that any function $f \in L^2(\Sph^2)$ can be expanded as 
\begin{equation}
    f = \sum_{\ell =0}^{\infty} \sum_{  m = - \ell}^{\ell} f_{\ell,m} Y_{\ell,m},
\end{equation}
where $\{Y_{\ell,m}\}_{\ell  \in \mathbb{N}, - \ell \leq m \leq \ell}$ is a standard orthonormal basis of real spherical harmonics and the spherical Fourier coefficients $f_{\ell,m } = \langle f , Y_{\ell,m} \rangle_\rangleL \in \R$ of $f$ are such that
$\|   f \|^2_\rangleL := \sum_{\ell =0}^{\infty} \sum_{  m = - \ell}^{\ell} f_{\ell,m}^2 < \infty$ -- see for instance \cite[Chapter 3]{MP:11}.
We also consider functions over the domain $\Sph^2 \times \Sph^2$, which is required to deal with the second-order moments of spherical random fields. The family $\{ Y_{\ell,m} \otimes Y_{\ell',m'}\}_{\ell, \ell'  \in \mathbb{N}, - \ell \leq m \leq \ell, - \ell' \leq m' \leq \ell'}$ is then an orthonormal basis of $L^2(\Sph^2\times\Sph^2) = L^2(\Sph^2) \otimes L^2(\Sph^2)$, where $( Y_{\ell,m} \otimes Y_{\ell',m'}) (u,v) =  Y_{\ell,m}(u)  Y_{\ell',m'} (v)$ for any $(u,v)\in \Sph^2 \times \Sph^2$. 

    \subsection{Sobolev Spaces on $\Sph^2$ and $\Sph^2\times \Sph^2$}
    \label{sec:sobolevspaces}
    We characterize the smoothness properties of the random fields, means, and covariance functions in terms of Sobolev spaces, defined in Definition~\ref{def:sobospace}. 
    
\begin{definition} \label{def:sobospace}
Let $p \geq 0$. The spherical Sobolev space of order $p$ on $\Sph^2$ is defined as
\begin{equation}
    \Hp =  \Hp (\Sph^2) = 
    \left\{ f \in L^2(\Sph^2) , \ \|f\|^2_{\Hp} := \sum_{\ell =0}^{\infty} (1 + \ell(\ell + 1))^p  \sum_{m = - \ell}^{\ell}  \langle f , Y_{\ell,m} \rangle^2 < \infty \right\}.
\end{equation}
The tensorial Sobolev space of order $p$ on $\Sph^2 \times \Sph^2$ is
\begin{equation}
    \Hprod =  \Hprod (\Sph^2 \times \Sph^2) = \Hp \otimes \Hp \subset L^2 (\Sph^2 \times \Sph^2).
\end{equation}
\end{definition}
The spaces  $\Hp$ and $\Hprod$ are
Hilbert spaces for their respective Hilbert norms 
$ \| \cdot \|_\rangleH$ and $\| \cdot \|_\rangleHH$ inherited from the inner products
    \begin{align}
        \langle f_1, f_2 \rangle_\rangleH 
        &:=
        \sum_{\ell =0}^{\infty} (1 + \ell(\ell + 1))^p  \sum_{m = - \ell}^{\ell}  \langle f_1 , Y_{\ell,m}\rangle_\rangleL\langle f_2 , Y_{\ell,m} \rangle_\rangleL,  \label{Hp}  \\
        \langle g_1, g_2 \rangle_\rangleHH
        &:=
        \sum_{\ell, \ell'= 0}^\infty 
        (1 + \ell(\ell + 1))^p(1 + \ell'(\ell' + 1))^p \nonumber \\
        & \quad \quad \quad \times \sum_{m = - \ell}^{\ell}  \sum_{m' = - \ell'}^{\ell'}   \langle g_1 , Y_{\ell,m}\otimes Y_{\ell',m'}\rangle_\rangleL  \langle g_2 , Y_{\ell,m}\otimes Y_{\ell',m'} \rangle_\rangleL \label{Hprod}
    \end{align}
    for $f_1,f_2 \in \Hp$ and $g_1,g_2 \in \Hprod$.
%    \begin{equation}
%        g \mapsto \|g\|^2_\rangleHH := \sum_{\ell, \ell'= 0}^\infty 
%        (1 + \ell(\ell + 1))^p(1 + \ell'(\ell' + 1))^p
%       \sum_{m = - \ell}^{\ell}  \sum_{m' = - \ell'}^{\ell'}   |\langle g , Y_{\ell,m}\otimes Y_{\ell',m'} \rangle|^2 . 
%    \end{equation}
    Note that the norm $\| \cdot \|_\rangleHH$ is such that $\|f_1 \otimes f_2\|_\rangleHH =\|f_1\|_\rangleH\|f_2\|_\rangleH $ for any $f_1,f_2 \in \Hp$.

     The parameter $p$ in Definition~\ref{def:sobospace} quantifies the regularity: the higher $p$, the smoother the functions  $f \in \Hp$ and  $g \in \Hprod$. 
     We also define the spaces $\mathcal{S}(\Sph^2)$ and $\mathcal{S}(\Sph^2 \times \Sph^2) = \mathcal{S}(\Sph^2) \otimes \mathcal{S}(\Sph^2)$ of infinitely smooth functions on $\Sph^2$ and $\Sph^2\times \Sph^2$, respectively. We then have that 
\begin{equation}
    \mathcal{S}(\Sph^2) = \bigcap_{p \geq 0} \Hp  \qquad \text{and} \qquad  \mathcal{S}(\Sph^2 \times \Sph^2) = \bigcap_{p \geq 0} \Hprod.
\end{equation}
     The topological duals of $\mathcal{S}'(\Sph^2)$ and $\mathcal{S}'(\Sph^2 \times \Sph^2)$ are respectively the space of distributions $\mathcal{S}(\Sph^2)$ and $\mathcal{S}(\Sph^2 \times \Sph^2)$ and are such that 
  \begin{equation}
    \mathcal{S}'(\Sph^2) = \bigcup_{p \geq 0} \Hp'  \qquad \text{and} \qquad  \mathcal{S}' (\Sph^2 \times \Sph^2) = \bigcup_{p \geq 0} \Hprod'.
\end{equation}  
     
     Among Sobolev spaces, we are specifically interested in the ones on which the evaluation functionals $f \mapsto f(x)$ are continuous for any $x$ in the domain ($\Sph^2$ or $\Sph^2\times \Sph^2$). In other terms, we shall only consider Sobolev spaces $\Hp$ and $\Hprod$ that are Reproducing Kernel Hilbert Spaces (RKHS)~\cite{aronszajn1950theory}. We characterize this property in Proposition~\ref{prop:RKHSprop}, whose proof is provided in Section~\ref{sec:proofs1}.
 
    \begin{proposition}
    \label{prop:RKHSprop}
    Let $p \geq 0$. Then, we have the  equivalences
    \begin{equation}
        \Hp \text{ is a RKHS} \quad \Longleftrightarrow \quad  \Hprod \text{ is a RKHS} \quad \Longleftrightarrow  \quad p > 1   .
    \end{equation}
    \end{proposition}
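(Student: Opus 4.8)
The plan is to reduce the RKHS property to the convergence of a single explicit series (a Mercer-type sum) in both cases, and then observe that the two series converge under exactly the same condition on $p$. The only analytic input needed is the standard criterion that a Hilbert space of functions is a RKHS if and only if all point evaluations are bounded linear functionals, together with the Riesz representation theorem, and the addition theorem for spherical harmonics, which with the normalization $\langle Y_{\ell,m},Y_{\ell',m'}\rangle_\rangleL=\delta_{\ell\ell'}\delta_{mm'}$ used here reads $\sum_{m=-\ell}^{\ell} Y_{\ell,m}(x)^2=\frac{2\ell+1}{4\pi}$ for every $x\in\Sph^2$.

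First I would handle $\Hp$. Note that $\{(1+\ell(\ell+1))^{-p/2}Y_{\ell,m}\}$ is an orthonormal basis of $\Hp$ and each $Y_{\ell,m}$ itself lies in $\Hp$ for every $p\geq 0$. If the evaluation functional at $x$ were bounded, its Riesz representative $k_x\in\Hp$ would have to satisfy $\langle k_x,Y_{\ell,m}\rangle_\rangleL=(1+\ell(\ell+1))^{-p}Y_{\ell,m}(x)$ (obtained by applying the reproducing identity to $f=Y_{\ell,m}$), whence
\[
\|k_x\|_{\Hp}^2=\sum_{\ell=0}^\infty (1+\ell(\ell+1))^{-p}\sum_{m=-\ell}^{\ell}Y_{\ell,m}(x)^2=\frac{1}{4\pi}\sum_{\ell=0}^\infty (2\ell+1)(1+\ell(\ell+1))^{-p}=:c_p,
\]
a quantity independent of $x$. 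So boundedness of evaluation at a single point already forces $c_p<\infty$. Conversely, when $c_p<\infty$ I would exhibit the kernel $K_p(x,\cdot):=\sum_{\ell,m}(1+\ell(\ell+1))^{-p}Y_{\ell,m}(x)\,Y_{\ell,m}$, check that it lies in $\Hp$ (its squared norm is precisely $c_p$) and that it reproduces, $\langle f,K_p(x,\cdot)\rangle_{\Hp}=\sum_{\ell,m}\langle f,Y_{\ell,m}\rangle_\rangleL Y_{\ell,m}(x)=f(x)$. Hence $\Hp$ is a RKHS iff $c_p<\infty$, and since $(2\ell+1)(1+\ell(\ell+1))^{-p}\asymp \ell^{1-2p}$ as $\ell\to\infty$, this holds iff $1-2p<-1$, i.e.\ $p>1$.

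Then I would do the tensorial case identically. Here $\{(1+\ell(\ell+1))^{-p/2}(1+\ell'(\ell'+1))^{-p/2}\,Y_{\ell,m}\otimes Y_{\ell',m'}\}$ is an orthonormal basis of $\Hprod$, and the same Riesz computation shows evaluation at $(x,y)$ is bounded on $\Hprod$ iff $\sum_{\ell,\ell',m,m'}(1+\ell(\ell+1))^{-p}(1+\ell'(\ell'+1))^{-p}Y_{\ell,m}(x)^2Y_{\ell',m'}(y)^2<\infty$. All terms being nonnegative, this double sum factorizes (Tonelli) into the product of the two scalar sums, which by the addition theorem both equal $c_p$; so the whole expression is $c_p^2$. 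Therefore $\Hprod$ is a RKHS iff $c_p^2<\infty$ iff $c_p<\infty$ iff (by the scalar case) $p>1$ iff $\Hp$ is a RKHS, which is exactly the asserted chain of equivalences. I do not anticipate a genuine obstacle: the content is really just the computation of $c_p$ via the addition theorem plus a comparison test; the two points needing care are (i) arguing both directions of ``RKHS $\Leftrightarrow c_p<\infty$'' cleanly, in particular that no Riesz representative can exist when $c_p=\infty$ and that the candidate kernel genuinely belongs to the space and reproduces when $c_p<\infty$, and (ii) keeping the normalization of the $Y_{\ell,m}$ consistent with the $L^2(\Sph^2)$ inner product used throughout, so that the addition-theorem constant is $\frac{2\ell+1}{4\pi}$.
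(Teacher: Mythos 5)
Your proof is correct and follows essentially the same route as the paper: both reduce the RKHS property to finiteness of the dual norm of the point-evaluation functional (equivalently, the Riesz representative of $\delta_x$ or $\delta_{(x,y)}$) and observe that for $\Hprod$ this quantity factorizes into the product of the two scalar quantities, so the tensorial and scalar RKHS properties are equivalent. The only difference is that the paper cites the scalar threshold $p>1$ as classical, whereas you derive it directly from the addition theorem $\sum_{m=-\ell}^{\ell}Y_{\ell,m}(x)^2=(2\ell+1)/(4\pi)$ and a comparison test, which makes the argument self-contained.
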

    
    The RKHS properties allow us to provide uniform control functions on $\Hp$ and $\Hprod$, as will be used for instance in Lemma~\ref{lemma::sup} in Section~\ref{sec:proofs3}. This implies in particular the uniform convergence of Fourier series; \emph{e.g.}, for any $f \in \Hp$,
$$
\sup_{u \in \mathbb{S}^2} \left |f(u) - \sum_{\ell=0}^L \sum_{m=-\ell}^\ell \langle f, Y_{\ell,m} \rangle_\rangleL Y_{\ell,m}(u)\right| \to 0, \qquad L \to \infty.
$$

    \subsection{Spherical and Tensorial Admissible Operators}
    \label{sec:sphoperators}

    The Tikhonov regularization used in our estimation strategy relies on linear operators that will impact the smoothness of the estimators. We now introduce the class of linear operators that are relevant for our work.
    
\begin{definition}[Admissible Operators]\label{sph-pseudo-diff}
An \emph{admissible operator} will be any linear operator of the form
\begin{equation*}
\mathscr{D} :  \begin{dcases}
\mathcal{S}(\mathbb{S}^2) \to \mathcal{S}(\mathbb{S}^2)\\
f \mapsto \mathscr{D} f = \sum_{\ell=0}^\infty D_\ell \sum_{m=-\ell}^\ell f_{\ell,m} \Y_{\ell,m}
\end{dcases}
\end{equation*}
where $\{f_{\ell,m} = \langle f, Y_{\ell,m} \rangle_\rangleL\}_{\ell \in \mathbb{N},  -\ell \le m\le \ell}$ are the spherical Fourier coefficients of $f$ and $\{D_\ell\}_{\ell \in \mathbb{N}}$ is a sequence of non-zero real numbers such that
%the set
%\begin{equation*}
%\mathfrak{K}_{\mathscr{D}} := \{\ell \in \mathbb{N}: |D_\ell| = 0\}
%\end{equation*}
%is finite, i.e. $\#\mathfrak{K}_{\mathscr{D}} := N_0 < \infty$, and
%\begin{equation} \label{eq:conditionDn}
%|D_\ell| = \Theta(\ell^p),
%\end{equation}
%, in the sense that 
\begin{equation} \label{eq:conditionDn}
C_1 (1+\ell)^p \le |D_\ell| \le C_2 (1+\ell)^p,
\end{equation}
for some real number $p\ge0$,  some positive constants $C_1, C_2 > 0$ and every $\ell \in \mathbb{N}$.
We call the $D_\ell$, $\ell \in \mathbb{N}$, the \emph{spherical Fourier coefficients} and $p$ the \emph{spectral growth order} of $\mathscr{D}$. 
\end{definition}

%\begin{remark}[Theta Notation] 
%\end{remark}

The condition~\eqref{eq:conditionDn} implies that $\mathscr{D}$ is invertible from $\mathcal{S}(\mathbb{S}^2)$ to itself. 
The inverse of $\mathscr{D}$ is given by $\mathscr{D}^{-1} f = \sum_{\ell=0}^\infty \frac{1}{D_\ell} \sum_{m=-\ell}^\ell f_{\ell,m} \Y_{\ell,m}$.
Note that Definition \ref{sph-pseudo-diff} excludes some classical pseudo-differential operators such as the Laplacian operator $\mathscr{D} = \Delta_{\Sph^2}$ itself, which is not invertible. Indeed, we have in this case that $D_\ell = - \ell(\ell+1)$, hence $D_0 = 0$ and the condition \eqref{eq:conditionDn} is not fulfilled. It is however possible to generalize Definition \ref{sph-pseudo-diff} above to include the Laplacian and more generally spherical pseudo-differential operators with finite-dimensional null space -- see for example \cite[Definition 4]{simeoni2021functional}. 

%Let $\mathcal{S}'(\Sph^2)$ be the space of spherical distributions, defined as the topological dual of $\mathcal{S}(\Sph^2)$. Then, any spherical pseudo-differential operator can be uniquely extended as a continuous and invertible operator from 

%Later we will assume $|D_\ell| \ge 1$, for all $\ell  \geq 0$. 
%This condition implies that $\mathscr{D}$ is invertible from $\mathcal{S}(\mathbb{S}^2)$ to itself and 

\begin{example} \label{ex:sobolevop}
We consider the family of \textit{Sobolev operators}, \emph{i.e.},  operators of the form $\mathscr{D}:= (\operatorname{Id} - \Delta_{\mathbb{S}^2} )^{p/2}$ for some $p \geq 0$.
The spherical Fourier coefficients of $(\operatorname{Id} - \Delta_{\mathbb{S}^2} )^{p/2}$ are given by 
$$
D_\ell = (1+ \ell(\ell+1))^{p /2 }, \qquad \ell \in \mathbb{N}.
$$
%This follows from the fact the spherical Laplacian operator $\Delta_{\mathbb{S}^2}$ has spherical Fourier coefficients $\ell(\ell+1)$~\cite{}. 
We easily see that  $(\operatorname{Id} - \Delta_{\mathbb{S}^2} )^{p/2}$ satisfies the conditions of Definition~\ref{sph-pseudo-diff} with spectral growth order $p$. 
\end{example}
%Notice that $(\operatorname{Id} - \Delta_{\mathbb{S}^2} )^{\gamma/2}$ is always positive definite.

%\begin{remark}

We can use the admissible operators in Definition~\ref{sph-pseudo-diff} to construct operators acting on functions over $\Sph^2 \times \Sph^2$ as follows. Let $\mathscr{D}$ be an admissible operator. Then, $\mathscr{D} \otimes \mathscr{D}$ is a linear operator from $\mathcal{S}(\Sph^2\times\Sph^2)$ to itself characterized by the relations  
$$
(\mathscr{D} \otimes \mathscr{D}) (f_1 \otimes f_2) = (\mathscr{D}  f_1 ) \otimes (\mathscr{D}  f_2)
$$
for any $f_1,f_2 \in \mathcal{S}(\Sph^2)$. The invertibility of $\mathscr{D}$ then implies the invertibility of $\mathscr{D} \otimes \mathscr{D}$ from $\mathcal{S}(\Sph^2\times\Sph^2)$ to itself. 
%We call $\mathscr{D} \otimes \mathscr{D}$ a \emph{tensorial admissible operator}.

The Sobolev operator $\mathscr{D} = (\operatorname{Id} - \Delta_{\mathbb{S}^2} )^{p/2}$ of order $p \geq 0$ is such that
\begin{equation}
     \|f\|_\rangleH = \| \mathscr{D} f \|_\rangleL \quad \text{and} \quad 
     \|g\|_\rangleHH = \| ( \mathscr{D} \otimes \mathscr{D}) g \|_\rangleLL
\end{equation}
for any $f \in \Hp$ and $g \in \Hprod$.
More generally, any admissible operator $\mathscr{D}$ specifies a continuous bijection $\mathscr{D} : \Hp \rightarrow L^2(\Sph^2)$ (see Proposition~\ref{prop:Disgood} in Section~\ref{sec:proofs1} for a formal statement). This implies that 
$f \mapsto \| \mathscr{D} f \|_\rangleL$ specifies a norm on $\Hp$ which is equivalent to $\|\cdot \|_\rangleH$. Similarly, $g \mapsto \|( \mathscr{D} \otimes \mathscr{D} ) g  \|_\rangleL$ specifies a norm on $\Hprod$ which is equivalent to $\|\cdot \|_\rangleHH$. 

\begin{remark}
For any admissible operator $\mathscr{D}$, the family $\left\{ Y_{\ell,m} / D_\ell \right\}_{\ell  \in \mathbb{N}, - \ell \leq m \leq \ell}$ is orthonormal in $(\Hp, \| \mathscr{D} \cdot \|_{L^2(\Sph^2)})$ This simply follows from the fact that, as easily seen from Definition~\ref{sph-pseudo-diff}, $\mathscr{D} Y_{\ell,m} = D_\ell Y_{\ell,m}$ and therefore,
\begin{equation} \label{eq:useful1}
    \langle \mathscr{D}Y_{\ell,m}, \mathscr{D}Y_{\ell',m'} \rangle_\rangleL
    =
    D_{\ell} D_{\ell'}^{*} \langle Y_{\ell,m} , Y_{\ell',m'} \rangle_\rangleL  
    =
    D_\ell^2 \delta_\ell^{\ell'} \delta_m^{m'}.
\end{equation}
In particular, $\left\{ Y_{\ell,m} / (1 + \ell (\ell+1))^{p/2} \right\}_{\ell  \in \mathbb{N}, - \ell \leq m \leq \ell}$ is an orthonormal basis of $(\Hp, \| \cdot \|_{\Hp})$. 
The relation~\eqref{eq:useful1} moreover implies, for any $f \in \Hp$, we have the useful relation
\begin{equation}
    \label{eq:useful2}
\langle \mathscr{D}f, \mathscr{D}Y_{\ell,m} \rangle_\rangleL = D_\ell^2 \langle f, Y_{\ell,m} \rangle_\rangleL.
\end{equation}
\end{remark}

We now introduce the Green's functions and the zonal Green kernel of admissible operators in Definition~\ref{def:green}. 

\begin{definition}\label{def:green}
Let $\mathscr{D}$ be an admissible operator. 
For any $u \in \mathbb{S}^2$, we denote by $\green^{\mathscr{D}}_{u} = \mathscr{D}^{-1} \delta_u$. Then, we call  $\green^{\mathscr{D}}_{u}$ the \emph{Green's function} of the operator $\mathscr{D}$ at position $u$. 
Moreover, there exists a function $\psi_{\mathscr{D}} : [-1,1] \rightarrow \R$ such that, for any $u,v \in \Sph^2$,
\begin{equation} \label{eq:zonalgreen}
    \green_u^{\mathscr{D}} (v) = \psi_{\mathscr{D}} (\langle u, v\rangle), 
\end{equation}
where $\langle u, v\rangle$ is the usual inner product between points in $\Sph^2 \subset \R^3$. The function $\psi_{\mathscr{D}}$ is called the \emph{zonal Green's kernel} of $\mathscr{D}$.
\end{definition}

It is known that the Green's functions of admissible operators are continuous as soon as the spectral growth order $p$ satisfies $p > 1$~\cite[Proposition 4]{simeoni2021functional}. 
The existence of the zonal Green's kernel is for instance proved in ~\cite[Proposition 3]{simeoni2021functional}; it can be expressed in terms of the spherical Fourier coefficients $\{D_\ell\}$ and the $2$-dimensional ultraspherical polynomials (see \cite[Eq. (16)]{simeoni2021functional}). 
The existence of the zonal Green's kernel has important practical consequences. It means in particular that the Green's functions at any positions can be easily computed from $\psi_{\mathscr{D}}$, as will be exploited in Section~\ref{sec:simulations}. 

%Moreover, according to~\cite[Proposition 3]{simeoni2021functional}, there exists a function $\psi_{\mathscr{D}} : [-1,1] \rightarrow \R$ such that, for any $u,v \in \Sph^2$,
%\begin{equation} \label{eq:zonalgreen}
%    \green_u^{\mathscr{D}} (v) = \psi_{\mathscr{D}} (\langle u, v\rangle), 
%\end{equation}
%where $\langle u, v\rangle$ is the usual inner product between points in $\Sph^2 \subset \R^3$. The function $\psi_{\mathscr{D}}$ is called the \textit{zonal Green's kernel} of $\mathscr{D}$; it can be expressed in terms of the $D_\ell$ and the $2$-dimensional ultraspherical polynomials (see \cite[Eq. (16)]{simeoni2021functional}). 

\section{Model and Estimation Methodology} \label{sec:model}

In this section, we present the theoretical setting in which we develop our methodology.
From a purely functional data analysis perspective, we consider a second-order stochastic process $X=\{X(u), \, u \in \mathbb{S}^2\}$ that is a random element of $\Hq$, for some $q >1$, with
$$
\mathbb{E}[X (u)] = \mu(u), \qquad  \mathbb{E}[X(u) X(v)] = R(u,v),
$$
and covariance function
$$
C(u,v) = R(u,v) - \mu(u)\mu(v),
$$
for $u,v \in \mathbb{S}^2$. Recall from Proposition \ref{prop:RKHSprop} that $\Hq$ with $q>1$ is a RKHS, hence the process $X$ inherits all the properties of a RKHS valued process, see \cite[Section 7.5]{Hsing}. Moreover, since $q>1$, the realizations of $X$ are almost surely continuous on $\Sph^2$.

%As $\Hq$ with $q>1$ is a RKHS, 
%if $\mathbb{E}\|X\|_\rangleH^2 < \infty$, then $X$ is mean-square continuous on $\Sph^2$ and the covariance operator $\mathscr{C}: ? \to ?$
%$$
%(\mathscr{C}f)(u) = \int_{\Sph^2} C(u,v)f(v)dv, \qquad f \in ?
%$$
%is well-defined pointwise.

Now suppose we have $X_1,\dots,X_n$ independent replicates of $X$ and that for the $i$-th replicate we make measurements at $r_i$ random locations on $\mathbb{S}^2$. Formally, we consider the following regression problem
$$
\W_{ij} = X_i(U_{ij}) + \epsilon_{ij}, \qquad  i =1,\dots,n, \, j =1,\dots,r_i,
$$
where the $U_{ij}$'s 
%are independent measurement locations uniformly distributed over the sphere $\mathbb{S}^2$
are independently drawn from a common distribution on $\mathbb{S}^2$, and the $\epsilon_{ij}$'s are independent and identically distributed measurement errors of mean 0 and variance $0<\sigma^2<\infty$. %Furthermore, the $X_i$'s and the measurement locations are assumed to be independent of the measurement errors.
Furthermore, the $X_i$'s, the measurement locations, and the measurement errors are assumed to be mutually independent.
Then,
\begin{equation}\label{eq::meanconditional}
\mathbb{E}[\W_{ij}|U_{ij}=u_{ij}] = \mu(u_{ij}),
\end{equation}
and
\begin{equation}
    \label{eq::secondmomentconditional}
    \mathbb{E}[\W_{ij}\W_{lk}|U_{ij}=u_{ij}, U_{lk}=u_{lk}] = \begin{cases}
    \mu(u_{ij}) \mu(u_{lk}) \quad \text{if } i \ne l\\
    R(u_{ij}, u_{ik}) + \sigma^2\delta_j^k \qquad  \text{if }   i=l
    \end{cases} .
\end{equation}
Moreover,
$$
\operatorname{Cov}[\W_{ij},\W_{lk}|U_{ij}=u_{ij}, U_{lk}=u_{lk}] = \begin{cases}
0 \quad \text{if } i \ne l\\
C(u_{ij}, u_{ik}) + \sigma^2\delta_j^k \qquad  \text{if }   i=l
\end{cases} .
$$
Below, expectations will be sometimes computed conditionally on/with respect to the whole set of $U_{ij}$'s, which will be denoted by $\U$.

It appears evident from Equations \eqref{eq::meanconditional} and \eqref{eq::secondmomentconditional} that the measurements themselves and the \emph{off-diagonal} products can be seen as unbiased estimators for the mean and second-order moment functions, respectively, computed at fixed locations.
In particular, we can recover the covariance by performing smoothing on the pooled measurements (to first recover the mean) and then on the pooled product observations (to recover the second-order moment function). Note that it is important in this second step to discard the “diagonal" elements (not easy to visualize on $\Sph^2 \times \Sph^2$ as opposed to $[0,1] \times [0,1]$) so that potential biases arising from the noise are neglected (as in \cite{yao:2005b}).
We then use the previously introduce machinery to build smoothing techniques that takes into account the geometry of $\Sph^2$. %yielding to gTikhonov regularized estimators.

Specifically, for a given $p\ge q$ and $\pen, \lambda > 0 $, we can define the following estimators for the mean function
%\begin{equation}\label{eq::mu-est}
%    \mu_\pen := \argmin_{g \in \Hp} \frac{4\pi}{nr} \sum_{i=1}^n \sum_{j=1}^{r_i} (\W_{ij}  - g(U_{ij}))^2 + \pen \|g\|^2_\rangleH,
%\end{equation}
\begin{equation}\label{eq::mu-est}
    \mu_\lambda := \argmin_{g \in \Hp} \frac{4\pi}{n} \sum_{i=1}^n \frac{1}{r_i} \sum_{j=1}^{r_i} (\W_{ij}  - g(U_{ij}))^2 + \lambda \| \mathscr{D} g\|^2_\rangleL,
\end{equation}
and for the second-order moment function 
%\begin{equation}\label{eq::R-est}
%    R_\pen := \argmin_{g \in \Hprod}  \frac{(4\pi)^2}{nr(r-1)} \sum_{i=1}^n \sum_{1\le j \ne k \le r } (\W_{ij}\W_{ik}  - g(U_{ij}, U_{ik}))^2 + \pen \|g\|^2_\rangleHH.
%\end{equation}
\begin{equation}\label{eq::R-est}
    R_\pen := \argmin_{g \in \Hprod}  \frac{(4\pi)^2}{n} \sum_{i=1}^n \frac{1}{r_i(r_i-1)} \sum_{1\le j \ne k \le r_i } (\W_{ij}\W_{ik}  - g(U_{ij}, U_{ik}))^2 + \pen \| (\mathscr{D} \otimes \mathscr{D})  g\|^2_\rangleLL,
\end{equation}
where $\mathscr{D}$ is any admissible operator with spectral growth order $p$ (see Definition \ref{sph-pseudo-diff} in Section \ref{sec:sphoperators}).
An estimate of the complete covariance kernel $C(u,v)=R(u,v) - \mu(u)\mu(v)$ is then given by
\begin{equation}\label{eq::cov-est}
C_{\pen,\lambda}(u,v) = R_\pen(u,v)  - \mu_\lambda(u) \mu_\lambda(v).
\end{equation}
\begin{remark}
In order to simplify the notation, from now on, we will not differentiate between the two penalty parameters $\pen, \lambda$. In particular, for the covariance estimator, we will consider $\pen = \lambda$ and write directly $C_\pen$. This will not affect in any way our asymptotic results.
%The choice to keep the same penalty parameter $\eta$ for both the mean and second moment functions is purely for the sake of notational simplicity. Especially for practical purposes, one may want to differentiate between the two and select a value $\eta_1$ for estimating $\mu$ and a value $\eta_2$ for estimating $R$, leading to a covariance estimator which depends on both. All the results can be read 
%In order to keep the notation as simple as possible.In the following sections, the mean will be first treated separately and we will give optimal rate of convergence under a suitable choice of the decay of $\eta$. 
\end{remark}

We remark that our estimators are genuinely nonparametric and anisotropies in space are allowed. Moreover, the class of admissible operators is very large and, hence, this implementation gives the possibility to be flexible with respect to particular practical problems (non-asymptotic regimes).

\section{Representer Theorems for Mean and Covariance Estimation}\label{sec:representer}

In this section, we specify the form of the solutions of the optimization problems which are the cornerstones of our estimation strategy. 
More precisely, representer theorems for the mean estimator \eqref{eq::mu-est} and the second-order moment estimator \eqref{eq::R-est} are stated in Sections \ref{sec:RTmean} and \ref{sec:RTsecond} respectively.
In both cases, the form of the solution is deduced from general principles for optimization problems over Hilbert spaces, that are presented in Section~\ref{sec:proofs2}.

%    \begin{proof}
%    According to~\cite[Theorem 1]{scholkopf2001generalized}, the optimization problem \eqref{eq:newoptipbabstract} admits a unique solution of the form $\widehat{f} = \sum_{\ell=1}^L \alpha_\ell \nu_\ell$ for some $\boldsymbol{\alpha} \in \R^L$. 
%    \end{proof}

%    Theorem~\ref{theo:RTHilbert} has first been proven in a slightly different version in~\cite{kimeldorf1971some}. It has been significantly generalized in \cite{scholkopf2001generalized}; see also~\cite[Chapter 4.2]{scholkopf2018learning}. For a treatment with similar notations in a slightly more general context (but restricted to periodic univariate functions), we refer the reader to~\cite[Theorem 1 and Proposition 4]{badoual2018periodic}. Note that $\boldsymbol{\mathrm{G}}$ is semi-positive definite and $\lambda >0$, hence the matrix $\left( \boldsymbol{\mathrm{G}} + \lambda \boldsymbol{\mathrm{I}}_L \right)$ is positive definite and therefore invertible.   

    \subsection{Representer Theorem for Mean Estimation}  \label{sec:RTmean}

     As we have seen in \eqref{eq::mu-est} and \eqref{eq::R-est}, 
     our  strategy for the mean and covariance estimations relies on the minimization of quadratic cost functionals with two components: (i) a data-fidelity term which constraint the solution to be consistent with the sampled observations and (ii) a regularization term which enforces some smoothness condition via the admissible operator $\mathscr{D}$.
     Our goal in this section and the next one is to reveal the form of the solution of the optimization problem together with their main properties. We start with the mean estimation in Theorem \ref{theo:meanesti}. 
     
    \begin{theorem}
    \label{theo:meanesti}
   Let $\mathscr{D}$ be an admissible operator with spectral growth $p > 1$. 
    Let $n \geq 1$, $r_i \geq 1$ for $i = 1 ,\ldots n$. We consider weights $w_{ij} \in \R$ and pairwise distinct positions $u_{ij} \in \Sph^2$ for $i=1,\ldots, n$ and $j =1 ,\ldots, r_i$. We fix $\eta > 0$. 
    Then, the optimization problem 
    \begin{equation}\label{eq::mu-estbis}
    \min_{f \in \Hp} \frac{4\pi}{n} \sum_{i=1}^n \frac{1}{r_i} \sum_{j=1}^{r_i} (w_{ij}  - f(u_{ij}))^2 + \pen \| \mathscr{D} f\|^2_\rangleL
    \end{equation}
    has a unique solution $\mu_{\eta} \in \Hp$ which is given by
    \begin{equation} \label{eq:RTformnu}
        \mu_{\eta} =  \sum_{i=1}^n \frac{1}{\sqrt{r_i}} 
        \sum_{j=1}^{r_i} \alpha_{ij} \psi_{\mathscr{D}^* \mathscr{D}} (\langle \cdot , u_{ij} \rangle)
    \end{equation}
    for some  $(\alpha_{ij})_{1 \leq i \leq n , \ 1 \leq j \leq r_i}$, where $\psi_{\mathscr{D}^* \mathscr{D}}$  is the zonal Green's kernel of $\mathscr{D}^* \mathscr{D}$ (see Definition~\ref{def:green}). 
    
    Moreover, the coefficients $\alpha_{ij}$ are computed as follows. Let $L = \sum_{i=1}^n r_i$ be the total number of measurements. We set $\boldsymbol{\alpha} \in \R^{L}$ the vectorized version of  $(\alpha_{ij})_{1 \leq i \leq n , \ 1 \leq j \leq r_i}$, $\boldsymbol{y} \in \R^L$ the vectorized version of the normalized observations $\left(\frac{w_{ij}}{\sqrt{r_i}}\right)_{1 \leq i \leq n , \ 1 \leq j \leq r_i}$, 
    %$\boldsymbol{u} = (u_\ell)_{1 \leq \ell \leq L} \in \R^L$ the vectorized version of the positions $\left(u_{ij}\right)_{1 \leq i \leq n , \ 1 \leq j \leq r_i}$
    and $\boldsymbol{\mathrm{G}} \in \R^{L\times L}$ the matrix whose entries are given by 
    \begin{equation} \label{eq:G12}
        G_{\ell_1 \ell_2} 
%        =  \green_{u_{i_1j_1}}^{\mathscr{D}^* \mathscr{D}} (u_{i_2j_2}).
        = \frac{\psi_{\mathscr{D}^* \mathscr{D}} (\langle u_{i_1j_1}, u_{i_2j_2} \rangle)}{\sqrt{r_{i_1} r_{i_2}}}.
        %\left( \mathscr{D}^* \mathscr{D} \right)^{-1} \delta_{v_{\ell_1}} (v_{\ell_2}). 
    \end{equation}
        where the index $\ell_1$ (resp. $\ell_2$) corresponds to the couple $(i_1,j_1)$ (resp. $(i_2,j_2)$) in the vectorization.
    Then, we have that
    \begin{equation}\label{eq:alphas}
        \boldsymbol{\alpha} = \left( \boldsymbol{\mathrm{G}} + \frac{\eta n}{4 \pi}  \boldsymbol{\mathrm{I}}_L \right)^{-1} \boldsymbol{y}
    \end{equation}
    where $\boldsymbol{\mathrm{I}}_L \in \R^{L \times L}$ is the identity matrix.
    \end{theorem}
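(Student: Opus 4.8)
The plan is to recognize the optimization problem \eqref{eq::mu-estbis} as a kernel ridge regression over the reproducing kernel Hilbert space obtained by re-norming $\Hp$ via $\mathscr{D}$, to identify its reproducing kernel with the zonal Green's kernel of $\mathscr{D}^* \mathscr{D}$, and then to apply the standard representer argument followed by an explicit finite-dimensional minimization. First I would fix the Hilbert structure: since $\mathscr{D}$ is admissible with spectral growth $p$, Proposition~\ref{prop:Disgood} shows that $f \mapsto \|\mathscr{D} f\|_\rangleL$ is a norm on $\Hp$ equivalent to $\|\cdot\|_{\Hp}$, so $\langle f_1, f_2 \rangle_{\mathscr{D}} := \langle \mathscr{D} f_1, \mathscr{D} f_2 \rangle_\rangleL$ makes $\Hp$ a Hilbert space carrying its original topology and the penalty in \eqref{eq::mu-estbis} is exactly $\eta \|f\|_{\mathscr{D}}^2$. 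Because $p > 1$, Proposition~\ref{prop:RKHSprop} gives that $\Hp$ is an RKHS, so each $f \mapsto w_{ij} - f(u_{ij})$ is a continuous affine functional; hence the objective $J$ is the sum of a convex data term and the strictly convex coercive term $\eta \|f\|_{\mathscr{D}}^2$. Strict convexity, coercivity and weak lower semicontinuity on this Hilbert space then yield the existence and uniqueness of the minimizer $\mu_\eta$.

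Next I would identify the reproducing kernel of $(\Hp, \langle \cdot, \cdot \rangle_{\mathscr{D}})$. The operator $\mathscr{D}^* \mathscr{D}$ has spherical Fourier coefficients $D_\ell^2$, so by \eqref{eq:conditionDn} it is admissible with spectral growth $2p > 1$; consequently its Green's function is continuous and its zonal Green's kernel $\psi_{\mathscr{D}^* \mathscr{D}}$ is well defined (Definition~\ref{def:green}), with $\green^{\mathscr{D}^* \mathscr{D}}_u = (\mathscr{D}^* \mathscr{D})^{-1} \delta_u = \sum_{\ell, m} D_\ell^{-2} Y_{\ell,m}(u) Y_{\ell,m}$. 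I would then verify that $K_u := \psi_{\mathscr{D}^* \mathscr{D}}(\langle \cdot, u \rangle) = \green^{\mathscr{D}^* \mathscr{D}}_u$ is the reproducing kernel: it belongs to $\Hp$ because the addition theorem $\sum_m Y_{\ell,m}(u)^2 = \tfrac{2\ell+1}{4\pi}$ together with $D_\ell^2 \gtrsim (1+\ell)^{2p}$ give $\|K_u\|_{\Hp}^2 \lesssim \sum_\ell (1+\ell)^{2p}(2\ell+1)(1+\ell)^{-4p} < \infty$, which holds precisely because $p > 1$; and by \eqref{eq:useful2}, for every $f \in \Hp$, $\langle f, K_u \rangle_{\mathscr{D}} = \sum_{\ell, m} D_\ell^{-2} Y_{\ell,m}(u) \langle \mathscr{D} f, \mathscr{D} Y_{\ell,m} \rangle_\rangleL = \sum_{\ell, m} \langle f, Y_{\ell,m} \rangle_\rangleL Y_{\ell,m}(u) = f(u)$, the last equality following from the uniform convergence of spherical Fourier series on $\Hp$.

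With the kernel in hand, the representer reduction is routine: setting $V = \operatorname{span}\{K_{u_{ij}} : 1 \le i \le n, \, 1 \le j \le r_i\}$ and splitting any $f = f_V + f^\perp$ orthogonally for $\langle \cdot, \cdot \rangle_{\mathscr{D}}$, the reproducing property gives $f(u_{ij}) = \langle f, K_{u_{ij}} \rangle_{\mathscr{D}} = f_V(u_{ij})$ while Pythagoras gives $\|f\|_{\mathscr{D}}^2 = \|f_V\|_{\mathscr{D}}^2 + \|f^\perp\|_{\mathscr{D}}^2 \ge \|f_V\|_{\mathscr{D}}^2$, so the unique minimizer lies in $V$ and, writing it as $\mu_\eta = \sum_{i,j} \tfrac{\alpha_{ij}}{\sqrt{r_i}} K_{u_{ij}}$ (the factor $1/\sqrt{r_i}$ being a harmless reparametrization), one obtains \eqref{eq:RTformnu}; this is the point at which the general Hilbert-space representer principle of Section~\ref{sec:proofs2} would be invoked. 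Substituting this form back and using $\langle K_u, K_v \rangle_{\mathscr{D}} = K_u(v) = \psi_{\mathscr{D}^* \mathscr{D}}(\langle u, v \rangle)$ together with the vectorization $\ell_1 \leftrightarrow (i_1, j_1)$, the data term becomes $\tfrac{4\pi}{n} \|\boldsymbol{y} - \boldsymbol{\mathrm{G}} \boldsymbol{\alpha}\|_2^2$ and the penalty becomes $\eta \, \boldsymbol{\alpha}^\top \boldsymbol{\mathrm{G}} \boldsymbol{\alpha}$ with $\boldsymbol{\mathrm{G}}$ as in \eqref{eq:G12}. Minimizing $\tfrac{4\pi}{n} \|\boldsymbol{y} - \boldsymbol{\mathrm{G}} \boldsymbol{\alpha}\|_2^2 + \eta \, \boldsymbol{\alpha}^\top \boldsymbol{\mathrm{G}} \boldsymbol{\alpha}$ over $\boldsymbol{\alpha} \in \R^L$ gives, since $\boldsymbol{\mathrm{G}}$ is symmetric, the normal equation $\boldsymbol{\mathrm{G}}\big((\boldsymbol{\mathrm{G}} + \tfrac{\eta n}{4\pi} \boldsymbol{\mathrm{I}}_L) \boldsymbol{\alpha} - \boldsymbol{y}\big) = 0$, which is solved by \eqref{eq:alphas}; the matrix $\boldsymbol{\mathrm{G}} + \tfrac{\eta n}{4\pi} \boldsymbol{\mathrm{I}}_L$ is invertible because $\boldsymbol{\mathrm{G}} \succeq 0$, and since the $u_{ij}$ are pairwise distinct and the zonal kernel has strictly positive Fourier coefficients $\tfrac{2\ell+1}{4\pi D_\ell^2}$, the Gram matrix $\boldsymbol{\mathrm{G}}$ is in fact strictly positive definite, so this $\boldsymbol{\alpha}$ is the unique solution.

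The only genuinely non-mechanical step is the identification of the reproducing kernel with the zonal Green's kernel of $\mathscr{D}^* \mathscr{D}$ — in particular verifying that $K_u \in \Hp$, which is exactly where the hypothesis $p > 1$ is used, and carrying out the spectral-domain computation of the reproducing property. Everything else, namely well-posedness, the orthogonal decomposition argument, and the final linear-algebra minimization, is standard.
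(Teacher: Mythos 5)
Your proof is correct and follows essentially the route the paper intends: the paper does not write out this argument but instead points to the general Hilbert-space representer theorem (Theorem~\ref{theo:RTHilbert}) and the detailed proof of the second-order analogue (Theorem~\ref{theo:Resti}), and your instantiation --- re-norming $\Hp$ by $\|\mathscr{D}\cdot\|_\rangleL$, identifying $\nu_{ij}=\psi_{\mathscr{D}^*\mathscr{D}}(\langle\cdot,u_{ij}\rangle)/\sqrt{r_i}=(\mathscr{D}^*\mathscr{D})^{-1}\delta_{u_{ij}}/\sqrt{r_i}$ as the representers of the evaluation functionals, and then solving the finite-dimensional quadratic problem --- is exactly the computation carried out there in the tensor-product setting. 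Your verification that $K_u\in\Hp$ precisely when $p>1$ and your positive-definiteness argument for $\boldsymbol{\mathrm{G}}$ supply correctly the details the paper leaves implicit.
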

   
    As far as the mean is concerned, the representer theorem can be deduced from known results in the literature. In particular, the general form \eqref{eq:RTformnu} of the solution $\mu_{\eta}$ in Theorem~\ref{theo:meanesti} can be seen as a particular case of~\cite[Theorem 5.3]{simeoni2020functional}. 
    The specification of the weights $\alpha_{ij}$ is then a finite-dimensional quadratic optimization problem (see for instance~\cite[Section V.A]{gupta2018continuous} and~\cite[Proposition 4]{badoual2018periodic} for similar results on the discretization of quadratic optimization problems over function spaces).
    Theorem~\ref{theo:meanesti} is also a special case of the general Representer Theorem over Hilbert spaces that we recall in Section~\ref{sec:proofs2} (see Theorem~\ref{theo:RTHilbert}). This specification is similar -- and simpler -- to the one we detail in Theorem~\ref{theo:Resti} for the second-order estimator, hence we do not detail it. 
%    Theorem~\ref{theo:meanesti} is not new and we provide a proof for the sake of completeness in Appendix~\ref{app:meanesti}.
%    The existence, uniqueness, and general form of the solution as a linear combination of the $\psi_{\mathscr{D}^* \mathscr{D}} (\langle \cdot , u_{ij} \rangle)$  can be deduced from~\cite[Theorem 5.3]{simeoni2020functional}. The specification of the weights $\alpha_{ij}$ is then a finite-dimensional quadratic optimization problem (see for instance~\cite[Section V.A]{gupta2018continuous} and~\cite[Proposition 4]{badoual2018periodic} for similar results on the discretization of quadratic optimization problems over function spaces). %Note that $\boldsymbol{\mathrm{G}}$ is positive definite, hence the matrix $\left( \boldsymbol{\mathrm{G}} + \lambda \boldsymbol{\mathrm{I}}_L \right)$ is positive definite and therefore invertible.   

    \begin{remark}
    \label{rmk:smoothing_spline}
    The estimator $\mu_{\eta}$ of \eqref{eq::mu-estbis} depends linearly on $\boldsymbol{\alpha}$ via \eqref{eq:RTformnu}, which depends itself linearly on the observations $\left({w_{ij}}\right)_{1 \leq i \leq n , \ 1 \leq j \leq r_i}$. Hence, $\mu_{\eta}$ is a linear estimator of the mean $\mu$. 
   % Moreover, $\mu_\eta$ is a continuous function over the sphere $\Sph$. Indeed, the operator 
%    $\mathscr{D}^* \mathscr{D}$ is an invertible pseudo-differential operator with growth order $q = 2p > 2$, and we have seen after Definition~\ref{def:green} that the Green's functions $\green_{u_{ij}}^{\mathscr{D}^* \mathscr{D}}$ are therefore continuous. 
       We moreover observe that  $\mu_\eta$ is a $(\mathscr{D}^*\mathscr{D})$-spline in the sense that~\cite[Definition 7]{simeoni2020functional}
    \begin{equation} \label{eq:splinemu}
        (\mathscr{D}^*\mathscr{D}) \{\mu_\eta\} = \sum_{i=1}^n \frac{1}{\sqrt{r_i}} \sum_{j=1}^{r_i} \alpha_{ij} \delta_{u_{ij}} 
    \end{equation}
    is a sum of Dirac impulses. 
    The estimator $\mu_\eta$ is therefore the optimal $(\mathscr{D}^*\mathscr{D})$-spline with knots at the sampling locations. This is the adaptation in our context of a well-known fact for Tikhonov-type regularization~\cite{simeoni2020functional,kimeldorf1971some,berlinet2011reproducing,badoual2018periodic}. 
    
    This reveals that the choice of the admissible operator $\mathscr{D}$ is crucial. First, the spectral growth order determines the smoothness of the estimated mean, which is in $\mathcal{H}_{p}$. Since $p > 1$, we deduce that $\mu_{\eta}$ is continuous over the sphere $\Sph^2$.
        Second, the shape of the zonal Green's kernel $\psi_{\mathscr{D}^* \mathscr{D}}$ determines the general form of the reconstruction. Distinct admissible operators with identical spectral growth order will have identical asymptotic performances (see Section~\ref{sec:asymptoticanalysis}) but can lead to distinct practical performances in the non-asymptotic regime, what will be exploited in Section~\ref{sec:simulations}. 
    \end{remark}
    
%    \ju{Question: we could make a small and simple proposition on the properties of $\mu_\eta$. In particular, we could characterize in which $\Hp$ it is, mention that it is a spline and that it is continuous in there. Do you think it worths it?}
%   \begin{proposition} \label{prop:usefulpropmueta}
%        The solution $\mu_{\eta}$ of \eqref{eq::mu-estbis} is a $(\mathscr{D}^*\mathscr{D})$-spline in the sense that~\cite[Definition 7]{simeoni2020functional}
%    \begin{equation}
%        (\mathscr{D}^*\mathscr{D}) \{\mu_\eta\} = \sum_{i=1}^n \frac{1}{r_i} \sum_{j=1}^{r_i} \alpha_{ij} \delta_{u_{ij}} 
%    \end{equation}
%     is a sum of Dirac impulses.
%     Moreover, we have that $\mu_\eta \in \mathcal{H}_{2p}$ with $p$ the growth order of $\mathscr{D}$. In particular, $
%     \mu_\eta$ is continuous over $\Sph^2$. 
%    \end{proposition}
    
%    \begin{proof}
%    \end{proof}

    \subsection{Representer Theorem for Second-Order Estimation} \label{sec:RTsecond}
    
    We now present a representer theorem which gives the form of the estimator for the second-order moment function as the solution of \eqref{eq::R-est}. The proof of Theorem~\ref{theo:Resti} is provided in Section~\ref{sec:proofs2}.
    
    \begin{theorem}
        \label{theo:Resti}
     Let $\mathscr{D}$ be an admissible operator with spectral growth $p > 1$. 
    Let $n \geq 1$, $r_i \geq 1$ for $i = 1 ,\ldots n$. We consider weights $w_{ij} \in \R$ and pairwise distinct positions $u_{ij} \in \Sph^2$ for $i=1,\ldots, n$ and $j =1 ,\ldots, r_i$. We fix $\eta > 0$. 
    Then, the optimization problem 
    \begin{equation}\label{eq::R-estbis}
    \min_{g \in \Hprod}  \frac{(4\pi)^2}{n} \sum_{i=1}^n \frac{1}{r_i(r_i-1)} \sum_{1\le j \ne k \le r_i } (w_{ij}w_{ik}  - g(u_{ij}, u_{ik}))^2 + \pen \| (\mathscr{D} \otimes \mathscr{D})  g\|^2_\rangleLL
    \end{equation}    
    has a unique solution $R_\eta \in \Hprod$  which is given by
    \begin{equation} \label{eq:RTformR}
        R_{\eta} =  \sum_{i=1}^n \frac{1}{\sqrt{r_i(r_i-1)}} \sum_{1 \leq j \neq k \leq r_i} \beta_{ijk} \psi_{\mathscr{D}^* \mathscr{D}} ( \langle \cdot ,u_{ij}\rangle)  \otimes \psi_{\mathscr{D}^* \mathscr{D}} ( \langle \cdot ,u_{ik}\rangle)
    \end{equation}
    for some  $(\beta_{ijk})_{1 \leq i \leq n, \ 1\leq j \neq k \leq r_i}$, where $\psi_{\mathscr{D}^* \mathscr{D}}$ is the zonal Green's kernel of $\mathscr{D}^* \mathscr{D}$. 
    
   The coefficients $\beta_{ijk}$ are computed as follows. Let $L = \sum_{i=1}^n r_i (r_i -1)$. We set $\boldsymbol{\beta} \in \R^{L}$ the vectorized version of  $(\beta_{ijk})_{1 \leq i \leq n, \ 1\leq j \neq k \leq r_i}$, $\boldsymbol{z} \in \R^L$ the vectorized version of the normalized observations $\left(\frac{w_{ij} w_{ik}}{\sqrt{r_i(r_i-1)}}\right)_{1 \leq i \leq n, \ 1\leq j \neq k \leq r_i}$, 
    %$\boldsymbol{u} = (u_\ell)_{1 \leq \ell \leq L} \in \R^L$ the vectorized version of the couples $\left((u_{ij},u_{ik})\right)_{1 \leq i \leq n, \ 1\leq j \neq k \leq r_i}$
    and $\boldsymbol{\mathrm{H}} \in \R^{L\times L}$ the matrix whose entries are given by 
    \begin{equation} \label{eq:H12}
        H_{\ell_1 \ell_2} = 
        \frac{ \psi_{\mathscr{D}^* \mathscr{D}} ( \langle u_{i_1 j_1} , u_{i_2 j_2} \rangle) \times 
        \psi_{\mathscr{D}^* \mathscr{D}} ( \langle u_{i_1 k_1} , u_{i_2 k_2} \rangle) }{\sqrt{r_{i_1}(r_{i_1}-1) r_{i_2}(r_{i_2}-1)}}
        %\left( \mathscr{D}^* \mathscr{D} \right)^{-1} \delta_{v_{\ell_1}} (v_{\ell_2}). 
    \end{equation}
    where the index $\ell_1$ (resp. $\ell_2$) corresponds to the triplet $(i_1,j_1,k_1)$ (resp. $(i_2,j_2,k_2)$) in the vectorization.
    Then, we have that
    \begin{equation}\label{eq:betas}
        \boldsymbol{\beta} = \left( \boldsymbol{\mathrm{H}} + \frac{\eta n}{(4 \pi)^2}  \boldsymbol{\mathrm{I}}_L \right)^{-1} \boldsymbol{z}.
    \end{equation}
    Moreover, the estimator $R_{\eta}$ is symmetric in the sense that $R_\eta (u,v) = R_\eta(v,u)$ for any $u,v \in \Sph^2$. 
    \end{theorem}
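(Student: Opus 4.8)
The plan is to cast~\eqref{eq::R-estbis} as an instance of the abstract quadratic representer theorem over Hilbert spaces (Theorem~\ref{theo:RTHilbert}), applied to $\Hprod$ endowed with the inner product $\langle g_1,g_2\rangle_{*}:=\langle(\mathscr{D}\otimes\mathscr{D})g_1,(\mathscr{D}\otimes\mathscr{D})g_2\rangle_{\rangleLL}$. Since $\mathscr{D}$ is admissible, this inner product induces a norm equivalent to $\|\cdot\|_{\rangleHH}$, so $(\Hprod,\langle\cdot,\cdot\rangle_{*})$ is a RKHS by Proposition~\ref{prop:RKHSprop} (recall $p>1$); in particular every point evaluation $g\mapsto g(u,v)$, $(u,v)\in\Sph^2\times\Sph^2$, is a bounded linear functional. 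A first preliminary observation is that $\mathscr{D}^*\mathscr{D}$ is again an admissible operator: its spherical Fourier coefficients are $D_\ell^2$, which obey~\eqref{eq:conditionDn} with spectral growth order $2p>1$, so its Green's function and zonal Green's kernel $\psi_{\mathscr{D}^*\mathscr{D}}$ are well defined (Definition~\ref{def:green}).

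The crux is to identify, for $(u,v)\in\Sph^2\times\Sph^2$, the Riesz representer of $g\mapsto g(u,v)$ in $(\Hprod,\langle\cdot,\cdot\rangle_{*})$ as
\[
    K_{(u,v)} \;:=\; \green^{\mathscr{D}^*\mathscr{D}}_{u}\otimes\green^{\mathscr{D}^*\mathscr{D}}_{v} \;=\; \psi_{\mathscr{D}^*\mathscr{D}}(\langle\cdot,u\rangle)\otimes\psi_{\mathscr{D}^*\mathscr{D}}(\langle\cdot,v\rangle).
\]
I would verify this by expanding $g$ and $K_{(u,v)}$ in the tensorised spherical harmonic basis, using that $\{Y_{\ell,m}/D_\ell\otimes Y_{\ell',m'}/D_{\ell'}\}$ is orthonormal for $\langle\cdot,\cdot\rangle_{*}$ (a consequence of~\eqref{eq:useful1}), together with the spectral expansions $\delta_u=\sum_{\ell,m}Y_{\ell,m}(u)Y_{\ell,m}$ and $\green^{\mathscr{D}^*\mathscr{D}}_{u}=(\mathscr{D}^*\mathscr{D})^{-1}\delta_u=\sum_{\ell,m}D_\ell^{-2}Y_{\ell,m}(u)Y_{\ell,m}$; equivalently, the reproducing kernel of a tensor-product RKHS is the product of the reproducing kernels of the factors. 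One also checks $K_{(u,v)}\in\Hprod$ precisely because $p>1$, consistently with Proposition~\ref{prop:RKHSprop}.

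With this in hand the proof proceeds along standard lines. The functional in~\eqref{eq::R-estbis} is the sum of a nonnegative, continuous, convex quadratic data term — a continuous function of the finitely many evaluations $g(u_{ij},u_{ik})$ — and the strictly convex, coercive penalty $\eta\|(\mathscr{D}\otimes\mathscr{D})g\|_{\rangleLL}^2$, hence it admits a unique minimiser $R_\eta\in\Hprod$. Theorem~\ref{theo:RTHilbert} then forces $R_\eta$ to lie in the span of the representers $K_{(u_{ij},u_{ik})}$, $1\le j\ne k\le r_i$, which, after absorbing the factor $1/\sqrt{r_i(r_i-1)}$ into the coefficients, is exactly the form~\eqref{eq:RTformR}. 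Substituting this ansatz into~\eqref{eq::R-estbis} and using the reproducing property together with $\langle K_{(u,v)},K_{(u',v')}\rangle_{*}=\psi_{\mathscr{D}^*\mathscr{D}}(\langle u,u'\rangle)\,\psi_{\mathscr{D}^*\mathscr{D}}(\langle v,v'\rangle)$, the data term becomes $\tfrac{(4\pi)^2}{n}\|\boldsymbol{z}-\boldsymbol{\mathrm{H}}\boldsymbol{\beta}\|_2^2$ and the penalty becomes $\eta\,\boldsymbol{\beta}^\top\boldsymbol{\mathrm{H}}\boldsymbol{\beta}$, with $\boldsymbol{\mathrm{H}}$ the symmetric positive semidefinite Gram matrix~\eqref{eq:H12}. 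Setting the gradient of this convex finite-dimensional quadratic to zero gives the normal equations $\boldsymbol{\mathrm{H}}\big(\boldsymbol{\mathrm{H}}\boldsymbol{\beta}+\tfrac{\eta n}{(4\pi)^2}\boldsymbol{\beta}-\boldsymbol{z}\big)=\boldsymbol{0}$, which are solved by $\boldsymbol{\beta}=\big(\boldsymbol{\mathrm{H}}+\tfrac{\eta n}{(4\pi)^2}\boldsymbol{\mathrm{I}}_L\big)^{-1}\boldsymbol{z}$, yielding~\eqref{eq:betas}. Finally, symmetry of $R_\eta$ follows from uniqueness: the swap map $\tau g(u,v):=g(v,u)$ is a $\langle\cdot,\cdot\rangle_{*}$-isometry of $\Hprod$ (since $\mathscr{D}\otimes\mathscr{D}$ commutes with $\tau$) which leaves~\eqref{eq::R-estbis} invariant — the index set $\{j\ne k\}$ and the products $w_{ij}w_{ik}$ being symmetric in $(j,k)$ — so $\tau R_\eta$ is also a minimiser and hence equals $R_\eta$.

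The step I expect to require the most care is the explicit identification of the representer $K_{(u,v)}$ as $\green^{\mathscr{D}^*\mathscr{D}}_{u}\otimes\green^{\mathscr{D}^*\mathscr{D}}_{v}$ — that is, verifying that the tensorial penalty is realised by $\mathscr{D}\otimes\mathscr{D}$ on $\Hprod$ and that the induced reproducing kernel genuinely factorises across the two sphere factors — together with the bookkeeping of the normalisation constants ($4\pi$, the $r_i(r_i-1)$ weights, and the square roots) so that the finite-dimensional linear system comes out precisely as~\eqref{eq:betas}. The convex-analytic ingredients (existence, uniqueness, and the reduction to finitely many coefficients) are routine once Theorem~\ref{theo:RTHilbert} is granted.
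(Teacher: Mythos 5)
Your proposal is correct and, for the main representer part, follows essentially the same route as the paper: identify $(\Hprod,\langle(\mathscr{D}\otimes\mathscr{D})\cdot,(\mathscr{D}\otimes\mathscr{D})\cdot\rangle_{\rangleLL})$ as a Hilbert space with equivalent norm, recognise $\psi_{\mathscr{D}^*\mathscr{D}}(\langle\cdot,u_{ij}\rangle)\otimes\psi_{\mathscr{D}^*\mathscr{D}}(\langle\cdot,u_{ik}\rangle)$ as the Riesz representer of the evaluation $g\mapsto g(u_{ij},u_{ik})$ via $(\mathscr{D}\otimes\mathscr{D})^*(\mathscr{D}\otimes\mathscr{D})\nu_{ijk}=\delta_{u_{ij}}\otimes\delta_{u_{ik}}$, apply Theorem~\ref{theo:RTHilbert} with $\lambda=n\eta/(4\pi)^2$, and compute the Gram matrix. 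The one place where you genuinely diverge is the symmetry of $R_\eta$: you invoke the swap map $\tau g(u,v)=g(v,u)$, check that it is an isometry commuting with $\mathscr{D}\otimes\mathscr{D}$ and that it leaves the cost functional invariant (relabelling $(j,k)\mapsto(k,j)$ in the data term), and conclude $\tau R_\eta=R_\eta$ by uniqueness; the paper instead decomposes $g=g_s+g_a$ into symmetric and antisymmetric parts, shows the data term and penalty both split orthogonally across this decomposition, and deduces $J(g)\ge J(g_s)$. Both arguments are valid and rest on the same uniqueness; yours is shorter, while the paper's gives the slightly stronger quantitative statement that the antisymmetric part only ever increases the cost. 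One small point you gloss over: to assert that $\boldsymbol{\beta}$ is \emph{uniquely} given by $(\boldsymbol{\mathrm{H}}+\tfrac{\eta n}{(4\pi)^2}\boldsymbol{\mathrm{I}}_L)^{-1}\boldsymbol{z}$ (rather than merely one solution of the normal equations $\boldsymbol{\mathrm{H}}(\boldsymbol{\mathrm{H}}\boldsymbol{\beta}+\tfrac{\eta n}{(4\pi)^2}\boldsymbol{\beta}-\boldsymbol{z})=\boldsymbol{0}$ when $\boldsymbol{\mathrm{H}}$ is singular), you need the linear independence of the representers, which the paper obtains from the pairwise distinctness of the $u_{ij}$ via the linear independence of the Dirac impulses $\delta_{u_{ij}}\otimes\delta_{u_{ik}}$; this is worth stating explicitly but is not a substantive gap.
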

    
    The practical vectorization of the optimization problem \eqref{eq::R-estbis} for the specification of the $\beta_{ijk}$, which is only implicit in Theorem~\ref{theo:Resti}, is detailed in Section~\ref{sec:simulations}; see in particular~\eqref{eq:vectorization_index_map}. 
    
    \begin{remark}
    %It is important to notice that $R_\eta$ is \emph{not} positive semi-definite in general, contrary to the second-order function $R$. This is a drawback of the proposed methodology, as with any smoothing-based covariance estimation methodology. But we shall see that the positive semi-definiteness is asymptotically recovered in the next Section.
    
    The estimator $R_\eta$ is symmetric. This is due to the use of a tensorial admissible operator $\mathscr{D}\otimes \mathscr{D}$ in the regularization and is consistent with the fact that the second-order moment $R$ is symmetric by construction.
    
    Moreover, $R_\eta$ is linear with respect to the pairwise products $w_{ij}w_{ik}$, and it is a $(\mathscr{D}^*\mathscr{D})\otimes (\mathscr{D}^*\mathscr{D})$-spline, in the sense that
        \begin{equation} \label{eq:splinesecondmoment}
        (\mathscr{D}^*\mathscr{D})\otimes (\mathscr{D}^*\mathscr{D}) \{R_\eta\} = \sum_{i=1}^n \frac{1}{\sqrt{r_i(r_i-1)}} \sum_{1\leq j \neq k \leq r_i}  \beta_{ijk} \delta_{(u_{ij},u_{ik})}, 
    \end{equation}
    with knots determined by the sampling locations $u_{ij}$. As an element of $\Hprod$ for $p > 1$, the estimated second-order moment function $R_\eta$ is also continuous on $\Sph^2\times\Sph^2$.
    \end{remark}

    Finally, we estimate the covariance function $C$ given by $C(u,v)=R(u,v) - \mu(u)\mu(v) = (R - \mu \otimes \mu)(u,v)$ using the estimators $R_\eta$ and $\mu_\eta$ via
    \begin{equation} \label{eq:Cetaesti}
        C_\eta = R_\eta - \mu_\eta \otimes \mu_\eta. 
    \end{equation}
    The estimator $C_\eta$ is continuous and symmetric on $\Sph^2\times \Sph^2$, is a $(\mathscr{D}^*\mathscr{D})\otimes (\mathscr{D}^*\mathscr{D})$-spline in $\Hprod$ (due to \eqref{eq:splinemu} and \eqref{eq:splinesecondmoment}), and is linear with respect to the pairwise products $w_{ij}w_{ik}$.

\begin{remark}
We remark that our nonparametric estimators $R_\eta$ and $C_\eta$ are \emph{not} necessarily positive semi-definite in general. This is a standard occurrence in functional data analysis, not specific to our estimator, but common to all smoothing-based nonparametric methods (e.g. the PACE estimator \cite{yao:2005b}, see also \cite{li2010}, the RKHS estimator \cite{caiYuan2}, or their extensions to time series \cite{rubin2020}). In practice, however, this has negligible effects: asymptotically positive semi-definiteness is recovered, as is proved in Section \ref{sec:asymptoticanalysis}, whereas in finite-sample setups it can be recovered via a projection (which does not affect the rate) on the semi-definite cone, as is explained in Section \ref{sec:simulations}. Indeed the asymptotic theory guarantees that negative eigenvalues will typically only be encountered at the tail end of the spectrum.
\end{remark}

\section{Asymptotic Theory} \label{sec:asymptoticanalysis}

This section contains the main results of our paper, that is, uniform rates of convergence for the mean and covariance estimators. In the following, we define the class of probability measures for which our rates are achieved.

\begin{definition}\label{def:pq-mean}
Consider $p>2$, $1<q\le p$. Let $\pio$ be the collection of probability measures for $\Hq$-valued processes such that for any $X$ with probability law $\mathbb{P}_X \in \pio$
$$\mathbb{E}\|X\|^2_{\Hq} \le M,\qquad
\|\mu\|^2_{\rangleH} \le K,
$$
for some constants $M,K > 0$.
\end{definition}
%\mathbb{P}(p, q; M, C)

\begin{definition}\label{def:pq-cov}
Consider $p>2$, $1<q\le p$. Let $\pit$ be the collection of probability measures for $\Hq$-valued processes, such that for any $X$ with probability law $\mathbb{P}_X \in \pit$
$$\mathbb{E}\|X\|^4_{\Hq}\le L, \qquad
\|R\|^2_{\rangleHH} \le K_1, \qquad \|\mu\|^2_{\rangleH} \le K_2,
$$
for some constants $L, K_1, K_2 > 0$.
\end{definition}

\noindent The realizations of the process $X$ are presumed to lie in $\Hq$, for some $1 < q \le p$. This entails that they are allowed to be “rougher" than the mean and covariance functions; indeed, $q$ can be strictly smaller than $p$.
As a result, the class of processes considered in \cite{caiYuan1} is a special case of that considered in Definition \ref{def:pq-mean} for the mean estimation, \emph{i.e.}, when $p=q$. Note indeed that, if $p = q$,
$$
\|\mu\|^2_\rangleH \le \mathbb{E} \|X\|^2_\rangleH \le M,
$$
and the covariance kernel (as well as the second-order moment) belongs to the direct product Hilbert space $\Hprod = \Hp \otimes \Hp$, since
$$
\|C \|_\rangleHH ^2 \le \left (\mathbb{E}\|X - \mu \|_\rangleH ^2\right)^2 \le  \left (\mathbb{E}\|X \|_\rangleH ^2\right)^2 < \infty.
$$
Moreover, if $p = q$ in Definition \ref{def:pq-cov}, we have that
$$
\|C\|^2_\rangleHH \le \mathbb{E} \|X\|^4_\rangleH \le L,
$$
which yields to the class of processes considered in \cite{caiYuan2} for  covariance estimation.
These considerations will be discussed in more detail in Section \ref{sec:examples}.

\begin{remark}
The requirements in Definitions \ref{def:pq-mean} and \ref{def:pq-cov} are smoothness conditions, which form the basis for any nonparametric procedure. Their specific form amounts to classical Sobolev ellipsoid conditions (see, e.g., \cite[Chapter 7]{wasserman2006all}).
\end{remark}

Our rates will be expressed in terms of the number of replicates $n$ and the \emph{average} number of measurement locations
$$
r:= \left( \frac1n \sum_{i=1}^n \frac{1}{r_i}\right)^{-1},
$$
defined as the harmonic mean of the $r_i$'s.
The results can thus be interpreted in both the \emph{dense} and \emph{sparse} sampling regimes.
In a dense design, $r=r(n)$ is required to diverge with $n$ and it is larger than some order of $n$; on the other hand, in a sparse design, the sampling frequency $r$ is bounded and can be arbitrary small (as small as two). 

%Note that $X \in \Hp$ a.s. entails that
%$$
%\mathbb{E}[X^4(u) ] = \mathbb{E}\langle X, k_\mathscr{D}(\cdot, u) \rangle^4_\rangleH \le  \|k_\mathscr{D}(\cdot, u) \|_\rangleH ^4\mathbb{E}\|X\|_\rangleH ^4 = k^2_\mathscr{D}(u, u) \mathbb{E}\|X\|_\rangleH ^4
%$$
%So, \eqref{cond::mom4.1} is satisfied if $\|X\|_\Hp^4$ has finite expectation; while \eqref{cond::mom4.2} holds for normal processes with $C = 3$.

\subsection{Mean Estimation}\label{sec::mean}

This section is devoted to the estimation of the mean $\mu(\cdot)$. In the following theorem we provide a (uniform) rate of convergence for the estimator given in Equation \eqref{eq::mu-est}, under a suitable condition on the decay of the penalty parameter $\pen$. The proof is provided in Section~\ref{sec:proofs3}.

\begin{theorem}\label{th::mu} Assume that the $U_{ij}, i=1,\dots,n, j=i,\dots,r_i$, are independent copies of $U\sim \operatorname{Unif}(\Sph^2)$. Let $p>2$ and $1<q\le p$, and consider the estimation problem in Equation \eqref{eq::mu-est} for an admissible operator $\mathscr{D}$ of spectral growth order $p$. 
If $\pen \asymp (n r)^{-p/(p+1)}$, then
\begin{equation*}
\lim_{D\to \infty} \limsup_{n\to\infty} \sup_{\mathbb{P}_X \in \pio} \mathbb{P}(\| \mu_\pen  - \mu \|^2_\rangleL > D((n r)^{-p/(p+1)} + n^{-1}) ) = 0.
\end{equation*}
\end{theorem}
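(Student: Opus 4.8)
The plan is to adapt the reproducing kernel Hilbert space (RKHS) approach of \cite{caiYuan1} to the spherical, anisotropic, and ``rough-realization'' regime $1<q\le p$, exploiting that $\mathscr{D}^*\mathscr{D}$ is diagonalised by the spherical harmonics. By the norm equivalence stated after Example~\ref{ex:sobolevop}, $(\Hp,\|\mathscr{D}\,\cdot\,\|_{\rangleL})$ is an RKHS with reproducing kernel $K(u,v)=\psi_{\mathscr{D}^*\mathscr{D}}(\langle u,v\rangle)$ -- the same zonal kernel that enters the representer form \eqref{eq:RTformnu}. Since $U\sim\operatorname{Unif}(\Sph^2)$, the covariance operator $\Sigma$ on this RKHS (the average of the rank-one operators $f\mapsto f(U)\,K(U,\cdot)$) has the $\Y_{\ell,m}$ as eigenfunctions with eigenvalues comparable to $(1+\ell)^{-2p}$; listing them with multiplicity $2\ell+1$, the $j$-th largest is of order $j^{-p}$, so the effective dimension $\operatorname{tr}\!\big(\Sigma(\Sigma+\pen\operatorname{Id})^{-1}\big)$ is of order $\pen^{-1/p}$. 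Because $p>2$, the reproducing kernels of $\Hp$ and $\Hq$ are bounded and continuous (Proposition~\ref{prop:RKHSprop}), which will be used to control pointwise moments of $X$.

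Writing $W_{ij}=\mu(U_{ij})+\xi_{ij}$ with $\xi_{ij}=Z_i(U_{ij})+\epsilon_{ij}$, $Z_i:=X_i-\mu$, and using that \eqref{eq::mu-est} is a quadratic problem whose minimiser is linear in the data (Theorem~\ref{theo:meanesti}), I would decompose
\[
\mu_\pen-\mu \;=\; \mu_\pen^{\xi}\;+\;(\check\mu_\pen-\bar\mu_\pen)\;+\;(\bar\mu_\pen-\mu),
\]
where $\bar\mu_\pen$ is the population smoother, $(\bar\mu_\pen)_{\ell,m}=\mu_{\ell,m}/(1+c\,\pen D_\ell^2)$ for a fixed $c>0$; $\check\mu_\pen$ solves \eqref{eq::mu-est} with $W_{ij}$ replaced by $\mu(U_{ij})$ at the same random locations; and $\mu_\pen^{\xi}$ solves it with data $\xi_{ij}$. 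These are, respectively, a stochastic term, a design term (randomness of the $U_{ij}$), and a deterministic approximation bias. The bias is purely spectral: using $x^2/(1+x)^2\le x$ for $x\ge0$,
\[
\|\bar\mu_\pen-\mu\|^2_{\rangleL}=\sum_{\ell,m}\mu_{\ell,m}^2\,\frac{(c\,\pen D_\ell^2)^2}{(1+c\,\pen D_\ell^2)^2}\;\le\;c\,\pen\sum_{\ell,m}D_\ell^2\,\mu_{\ell,m}^2\;=\;c\,\pen\,\|\mathscr{D}\mu\|^2_{\rangleL}\;\lesssim\;\pen\,\|\mu\|^2_{\rangleH}\le K'\pen,
\]
which is $O\big((nr)^{-p/(p+1)}\big)$ at the prescribed $\pen$, uniformly over $\pio$.

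For the stochastic term I would bound $\mathbb{E}\big[\|\mu_\pen^{\xi}\|^2_{\rangleL}\mid\U\big]$ via $\mu_\pen^{\xi}=(\hat\Sigma+\pen\operatorname{Id})^{-1}\hat b$, $\hat b=\tfrac{4\pi}{n}\sum_i\tfrac1{r_i}\sum_j\xi_{ij}K(U_{ij},\cdot)$, and split $\xi_{ij}=Z_i(U_{ij})+\epsilon_{ij}$: the $\epsilon$-part is i.i.d.\ across all $(i,j)$ with variance $\sigma^2$, contributing a term of order $\sigma^2\operatorname{tr}\!\big(\Sigma(\Sigma+\pen\operatorname{Id})^{-1}\big)/(nr)\asymp\pen^{-1/p}/(nr)$; the $Z$-part is independent across replicates but correlated within one, so its leading contribution is an average of $n$ i.i.d.\ mean-zero terms of the form $(\Sigma+\pen\operatorname{Id})^{-1}\Sigma Z_i$, of order $\mathbb{E}\|Z\|^2_{\rangleL}/n\lesssim \mathbb{E}\|X\|^2_{\Hq}/n\le (M+K)/n$ (using $q\le p$ and $\Hq\hookrightarrow\rangleL$), plus a within-replicate fluctuation of the same order as the $\epsilon$-part. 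Altogether $\mathbb{E}\big[\|\mu_\pen^{\xi}\|^2_{\rangleL}\mid\U\big]=O\big(n^{-1}+(nr)^{-p/(p+1)}\big)$, with all constants depending only on $M,K,\sigma^2,p,q$, where pointwise bounds on $\sup_u K(u,u)$, $\sup_u C(u,u)$ and $\mathbb{E}\sup_u Z(u)^2$ are obtained from the $\Hp$- and $\Hq$-RKHS properties.

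The main obstacle is the design term $\check\mu_\pen-\bar\mu_\pen$, i.e.\ the effect of the random sampling locations. I would control it by a concentration argument for the empirical sampling operator $\hat\Sigma$, written as an average over the $n$ \emph{independent} replicate-operators $f\mapsto\tfrac1{r_i}\sum_j f(U_{ij})K(U_{ij},\cdot)$: a Bernstein-type inequality for independent self-adjoint operators, with norm bound $\sup_u K(u,u)/\pen$ and variance proxy of order $\pen^{-1/p}/(nr)$, gives $\|\Sigma_\pen^{-1/2}(\hat\Sigma-\Sigma)\Sigma_\pen^{-1/2}\|\le\tfrac12$ with probability tending to $1$ once $\pen^{-1/p}/(nr)\to0$, where $\Sigma_\pen:=\Sigma+\pen\operatorname{Id}$. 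On this event a resolvent-perturbation identity expresses $\check\mu_\pen-\bar\mu_\pen$ through $\Sigma_\pen^{-1/2}(\hat\Sigma-\Sigma)\Sigma_\pen^{-1/2}$ together with the already-bounded quantities $\|\bar\mu_\pen-\mu\|_{\rangleL}$ and $\|\mathscr{D}\bar\mu_\pen\|_{\rangleL}\le\|\mathscr{D}\mu\|_{\rangleL}$, yielding $\|\check\mu_\pen-\bar\mu_\pen\|^2_{\rangleL}=O\big((nr)^{-p/(p+1)}\big)$. Combining the three pieces by the triangle and Markov inequalities, and noting that the prescribed $\pen\asymp(nr)^{-p/(p+1)}$ makes $\pen\asymp\pen^{-1/p}/(nr)$ so that each bound is of order $(nr)^{-p/(p+1)}+n^{-1}$ with $D$ free to grow, yields the stated double limit uniformly over $\pio$.
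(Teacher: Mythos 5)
Your argument is correct in outline but follows a genuinely different route from the paper's. The paper performs a functional Newton expansion around the population minimizer: it introduces $\bar\mu_\pen=\arg\min\bar F_\pen$ and the one-step correction $\Tilde\mu_\pen=\bar\mu_\pen-(\bar F''_\pen)^{-1}F'_\pen(\bar\mu_\pen)$, computes Fr\'echet derivatives explicitly, bounds $\mathbb{E}\|\Tilde\mu_\pen-\bar\mu_\pen\|^2$ by the law of total variance applied to the scalar quantities $F'_\pen(\bar\mu_\pen)Y_{\ell,m}$, and controls the remainder $\mu_\pen-\Tilde\mu_\pen$ by a Cauchy--Schwarz inequality in an intermediate norm $\|\cdot\|_\theta$, $\theta\in(1/p,1/2)$, combined with a self-bounding argument in the random quantities $A_n,B_n$. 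You instead exploit linearity of the estimator to split the data into signal and noise, recast the problem as kernel ridge regression $\mu_\pen=(\hat\Sigma+\pen\operatorname{Id})^{-1}\hat b$, and invoke the effective dimension $\operatorname{tr}(\Sigma\Sigma_\pen^{-1})\asymp\pen^{-1/p}$ together with operator Bernstein concentration and a resolvent perturbation identity. Both routes produce the same three rates (bias $\asymp\pen$, sampling variance $\asymp\pen^{-1/p}/(nr)$ with $r$ the harmonic mean, replicate variance $\asymp n^{-1}$), and your identity $\check\mu_\pen-\bar\mu_\pen=\hat\Sigma_\pen^{-1}(\hat\Sigma-\Sigma)(\mu-\bar\mu_\pen)$ does yield $O(\pen)$ for the design term on the good event. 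What your approach buys is clean operator-level statements and a transparent role for the effective dimension; what the paper's buys is that it never needs operator-norm concentration at all --- only second moments of the scalars $V_{\ell,\ell',m,m'}$ --- which is precisely why the intermediate norm and the self-bounding step appear there.

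Two points in your sketch need tightening. First, your stated sufficient condition for the concentration event, ``$\pen^{-1/p}/(nr)\to0$'', conflates the variance proxy with the range term: operator Bernstein for $\|\Sigma_\pen^{-1/2}(\hat\Sigma-\Sigma)\Sigma_\pen^{-1/2}\|$ additionally requires the range term $\sup_uK(u,u)/(N\pen)$ to be small, where $N$ is the number of independent summands. If you block by replicates ($N=n$), this is $\kappa^2/(n\pen)$, which does \emph{not} vanish in the dense regime $r\gg n^{1/p}$ at the prescribed $\pen$; you must instead treat all $nr$ i.i.d.\ locations as the independent units, giving $\kappa^2/(nr\pen)\asymp(nr)^{-1/(p+1)}\to0$ (and, for unequal $r_i$, track how the weights $1/(nr_i)$ produce the harmonic mean). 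Second, the within-replicate correlation of the $Z_i(U_{ij})$ and the fact that $\hat\Sigma$ in the resolvent is itself random mean that the conditional-on-$\U$ variance computation must be carried out on the event where $\hat\Sigma_\pen\succeq\tfrac12\Sigma_\pen$; this is routine but should be stated, since it is exactly the step the paper replaces by its explicit law-of-total-variance decomposition.
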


\begin{corollary}
Let $X$ be such that $\mathbb{P}_X \in \pio$. Under the same assumptions of Theorem \ref{th::mu},
$$   
\| \mu_\pen  - \mu \|^2_\rangleL = O_\mathbb{P}\left ((n r)^{-p/(p+1)} + n^{-1}\right).
$$
\end{corollary}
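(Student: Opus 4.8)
The plan is to derive this pointwise $O_\mathbb{P}$ statement as an immediate specialization of the uniform bound already established in Theorem~\ref{th::mu}. Write $Z_n := \| \mu_\pen - \mu \|^2_\rangleL$ and $a_n := (nr)^{-p/(p+1)} + n^{-1}$, and recall that, by definition, $Z_n = O_\mathbb{P}(a_n)$ means precisely that
\[
\lim_{D\to\infty} \limsup_{n\to\infty} \mathbb{P}\!\left( Z_n > D\, a_n \right) = 0 ,
\]
where the probability is computed under the single fixed law $\mathbb{P}_X \in \pio$ of the process named in the corollary. Thus the whole task reduces to verifying this double limit.

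First I would fix that single law $\mathbb{P}_X \in \pio$ and observe that, for every $D > 0$ and every $n$, the corresponding tail probability is trivially dominated by the worst case over the entire class:
\[
\mathbb{P}\!\left( Z_n > D\, a_n \right) \;\le\; \sup_{\mathbb{P}_X \in \pio} \mathbb{P}\!\left( Z_n > D\, a_n \right) .
\]
Taking $\limsup_{n\to\infty}$ of both sides preserves the inequality, so the single-law double limit is bounded above by the uniform double limit.

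Next I would invoke Theorem~\ref{th::mu} directly. Under the stated calibration $\pen \asymp (nr)^{-p/(p+1)}$ and the assumptions $p>2$, $1<q\le p$ (together with $U_{ij}$ being i.i.d.\ uniform on $\Sph^2$), its conclusion is exactly that $\lim_{D\to\infty}\limsup_{n\to\infty}\sup_{\mathbb{P}_X\in\pio}\mathbb{P}(Z_n > D\,a_n)=0$. Combining this with the domination displayed above forces $\lim_{D\to\infty}\limsup_{n\to\infty}\mathbb{P}(Z_n > D\,a_n)=0$ as well, which is the definition of $Z_n = O_\mathbb{P}(a_n)$ and completes the argument.

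There is no substantive obstacle here: the entire analytic content resides in Theorem~\ref{th::mu}, and the only point worth flagging is the (standard) observation that the double limit appearing in that theorem's statement is precisely the $\limsup$-characterization of boundedness in probability at rate $a_n$. Consequently, passing from the uniform bound over $\pio$ to the single-law bound is a one-line monotonicity argument, and the corollary is strictly weaker than the theorem it follows from, so no new estimate is required.
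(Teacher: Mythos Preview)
Your proposal is correct and matches the paper's treatment: the paper states the corollary without proof immediately after Theorem~\ref{th::mu}, precisely because it is the one-line specialization you describe (bounding the single-law tail probability by the supremum over $\pio$ and invoking the uniform statement).
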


Theorem \ref{th::mu} tells us that the estimator $\mu_\pen$ \emph{achieves} the rate $(n r)^{-p/(p+1)} + n^{-1}$ uniformly over the class $\pio$ and hence such rate is called \emph{achievable} for that class (see \cite{stone1980}). %However, it is possible to show that $\mu_\pen$ is actually \emph{rate-optimal}, by proving that $(n r)^{-p/(p+1)} + n^{-1}$ is also a lower minimax rate of convergence for the class $\pio$ (see \cite[Theorem 3.1]{caiYuan1}).

It is important to remark that we can observe a  phase transition phenomenon with a boundary at $r = n^{1/p}$, which allows to discriminate between sparse and dense sampling regimes. 
Indeed, when the sampling frequency $r$ is small, that is, $r = O(n^{1/p})$, we have
$$   
\| \mu_\pen  - \mu \|^2_\rangleL = O_\mathbb{P}\left ((n r)^{-p/(p+1)}\right),
$$
which is equivalent to the rate of a smoothing spline estimator in nonparametric regression based on $nr$ independent observations. In other words, the convergence rate is not affected by the
spatial dependence. 
In the case of high sampling frequency with $r\gg n^{1/p}$ , we have a parametric rate
$$   
\| \mu_\pen  - \mu \|^2_\rangleL = O_\mathbb{P}\left (n^{-1}\right),
$$
that does not depend on $r$, as in a classical FDA approach where one can observe $X_1,\dots,X_n$ continuously on $\mathbb{S}^2$.

All the previous definitions and results apply also to hypersphere $\mathbb{S}^d$, $d>2$. In particular, \emph{hyperspherical harmonics} (see for instance \cite[Chapter 3]{simeoni2020functional}) can be used to prove the rate, that is $(n r)^{-2p/(2p+d)} + n^{-1}$, for $p > d$ and $d/2 < q \le p$.

\begin{remark}[On the distribution of the measurement locations]\label{rmk::loc-distr}
Theorem \ref{th::mu} is stated and proved for independent measurament locations $U_{ij}, i=1,\dots,n, j=1,\dots,r_i$, \emph{uniformly distributed} over the sphere. It will be clear in the proofs below that, for this specific case, one can work directly with the $L^2$- and $\Hp$- norms introduced in Section \ref{sec:background}, \emph{i.e.}, $\|\cdot\|_\rangleL$ and $\|\mathscr{D}\cdot\|_\rangleL$, and the corresponding \emph{intermediate} spaces (see \cite[Chapter 4]{odenreddy} for further details on intermediate norms and spaces). The set of spherical harmonics $\{Y_{\ell,m}\}$ also plays a crucial role, as it forms an orthonormal basis for $L^2(\Sph^2)$ and an orthogonal basis for $\Hp$. However, the results can be generalized to any other (common) probability density $\pu(\cdot)$ supported on $\Sph^2$ and bounded away from 0 and infinity, that is, $$0< \inf_{u \in \Sph^2} \pu(u) \le \sup_{u \in \Sph^2} \pu(u) < \infty.$$ Therefore, we can define two new inner products
% $4\pi\|\sqrt{f_U} \cdot\|_\rangleL^2$ and $4\pi\|\sqrt{f_U} \cdot\|_\rangleL^2+\|\mathscr{D}\cdot\|_\rangleL^2$.
\begin{align}
&(f,g) \mapsto 4\pi \int_{\mathbb{S}^2} f(u)g(u) \pu(u) du, \label{eq::inner1} \\
&(f,g) \mapsto 4 \pi \int_{\mathbb{S}^2} f(u)g(u) \pu(u) du + \langle \mathscr{D}f, \mathscr{D}g \rangle_\rangleL, \label{eq::inner2}
\end{align}
and their corresponding norms which are respectively equivalent to $\|\cdot\|_\rangleL$ and $\|\mathscr{D}\cdot\|_\rangleL$. 
%note that, when $f_U(\cdot)$ is the Uniform density, \eqref{eq::inner1} is exactly $\langle \cdot, \cdot \rangle_\rangleL$.
% \begin{align*}
% &g \mapsto \left (4 \pi \int_{\mathbb{S}^2} |g(u)|^2 f_U(u) du \right)^{1/2}, \\
% &g \mapsto \left (4 \pi \int_{\mathbb{S}^2} |g(u)|^2 f_U(u) du + \|\mathscr{D}g\|_\rangleL^2 \right)^{1/2},
% \end{align*}
%$$
% 4\pi\|\sqrt{f_U} g\|_\rangleL^2 = 4 \pi \int_{\mathbb{S}^2} |g(u)|^2 f_U(u) du, \qquad g \in L^2(\Sph^2),
% $$
% which corresponds to the standard $L^2$ norm with respect to the measure $4\pi f_U(u)du$
Such equivalence allows to establish results parallel to those in the uniform case. Indeed, it is possible to show that there exists a set of functions $\{\psi_j\}$ that forms an orthonormal basis for $L^2(\mathbb{S}^2)$ endowed with \eqref{eq::inner1} and an orthogonal basis for $\Hp$ endowed with \eqref{eq::inner2}. We are also able to define intermediate norms and associated intermediate spaces. The reader is referred in particular to \cite[Section 2.4]{linthesis} and the references therein. It is clear the the set $\{\psi_j\}$ plays the same role as the set of spherical harmonics and we want to stress that (apart from orthonormality/orthogonality) no other specific properties of spherical harmonics have been used to prove our results. Hence, the generalization can be obtained by following exactly the same steps; see also \cite{Lin2000} for an application.
\end{remark}

%\begin{remark}
%This means that for any $X$ such that $\mathbb{P}_X \in \pio$
%$$
%\lim_{D\to \infty} \limsup_{n\to\infty} \mathbb{P}(\| \mu_\pen  - \mu \|^2_\rangleL > D( (n r)^{-p/(p+1)} + n^{-1} )) = 0.
%$$
%\end{remark}

\subsection{Covariance Estimation}

In this section, we present our main result, which concerns the estimation of the covariance function $C(\cdot, \cdot)$. The following theorem gives a (uniform) rate of convergence for the estimator given in Equation \eqref{eq::cov-est}, under a suitable condition on the decay of the penalty parameter $\pen$. The proof is provided in Section~\ref{sec:proofs3}.

\begin{theorem}\label{th::R}
Assume that 
$\mathbb{E}[\epsilon_{11}^4]<\infty$ and the $U_{ij}, i=1,\dots,n, j=i,\dots,r_i$, are independent copies of $U\sim \operatorname{Unif}(\Sph^2)$. Let $p>2$ and $1<q\le p$, and consider the estimation problem in Equation \eqref{eq::cov-est} for an admissible operator $\mathscr{D}$ of spectral growth order $p$. If $\pen \asymp (n r/\log n )^{-p/(p+1)}$, then
\begin{equation*}
\lim_{D\to \infty}  \limsup_{n\to\infty} \sup_{\mathbb{P}_X \in \pit}  \mathbb{P}\left (\| C_\pen  - C \|^2_\rangleLL > D\left (\left( \frac{\log n}{nr}\right)^{p/(p+1)} + n^{-1}\right ) \right ) = 0.
\end{equation*}
\end{theorem}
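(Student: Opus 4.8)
The plan is to decompose the error $\|C_\pen - C\|^2_\rangleLL$ into contributions coming from the second-order moment estimator $R_\pen$ and from the quadratic term $\mu_\pen \otimes \mu_\pen - \mu\otimes \mu$, and then to treat the $R_\pen$ term by a bias--variance analysis that mirrors the one used for Theorem~\ref{th::mu}, but carried out on the tensorized domain $\Sph^2\times\Sph^2$ with the operator $\mathscr{D}\otimes\mathscr{D}$. First I would write
\begin{equation*}
C_\pen - C = (R_\pen - R) - (\mu_\pen\otimes\mu_\pen - \mu\otimes\mu),
\end{equation*}
and bound the second difference via $\mu_\pen\otimes\mu_\pen - \mu\otimes\mu = (\mu_\pen - \mu)\otimes\mu_\pen + \mu\otimes(\mu_\pen-\mu)$, so that $\|\mu_\pen\otimes\mu_\pen - \mu\otimes\mu\|_\rangleLL \le \|\mu_\pen-\mu\|_\rangleL(\|\mu_\pen\|_\rangleL + \|\mu\|_\rangleL)$. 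Since Theorem~\ref{th::mu} already gives $\|\mu_\pen-\mu\|^2_\rangleL = O_\mathbb{P}((nr)^{-p/(p+1)}+n^{-1})$ with the slightly larger penalty, and $\|\mu_\pen\|_\rangleL$ is $O_\mathbb{P}(1)$ (uniformly over $\pio\supseteq$ the marginal class induced by $\pit$), this term is of strictly smaller or equal order than the claimed rate and contributes only the stated $(\log n/(nr))^{p/(p+1)} + n^{-1}$ up to constants; one must check that using $\pen\asymp(nr/\log n)^{-p/(p+1)}$ rather than $(nr)^{-p/(p+1)}$ in the mean problem does not degrade its rate, which it does not (it only inflates the bias by a $\log n$ factor, still dominated).

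The heart of the proof is the analysis of $\|R_\pen - R\|^2_\rangleLL$. Here I would emulate the classical penalized-least-squares decomposition: introduce the ``population'' minimizer $\tilde R_\pen := \argmin_{g\in\Hprod} \mathbb{E}_\U[\text{(empirical loss)}] + \pen\|(\mathscr{D}\otimes\mathscr{D})g\|^2_\rangleLL$, split $R_\pen - R = (R_\pen - \tilde R_\pen) + (\tilde R_\pen - R)$, and bound the deterministic approximation (bias) term using the Sobolev smoothness $\|R\|^2_\rangleHH \le K_1$ from Definition~\ref{def:pq-cov}: expanding in the basis $\{Y_{\ell,m}\otimes Y_{\ell',m'}/(D_\ell D_{\ell'})\}$ diagonalizes the penalty, and the standard interpolation-inequality argument gives $\|\tilde R_\pen - R\|^2_\rangleLL = O(\pen) = O((nr/\log n)^{-p/(p+1)})$. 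For the stochastic (variance) term $R_\pen - \tilde R_\pen$, the key observation is that the ``observations'' here are the pairwise products $\W_{ij}\W_{ik}$, which are \emph{not} independent across $j\ne k$ within a replicate; one controls their fluctuations by writing the relevant empirical process as an average over the $n$ independent replicates of a per-replicate $U$-statistic-type quantity, then bounding its $\Hprod'$-norm (or the relevant bilinear-form norm) via a moment/chaining argument. This is exactly where the $\mathbb{E}[\epsilon_{11}^4]<\infty$ hypothesis and the $\mathbb{E}\|X\|^4_\Hq \le L$ bound enter: they guarantee the fourth moments of the products $\W_{ij}\W_{ik}$ are finite, so a Bernstein- or Rosenthal-type inequality applies. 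The $\log n$ inflation in the rate (absent from Theorem~\ref{th::mu}) comes precisely from needing a union bound / truncation to handle the heavier tails of products versus single observations; the resulting variance term is $O_\mathbb{P}(((nr)\pen)^{-1}\log n + n^{-1})$, and balancing against the $O(\pen)$ bias yields $\pen\asymp(nr/\log n)^{-p/(p+1)}$ and the announced rate.

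A few points require care and I expect the stochastic term to be the main obstacle. (i) Everything must be done \emph{uniformly} over $\pit$, so all the moment bounds have to be expressed through the fixed constants $L, K_1, K_2, \sigma^2$ and the fourth moment of $\epsilon_{11}$, never through process-specific quantities; this is mostly bookkeeping but must be tracked. (ii) Because the $U_{ij}$ are $\operatorname{Unif}(\Sph^2)$, as noted in Remark~\ref{rmk::loc-distr} the empirical inner product is an unbiased estimate of $4\pi\langle\cdot,\cdot\rangle_\rangleL$ and the spherical harmonics diagonalize everything simultaneously, so the analysis can be pushed through componentwise in the Fourier domain — this is what makes the tensor-product case tractable and I would lean on it heavily (the general $\pu$ case being deferred to Remark~\ref{rmk::loc-distr}'s ``same steps'' argument). (iii) The reference to Lemma~\ref{lemma::sup} in Section~\ref{sec:proofs3} suggests a uniform-control tool for passing from the Hilbert-norm bound to bounds on evaluation functionals; since $p>2$ implies $p/2>1$, $\Hprod$ is an RKHS (Proposition~\ref{prop:RKHSprop}) and such a lemma is available. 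The genuinely delicate estimate is the concentration of the product-based empirical process around its mean in the $\Hprod$-dual norm with the right dependence on $r$ through the harmonic-mean normalization $\frac1{r_i(r_i-1)}$; getting the $(nr)^{-1}$ scaling (rather than something worse in the sparse regime $r_i=O(1)$) is the crux, and it hinges on the fact that each replicate contributes an unbiased estimate with variance $O(1/(r_i(r_i-1))\cdot r_i^2) = O(1)$ after the $4\pi$-scaling, so averaging over $n$ replicates gives the $1/n$ piece and the within-replicate smoothing gives the $1/(nr\,\pen)$ piece.
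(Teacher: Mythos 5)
Your high-level architecture (separate the $\mu_\pen\otimes\mu_\pen$ term by the triangle inequality, then run a bias--variance analysis for $R_\pen$ on the tensorized domain in the spherical-harmonic basis) matches the paper, and your identification of where $\mathbb{E}[\epsilon_{11}^4]<\infty$ and $\mathbb{E}\|X\|^4_{\Hq}\le L$ enter is correct. However, there are two genuine gaps. First, you misattribute the $\log n$ factor to ``a union bound / truncation to handle the heavier tails of products versus single observations.'' The paper uses no union bound, no Bernstein/Rosenthal inequality and no chaining: the entire stochastic analysis is second-moment based (law of total variance plus Markov). The logarithm is deterministic in origin -- it comes from the tensor-product spectral sum
\begin{equation*}
\sum_{\ell,\ell'} \frac{(2\ell+1)(2\ell'+1)}{(1+\pen D_\ell^2 D_{\ell'}^2)^2} = O\bigl(\pen^{-1/p}\log(1/\pen)+1\bigr),
\end{equation*}
i.e.\ the effective dimension of the ball in $\Hprod$ picks up a divisor-type $\log$ relative to the univariate sum used for Theorem~\ref{th::mu}. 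If you pursue a concentration-inequality route to manufacture the $\log$, you are solving the wrong problem; the $\log$ is already present in $\mathbb{E}\|\tilde R_\pen-\bar R_\pen\|^2_\rangleLL$. Relatedly, your stated variance order $((nr)\pen)^{-1}\log n$ does not balance against the bias $O(\pen)$ to give the announced exponent $p/(p+1)$; the correct order is $(nr)^{-1}\pen^{-1/p}\log(1/\pen)+n^{-1}$.

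Second, your decomposition is two-term: you set $\tilde R_\pen$ equal to the minimizer of the expected loss (the paper's $\bar R_\pen$) and propose to control $R_\pen-\bar R_\pen$ directly ``via a moment/chaining argument.'' This is the hard step and your plan does not say how to do it: $R_\pen$ is defined only implicitly as a minimizer, so its fluctuations are not directly an empirical process. The paper's essential device is a three-term decomposition through the one-step linearization $\tilde R_\pen = \bar R_\pen - (\bar F''_\pen)^{-1}F'_\pen(\bar R_\pen)$: the quantity $\tilde R_\pen-\bar R_\pen$ is \emph{linear} in the data, so its second moment can be computed exactly in the Fourier domain (this is where the careful case analysis over coincidences among the indices $j,k,j',k'$ handles the within-replicate dependence of the products $W_{ij}W_{ik}$, yielding the $r_i^4$-order term that gives $n^{-1}$ and the lower-order terms that give $(nr)^{-1}$), and the remainder $R_\pen-\tilde R_\pen$ is then shown to be $o_\mathbb{P}$ of the rate by a Cauchy--Schwarz argument in an intermediate norm $\|\cdot\|_\theta$ together with a random multiplier $A_n=o_\mathbb{P}(1)$. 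Without this linearization (or an equivalent empirical-process bound for the implicitly defined minimizer), your sketch does not close.
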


\begin{corollary} \label{coro:asymptoticCetabigO}
Let $X$ be such that $\mathbb{P}_X \in \pit$. Under the same assumptions of Theorem \ref{th::R},
\begin{equation*}
    \| C_\pen  - C\|^2_\rangleLL = O_\mathbb{P}\left ( \left( \frac{\log n}{n r}\right)^{p/(p+1)} + n^{-1}\right).
\end{equation*}
\end{corollary}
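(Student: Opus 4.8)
The plan is to obtain the corollary as an immediate specialization of Theorem~\ref{th::R}, whose uniform-over-$\pit$ conclusion is strictly stronger than the fixed-law statement required here. Write $a_n := (\log n/(nr))^{p/(p+1)} + n^{-1}$ for the target rate. Recall that a sequence of nonnegative random variables $Z_n$ satisfies $Z_n = O_{\mathbb{P}}(a_n)$ precisely when
\[
\lim_{D \to \infty} \limsup_{n \to \infty} \mathbb{P}(Z_n > D a_n) = 0,
\]
so it suffices to verify this double-limit criterion for $Z_n = \|C_\pen - C\|^2_\rangleLL$ under a fixed law.

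First I would fix an arbitrary $\mathbb{P}_X \in \pit$. Since the singleton $\{\mathbb{P}_X\}$ is contained in the class $\pit$, for every $D > 0$ and every $n$ we have the trivial pointwise bound
\[
\mathbb{P}\left(\|C_\pen - C\|^2_\rangleLL > D a_n\right) \leq \sup_{\mathbb{P}_X \in \pit} \mathbb{P}\left(\|C_\pen - C\|^2_\rangleLL > D a_n\right),
\]
the left-hand probability being computed under the fixed law and the right-hand supremum ranging over the whole class. Taking $\limsup_{n \to \infty}$ and then $\lim_{D \to \infty}$ of both sides, the right-hand side vanishes by the conclusion of Theorem~\ref{th::R}, forcing the left-hand side to vanish as well. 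This is exactly the $O_{\mathbb{P}}$ criterion for $a_n$, and hence the assertion of the corollary.

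I should stress that all of the analytic substance — the bias–variance decomposition of $C_\pen - C$, the concentration estimates that produce the $\log n$ factor characteristic of the product observations $w_{ij}w_{ik}$, and the calibration of the penalty schedule $\pen \asymp (nr/\log n)^{-p/(p+1)}$ — is carried out in the proof of Theorem~\ref{th::R}. The corollary introduces no new difficulty: it is merely the standard passage from a uniform (minimax-type) rate to a fixed-law stochastic-boundedness statement. There is accordingly no real obstacle; the only point requiring care is that the penalty schedule defining $C_\pen$ in the corollary coincides with the one mandated by Theorem~\ref{th::R}, which is precisely what the phrase ``under the same assumptions'' guarantees.
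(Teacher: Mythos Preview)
Your proposal is correct and matches the paper's approach: the corollary is stated in the paper without a separate proof, precisely because it is the immediate specialization of the uniform statement in Theorem~\ref{th::R} to a single fixed law, exactly as you have written out. There is nothing to add.
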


\begin{remark}
A careful inspection of the proof reveals that any estimator $\hat{\mu}(\cdot)$ of the mean function that satisfies
\begin{equation*}
\lim_{D\to \infty}  \limsup_{n\to\infty} \sup_{\mathbb{P}_X \in \pit}  \mathbb{P}\left (\| \hat{\mu}  - \mu \|^2_\rangleLL > D\left (\left( \frac{\log n}{nr}\right)^{p/(p+1)} + n^{-1}\right ) \right ) = 0
\end{equation*}
can be used. The estimator $\mu_\pen(\cdot)$ proposed in Section \ref{sec::mean} satisfies this condition both when $\pen \asymp (n r)^{-p/(p+1)}$ (see Theorem \ref{th::mu}) or when $\pen \asymp (n r/\log n)^{-p/(p+1)}$ (see Proof of Theorem \ref{th::R}).
\end{remark}

 In Theorem \ref{th::R} we show that the estimator $C_\pen$ achieves a uniform rate $ (\log n /nr)^{p/(p+1)} + n^{-1}$ on the class $\pit$.
 As was the case for the mean, we have a phase transition, this time at $r=n^{1/p} \log n$. Indeed, 
 when the functions are densely sampled, \emph{i.e.}, $r \gg n^{1/p} \log n$, the sampling frequency has no impact and the rate is
$$
 \| C_\pen  - C\|^2_\rangleL = O_\mathbb{P}\left (n^{-1}\right),
$$
which suggests that, with a sufficient number of measurement locations per surface, the covariance function can be estimated as well as if the $X_1,\dots,X_n$ can be observed on the whole domain $\Sph^2$. On the other hand, when the functions are sparsely sampled, 
the rate is jointly determined by $n$ and $r$, namely,
$$
 \| C_\pen  - C\|^2_\rangleL = O_\mathbb{P}\left ( \left( \frac{\log n}{n r}\right)^{p/(p+1)} \right).
$$
We can observe that, the estimator is based on a total of $nr(r-1)$ paired observations. However, the rate is expressed in terms of just $nr$. This can be explained by the fact that we are actually observing $nr$ $W_{ij}$'s and hence there is a significant redundancy among the off-diagonal products.
 
%\comm{ However, it cannot be much more improved, see \cite[Theorem ]{caiYuan2}.}

%Consider $\Hp(\Sph^2 \times \Sph^2)$, defined as the space of functions $g \in L^2(\Sph^2 \times \Sph^2)$ such that 
%\begin{equation*}
%\sum_{\ell,\ell' =0}^{\infty} (1 + \ell(\ell %+ 1) +  \ell'(\ell'+1))^p  \sum_{m = - %\ell}^{\ell}  \sum_{m' = - \ell'}^{\ell'}  %\langle g , Y_{\ell,m}\otimes Y_{\ell',m'} %\rangle_\rangleLL < \infty.
%\end{equation*}
%Such space is linked to the Sobolev operator $(\operatorname{Id} - \Delta_{\mathbb{S}^2 \times \mathbb{S}^2})^{p/2},$ $p \ge 0$, where $\Delta_{\mathbb{S}^2 \times \mathbb{S}^2}$ is the Laplace-Beltrami operator on $\mathbb{S}^2 \times \mathbb{S}^2$, and satisfies
%$$
%\Delta_{\mathbb{S}^2 \times \mathbb{S}^2} \Y_{\ell,m} \otimes \Y_{\ell',m'} = -(\ell(\ell+1)+ \ell'(\ell'+1)) \Y_{\ell,m}\otimes \Y_{\ell',m'}.
%$$
%The following chain of inclusions holds
%$$
%\mathcal{H}_{2p} (\mathbb{S}^2 \times \mathbb{S}^2) \subseteq \Hp (\mathbb{S}^2) \otimes \Hp(\mathbb{S}^2) \subseteq \mathcal{H}_p(\mathbb{S}^2 \times \mathbb{S}^2).
%$$
%These three spaces induces different level of smoothing in the covariance function.
%The respective rates are comparable:
%$$
% \left ( \frac{1}{nr}\right)^{2p/(2p+2)} + \frac{1}{n}, \qquad \left ( \frac{\log n}{nr}\right)^{2p/(2p+2)} + \frac{1}{n}, \qquad \left ( \frac{1}{nr}\right)^{2p/(2p+4)} + \frac{1}{n}
%$$
%\comm{explain it is worth considering this space..computational intensity, and comparing the critical values in the example}

As was the case for the mean, the result can be extended to the hypersphere $\mathbb{S}^d$, $d>2$, with rate $(\log n /(n r))^{2p/(2p+d)} + n^{-1}$, for $p > d$ and $d/2 < q \le p$.

\begin{remark}
A similar reasoning to that expressed in Remark \ref{rmk::loc-distr} can be applied to the covariance estimator. Indeed, if the measurement locations are independently drawn from a common distribution with probability density $\pu(\cdot)$ supported on $\Sph^2$ and bounded away from 0 and infinity,
we can define the inner products
% $4\pi\|\sqrt{f_U} \cdot\|_\rangleL^2$ and $4\pi\|\sqrt{f_U} \cdot\|_\rangleL^2+\|\mathscr{D}\cdot\|_\rangleL^2$.
\begin{align}
&(f,g) \mapsto (4\pi)^2 \int_{\mathbb{S}^2} \int_{\mathbb{S}^2} f(u,v)g(u,v) \pu(u)\pu(v) dudv,\label{eq::inner11} \\
&(f,g) \mapsto (4 \pi)^2 \int_{\mathbb{S}^2} \int_{\mathbb{S}^2} f(u,v)g(u,v) \pu(u)\pu(v) dudv + \langle (\mathscr{D}\otimes \mathscr{D})f,  (\mathscr{D}\otimes \mathscr{D})g \rangle_\rangleLL, \label{eq::inner22}
\end{align}
and their corresponding norms which are respectively equivalent to $\|\cdot\|_\rangleLL$ and $\| (\mathscr{D}\otimes \mathscr{D})\cdot\|_\rangleLL$. Accordingly, the set $\{\psi_j \otimes \psi_{j'}\}$ forms an orthonormal basis for $L^2(\mathbb{S}^2\times \mathbb{S}^2)$ endowed with \eqref{eq::inner11} and an orthogonal basis for $\Hprod$ endowed with \eqref{eq::inner22}. All the results can be then extended.
\end{remark}

\subsection{Examples} \label{sec:examples}

In this section, we exemplify the previous results for two classes of spherical Gaussian random fields. This will reveal the interest of considering the random classes $\Pi_2(p,q)$ with distinct values for $p$ and $q$ for the asymptotic analysis of the performance of our estimation strategy.

For $\beta > 1$, we say that the spherical random field $X$ is in the class $\mathcal{C}_\beta$ if 
\begin{equation}
    X = \mathscr{D}^{-1} W
\end{equation}
where $\mathscr{D}$ is an admissible operator with spectral growth order $\beta$ and $W$ is a Gaussian white noise such that $\mathbb{E} [\langle W , f \rangle_{\rangleL} \langle W , g\rangle_{\rangleL} ] = \sigma^2 \langle f,  g\rangle_{\rangleL}$ for any $f,g \in L^2(\Sph^2)$. 
In the next proposition, we quantify the rate of convergence of our estimation strategy for spherical random fields in $\mathcal{C}_\beta$.
The proof of Proposition~\ref{prop:filternoise} is given in Section~\ref{sec:proofs3}.

\begin{proposition}
\label{prop:filternoise}
Let $\beta > 5/2$. Then, we have that $\mathcal{C}_\beta \subset \pit$ for any  $1 \leq q \leq p$ such that $p < \beta - 1/2 $ and $q < \beta - 1$.
Moreover, if $X \in \mathcal{C}_\beta$ 
is a spherical random field with covariance $C$, then 
for any $\epsilon > 0$, there exists an estimator $C_\pen$ given by \eqref{eq:Cetaesti} such that 
\begin{equation} \label{eq:firstclass}
    \| C_\pen  - C\|^2_\rangleLL = O_\mathbb{P}\left ( \left( \frac{\log n}{n r}\right)^{\frac{\beta-1/2}{\beta+1/2} - \epsilon} + n^{-1}\right).
\end{equation}
\end{proposition}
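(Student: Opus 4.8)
The plan is to verify the membership $\mathcal{C}_\beta\subset\pit$ by explicit computations in the spherical--harmonic domain, and then to invoke Corollary~\ref{coro:asymptoticCetabigO} with a spectral growth order $p$ chosen just below the critical value $\beta-1/2$; the loss of $\epsilon$ in the exponent is exactly the price for not being able to take $p=\beta-1/2$.

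First I would diagonalize. Since $W$ is a Gaussian white noise with $\mathbb{E}[\langle W,f\rangle_\rangleL\langle W,g\rangle_\rangleL]=\sigma^2\langle f,g\rangle_\rangleL$, the coefficients $W_{\ell,m}:=\langle W,Y_{\ell,m}\rangle_\rangleL$ are i.i.d.\ $\mathcal{N}(0,\sigma^2)$, so $X=\mathscr{D}^{-1}W$ has $\langle X,Y_{\ell,m}\rangle_\rangleL=W_{\ell,m}/D_\ell$, and $\mu=\mathbb{E}[X]=0$, whence $\|\mu\|_\rangleH=0$ and $C=R$. Using $|D_\ell|\asymp(1+\ell)^\beta$ (Definition~\ref{sph-pseudo-diff}) and monotone convergence,
$$\mathbb{E}\|X\|^2_\Hq=\sigma^2\sum_{\ell\ge0}\frac{(2\ell+1)(1+\ell(\ell+1))^q}{D_\ell^2}\lesssim\sum_{\ell\ge0}(1+\ell)^{1+2q-2\beta},$$
which is finite --- so that $X\in\Hq$ almost surely --- precisely when $q<\beta-1$. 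For the fourth moment I would write $\|X\|^2_\Hq=\sum_\ell a_\ell S_\ell$ with $a_\ell=(1+\ell(\ell+1))^q/D_\ell^2$ and $S_\ell=\sum_{m=-\ell}^{\ell}W_{\ell,m}^2\sim\sigma^2\chi^2_{2\ell+1}$, these blocks being independent across $\ell$. Taking expectations makes the cross terms collapse, leaving $\mathbb{E}\|X\|^4_\Hq=\big(\mathbb{E}\|X\|^2_\Hq\big)^2+2\sigma^4\sum_\ell(2\ell+1)a_\ell^2$, and the extra series $\sum_\ell(2\ell+1)a_\ell^2\lesssim\sum_\ell(1+\ell)^{1+4q-4\beta}$ converges whenever $q<\beta-1/2$, which is already implied by $q<\beta-1$. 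Hence $\mathbb{E}\|X\|^4_\Hq\le L<\infty$.

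For the second-order moment, $C(u,v)=\mathbb{E}[X(u)X(v)]=\sigma^2\sum_{\ell\ge0}D_\ell^{-2}\sum_{m=-\ell}^{\ell}Y_{\ell,m}(u)Y_{\ell,m}(v)$, so the only nonzero tensorial Fourier coefficients are $\langle C,Y_{\ell,m}\otimes Y_{\ell,m}\rangle_\rangleLL=\sigma^2/D_\ell^2$, and
$$\|C\|^2_\rangleHH=\sigma^4\sum_{\ell\ge0}\frac{(2\ell+1)(1+\ell(\ell+1))^{2p}}{D_\ell^4}\lesssim\sum_{\ell\ge0}(1+\ell)^{1+4p-4\beta},$$
finite precisely when $p<\beta-1/2$. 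Combining the three bounds, $\mathbb{P}_X\in\pit$ for every $1<q\le p$ with $p<\beta-1/2$ and $q<\beta-1$; since $\beta>5/2$, such a pair with $p>2$ and $q>1$ exists, which is the first assertion (if one wants a single triple of constants $L,K_1,K_2$ valid over all of $\mathcal{C}_\beta$, fix the admissibility constant $C_1$ and $\sigma^2$, on which the bounds above depend only). For the rate, fix $\epsilon>0$: since $p\mapsto p/(p+1)$ is continuous and strictly increasing with $\sup_{2<p<\beta-1/2}p/(p+1)=(\beta-1/2)/(\beta+1/2)$, choose $p\in(2,\beta-1/2)$ with $p/(p+1)\ge(\beta-1/2)/(\beta+1/2)-\epsilon$ and a compatible $q\in(1,\min(p,\beta-1))$. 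Taking $\mathscr{D}$ in \eqref{eq::R-est} of spectral growth order $p$ and $\eta\asymp(nr/\log n)^{-p/(p+1)}$, Corollary~\ref{coro:asymptoticCetabigO} (under its hypotheses: uniform sampling and $\mathbb{E}[\epsilon_{11}^4]<\infty$) gives $\|C_\eta-C\|^2_\rangleLL=O_\mathbb{P}\big((\log n/(nr))^{p/(p+1)}+n^{-1}\big)$, and since $\log n/(nr)\le1$ for $n$ large, $(\log n/(nr))^{p/(p+1)}\le(\log n/(nr))^{(\beta-1/2)/(\beta+1/2)-\epsilon}$, which yields \eqref{eq:firstclass}.

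The only step beyond exponent bookkeeping is the fourth-moment estimate: it hinges on the independence of the blocks $S_\ell=\sum_m W_{\ell,m}^2$ across $\ell$ and on their $\chi^2$ variances, so that the double sum reduces to $(\mathbb{E}\|X\|^2_\Hq)^2$ plus a single diagonal series that --- crucially --- already converges under $q<\beta-1$; everything else is a matter of matching Sobolev exponents against $C_1(1+\ell)^\beta\le|D_\ell|\le C_2(1+\ell)^\beta$.
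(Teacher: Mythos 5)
Your proposal is correct, and for the membership part it takes a genuinely different route from the paper. The paper establishes $X\in\Hq$ a.s.\ by citing the distributional regularity of Gaussian white noise ($W\in\mathcal{H}_{-1-\epsilon}$ a.s.) and then identifies $C$ with the reproducing kernel of $(\mathcal{H}_\beta,\|\mathscr{D}\cdot\|_\rangleL)$ via a duality computation, before reading off $\|C\|^2_\rangleHH<\infty$ from the kernel's Fourier coefficients. You instead diagonalize everything in the spherical-harmonic basis and compute the moments directly, arriving at the same coefficient series $\sum_\ell(2\ell+1)(1+\ell(\ell+1))^{2p}D_\ell^{-4}$ and the same thresholds $q<\beta-1$, $p<\beta-1/2$. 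Your route is more elementary and, importantly, it explicitly verifies the condition $\mathbb{E}\|X\|^4_\Hq\le L$ required by Definition~\ref{def:pq-cov} via the independent $\chi^2_{2\ell+1}$ blocks — a step the paper's proof leaves implicit (a.s.\ membership in $\Hq$ alone does not give the fourth moment without an appeal to Gaussian integrability à la Fernique or a computation like yours). What the paper's approach buys in exchange is the conceptually useful identification $C=K$, which does not rely on the harmonic diagonalization and so extends to settings where $\mathscr{D}$ is not diagonal in a convenient basis. The rate part of your argument (choosing $p$ with $p/(p+1)\ge\frac{\beta-1/2}{\beta+1/2}-\epsilon$ and invoking Corollary~\ref{coro:asymptoticCetabigO}) coincides with the paper's, and in fact your bookkeeping is the correct version of the paper's slightly garbled condition $\frac{p}{p-1}-\frac{\beta-1/2}{\beta+1/2}=\epsilon$.
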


%The proof of Proposition~\ref{prop:filternoise} is given in Section~\ref{sec:proofs3}. According to \eqref{eq:firstclass} the critical rate of convergence of a random field in $\mathcal{C}_\beta$ is $\frac{\beta-1/2}{\beta+1/2}$. \\

The second class of spherical random fields that we consider is as follows. For $\beta > 1$, we say that $X \in \mathcal{B}_\beta$ if 
\begin{equation}
    X = \mathscr{D}^{-1} \left\{ \sum_{k=1}^Q \xi_k \delta_{u_k} \right\} = \sum_{k=1}^Q \xi_k \psi_{\mathscr{D}}( \langle \cdot , u_k \rangle) 
\end{equation}
with $\mathscr{D}$ an admissible operator of order $\beta$ and with zonal Green's kernel $\psi_\mathscr{D}$, $Q \geq 1$, $u_1,\dots,u_Q \in \Sph^2$ distinct, and $\boldsymbol{\xi}= ( \xi_1, \ldots , \xi_Q) \sim \mathcal{N}(0,\sigma^2 \mathrm{Id})$ an $\text{i.i.d.}$ Gaussian vector. Note that $X$ is simply a random $\mathscr{D}$-spline with Gaussian weights. In particular, $X$ is located in the finite-dimensional space of splines with $Q$ knots at locations $u_1,\dots,u_Q$. 
The asymptotic performance for the estimation of random fields in $\mathcal{B}_\beta$ is quantified in Proposition~\ref{prop:filterdiracstream}, whose proof is in Section~\ref{sec:proofs3}. 

\begin{proposition}
\label{prop:filterdiracstream}
Let $\beta > 2$. Then, we have that $\mathcal{B}_\beta \subset \pit$ for any  $1 \leq q \leq p < \beta - 1$. 
Moreover, if $X \in \mathcal{B}_\beta$ 
is a spherical random field with covariance $C$, then 
for any $\epsilon > 0$, there exists an estimator $C_\pen$ given by \eqref{eq:Cetaesti} such that 
\begin{equation} \label{eq:secondclass} 
    \| C_\pen  - C\|^2_\rangleLL = O_\mathbb{P}\left ( \left( \frac{\log n}{n r}\right)^{\frac{\beta-1}{\beta} - \epsilon} + n^{-1}\right).
\end{equation}
\end{proposition}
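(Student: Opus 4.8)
The plan is to show that the law of any $X\in\mathcal{B}_\beta$ lies in $\pit$ for every admissible pair $1<q\le p<\beta-1$, and then to conclude from Corollary~\ref{coro:asymptoticCetabigO} by letting $p\uparrow\beta-1$ in the rate exponent. First I would record the first two moments of $X=\sum_{k=1}^Q\xi_k\green^{\mathscr{D}}_{u_k}$. Since $\boldsymbol{\xi}\sim\mathcal{N}(0,\sigma^2\mathrm{Id})$ is centred with $\mathbb{E}[\xi_k\xi_{k'}]=\sigma^2\delta_k^{k'}$, this gives $\mu\equiv0$ and $R=C=\sigma^2\sum_{k=1}^Q\green^{\mathscr{D}}_{u_k}\otimes\green^{\mathscr{D}}_{u_k}$, where $\green^{\mathscr{D}}_{u_k}=\mathscr{D}^{-1}\delta_{u_k}$ has spherical Fourier coefficients $\{Y_{\ell,m}(u_k)/D_\ell\}_{\ell,m}$ and $\mathscr{D}$ has spectral growth order $\beta$.

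The core computation is the exact Sobolev regularity of the Green's function. Combining $\mathscr{D}Y_{\ell,m}=D_\ell Y_{\ell,m}$, the addition theorem $\sum_{m=-\ell}^{\ell}Y_{\ell,m}(u)^2=(2\ell+1)/(4\pi)$, and the two-sided bound \eqref{eq:conditionDn} for $\{D_\ell\}$, I would get, uniformly in $u\in\Sph^2$, $\|\green^{\mathscr{D}}_u\|_{\Hp}^2=\sum_{\ell\ge0}(1+\ell(\ell+1))^p\sum_{m}Y_{\ell,m}(u)^2/D_\ell^2\asymp\sum_{\ell\ge0}(1+\ell)^{2p+1-2\beta}$, which is finite precisely when $p<\beta-1$. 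Given $1\le q\le p<\beta-1$, the three conditions of Definition~\ref{def:pq-cov} then follow at once: $\|\mu\|_{\Hp}^2=0$; by the triangle inequality in $\Hprod$ and the identity $\|f\otimes f\|_{\Hprod}=\|f\|_{\Hp}^2$, $\|R\|_{\Hprod}\le\sigma^2\sum_k\|\green^{\mathscr{D}}_{u_k}\|_{\Hp}^2<\infty$; and, $X$ being a finite linear combination of elements of $\Hq$, $\mathbb{E}\|X\|_{\Hq}^4\le\mathbb{E}\big[\big(\sum_k|\xi_k|\,\|\green^{\mathscr{D}}_{u_k}\|_{\Hq}\big)^4\big]<\infty$ by finiteness of the moments of a Gaussian vector. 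Hence $\mathbb{P}_X\in\pit$ (with constants depending on $Q$, $\sigma$, $u_1,\dots,u_Q$), i.e. $\mathcal{B}_\beta\subset\pit$ for all such $(p,q)$.

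For the rate, fix $\epsilon>0$. Since $x\mapsto x/(x+1)$ is increasing with $\lim_{x\uparrow\beta-1}x/(x+1)=(\beta-1)/\beta$, I would pick $p\in(2,\beta-1)$ with $p/(p+1)>(\beta-1)/\beta-\epsilon$, set $q=p$, take $\mathscr{D}$ in \eqref{eq:Cetaesti} to be any admissible operator of spectral growth order $p$ (e.g., a Sobolev operator as in Example~\ref{ex:sobolevop}), and let $\pen\asymp(nr/\log n)^{-p/(p+1)}$. By the previous step $\mathbb{P}_X\in\pit$, so Corollary~\ref{coro:asymptoticCetabigO} (also using the fourth-moment noise condition $\mathbb{E}[\epsilon_{11}^4]<\infty$ of Theorem~\ref{th::R}) yields $\|C_\pen-C\|^2_{\rangleLL}=O_\mathbb{P}\big((\log n/(nr))^{p/(p+1)}+n^{-1}\big)$; since $0<\log n/(nr)<1$ for $n$ large and $p/(p+1)\ge(\beta-1)/\beta-\epsilon$, the first term is at most $(\log n/(nr))^{(\beta-1)/\beta-\epsilon}$, giving \eqref{eq:secondclass}.

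The only step I expect to require any thought is the sharp regularity computation for $\green^{\mathscr{D}}_u$ in the middle step: it is what pins the smoothness of $C$ to exactly $p<\beta-1$ and hence produces the exponent $(\beta-1)/\beta$, and it rests solely on the addition theorem and the two-sided control \eqref{eq:conditionDn}. The $\epsilon$-loss is intrinsic, not an artefact: the critical order $p=\beta-1$ is inadmissible (the defining series diverges) while the rate exponent in Theorem~\ref{th::R} is strictly increasing in $p$, so $(\beta-1)/\beta$ can only be approached from below. The rest — the triangle and tensor-norm identities in $\Hprod$, finiteness of Gaussian moments, and the appeal to Corollary~\ref{coro:asymptoticCetabigO} — is routine.
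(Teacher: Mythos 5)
Your proposal is correct and follows essentially the same route as the paper: establish that the Green's functions lie in $\Hp$ exactly when $p<\beta-1$, deduce that $X\in\Hp$ and $C\in\Hprod$ so that $\mathbb{P}_X\in\pit$, and then apply the covariance rate of Theorem~\ref{th::R}/Corollary~\ref{coro:asymptoticCetabigO} with $p$ chosen arbitrarily close to $\beta-1$. The only difference is cosmetic: where the paper cites $\delta_u\in\mathcal{H}_{-1-\epsilon}$ and the mapping property of $\mathscr{D}^{-1}$, you verify the same regularity directly via the addition theorem, which is a fine (arguably more self-contained) way to justify that sub-step.
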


%The proof of Proposition~\ref{prop:filternoise} is given in Section~\ref{sec:proofs3}. According to \eqref{eq:secondclass} the critical rate of convergence of a random field in $\mathcal{C}_\beta$ is $\frac{\beta-1}{\beta}$. \\

\begin{remark}
The spherical random fields in $\mathcal{C}_\beta$ and $\mathcal{B}_\beta$ are both in the Sobolev spaces $\Hp$ for any $p < \beta - 1$. 
However, as seen in the proofs of Propositions~\ref{prop:filternoise} and \ref{prop:filterdiracstream}, their covariances have distinct Sobolev regularities, measured in the tensorial spaces $\Hprod$. We see in Theorem~\ref{th::R} that the Sobolev regularity of the covariance is crucial for the convergence rate of our estimation strategy. For this reason, the distinction between the regularity of the random field (parameter $q$ in $X \in \pit$) and its covariance (parameter $p$ in $X \in \pit$) is crucial to obtain the best possible convergence rate.

In particular, Propositions~\ref{prop:filternoise} and \ref{prop:filterdiracstream} reveal that we can approach the critical bounds $\frac{\beta-1/2}{\beta+1/2}$ for $X \in \mathcal{C}_\beta$ and $\frac{\beta-1}{\beta}$ for $X \in \mathcal{B}_\beta$. 
If we achieve the state-of-the-art results of~\cite{caiYuan2} for the latter, we are able to improve existing rates for the former class $\mathcal{C}_\beta$. 
\end{remark}

%p=q: minimax rate of CY. (up to log(n)), i.e. we match the state-of-the-art result of CY. 
%Not requiring p=q comes with some advantages. with purely infinite-dimensional processes, we have improved rates. to illustrate this, see examples. 

\section{Simulations and Computational Aspects}
\label{sec:simulations}
In this section, we investigate the practical feasibility of our estimation methodology. We use  simulated sparse samples of an anisotropic Gaussian spherical field as a test case study. Since this appears to be the first and only numerical procedure for nonparametric estimation of anisotropic spherical covariance operators from sparse functional data, we do not offer a comparison. Rather, our main focus is on demonstrating the practical feasibility of our estimation methodology. Since the estimation procedure described in Section \ref{sec:model} decouples the estimation of the first- and second-order moments, we assume without loss of generality a \emph{zero-mean} Gaussian random field and focus exclusively on the (computationally more challenging) second-order moment estimate \eqref{eq::R-est}, for which we propose, leveraging Theorem \ref{theo:Resti} and  sparse/second-order structure, a \emph{computationally tractable} and \emph{memory-thrifty} implementation. 
For details regarding the implementation of the mean estimate \eqref{eq::mu-est} we refer to the literature on \emph{smoothing splines}, of which \eqref{eq::mu-est} is a specific instance (see Remark \ref{rmk:smoothing_spline}). Smoothing spline estimates are standard practice in non-parametric regression \cite{green1993nonparametric}, and their use in the spherical setting was already proposed in the literature -- see for example \cite[Section 6.4.2]{michel2012lectures} and the references therein as well as \cite[Chapter 9]{simeoni2020functional} where smoothing splines are considered  for the estimation of global temperature anomaly maps from sparse recordings collected by the Argo fleet~\cite{argo2000argo,kuusela2018locally}. 

The simulation code is openly available and fully-reproducible, and is released in the form of a standalone Python 3 Jupyter notebook hosted  on hosted on GitHub: \url{https://github.com/matthieumeo/sphericov/blob/main/companion_nb.ipynb}. Instructions for installing the required dependencies and running the notebook are available at this link: \url{https://github.com/matthieumeo/sphericov#readme}.

\paragraph{Simulation Setup.}
Our simulation setup is as follows. Consider a spherical point set $\mathcal{V}_0=\{v_1, \ldots, v_Q\}\subset \Sph^2$ and let $S^\varepsilon_\nu: \mathbb{R}_+\to \mathbb{R}_+$ denote the \emph{Matérn function} with smoothness parameter $\nu>0$ and scale parameter $\varepsilon>0$ \cite[Eq. 4.16]{williams2006gaussian}.
Define moreover the \emph{spherical Matérn function} $\psi^\varepsilon_\nu: [-1,1]\to \mathbb{R}_+$ as \cite[Section 5.3]{simeoni2021functional} 
\begin{equation}\psi^\varepsilon_\nu(t)=S_\nu^\varepsilon(\sqrt{2-2t}), \qquad \forall t\in[-1,1].\label{eq:spherical_matern}\end{equation}
It is possible to show that $\psi^\varepsilon_\nu$ in \eqref{eq:spherical_matern} is the zonal Green's kernel of a certain {admissible spherical pseudo-differential operator}\footnote{In the sense of Definition \ref{sph-pseudo-diff}.} $\mathscr{D}_\nu^\varepsilon$ with spectral growth $p=2(\nu+1)$ \cite[Section 5.3.1]{simeoni2021functional}. Following the nomenclature of \cite[Section 5.3.1]{simeoni2021functional}, we refer to $\mathscr{D}_\nu^\varepsilon$ as a \emph{Matérn operator}.

We consider an underlying spherical random field $X=\{X(u), \, u \in \mathbb{S}^2\}$ taking the form of a \emph{$\mathscr{D}_\nu^\varepsilon$-sparse random field}~\cite{unser2012introduction}: 
\begin{equation}\mathscr{D}_\nu^\varepsilon X=\sum_{q=1}^Q \xi_q \delta_{v_q}\quad\Leftrightarrow \quad X(u)=\sum_{q=1}^Q \xi_q \psi_\nu^\varepsilon(\langle u, v_q \rangle)\quad \forall u \in\mathbb{S}^2,\label{eq:sparse_random_filed}\end{equation}
where $\bm{\xi}=[\xi_1,\ldots, \xi_Q]\sim \mathcal{N}_Q(\mathbf{0},\mathbf{R})$ for some covariance matrix $\mathbf{R}\in \mathbb{R}^{Q\times Q}$ -- see also Section \ref{sec:examples}. Roughly speaking, the random field \eqref{eq:sparse_random_filed} is the primitive with respect to the pseudo-differential operator $\mathscr{D}_\nu^\varepsilon$ of a random discrete measure composed of finitely many Dirac measures. It is easy to see that $X$ is a \emph{Gaussian random field}, with mean zero and second-order moment given by: 
\begin{equation}
    R(u,v)=\sum_{p,q=1}^Q R_{pq} \psi_\nu^\varepsilon(\langle u, v_p \rangle)\psi_\nu^\varepsilon(\langle v, v_q \rangle), \qquad (u,v)\in\mathbb{S}^2\times\mathbb{S}^2.
    \label{R_parametric_form}
\end{equation}
Note that since $X$ has zero mean, the bivariate function $R$ coincides in this case with the covariance function of the field. 
As described in Section \ref{sec:model}, our input data consists in simulated realizations of noisy random samples of $\text{i.i.d.}$ replicates $X_1, \ldots, X_n$ of the random field $X$: 
$$
W_{ij} = X_i(U_{ij}) + \epsilon_{ij}, \qquad  i =1,\dots,n, \, j =1,\dots,r_i,
$$
where the random sampling locations are distributed uniformly over the sphere  $U_{ij}\stackrel{\text{i.i.d.}}{\sim}\operatorname{Unif}({\mathbb{S}^2})$, and the measurement errors are distributed as $\epsilon_{ij}\stackrel{\text{i.i.d.}}{\sim}\mathcal{N}(0, \sigma^2)$ for some $\sigma>0$. Moreover, the random fields, sampling locations, and measurement errors are all assumed mutually independent. Realizations of the random variables $W_{ij}$ and $U_{ij}$ are denoted in lower-case notation, \emph{i.e.}, $w_{ij}$ and $u_{ij}$ respectively.

\paragraph{Simulation Parameters.}
In our simulations, we set the various parameters listed above to the following values:
\begin{figure}[p!]
\centering
\subfloat[][Diagonal slice $u\mapsto R(u,u), \, u \in \mathbb{S}^2$ (\emph{i.e.}, variance function) of the second-order moment kernel. The locations of the sources $\{v_1, \ldots, v_Q\}$ in  \eqref{eq:sparse_random_filed} are overlaid as black scatters.]{
\includegraphics[width=0.65\linewidth]{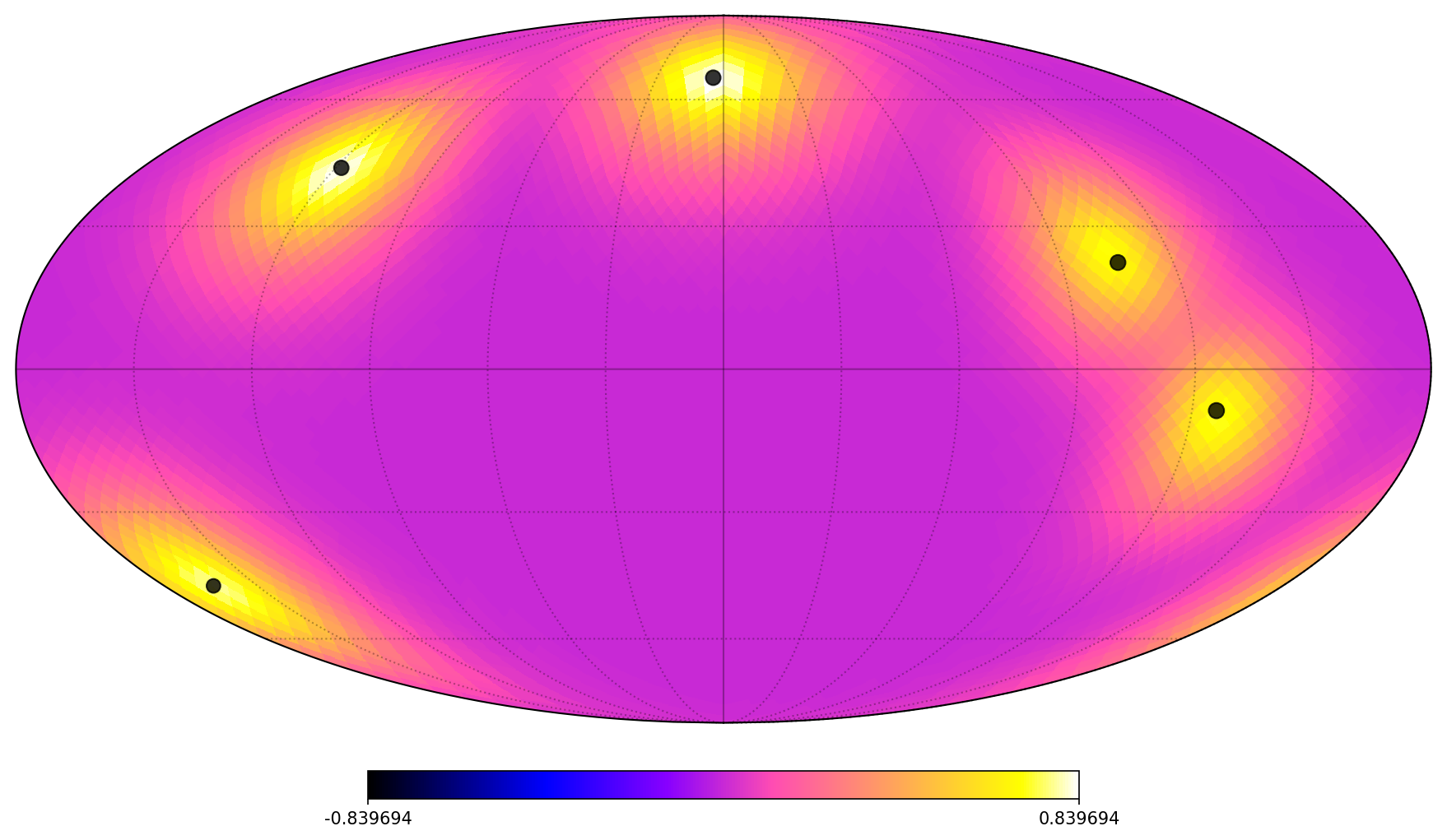}
}

\subfloat[][Slices $u\mapsto R(u,u_p), \, u \in \mathbb{S}^2$ of the second-order moment kernel for 12 points $\{u_1, \ldots, u_P\}$ (overlaid as black crosses).]{
\includegraphics[width=0.9\linewidth]{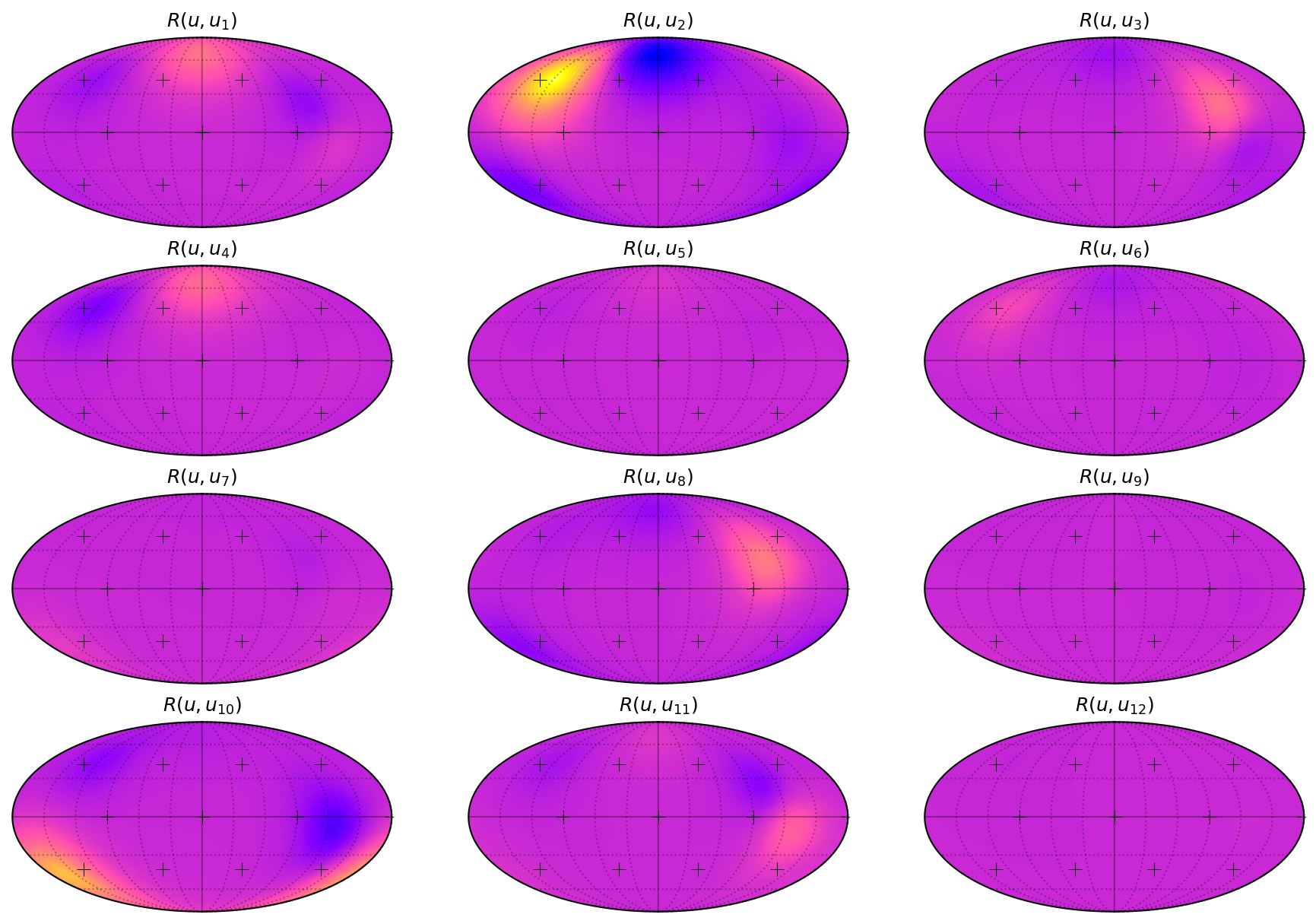}
}
\caption{Mollweide projections of covariance kernel slices $R:\mathbb{S}^2\times\mathbb{S}^2\to \mathbb{R}$ of the Gaussian random field $X$  \eqref{eq:sparse_random_filed} used in our simulations. }
\label{fig:gt_cov_kernel}
\end{figure}
\begin{itemize}
    \item \textbf{Matérn Function:} The smoothing parameter is set to $\nu=5/2$. For such a value of $\nu$, the Matérn function admits the following simple closed-form expression: 
    $$ S^\varepsilon_{5/2}(t)=\left(1+\frac{\sqrt{5} t}{\varepsilon} + \frac{5t^2}{3\varepsilon^2}\right)\exp\left(-\frac{\sqrt{5} t}{\varepsilon}\right), \qquad \forall t \geq 0.$$
    The scale parameter is set to an arbitrary value of $\varepsilon=0.4.$
    \item \textbf{Random Field:} We set the number of sources to $Q=6$, draw the spherical point set $\mathcal{V}_0$ uniformly at random, and choose the covariance matrix $\mathbf{R}$ in \eqref{eq:sparse_random_filed} as: 
    $$\mathbf{R}=\left[\begin{array}{ccccc} 0.812&-0.013&-0.209&-0.416&-0.028\\-0.013&0.974&-0.008&-0.632&-0.372\\-0.209&-0.008&0.909&-0.095&-0.588\\-0.416&-0.632&-0.095&1.000&0.235\\-0.028&-0.372&-0.588&0.235&0.929 \end{array}\right].$$
    We plot in Figure \ref{fig:gt_cov_kernel}  slices of the second-order moment kernel $R$ of the random field $X$ obtained this way. 
    \item \textbf{Data Simulation:} We set the number of replicates of $X$ to $n=64$ and consider a fixed number $r_i=r=12$ of spatial samples per replicates.  The noise level is chosen as $\sigma=0.1$, yielding a \emph{peak signal-to-noise ratio (PSNR)} of 10 dB.
    \end{itemize}
%Note that the random number generators used in our simulations are \emph{seeded}, allowing for full reproducibility of the numerical experiments.

\paragraph{Numerical Estimation Procedure.}
We consider the following estimator for $R$: 
\begin{equation}
R_\eta=\argmin_{g \in \Hprod} \frac{4\pi^2}{L} \sum_{i=1}^n \sum_{1\le j \ne k \le r} (w_{ij}w_{ik}  - g(u_{ij}, u_{ik}))^2 + \pen \| (\mathscr{D}_\xi^\varepsilon \otimes \mathscr{D}_\xi^\varepsilon)  g\|^2_\rangleLL,
\label{eq:R_estimator_numerics}
\end{equation}
for $L=nr(r-1)$ and some regularization parameter $\eta>0$ (the selection of this regularization parameter is addressed  in the next section). The smoothing parameter $\xi$ of the Matérn factors $\mathscr{D}_\xi^\varepsilon$ in \eqref{eq:R_estimator_numerics} is set to $\xi =(\nu-1)/2$, which is the  minimal value so that the unknown second-order moment function $R$ defined in \eqref{R_parametric_form} belongs to the search space $\Hprod$ with $p=2(\nu+1)$. Note that \eqref{eq:R_estimator_numerics} coincides indeed with the second-order moment estimator \eqref{eq::R-est} discussed in Section \ref{sec:model} when the number of random spatial samples $r_i$ per replicate $X_i$ is constant and equal to $r$ (as assumed in this simulation setup). In which case indeed, the normalising constants involved in the least-square term of \eqref{eq::R-est} reduce to a single factor $4\pi^2/L$, hence yielding the simpler expression \eqref{eq:R_estimator_numerics} above. Theorem \ref{theo:Resti} applied to \eqref{eq:R_estimator_numerics} reveals that $R_\eta$ is given by: 
\begin{equation}    
        R_{\eta}(u,v) =  \sum_{i=1}^n \sum_{1 \leq j \neq k \leq r} \beta_{ijk} \psi_\nu^\varepsilon ( \langle u ,u_{ij}\rangle)   \psi_\nu^\varepsilon ( \langle v ,u_{ik}\rangle), \qquad \forall (u,v)\in\Sph^2\times \Sph^2. 
        \label{estimate_closed_form}
\end{equation}
for some coefficients $(\beta_{ijk})_{1 \leq i \leq n, \ 1\leq j \neq k \leq r}$. The latter are moreover obtained  as solutions of the following linear system of size $L$:
\begin{equation}\label{eq:betas_linear_sys}
        \left( \boldsymbol{\mathrm{H}} + \frac{\eta L}{4\pi^2}  \boldsymbol{\mathrm{I}}_L \right)\boldsymbol{\beta} = \boldsymbol{z},
    \end{equation}
where $\boldsymbol{\beta} \in \mathbb{R}^L$ and $\boldsymbol{z} \in \mathbb{R}^L$ are vectorized versions of the coefficients $(\beta_{ijk})_{1 \leq i \leq n, \ 1\leq j \neq k \leq r}$ and data $(w_{ij}w_{ik})_{1 \leq i \leq n, \ 1\leq j \neq k \leq r}$ respectively. In practice, this vectorization is performed by mapping the multi-index triplet  $(i,j,k)$ to a single index $\ell$ as follows: 
\begin{equation}
\ell=(i-1)r(r-1) + (k-1)(r-1) + j - \bm{1}\{j>k\}, \quad i\in \llbracket 1, n\rrbracket, \; k\in \llbracket 1, r\rrbracket, \; j\in \llbracket 1, r\rrbracket\backslash\{k\},
        \label{eq:vectorization_index_map}
\end{equation}
where $\llbracket 1, n\rrbracket=\{1, \ldots, n\}\;\forall n\in\mathbb{N}$ and $\bm{1}\{j>k\}=1$ if $j>k$ and 0 otherwise. This vectorization can easily be performed in practice by filling the off-diagonal terms of $n$ matrices with size $r\times r$ with the coefficients/data (\emph{e.g.}, $\beta_{ijk}$ is put in the $i$-th $r\times r$ matrix, at row $j$ and column $k$), and then considering the vector formed by stacking vertically flattened versions of the matrices, obtained by stacking for each matrix the $r$ columns on top of one another and skipping the diagonal terms. This process is of course reversible, and the map to obtain the multi-index triplets $(i,j,k)$ from the index $\ell$ can be shown to be given by: 
\begin{equation}
\begin{dcases}
i=1 + (\ell\sslash (r(r-1)))\\
k= 1 + (\ell\remainder (r(r-1)))\sslash(r-1)\\
h= (\ell\remainder (r(r-1)))\remainder(r-1) \\ 
j=h + \bm{1}\{h \geq k\}
\end{dcases}, \qquad \ell \in \llbracket 1, L\rrbracket,
\label{eq:vectorization_index_inverse_map}
\end{equation}
where $\sslash$ and $\remainder$ are the \emph{floor-divide} and \emph{remainder} operators defined as:
$$n\sslash m= \left\lfloor n/m \right\rfloor, \qquad n\remainder m= n - \left\lfloor n/m \right\rfloor, \qquad \forall (n,m)\in\mathbb{N}\times \mathbb{N}.$$
With the vectorization scheme \eqref{eq:vectorization_index_map} and \eqref{eq:vectorization_index_inverse_map}, it is possible to show that the matrix $\boldsymbol{\mathrm{H}} \in \mathbb{R}^{L \times L}$ is given by 
\begin{equation}
    \boldsymbol{\mathrm{H}} =\boldsymbol{\mathrm{S}}\left[\begin{array}{ccc}\boldsymbol{\mathrm{J}}_{11}\otimes \boldsymbol{\mathrm{J}}_{11} & \cdots &  \boldsymbol{\mathrm{J}}_{1n}\otimes \boldsymbol{\mathrm{J}}_{1n}\\
    \vdots & \ddots & \vdots\\\boldsymbol{\mathrm{J}}_{n1}\otimes \boldsymbol{\mathrm{J}}_{n1} & \cdots &  \boldsymbol{\mathrm{J}}_{nn}\otimes \boldsymbol{\mathrm{J}}_{nn} \end{array}\right]\boldsymbol{\mathrm{S}}^T
    = \boldsymbol{\mathrm{S}}\left(\boldsymbol{\mathrm{J}}\ast \boldsymbol{\mathrm{J}}\right) \boldsymbol{\mathrm{S}}^T,
    \label{eq:factorization_H}
\end{equation}
where $\otimes$ and $\ast$ denote the \emph{Kronecker} and \emph{block Kronecker} or \emph{Khatri-Rao} products respectively, and: 
\begin{itemize}
    \item $\boldsymbol{\mathrm{S}}\in\mathbb{R}^{L\times nr^2}$ is a subsampled $nr^2$ identity matrix,   where the rows with indices given by $m=(i-1)r^2 + (k-1)r + k, \, i\in \llbracket 1, n\rrbracket, \; k\in \llbracket 1, r\rrbracket$ have been removed.
    \item  $\{\boldsymbol{\mathrm{J}}_{pq}\in\mathbb{R}^{r\times r}, \, (p,q)\in \llbracket 1, n\rrbracket^2\}$ are Gram matrices, with entries given by: 
    \begin{equation}(\boldsymbol{\mathrm{J}}_{pq})_{gh}=\psi_\nu^\varepsilon ( \langle u_{pg} ,u_{qh}\rangle), \qquad \forall (g,h)\in \llbracket 1, r\rrbracket^2.
    \label{eq:submatrix_factor}
    \end{equation}
    \item  $\boldsymbol{\mathrm{J}}\in \mathbb{R}^{nr\times nr}$ is a block matrix given by: 
    $$\boldsymbol{\mathrm{J}}= \left[\begin{array}{ccc}\boldsymbol{\mathrm{J}}_{11} & \cdots &  \boldsymbol{\mathrm{J}}_{1n}\\
    \vdots & \ddots & \vdots\\\boldsymbol{\mathrm{J}}_{n1} & \cdots &  \boldsymbol{\mathrm{J}}_{nn} \end{array}\right]. $$
\end{itemize}
Notice moreover that the submatrices $\boldsymbol{\mathrm{J}}_{pq}$ defined in \eqref{eq:submatrix_factor} are \emph{sparse}\footnote{In practice we observe that fewer than 9 \% of the entries of $\boldsymbol{\mathrm{J}}$ are significantly different from zero.}. Indeed, the Matérn zonal function $\psi_\nu^\varepsilon$ is such that $\psi_\nu^\varepsilon(\cos{\theta})\simeq 0$ when $ |\theta|\geq 3\arccos{\varepsilon}$ \cite[Section 5.3.1]{simeoni2021functional}, and hence the entries of the submatrices $\boldsymbol{\mathrm{J}}_{pq}$ are zero whenever the angle $\theta_{pg,qh}=\arccos{\langle u_{pg} ,u_{qh}\rangle}$ is sufficiently large, \emph{i.e.}, the sampling locations $u_{pg} ,u_{qh}$ are sufficiently far apart on the sphere. In practice, we can leverage this sparsity and the block-Kronecker structure to compute \eqref{eq:factorization_H} efficiently and represent it as a sparse matrix with a relatively low memory footprint. Our procedure is as follows: 
\begin{enumerate}
    \item Compute the submatrices defined in \eqref{eq:submatrix_factor} and sparsify them by setting to zeros all the entries smaller than $0.01 \times \psi_\nu^\varepsilon(1)$. 
    \item Build $\boldsymbol{\mathrm{J}}$ and store it as a sparse matrix \cite{shaikh2015efficient}. 
    \item Compute in a multi-threaded fashion the sparse block Kronecker product $\boldsymbol{\mathrm{J}}\ast \boldsymbol{\mathrm{J}}$. 
    \item Form $\boldsymbol{\mathrm{H}}$ by multiplying the sparse output of the block Kronecker product by $\boldsymbol{\mathrm{S}}$ and  $\boldsymbol{\mathrm{S}}^T$ from the left and right respectively.
\end{enumerate}
These steps are performed in practice by leveraging the \texttt{sparse} \cite{abbasi2018sparse} and \texttt{dask} \cite{rocklin2015dask} Python libraries, used respectively for sparse and distributed  computations. 
Note that without this memory- and compute-efficient procedure, computing $\boldsymbol{\mathrm{H}}$ in practice could quickly become intractable, due to its high dimensionality. For $n=128$ and $r=64$ already, the matrix $\boldsymbol{\mathrm{H}}$ with size $524'288\times 524'288$ would require TFlops/TBytes of computation/memory to be computed/stored as a dense matrix. Leveraging the sparse and block Kronecker structure allows us to bring down this computation/memory footprint by  two to three orders of magnitude approximately, \emph{i.e.}, GFlops/GBytes of computation/memory required.

%Computation cost (dense): (nr^2)^2 * 10 = 2 TFlops
%Memory cost (dense): (nr^2)^2 * 8 = 2 TBytes
%Computation cost (sparse + Kronecker): 0.01 * (nr^2)^2 * 5 = 2 GFlops
%Memory cost (sparse + Kronecker): 0.01 * (nr^2)^2 * 8 = 2 GBytes

Upon calculation of the matrix $\boldsymbol{\mathrm{H}}$, we solve \eqref{eq:betas_linear_sys} by means of conjugate gradient descent. Our implementation leverages the routine \texttt{cg()} from Scipy's module \texttt{scipy.sparse.linalg}~\cite{virtanen2020scipy}, which is compatible with sparse matrices. For the simulation setup under consideration, solving \eqref{eq:betas_linear_sys} numerically took on the order of a few dozen seconds.

\paragraph{Accuracy and Selection of the Penalty Parameter.}
We assess the accuracy of our estimator by means of \emph{4-fold cross-validation}. More specifically, we shuffle the data vector $\mathbf{z}$ by applying a random permutation to the latter, split it in 4 groups of equal size, train the model on three of these groups and compute the \emph{out-of-sample mean squared error (MSE)} on the last group. We moreover loop (in parallel) through every combination of training/test configurations and consider the average MSE score over all configurations, yielding the final cross-validation score. We investigate different values of penalty parameters in the range $[1, 6]$ and select the one yielding an estimator with minimal cross-validation score, that is $\tilde{\eta}\simeq 2.363$. The cross-validation scores of the various estimators are plotted in Figure~\ref{fig:cv_score}.

\begin{figure}[t!]
\centering
\includegraphics[width=0.6\linewidth]{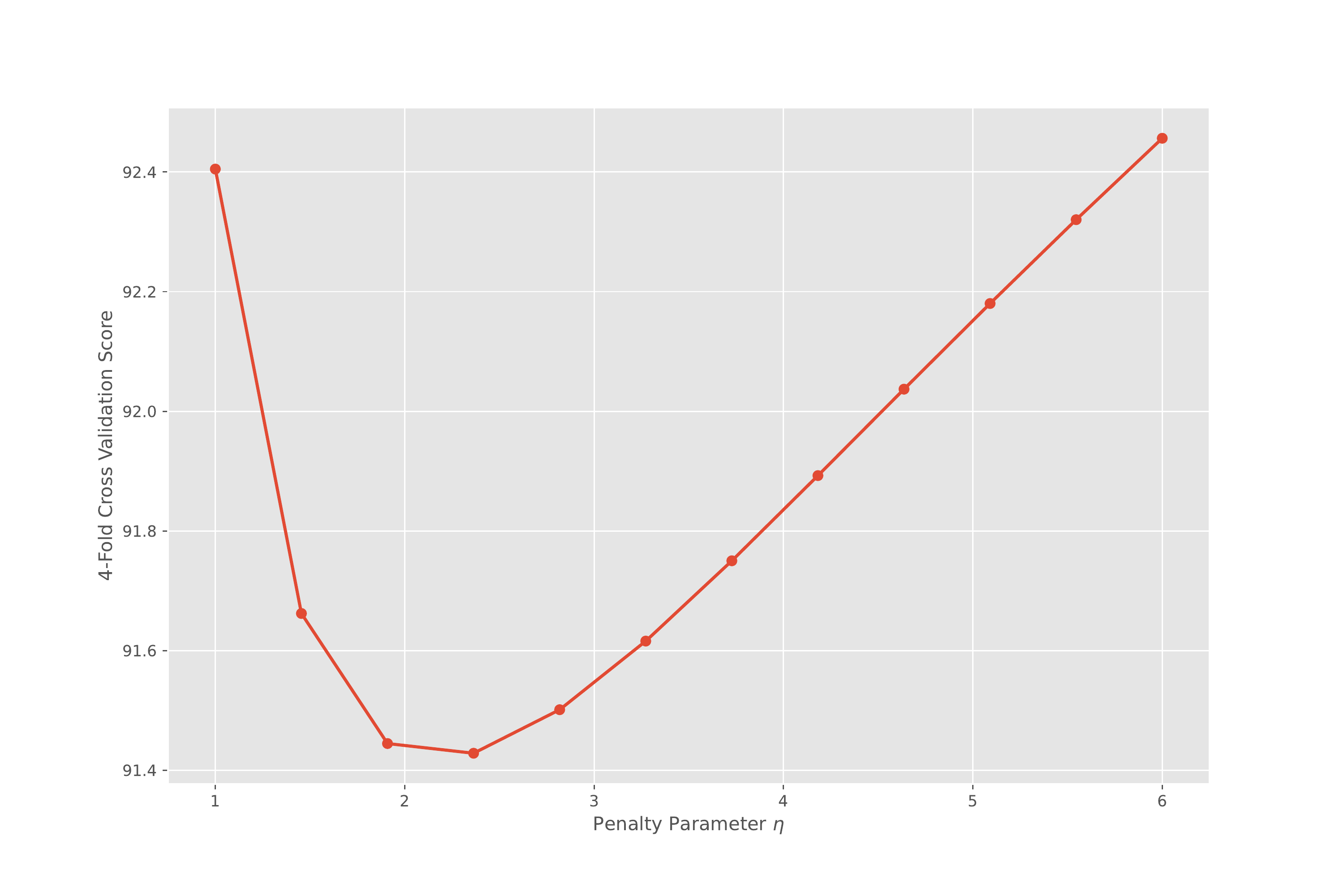}
\caption{4-fold Cross-validation score of various estimates $R_\eta$ for $\eta\in[1, 6]$.}
\label{fig:cv_score}
\end{figure}

\paragraph{Results and Discussion.}
\begin{figure}[p!]
\centering
\subfloat[][Diagonal slice $u\mapsto R_{\tilde{\eta}}(u,u), \, u \in \mathbb{S}^2$ of the second-order moment estimator. Negative values in the field are marked by a semi-transparent overlay. The locations of the sources in  \eqref{eq:sparse_random_filed} are overlaid as black scatters.]{
\includegraphics[width=0.65\linewidth]{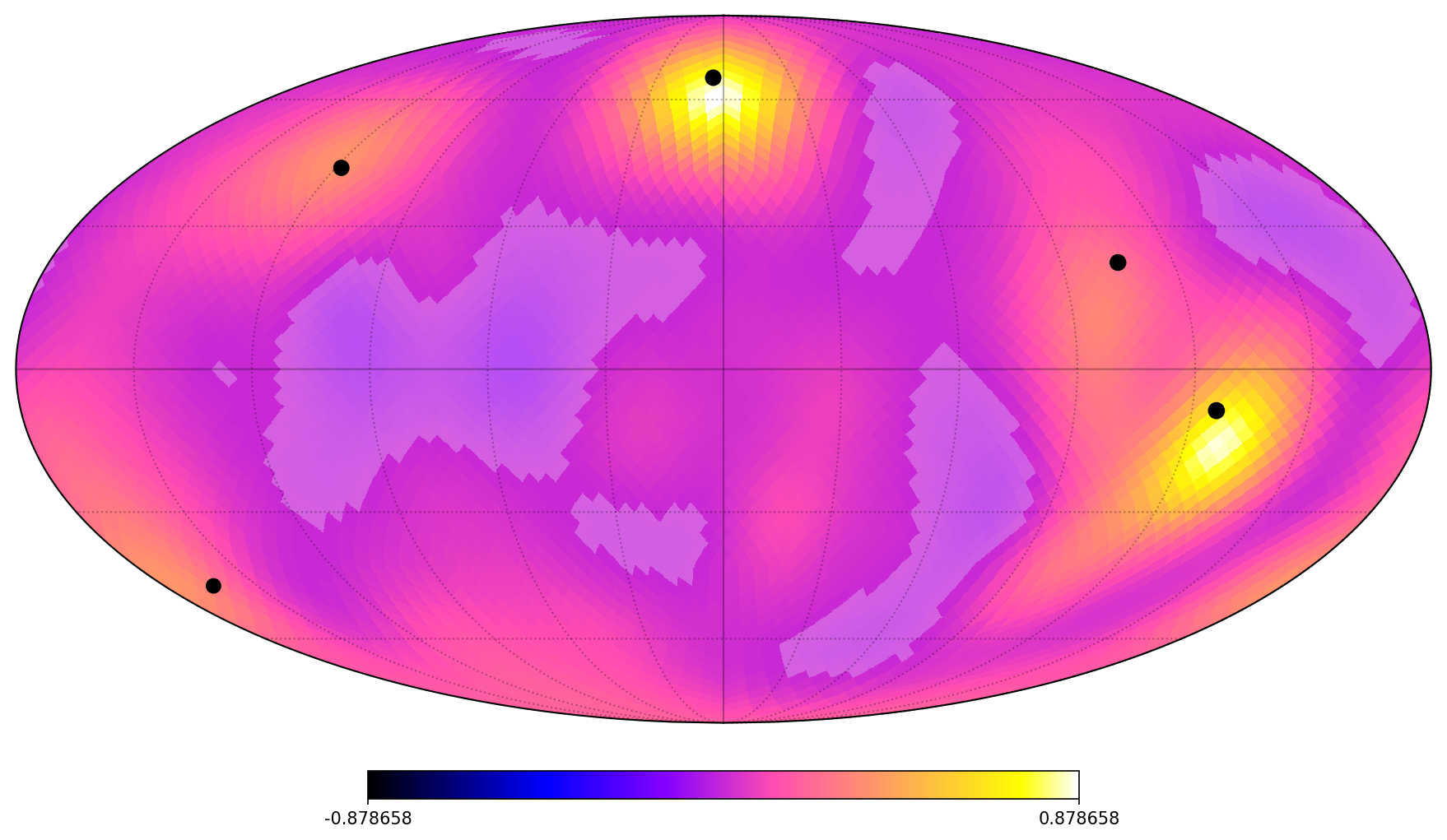}
\label{subfig:intensity_est}
}

\subfloat[][Slices $u\mapsto R_{\tilde{\eta}}(u,u_p), \, u \in \mathbb{S}^2$ of the second-order moment estimator for 12 points $\{u_1, \ldots, u_P\}$ (overlaid as black crosses).]{
\includegraphics[width=0.9\linewidth]{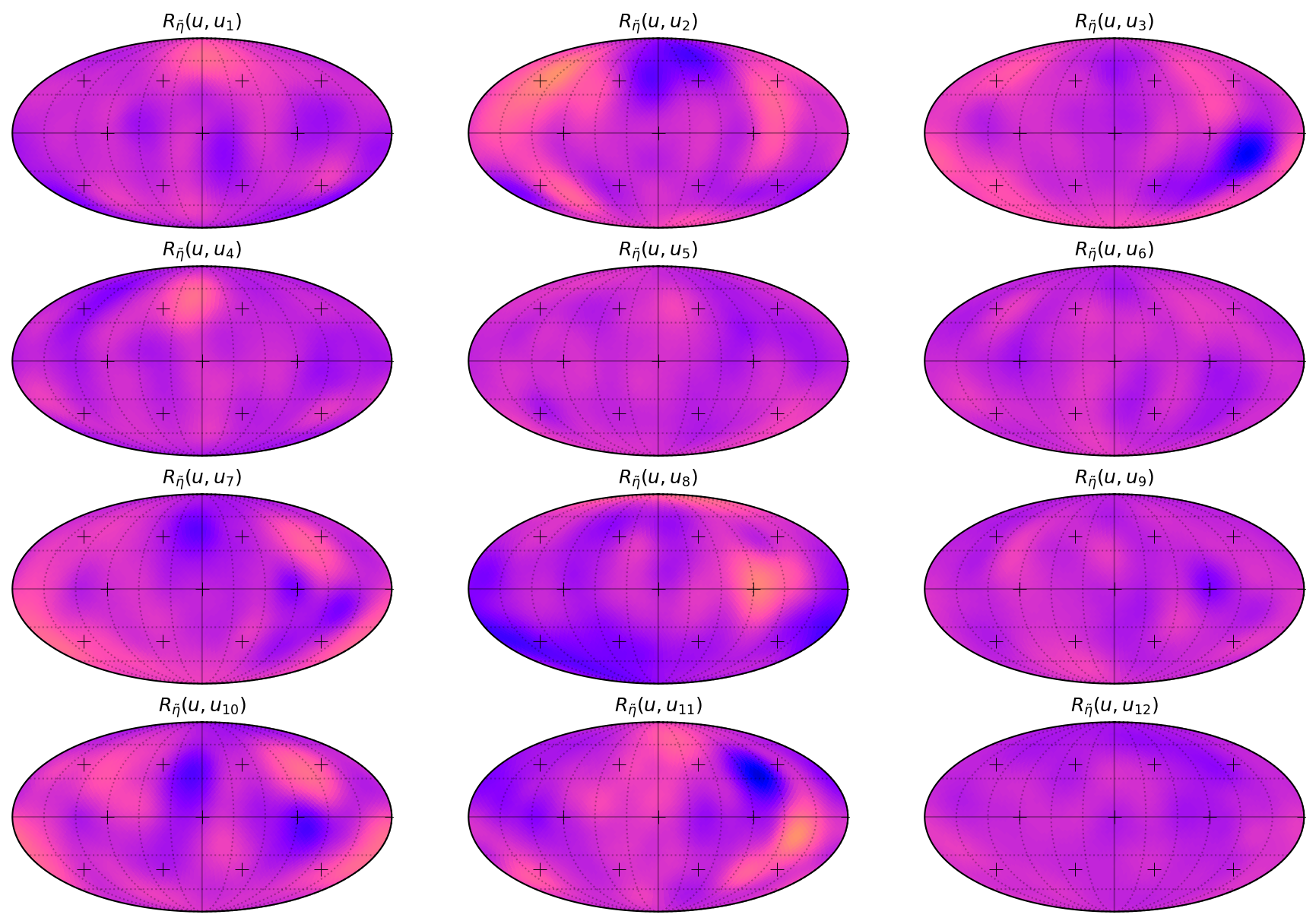}
}
\caption{Mollweide projections of second-order moment estimator slices $R_{\tilde{\eta}}:\mathbb{S}^2\times\mathbb{S}^2\to \mathbb{R}$. }
\label{fig:est_cov_kernel}
\end{figure}

 We plot in Figure \ref{fig:est_cov_kernel}  slices of the second-order moment estimator $R_{\tilde{\eta}}$ corresponding to the optimal penalty parameter value $\tilde{\eta}\simeq 2.363$ determined by cross-validation. A comparison of Figures \ref{fig:gt_cov_kernel}  and \ref{fig:est_cov_kernel} reveals that the estimate $R_{\tilde{\eta}}$ approximates fairly well the actual  second-order function $R$. Indeed, most of the main features of the variance/covariance  maps in Figure \ref{fig:gt_cov_kernel} are still clearly visible in Figure \ref{fig:est_cov_kernel}, although slightly blurred and/or less contrasted. One issue with the estimator however is that it is \emph{not} positive definite, which can be problematic if the latter is used in a kriging context. As a matter of fact, the diagonal $R_{\tilde{\eta}}(u,u)$ is not even always positive, as can be seen in Figure \ref{subfig:intensity_est} where negative values in the field are marked by a semi-transparent overlay. This phenomenon is likely to arise in practical setups with finite sample sizes, since the positive definiteness of our estimator is only guaranteed asymptotically (see Theorems \ref{theo:Resti} and \ref{th::R}). One potential remedy consists in projecting the estimator $R_{\tilde{\eta}}$ on the positive semi-definite cone. This can be achieved approximately in practice by discretizing $R_{\tilde{\eta}}$ on a fine (quasi-)uniform spherical grid~\cite{simeoni2020functional} (\emph{e.g.}, the HEALPix grid \cite{gorski2005healpix}), and setting the negative eigenvalues of the discretized operator to zero. This however, requires computing the eigenvalue decomposition of a potentially very high dimensional operator, which can reveal very computationally and memory intensive. Figure \ref{fig:est_cov_kernel_pd} shows the projection $R^+_{\tilde{\eta}}$ of the  estimator $R_{\tilde{\eta}}$ onto the positive semi-definite cone. The  effect of the projection onto the estimate is mostly visible onto the variance map, which no longer exhibits negative regions and aligns slightly better with the underlying sources.  
 
 \begin{figure}[p!]
\centering
\subfloat[][Diagonal slice $u\mapsto R^+_{\tilde{\eta}}(u,u), \, u \in \mathbb{S}^2$ of the projected second-order moment estimator. The locations of the sources in  \eqref{eq:sparse_random_filed} are overlaid as black scatters.]{
\includegraphics[width=0.65\linewidth]{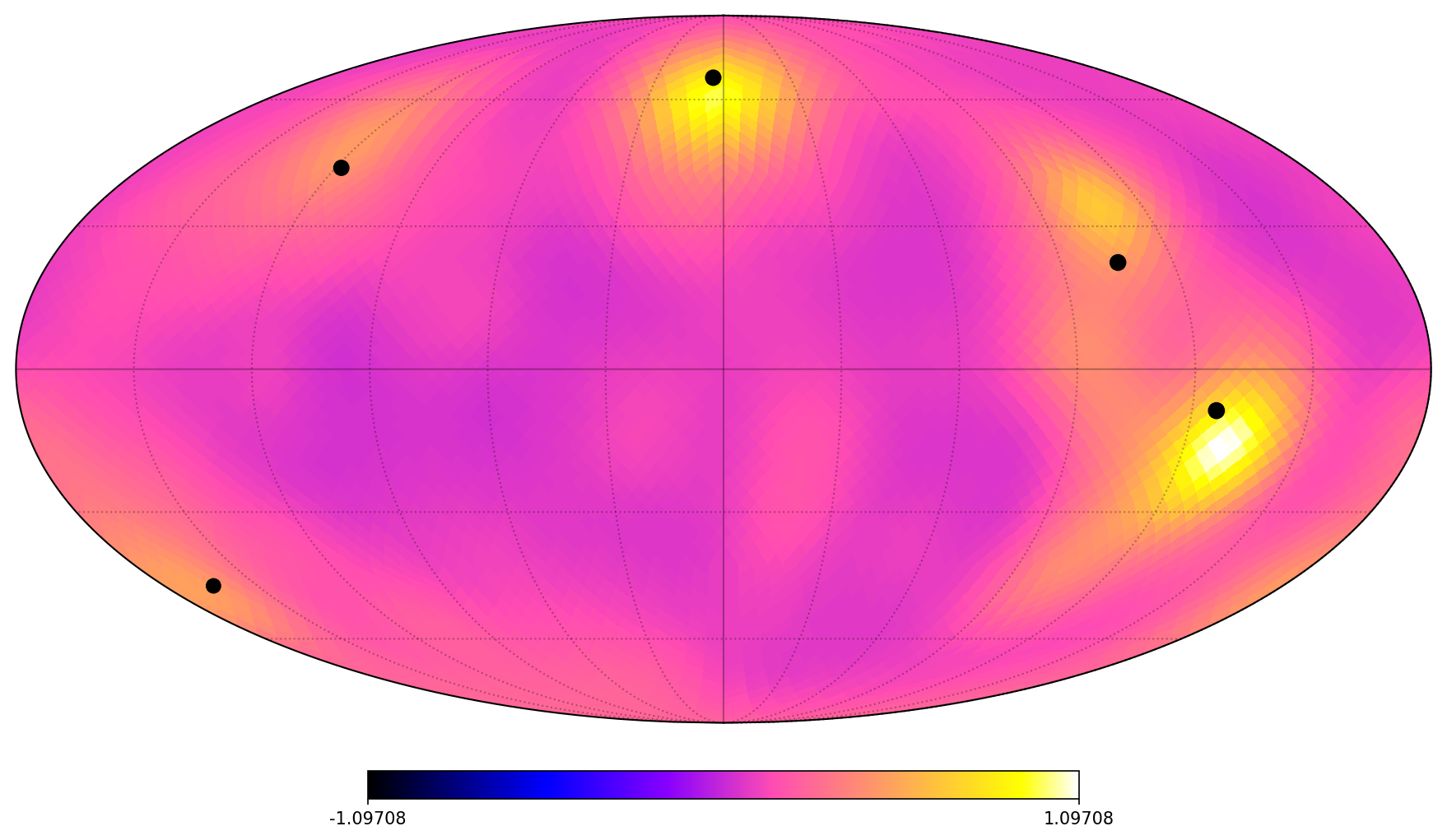}
\label{subfig:intensity_est_proj}
}

\subfloat[][Slices $u\mapsto R^+_{\tilde{\eta}}(u,u_p), \, u \in \mathbb{S}^2$ of the projected second-order moment estimator for 12 points $\{u_1, \ldots, u_P\}$ (overlaid as black crosses).]{
\includegraphics[width=0.9\linewidth]{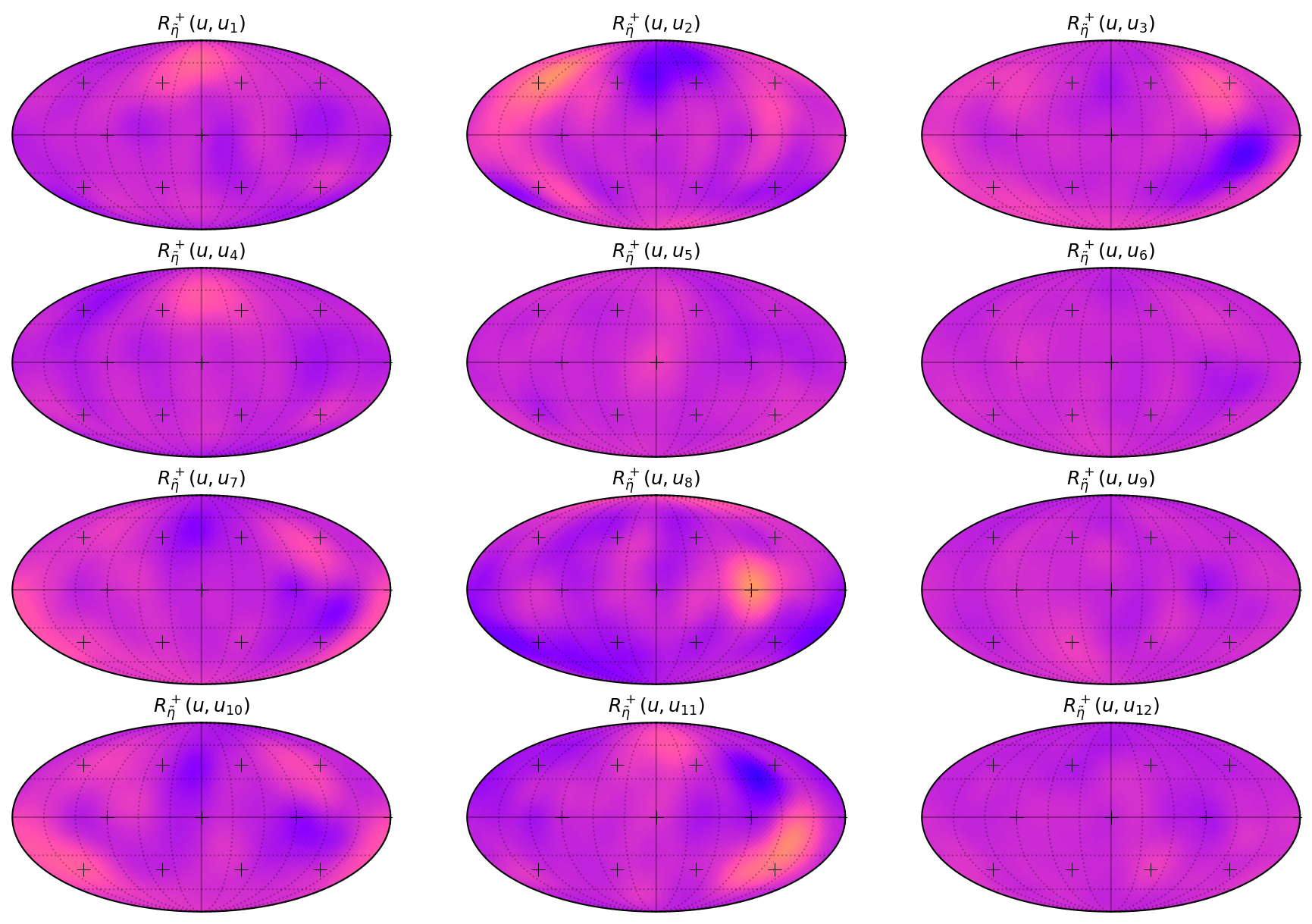}
}
\caption{Mollweide projections of the projected second-order moment estimator slices $R^+_{\tilde{\eta}}:\mathbb{S}^2\times\mathbb{S}^2\to \mathbb{R}$. }
\label{fig:est_cov_kernel_pd}
\end{figure}

\section{Extension to the Serially Dependent Case}\label{sec:temporal}

This section provides an extension of the previous results in the presence of serial dependence. We thus relax the assumption of independence among the $X_1,\dots,X_n$ and we introduce serial dependence, assuming that the indices represent \emph{time}. In doing so, we assume that the resulting temporal sequence is time-stationary (\emph{i.e.}, a stationary functional time series).

Consider the collection of second order processes $X_t=\{X_t(u), \, u \in \mathbb{S}^2\}$, $t \in \mathbb{Z}$, which are also random elements of $\Hq$, $q >1$. 
Throughout, we assume that the sequence $\X=\{X_t, \, t \in \mathbb{Z}\}$ is \emph{strictly stationary}: for any
finite set of indices $I \subset \mathbb{Z}$ and any $h \in  \mathbb{Z}$, the joint law of $\{X_t, \, t \in I \}$ coincides with that of $\{X_{t+h}, \, t \in I \}$.
Then, the first and second order properties of $\mathcal{X}$ are summarized by
$$
\mathbb{E}[X_t (u)] = \mu(u), \qquad  \mathbb{E}[X_{t+\lag}(u)X_t(v)] = R_\lag(u,v),
$$
$$
C_\lag(u,v) = R_\lag(u,v) - \mu(u)\mu(v),
$$
for $u,v \in \mathbb{S}^2$, respectively the mean function, the second-order moment function at \textit{lag} $\lag \in \mathbb{Z}$ and autocovariance function at \textit{lag} $\lag \in \mathbb{Z}$.

If $\mathbb{E}\|X_0\|_\rangleL^k < \infty$ (or $\mathbb{E} \| X_0 \|^k_\Hq< \infty$) for $k \ge 1$, the $k$-th order cumulant kernel is well defined in a $L^2$ sense (or pointwise)
$$
\operatorname{Cum} [X_{h_1}(u_1), \dots,X_{h_{k}}(u_{k})] = \sum_{\pi} (-1)^{|\pi|-1} (|\pi|-1)! \prod_{B \in \pi} \mathbb{E}\left[ \prod_{j \in B} X_{h_j}(u_j)\right],
$$
where the first sum is taken over the list of all unordered partitions of $\{1,\dots,k\}$, and $|\pi|$ is the cardinality of the partition $\pi$.
For $k\ge 1$, we can also define the $2k$-th order cumulant operator $\CL_{h_1,\dots,h_{2k-1}}: L^2( (\Sph^2)^k) \to L^2((\Sph^2)^k)$ as
\begin{align*}
    (\CL_{h_1,\dots,h_{2k-1}} f)(u_1,\dots,u_k)= \int_{(\Sph^2)^k} &\operatorname{Cum} [X_{h_1}(u_1), \dots,X_{h_{2k-1}}(u_{2k-1}), X_0(u_{2k})]\\
    &\times f(u_{k+1}, \dots, u_{2k}) du_{k+1}\cdots du_{2k},
\end{align*}
and the $(2k+1)$-th order cumulant operator $\CL_{h_1,\dots,h_{2k}}: L^2((\Sph^2)^{k+1}) \to L^2((\Sph^2)^{k})$ as 
\begin{align*}
    (\CL_{h_1,\dots,h_{2k}} f)(u_1,\dots,u_k)= \int_{(\Sph^2)^{k+1}} &\operatorname{Cum} [X_{h_1}(u_1), \dots,X_{h_{2k}}(u_{2k}), X_0(u_{2k+1})]\\
    &\times f(u_{k+1}, \dots, u_{2k+1}) du_{k+1}\cdots du_{2k+1}.
\end{align*}

Cumulant kernels and operators in $L^2$ spaces were first introduced in \cite{Panaretos2} for even orders. We extended the definition to odd-order cumulants and we also give an alternative definition in the Sobolev spaces described in Section \ref{sec:background}.

Denote with $\otimes^k \Hq$ the tensor product space of $k$ copies of $\Hq$. 
Clearly, $\otimes^1 \Hq= \Hq$ and  $\otimes^2 \Hq=\mathbb{H}_q$. The inner product $\langle \cdot, \cdot \rangle_{\otimes^{k} \Hq}$ and the norm $\|\cdot \|_{\otimes^{k} \Hq}$ are defined as natural extension of \eqref{Hp} and \eqref{Hprod}.

If $K$ is the reproducing kernel of $\Hq$, we can define $\CHq_{h_1,\dots,h_{2k-1}}: \otimes^k \Hq \to \otimes^k \Hq$ as the operator such that
\begin{align*}
&\operatorname{Cum} [X_{h_1}(u_1), \dots,X_{h_{2k-1}}(u_{2k-1}), X_0(u_{2k})] \\=& 
    \langle \CHq_{h_1,\dots,h_{2k-1}} K(\cdot, u_{k+1})\otimes \cdots \otimes K(\cdot, u_{2k}), K(\cdot, u_{1})\otimes \cdots \otimes K(\cdot, u_{k})  \rangle_{\otimes^k \Hq},
\end{align*}
and $\CHq_{h_1,\dots,h_{2k}}:\otimes^{k+1} \Hq \to \otimes^k \Hq$ such that
\begin{align*}
&\operatorname{Cum} [X_{h_1}(u_1), \dots,X_{h_{2k}}(u_{2k}), X_0(u_{2k+1})] \\=& 
    \langle \CHq_{h_1,\dots,h_{2k}} K(\cdot, u_{k+1})\otimes \cdots \otimes K(\cdot, u_{2k+1}), K(\cdot, u_{1})\otimes \cdots \otimes K(\cdot, u_{k})  \rangle_{\otimes^{k} \Hq}.
\end{align*}
This mapping can be extended to every $f_1,\dots,f_{2k+1} \in \Hq$, so that
\begin{align*}
&\operatorname{Cum} [\langle X_{h_1}, f_1 \rangle_\Hq, \dots , \langle X_{h_{2k-1}}, f_{2k-1} \rangle_\Hq,  \langle X_0, f_{2k} \rangle_\Hq ] \\=& 
    \langle \CHq_{h_1,\dots,h_{2k-1}} f_{k+1} \otimes \cdots \otimes f_{2k}, f_1 \otimes \cdots \otimes f_k  \rangle_{\otimes^k \Hq},
\end{align*}
and
\begin{align*}
&\operatorname{Cum} [\langle X_{h_1}, f_1 \rangle_\Hq, \dots , \langle X_{h_{2k}}, f_{2k} \rangle_\Hq,  \langle X_0, f_{2k+1} \rangle_\Hq ] \\=& 
    \langle \CHq_{h_1,\dots,h_{2k}} f_{k+1}\otimes \cdots \otimes f_{2k+1}, f_1 \otimes \cdots \otimes f_k  \rangle_{\otimes^{k} \Hq}.
\end{align*}

We can define the Hilbert-Schmidt and trace norms of $\CHq_{h_1,\dots,h_{2k-1}}$ and $\CHq_{h_1,\dots,h_{2k}}$ as usual for operators defined in Hilbert spaces, considering the fact that the set
$$\left\{\frac{Y_{\ell_1, m_1}}{(1+\ell_1(\ell_1+1))^{q/2}} \otimes \cdots \otimes \frac{Y_{\ell_k,m_k}}{(1+\ell_k(\ell_k+1))^{q/2}}\right\}_{\ell_1,\dots, \ell_k, m_1,\dots,m_k}$$
forms an orthonormal basis for $(\otimes^k \Hq, \|\cdot\|_{\otimes^k \Hq})$. In particular, the trace norms will be denoted by $\|\CHq_{h_1,\dots,h_{2k-1}}\|_{\operatorname{TR}, \otimes^k \Hq}$ and $\|\CHq_{h_1,\dots,h_{2k}}\|_{\operatorname{TR}, \otimes^{k+1} \Hq}$.

There is a link between the operators defined in $L^2$ and the operators defined in $\Hq$. For instance, for the $2k$-th order cumulants we have
\begin{align*}
&  \operatorname{Cum} [\langle X_{h_1}, \Y_{\ell_1,m_1} \rangle_\rangleL,\dots, \langle X_{h_k}, \Y_{\ell_k,m_k} \rangle_\rangleL, \langle X_{h_{k+1}}, \Y_{\ell_1,m_1} \rangle_\rangleL, \dots, \langle X_0, \Y_{\ell_k,m_k} \rangle_\rangleL] \\=& \frac{1}{\prod_{i=1}^k (1+\ell_i (\ell_i+1))^{2q}} \operatorname{Cum} [\langle X_{h_1}, \Y_{\ell_1,m_1} \rangle_\Hq,\dots, \langle X_{h_k}, \Y_{\ell_k,m_k} \rangle_\Hq, \langle X_{h_{k+1}}, \Y_{\ell_1,m_1} \rangle_\Hq, \dots, \langle X_0, \Y_{\ell_k,m_k} \rangle_\Hq],
\end{align*}
which implies
\begin{align}\label{eq::covops}
& \langle \CL_{h_1,\dots,h_{2k-1}} \Y_{\ell_{k+1},m_{k+1}} \otimes \dots \otimes \Y_{\ell_{2k},m_{2k}},  \Y_{\ell_{1},m_{1}} \otimes \dots \otimes \Y_{\ell_{k},m_{k}} \rangle_{L^2((\Sph^2)^k)} \\=& \frac{1}{\prod_{i=1}^k (1+\ell_i (\ell_i+1))^{2q}} \langle \CHq_{h_1,\dots,h_{2k-1}} \Y_{\ell_{k+1},m_{k+1}} \otimes \dots \otimes \Y_{\ell_{2k},m_{2k}},  \Y_{\ell_{1},m_{1}} \otimes \dots \otimes \Y_{\ell_{k},m_{k}} \rangle_{\otimes^k \Hq}. \notag
\end{align}
Analogous properties hold if we equip $\otimes^k \Hp$ with $\|\otimes^k \mathscr{D} \cdot\|_{L^2((\Sph^2)^k)}$.

Consider now the regression problem
$$
\W_{tj} = X_t(U_{tj}) + \epsilon_{tj}, \qquad j =1,\dots,r_t, \, t =1,\dots,n,
$$
where the the $U_{tj}$'s are independently drawn from a common distribution on $\mathbb{S}^2$, and the $\epsilon_{tj}$'s are independent and identically distributed measurement errors of mean 0 and variance $0<\sigma^2<\infty$. As before, the process $\X=\{X_t, \, t \in \mathbb{Z}\}$, the measurement locations and the measurement errors are assumed to be mutually independent.
Then,
$$
\mathbb{E}[\W_{tj}|U_{tj}=u_{tj}] = \mu(u_{tj}),
$$
and
$$
\mathbb{E}[\W_{t+\lag,j}\W_{tk}|U_{t+\lag,j}=u_{t+\lag,j}, U_{tk}=u_{tk}] = \begin{cases}
R_{\lag}(u_{t+\lag,j}, u_{tk}) \quad \text{if } \lag \ne 0\\
R_0(u_{tj}, u_{tk}) + \sigma^2\delta_j^k \qquad  \text{if }   \lag = 0
\end{cases} .
$$

The estimators for $\mu$ and $R_0$ are defined as in Equations \eqref{eq::mu-est} and \eqref{eq::R-est}, respectively.
In addition here, we define the estimators of the lag-$h$ autocovariance kernels for $h > 0$, as follows. For  $\lag = 1,\dots,n-1$, we first compute
%\begin{equation}\label{eq::Rh-est}
%    R_{\lag;\pen} := \argmin_{g \in \Hprod}  \frac{(4\pi)^2}{(n-\lag)r^2} \sum_{t=1}^{n-\lag} \sum_{j=1}^r \sum_{k=1}^r (\W_{t+\lag,j}\W_{tk}  - g(U_{t+\lag,j}, U_{tk}))^2 + \pen \|g\|^2_\rangleHH.
%\end{equation}
\begin{equation}\label{eq::Rh-est} %N_h?
    R_{\lag;\pen} := \argmin_{g \in \Hprod}  \frac{(4\pi)^2}{n-\lag} \sum_{t=1}^{n-\lag} \frac{1}{r_t^2} \sum_{j=1}^{r_t} \sum_{k=1}^{r_t} (\W_{t+\lag,j}\W_{tk}  - g(U_{t+\lag,j}, U_{tk}))^2 + \pen \|(\mathscr{D}\otimes\mathscr{D})g\|^2_\rangleLL,
\end{equation}
and then we obtain
\begin{equation}\label{eq::Ch-est}
C_{\lag;\pen}(u,v) = R_{\lag;\pen}(u,v) - \mu_\pen(u)\mu_\pen(v).
\end{equation}
For $h <0$, we set $C_{\lag;\pen}: (u,v)\mapsto C_{-\lag;\pen} (v,u)$.
 Observe that the diagonal terms are not removed when $h\ne0$. 
%\begin{equation*}
%    F_{ \pen} (g) := \frac{(4\pi)^2}{nr(r-1)} \sum_{t=1}^n \sum_{1\le j \ne k \le r_i } (\W__{tj}\W__{tk}  - g(U_{tj}, U_{tk}))^2 + \pen \|g\|^2_\rangleHH,
%\end{equation*}
%if $\lag=0$, whence
%\begin{equation*}
%    F_{ \pen} (g) := \frac{(4\pi)^2}{(n-\lag)r^2} \sum_{t=1}^{n-\lag} \sum_{j=1}^r \sum_{k=1}^r (\W__{t+\lag,j}\W__{tk}  - g(U_{t+\lag,j}, U_{tk}))^2 + \pen \|g\|^2_\rangleHH,
%\end{equation*}
%if $\lag=1,\dots,n-1$.
Similarly to Theorem~\ref{theo:Resti}, we can obtain a representer theorem for the unique solution of \eqref{eq::Rh-est}. In particular, $R_{\lag;\pen}$ is a continuous function in $\Sph^2\times \Sph^2$ and is a $(\mathscr{D}\otimes\mathscr{D})$-spline.

%Summability of autocovariances and summability of cumulants up to the fourth order are common assumptions for mean and autocovariances estimation, respectively, see for instance \cite{Panaretos2, Panaretos, rubin2020}.

Differently from the $\text{i.i.d.}$ case, consistency and rates of convergence for the mean and autocovariance estimators can be proved with additional conditions in the time-domain.  For the mean, a form of functional weak dependence is assumed, \emph{i.e.}, summability of the autocovariance operators (in trace norm). For the autocovariance kernels this concept is extended up to the fourth-order cumulant operators. Note that cumulants mixing conditions arise naturally in this context, since we are essentially dealing with moment-based estimators. See for instance \cite{Panaretos, rubin2020} . %introduction to the cumulants of real random variables see Rosenblatt (1985)

\begin{definition}
Consider $p>2$, $1<q\le p$. Let $\piotemp$ be the collection of probability measures for stationary sequences of $\Hq$-valued processes such that for any $\X=\{X_t, \, t \in \mathbb{Z}\}$ with probability law $\mathbb{P}_\X \in \piotemp$
$$\sum_{\lag \in \mathbb{Z}}\|\CHq_\lag\|_{\operatorname{TR}, \Hq} \le M,\qquad
\|\mu\|^2_{\Hp} \le K,
$$
for some constants $M, K > 0$.
\end{definition}
\noindent Note that, for any $t\in \mathbb{Z}$,
$$\mathbb{E}\|X_t - \mu\|^2_{\Hq} = \|\CHq_0\|_{\operatorname{TR}, \Hq}.
$$

\begin{definition}
Consider $p>2$, $1<q\le p$.  Let $\pittemp$ be the collection of probability measures for stationary sequences of $\Hq$-valued processes such that for any $\X=\{X_t, \, t \in \mathbb{Z}\}$ with probability law $\mathbb{P}_\X \in \pittemp$
$$
\sum_{h_1\in \mathbb{Z}} \sum_{h_2 \in \mathbb{Z}} \sum_{h_3\in \mathbb{Z}} \|\CHq_{h_1,h_2, h_3} \|_{\operatorname{TR}, \Hqprod} \le L_1, \qquad \sum_{h_1\in \mathbb{Z}} \sum_{h_2 \in \mathbb{Z}} \|\CHq_{h_1,h_2} \|_{\operatorname{TR}, \Hqprod} \le L_2, \qquad  \sum_{\lag \in \mathbb{Z}} \|\CHq_\lag \|_{\operatorname{TR}, \Hq}  \le M,
$$
$$
 \sup_{\lag \in \mathbb{Z} }\|R_\lag\|^2_{\rangleHH} \le K_1, \qquad \|\mu\|^2_{\Hp} \le K_2,
$$
for some constants $L_1,L_2,M,K_1, K_2 > 0$.
\end{definition} %separare?
%\noindent Note also in this case we have
%$$\mathbb{E}\|X_t\|^4_{\Hq} = \|\mathbb{E}[X_0\otimes X_0 \otimes X_0 \otimes X_0] \|_{\operatorname{TR}, \Hqprod},
%$$
%for any $t \in \mathbb{Z}$.

\begin{remark}
A weaker but less interpretable condition for the autocovariances is to replace the summability of $\CHq_h, \CHq_{h_1,h_2}, \CHq_{h_1,h_2,h_3}$ with the summability of the operator associated with the kernel $\operatorname{Cov}[X_{h_1}(u)X_{h_2}(v), X_{h_3}(w) X_0(z)]$.
 Note also that, when the mean is zero, conditions on odd-order cumulants can be discarded. 
\end{remark}

Now we are able to state the analogue of Theorem \ref{th::mu} and Theorem \ref{th::R} in the time-dependent setting.
%In particular, for the autocovariance kernel estimators we obtain a \comm{uniform rate with respect to the time-lag.} 
Proofs are provided in Section~\ref{sec:proofs4}.

\begin{theorem}\label{th::mu-time}
Assume that the $U_{ij}, i=1,\dots,n, j=i,\dots,r_i$, are independent copies of $U\sim \operatorname{Unif}(\Sph^2)$. Let $p>2$ and $1<q\le p$, and consider the estimation problem in Equation \eqref{eq::mu-est} for an admissible operator $\mathscr{D}$ of spectral growth order $p$. If $\pen \asymp (n r)^{-p/(p+1)}$, then
\begin{equation*}
\lim_{D\to \infty} \limsup_{n\to\infty} \sup_{\mathbb{P}_\X \in \piotemp} \mathbb{P}(\| \mu_\pen  - \mu \|^2_\rangleL > D((n r)^{-p/(p+1)} + n^{-1}) ) = 0.
\end{equation*}
\end{theorem}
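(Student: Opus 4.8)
The plan is to follow the proof of Theorem~\ref{th::mu} in the i.i.d.\ case essentially line by line, isolating the single place where the independence of the $X_i$ was used and replacing it with the weak-dependence hypothesis $\sum_{\lag\in\mathbb{Z}}\|\CHq_\lag\|_{\operatorname{TR},\Hq}\le M$. First I would condition on the sampling locations $\U=\{U_{tj}\}$ and, using the representer theorem (Theorem~\ref{theo:meanesti}) together with the associated normal equations, decompose $\mu_\pen-\mu$ into (i) a deterministic \emph{bias} term measuring how well a Tikhonov-regularized $(\mathscr{D}^*\mathscr{D})$-spline in $\Hp$ can approximate $\mu\in\Hp$, (ii) a \emph{noise} term linear in the $\epsilon_{tj}$, and (iii) a \emph{field} term linear in the centered values $X_t(U_{tj})-\mu(U_{tj})$. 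As in the i.i.d.\ proof, the argument is carried out in the family of intermediate norms between $\|\cdot\|_\rangleL$ and $\|\mathscr{D}\cdot\|_\rangleL$ (cf.\ Remark~\ref{rmk::loc-distr} and \cite{odenreddy}), the assertion being the endpoint case $\|\cdot\|_\rangleL^2$.

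The bias term depends only on $\mu$ (through $\|\mu\|_\rangleH^2\le K$) and on $\eta$, and is bounded exactly as in the i.i.d.\ case, contributing $O(\eta)=O((nr)^{-p/(p+1)})$ under the prescribed choice of $\eta$. Likewise, the concentration of the empirical sampling operator $f\mapsto\frac{4\pi}{n}\sum_t\frac1{r_t}\sum_j f(U_{tj})\,\green^{\mathscr{D}^*\mathscr{D}}_{U_{tj}}$ around its expectation (the identity, since $U\sim\operatorname{Unif}(\Sph^2)$), as well as the high-probability event on which this operator is invertible with controlled norm, use only that the $U_{tj}$ are i.i.d.\ and independent of $\X$ and $\{\epsilon_{tj}\}$; this remains true here, so those steps (and the associated use of Lemma~\ref{lemma::sup}) transfer verbatim. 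The noise term is also unchanged: the $\epsilon_{tj}$ are i.i.d.\ with variance $\sigma^2$, so its conditional second moment is of the same order $O(\eta^{-1/p}(nr)^{-1})=O((nr)^{-p/(p+1)})$ as before, with $r$ the harmonic mean of the $r_t$.

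The only term that genuinely requires a new argument is the field term. Writing $Z_t:=\frac1{r_t}\sum_{j=1}^{r_t}(X_t(U_{tj})-\mu(U_{tj}))\,\green^{\mathscr{D}^*\mathscr{D}}_{U_{tj}}$, the quantity to control is $\mathbb{E}\bigl\|\tfrac1n\sum_{t=1}^n Z_t\bigr\|^2$ in the relevant intermediate norm, which expands as $\frac1{n^2}\sum_{t=1}^n\sum_{t'=1}^n\mathbb{E}\langle Z_t,Z_{t'}\rangle$. Conditioning on $\U$, each $\mathbb{E}[\langle Z_t,Z_{t'}\rangle\mid\U]$ is a bilinear form in the lag-$(t-t')$ autocovariance kernel $C_{t-t'}$; after taking expectation over the locations this bilinear form is dominated by $\|\CHq_{t-t'}\|_{\operatorname{TR},\Hq}$, using the relation \eqref{eq::covops} (with $k=1$) between the $L^2$-level operator $\CL_\lag$ and the $\Hq$-level operator $\CHq_\lag$, the RKHS bound of Proposition~\ref{prop:RKHSprop} to pass to pointwise/uniform control, and the boundedness of the Green's functions (valid since $p>1$). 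Summing, $\frac1{n^2}\sum_{t,t'}\|\CHq_{t-t'}\|_{\operatorname{TR},\Hq}\le\frac1n\sum_{h\in\mathbb{Z}}\|\CHq_h\|_{\operatorname{TR},\Hq}\le M/n$, so weak dependence plays the role that independence (leaving only the single term $\CHq_0$) played before, at the cost of the constant $M$; the field term is therefore $O(n^{-1})$.

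Collecting the three contributions, together with the negligible probability of the bad event for the empirical sampling operator, yields $\|\mu_\pen-\mu\|_\rangleL^2\le D((nr)^{-p/(p+1)}+n^{-1})$ on an event whose probability can be made $\ge 1-\epsilon$ by taking $D=D(\epsilon)$ and $n$ large, uniformly over $\piotemp$, which is the claimed statement. I expect the main obstacle to be the bookkeeping in the double-sum variance expansion: one must verify that the bilinear-form bound by $\|\CHq_h\|_{\operatorname{TR},\Hq}$ holds uniformly in $h$ with a constant independent of the (possibly heterogeneous) $r_t$, so that the reduction $\frac1{n^2}\sum_{t,t'}\to\frac1n\sum_h$ is legitimate and the harmonic mean $r$ enters only the bias and noise terms, not the field term. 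Everything else is a faithful transcription of the i.i.d.\ argument.
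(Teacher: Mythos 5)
Your proposal is essentially correct, and the one genuinely new ingredient you identify coincides with what the paper does: by stationarity the double time sum collapses as $\frac{1}{n^2}\sum_{t,t'}|\langle \CL_{t-t'}Y_{\ell,m},Y_{\ell,m}\rangle_\rangleL|\le \frac1n\sum_{h\in\mathbb{Z}}|\langle \CL_{h}Y_{\ell,m},Y_{\ell,m}\rangle_\rangleL|$, and after reweighting through \eqref{eq::covops} (with $k=1$) and summing over $(\ell,m)$ against $D_\ell^{2\alpha}$ with $\alpha\le q/p$, this is dominated by $\frac1n\sum_{h}\|\CHq_h\|_{\operatorname{TR},\Hq}\le M/n$. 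Two points of divergence are worth recording. First, the paper does not organize the argument around the representer theorem and the $\U$-conditional linear smoother (with its attendant concentration/invertibility event); it reuses verbatim the Fr\'echet-calculus decomposition of Theorem~\ref{th::mu}, $\mu_\pen-\mu=(\mu_\pen-\tilde{\mu}_\pen)+(\tilde{\mu}_\pen-\bar{\mu}_\pen)+(\bar{\mu}_\pen-\mu)$ with $\bar{\mu}_\pen$ the population minimizer and $\tilde{\mu}_\pen$ the one-step linearization, and only the bound on $\mathbb{E}\|\tilde{\mu}_\pen-\bar{\mu}_\pen\|_\alpha^2$ changes; the remainder $\mu_\pen-\tilde{\mu}_\pen$ is untouched because the quantities $V_{\ell,\ell',m,m'}$ depend only on the i.i.d.\ locations. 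Your skeleton is a workable alternative, but the "concentration plus invertibility" steps you invoke are not literally present in the i.i.d.\ proof and would have to be supplied (they are implicit in the $A_n<1$ analysis). Second, a bookkeeping slip: the field term is not $O(n^{-1})$. Its cross-index part ($j\ne j'$ or $t\ne t'$) is $O(n^{-1})$ by your argument, but the coincident part $t=t'$, $j=j'$ behaves exactly like the measurement noise and contributes $O((nr)^{-1}\pen^{-1/p})=O((nr)^{-p/(p+1)})$ after summation over $\ell$; this does not alter the final rate, but your stated accounting (all of the $(nr)^{-p/(p+1)}$ coming from bias and noise) is internally inconsistent. Finally, the uniformity-in-$h$ concern you raise about heterogeneous $r_t$ is legitimate, but note the paper sidesteps it by proving the serially dependent case only for $r_1=\cdots=r_n$.
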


\begin{theorem}\label{th::R-time}
Assume that 
$\mathbb{E}[\epsilon_{11}^4]<\infty$ and the $U_{ij}, i=1,\dots,n, j=i,\dots,r_i$, are independent copies of $U\sim \operatorname{Unif}(\Sph^2)$. Let $p>2$ and $1<q\le p$, and consider the estimation problem in Equation \eqref{eq::Ch-est} for an admissible operator $\mathscr{D}$ of spectral growth order $p$.  If $\pen \asymp (n r/\log n )^{-p/(p+1)}$, then
\begin{equation*}
\lim_{D\to \infty}  \limsup_{n\to\infty} \sup_{\mathbb{P}_\X \in \pittemp} \mathbb{P}\left (\| C_{\lag;\pen}  - C_\lag \|^2_\rangleLL > D\left (\left( \frac{\log n}{nr}\right)^{p/(p+1)} + \frac1n\right ) \right ) = 0,
\end{equation*}
for any $h \in \mathbb{Z}$.
\end{theorem}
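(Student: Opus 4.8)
The proof follows the same architecture as that of Theorem~\ref{th::R}: the only substantive change is that every step where independence \emph{across replicates} was used to control a stochastic fluctuation must be replaced by a weakly-dependent analogue, which is exactly what the trace-norm summability of the cumulant operators in Definition of $\pittemp$ is designed to provide. The plan has three steps, and the real work is in the last one.

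\emph{Step 1: reduction to the lag-$\lag$ second-order moment.} Since $C_{\lag;\pen}-C_\lag=(R_{\lag;\pen}-R_\lag)-(\mu_\pen\otimes\mu_\pen-\mu\otimes\mu)$ and $\mu_\pen\otimes\mu_\pen-\mu\otimes\mu=(\mu_\pen-\mu)\otimes\mu_\pen+\mu\otimes(\mu_\pen-\mu)$, the identity $\|f\otimes g\|_\rangleLL=\|f\|_\rangleL\|g\|_\rangleL$ gives $\|\mu_\pen\otimes\mu_\pen-\mu\otimes\mu\|_\rangleLL\le\|\mu_\pen-\mu\|_\rangleL(\|\mu_\pen\|_\rangleL+\|\mu\|_\rangleL)$. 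By Theorem~\ref{th::mu-time} (whose proof, exactly as in the i.i.d.\ case, also yields the rate under the slightly larger penalty $\pen\asymp(nr/\log n)^{-p/(p+1)}$; alternatively $\mu_\pen$ may be run at its own scaling) we have $\|\mu_\pen-\mu\|_\rangleL^2=O_\mathbb{P}((nr)^{-p/(p+1)}+n^{-1})$, and since $\|\mu\|_\rangleL\le\|\mu\|_\rangleH\le\sqrt{K_2}$ this makes $\|\mu_\pen\|_\rangleL$ bounded in probability; hence $\|\mu_\pen\otimes\mu_\pen-\mu\otimes\mu\|_\rangleLL^2$ is of order the mean rate, which is no larger than the claimed covariance rate. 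It therefore suffices to prove $\|R_{\lag;\pen}-R_\lag\|_\rangleLL^2=O_\mathbb{P}((\log n/(nr))^{p/(p+1)}+n^{-1})$ uniformly over $\pittemp$. The case $\lag<0$ follows from $\lag>0$ by the symmetry $C_{\lag;\pen}(u,v)=C_{-\lag;\pen}(v,u)$, and $\lag=0$ is the analogue of Theorem~\ref{th::R}; note that for $\lag\ne0$ the diagonal terms are kept in \eqref{eq::Rh-est}, and this is legitimate since $\mathbb{E}[W_{t+\lag,j}W_{tk}\mid\U]=R_\lag(U_{t+\lag,j},U_{tk})$ for \emph{all} $j,k$ (no $\sigma^2$ bias to remove), which in fact makes this case marginally cleaner.

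\emph{Step 2: bias--variance decomposition for $R_{\lag;\pen}$.} Write $\langle\cdot,\cdot\rangle_{\widehat{n}}$ for the empirical sampling semi-inner product in the data-fidelity term of \eqref{eq::Rh-est}, with population counterpart $\langle\cdot,\cdot\rangle_\rangleLL$ under $U\sim\mathrm{Unif}(\Sph^2)$, and set $\xi_{t,j,k}:=W_{t+\lag,j}W_{tk}-R_\lag(U_{t+\lag,j},U_{tk})$, which is centred given $\U$. Comparing $R_{\lag;\pen}$ with the competitor $R_\lag\in\Hprod$ (recall $\|R_\lag\|_\rangleHH^2\le K_1$ uniformly in $\lag$) gives the basic inequality
$$\|R_{\lag;\pen}-R_\lag\|_{\widehat{n}}^2+\pen\|(\mathscr{D}\otimes\mathscr{D})R_{\lag;\pen}\|_\rangleLL^2\le 2\langle\boldsymbol{\xi},R_{\lag;\pen}-R_\lag\rangle_{\widehat{n}}+\pen\|(\mathscr{D}\otimes\mathscr{D})R_\lag\|_\rangleLL^2,$$
whose last term is $\le\pen\,C\,K_1$ by equivalence of $\|(\mathscr{D}\otimes\mathscr{D})\cdot\|_\rangleLL$ with $\|\cdot\|_\rangleHH$. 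The remaining ingredients are those of the i.i.d.\ proof: (a) comparing $\|\cdot\|_{\widehat{n}}$ with $\|\cdot\|_\rangleLL$ on a truncated Sobolev ellipsoid (spherical-harmonic frequencies up to a level $L^\star$ diverging as a power of $nr$), using that $\Hprod$ is an RKHS for $p>1$ (Proposition~\ref{prop:RKHSprop}); and (b) bounding $\langle\boldsymbol{\xi},R_{\lag;\pen}-R_\lag\rangle_{\widehat{n}}$ along the same frequency split, the high-frequency part being absorbed by the penalty and the low-frequency part reduced to a moment bound on the empirical residual process. Balancing $\pen K_1$ against the variance term $\asymp L^\star\log n/(nr)$ and the truncation bias $\asymp (L^\star)^{-p}$ then produces the stated rate, with the $n^{-1}$ term the ``parametric'' contribution of the fully-observed regime.

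\emph{Step 3 (the main obstacle): the variance bound under serial dependence.} Everything reduces to showing that, uniformly over frequencies $(\ell,m,\ell',m')$ with $\ell,\ell'\le L^\star$,
$$\mathbb{E}\Big[\Big(\tfrac{1}{n-\lag}\sum_{t=1}^{n-\lag}\tfrac{1}{r_t^2}\sum_{j,k}\xi_{t,j,k}\,(Y_{\ell,m}\otimes Y_{\ell',m'})(U_{t+\lag,j},U_{tk})\Big)^2\Big]\lesssim\frac{1}{(n-\lag)\,r}.$$
Expanding the square produces a double sum over times $t,t'$ and over the sampling-index configurations; conditioning on $\X$ and using the independence (and i.i.d.\ structure) of the $U$'s and $\epsilon$'s, together with stationarity, the $t,t'$-contributions reduce — through the moment-to-cumulant formula — to contractions, tested against $Y_{\ell,m}\otimes Y_{\ell',m'}$, of the fourth-order cumulant $\operatorname{Cum}[X_{t+\lag}(u_1),X_t(u_2),X_{t'+\lag}(u_3),X_{t'}(u_4)]$, of products of pairs of lag-autocovariances $R_{h}$ (and of $\mu$ when the mean is nonzero), and of the noise moments $\sigma^2$ and $\mathbb{E}[\epsilon_{11}^4]$ (both finite by hypothesis) supported on coincidence sets of the indices. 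Via the link \eqref{eq::covops} between the $L^2$ and the Sobolev cumulant operators, and using boundedness of point evaluations on $\Hq$ for $q>1$, the summability assumptions $\sum_{h_1,h_2,h_3}\|\CHq_{h_1,h_2,h_3}\|_{\operatorname{TR},\Hqprod}\le L_1$, $\sum_{h_1,h_2}\|\CHq_{h_1,h_2}\|_{\operatorname{TR},\Hqprod}\le L_2$ and $\sum_\lag\|\CHq_\lag\|_{\operatorname{TR},\Hq}\le M$ force the sum over $t'$ (and over all internal lags) to converge, so the double sum is $O(n-\lag)$ in the $t$-direction while the intra-$t$ index combinatorics yield the factor $1/r$ exactly as in the i.i.d.\ case; serial dependence contributes only a bounded multiplicative constant. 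A union bound over the $O((L^\star)^4)$ low frequencies introduces the $\log n$ factor — which is what dictates the choice $\pen\asymp(nr/\log n)^{-p/(p+1)}$ — after which the optimisation of $L^\star$ carried over verbatim from the proof of Theorem~\ref{th::R} gives the asserted bound on $\|R_{\lag;\pen}-R_\lag\|_\rangleLL^2$; since $n-\lag\asymp n$ for each fixed $\lag$, combining with Step~1 completes the proof. The genuinely delicate point is precisely this cumulant expansion of the empirical lag-$\lag$ product-moment variance on $\Sph^2\times\Sph^2$ and the verification that weak dependence leaves the effective sample size of order $nr$; the rest transfers with only cosmetic changes from the i.i.d.\ and mean-estimation arguments.
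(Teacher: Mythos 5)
Your high-level plan is sound in Steps 1 and 3 --- the reduction $C_{\lag;\pen}-C_\lag=(R_{\lag;\pen}-R_\lag)-(\mu_\pen\otimes\mu_\pen-\mu\otimes\mu)$ and the idea that serial dependence enters only through the moment-to-cumulant expansion controlled by the trace-norm summability of $\CHq_{h_1,h_2,h_3},\CHq_{h_1,h_2},\CHq_\lag$ together with the same index-coincidence combinatorics as in the i.i.d.\ case are exactly what the paper does. But your Step 2 replaces the paper's argument (a Fr\'echet-derivative linearization $R_{\lag;\pen}\approx\bar R_{\lag;\pen}-(\bar F''_{\lag;\pen})^{-1}F'_{\lag;\pen}(\bar R_{\lag;\pen})$, with $\bar F''_{\lag;\pen}$ diagonalized explicitly in the spherical-harmonic basis) by a basic-inequality/empirical-process route, and as sketched this route has concrete gaps. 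First, you attribute the $\log n$ in the rate to ``a union bound over the $O((L^\star)^4)$ low frequencies.'' A union bound yields a logarithmic cost only under sub-exponential tails of the per-frequency empirical averages; under $\pittemp$ only fourth moments of $X$ (and of $\epsilon$) are assumed, so Chebyshev plus a union bound over $(L^\star)^4$ frequencies costs a factor $(L^\star)^4$, not $\log n$. In the paper the logarithm arises \emph{deterministically}, from the hyperbolic-cross effective-dimension sum $\sum_{\ell,\ell'}(2\ell+1)(2\ell'+1)D_\ell^{2\alpha}D_{\ell'}^{2\alpha}(1+\pen D_\ell^2D_{\ell'}^2)^{-2}=O(\pen^{-(\alpha+1/p)}\log(1/\pen))$ created by the tensor-product penalty; no concentration beyond second moments is used. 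This is not a cosmetic point: it is the only reason the covariance rate carries a $\log n$ while the mean rate does not.

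Second, your displayed per-frequency variance bound $\lesssim 1/((n-\lag)r)$ cannot hold uniformly in $(\ell,m,\ell',m')$: the residual process averaged against low frequencies contains a replicate-level component whose variance is of order $(n-\lag)^{-1}$ with no factor $1/r$ (in the paper this is the term $\tfrac{4}{n}\sum_{m,m'}\mathbb{E}\langle X,Y_{\ell,m}\rangle^2\langle X,Y_{\ell',m'}\rangle^2$, controlled by the cumulant summability and producing the parametric floor $n^{-1}$ in the final rate), so your Step 3 display contradicts the $n^{-1}$ term you invoke in Step 2. Third, converting the basic inequality's control of $\|R_{\lag;\pen}-R_\lag\|_{\widehat n}$ into control of $\|\cdot\|_\rangleLL$ requires a uniform two-sided comparison of the empirical and population norms over the relevant ellipsoid, and on $\Sph^2\times\Sph^2$ the design points $(U_{t+\lag,j},U_{tk})$ are strongly dependent within each replicate (the $r^2$ pairs are built from only $2r$ sphere points); this is precisely where the effective sample size could degrade below $nr$, and you assert rather than prove that it does not. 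The paper circumvents all three issues at once: the quantities $V_{\ell_{1:4},m_{1:4}}$ play the role of your norm comparison, their second moments are summed against the resolvent weights (yielding $A_n=o_{\mathbb{P}}(1)$ with the correct $\log(1/\pen)/(nr\pen^{\alpha+\theta+1/p})$ order), and no uniform-in-frequency concentration is ever needed. To repair your argument you would either have to import this weighted second-moment accounting wholesale --- at which point you have reproduced the paper's proof --- or supply genuinely new concentration and norm-equivalence lemmas that are not available under the stated moment assumptions.
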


\begin{remark}
The previous result gives a rate of convergence which holds for any fixed lag $h \in \mathbb{Z}$.
However, its dependence on the lag order $h$ can be made explicit. Write $n_h=n-h$. If $\pen \asymp (n_h r/\log n_h )^{-p/(p+1)}$, then
\begin{equation*}
\| C_{\lag;\pen}  - C_\lag \|^2_\rangleLL =O_{\mathbb{P}}\left( \left( \frac{\log n_h}{n_h r}\right)^{p/(p+1)} + \frac{1}{n_h}\right),
\end{equation*}
provided that $n_h \to \infty.$ This informs us on how many lags we can estimate uniformly with high level of precision.
\end{remark}

\section{Proofs of Formal Statements}\label{sec:proofsformal}

    \subsection{Proofs of Section~\ref{sec:background}}\label{sec:proofs1}

    \begin{proof}[Proof of Proposition~\ref{prop:RKHSprop}]
    The fact that $\Hp$ is a RKHS if and only if $p>1$ is classical; it is for instance a particular case of~\cite[Lemma 5.5]{simeoni2020functional}. We therefore simply have to prove that $\Hp$ is a RKHS if and only if $\Hprod$ is.

    Let $\Hprod'$ be the topological dual of $\Hprod$. It is itself a Hilbert space
    \begin{equation} \label{eq:Hprod'}
        \Hprod' = \left\{ g \in  \mathcal{S}'(\Sph^2\times \Sph^2) , \
       \| g \|_{\Hprod'}^2 :=  \sum_{\ell,\ell'=0}^{\infty} 
       \sum_{m = - \ell}^{\ell}  \sum_{m' = - \ell'}^{\ell'}   
       \frac{\langle g , Y_{\ell,m}\otimes Y_{\ell',m'} \rangle^2}{(1 + \ell(\ell + 1))^p(1 + \ell'(\ell' + 1))^p}    < \infty  \right\} 
    \end{equation}
    for the norm $\|  \cdot  \|_{\Hprod'}$.
    
    Then, $\Hprod$ is a RKHS if and only if the evaluation functionals $g \mapsto g(u,v)$ are continuous from $\Hprod$ to $\R$ for any $(u,v) \in \Sph^2 \times \Sph^2$. This is therefore equivalent to $\delta_{(u,v)} \in \Hprod'$, \emph{i.e.}, $\| \delta_{(u,v)}  \|_{\Hprod'} < \infty$ for any $(u,v) \in \Sph^2 \times \Sph^2$. 
    Since $\langle \delta_{(u,v)} , Y_{\ell,m}\otimes Y_{\ell',m'} \rangle_\rangleL = Y_{\ell,m} (u) Y_{\ell',m'}(v)$, we have that
    \begin{equation} \label{eq:normdirac}
      \| \delta_{(u,v)}  \|_{\Hprod'}^2 
      = 
      \left(  \sum_{\ell=0}^{\infty} 
       \sum_{m = - \ell}^{\ell}    
       \frac{ Y_{\ell,m}(u)^2}{(1 + \ell(\ell + 1))^p} \right) 
      \left(  \sum_{\ell'=0}^{\infty} 
       \sum_{m' = - \ell'}^{\ell'}    
       \frac{Y_{\ell',m'}(v) ^2}{(1 + \ell'(\ell' + 1))^p} \right)       
       = \| \delta_x \|_{\Hp'}\| \delta_y \|_{\Hp'}  
    \end{equation}    
    where $\|  \cdot  \|_{\Hp'}$ is the dual norm on the topological dual $\Hp'$ of $\Hp$. 
    
    The relation \eqref{eq:normdirac} then shows that $\| \delta_{(u,v)}  \|_{\Hprod'} < \infty$ for any $(u,v) \in \Sph^2 \times \Sph^2$ if and only if $\| \delta_{x}  \|_{\Hp'} < \infty$ for any $x \in \Sph^2$, \emph{i.e.}, $\Hprod$ is a RKHS if and only if $\Hp$ is. 
    \end{proof}

    In Proposition~\ref{prop:Disgood}, we formalize the invertibility properties of admissible operators. This allows us to deduce that $f\mapsto \|\mathscr{D} f \|_{L^2(\Sph^2)}$ is a norm equivalent to $f\mapsto \|f\|_\rangleH$ on $\Hp$, as was used in Sections~\ref{sec:sphoperators} and~\ref{sec:representer}. 
    
    \begin{proposition} \label{prop:Disgood}
Let $\mathscr{D}$ be an admissible operator with spectral growth order $p \geq 0$. Then, $\mathscr{D}$ can be uniquely extended as a diffeomorphism  
$\mathscr{D} : \Hp \rightarrow L^2(\Sph^2)$
and  there exist constants $0 < A_1 \leq A_2$ such that, for any $f \in \Hp$,
\begin{equation} \label{eq:lowerupperbounds}
    A_1 \|f\|_\rangleH \leq \| \mathscr{D} f \|_\rangleL \leq    A_2 \|f\|_\rangleH.
\end{equation}
Similarly, $\mathscr{D} \otimes \mathscr{D}$ can be uniquely extended as a diffeomorphism 
$   \mathscr{D} \otimes \mathscr{D} : \Hprod \rightarrow L^2(\Sph^2 \times \Sph^2) $
and there exists $0< B_1 \leq B_2$  
such that, for any $g \in \Hprod$,
\begin{equation} 
    B_1 \|g\|_\rangleHH \leq \| ( \mathscr{D} \otimes \mathscr{D}) g \|_\rangleL \leq    B_2 \|g\|_\rangleHH.
\end{equation}
\end{proposition}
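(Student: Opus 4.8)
The plan is to work entirely in the spherical Fourier domain, where $\mathscr{D}$ and $\mathscr{D}\otimes\mathscr{D}$ act as diagonal multipliers, so that the whole statement reduces to a scalar two-sided comparison of the multipliers against the Sobolev weights. For $f\in\mathcal{S}(\Sph^2)$ with coefficients $f_{\ell,m}=\langle f,Y_{\ell,m}\rangle_\rangleL$, the definition of $\mathscr{D}$ and Parseval give $\|\mathscr{D}f\|_\rangleL^2=\sum_{\ell\ge0}D_\ell^2\sum_{m=-\ell}^\ell f_{\ell,m}^2$, whereas $\|f\|_\rangleH^2=\sum_{\ell\ge0}(1+\ell(\ell+1))^p\sum_{m=-\ell}^\ell f_{\ell,m}^2$. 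So everything hinges on comparing $D_\ell^2$ with $(1+\ell(\ell+1))^p$.

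First I would record the elementary bracketing $1+\ell(\ell+1)\le(1+\ell)^2\le 2\,(1+\ell(\ell+1))$, valid for every $\ell\in\mathbb{N}$ (the left inequality because $\ell\le 2\ell$, the right because $1+\ell^2\ge0$). Raising to the power $p\ge0$ and combining with the admissibility condition \eqref{eq:conditionDn}, which squared reads $C_1^2(1+\ell)^{2p}\le D_\ell^2\le C_2^2(1+\ell)^{2p}$, yields
$$C_1^2\,(1+\ell(\ell+1))^p\;\le\;D_\ell^2\;\le\;2^p C_2^2\,(1+\ell(\ell+1))^p$$
for all $\ell$. Multiplying by $\sum_{m}f_{\ell,m}^2$ and summing over $\ell$ gives \eqref{eq:lowerupperbounds} on the dense subspace $\mathcal{S}(\Sph^2)\subset\Hp$ with $A_1=C_1$ and $A_2=2^{p/2}C_2$.

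Next I would promote this to the whole space. By density of $\mathcal{S}(\Sph^2)$ in $(\Hp,\|\cdot\|_\rangleH)$ (finite spherical-harmonic expansions are dense) together with the upper bound, $\mathscr{D}$ extends uniquely to a bounded linear map $\Hp\to L^2(\Sph^2)$; the lower bound passes to the limit, so \eqref{eq:lowerupperbounds} holds on all of $\Hp$ and in particular $\mathscr{D}$ is injective. Surjectivity is explicit: given $h=\sum_{\ell,m}h_{\ell,m}Y_{\ell,m}\in L^2(\Sph^2)$, set $\mathscr{D}^{-1}h:=\sum_{\ell,m}(h_{\ell,m}/D_\ell)Y_{\ell,m}$; the same multiplier bound gives $\|\mathscr{D}^{-1}h\|_\rangleH^2\le C_1^{-2}\|h\|_\rangleL^2<\infty$, so $\mathscr{D}^{-1}h\in\Hp$ and $\mathscr{D}(\mathscr{D}^{-1}h)=h$. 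Hence $\mathscr{D}:\Hp\to L^2(\Sph^2)$ is a linear homeomorphism (the ``diffeomorphism'' of the statement), with inverse bounded by $1/A_1$, and it agrees with the original operator on $\mathcal{S}(\Sph^2)$ by construction.

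The tensorial case is verbatim, working with the basis $\{Y_{\ell,m}\otimes Y_{\ell',m'}\}$ of $L^2(\Sph^2\times\Sph^2)$: $\mathscr{D}\otimes\mathscr{D}$ multiplies the coefficient $g_{\ell,m,\ell',m'}$ by $D_\ell D_{\ell'}$, and squaring and multiplying the two scalar estimates gives $C_1^4\,(1+\ell(\ell+1))^p(1+\ell'(\ell'+1))^p\le D_\ell^2D_{\ell'}^2\le 2^{2p}C_2^4\,(1+\ell(\ell+1))^p(1+\ell'(\ell'+1))^p$, i.e. the asserted equivalence with $B_1=C_1^2$ and $B_2=2^pC_2^2$; the density and explicit-inverse argument over $\mathcal{S}(\Sph^2\times\Sph^2)$ carries over unchanged. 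There is no genuine obstacle here: the only step needing a moment's care is the bracketing $(1+\ell)^{2p}\asymp(1+\ell(\ell+1))^p$ that reconciles the normalization of Definition~\ref{def:sobospace} with that of Definition~\ref{sph-pseudo-diff}, plus the routine check that the uniquely determined bounded extension coincides with the given action of $\mathscr{D}$ on smooth functions.
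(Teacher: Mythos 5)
Your proof is correct and follows essentially the same route as the paper's: compute $\|\mathscr{D}f\|_{L^2(\mathbb{S}^2)}^2$ as a Fourier multiplier sum on $\mathcal{S}(\mathbb{S}^2)$, compare $D_\ell^2$ with $(1+\ell(\ell+1))^p$ via the admissibility condition, extend by density, and conclude invertibility from the two-sided bound. You simply spell out the details the paper leaves implicit (the bracketing $(1+\ell)^{2p}\asymp(1+\ell(\ell+1))^p$ and the explicit formula for the inverse), all of which are accurate.
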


\begin{proof}
Let $f \in \mathcal{S}(\Sph^2)$. We have that 
$$
\| \mathscr{D} f \|_\rangleL^2 = 
\sum_{\ell=0}^\infty |D_\ell|^2 \sum_{m=-\ell}^\ell |f_{\ell,m}|^2.$$
Then, the existence of $A_1, A_2$ in \eqref{eq:lowerupperbounds} for any $f \in \mathcal{S}(\Sph^2)$ easily follows from the condition \eqref{eq:conditionDn} and the definition of $\Hp$. 
The space $\mathcal{S}(\Sph^2)$ being dense in the Hilbert space $\Hp$, this means in particular that $\mathscr{D}$ can be extended in $\Hp$ and that $\mathscr{D} : \Hp \rightarrow L^2(\Sph^2)$ linearly and continuously. Finally, \eqref{eq:lowerupperbounds} also implies that $\mathscr{D}$ is a continuous bijection with continuous inverse $\mathscr{D}^{-1} : L^2(\Sph^2) \rightarrow \Hp$. 
The arguments for tensorial operators $\mathscr{D}\otimes \mathscr{D}$ are similar.
\end{proof}

\subsection{Proofs of Section~\ref{sec:representer}}
\label{sec:proofs2}

Both Theorem~\ref{theo:meanesti} and Theorem~\ref{theo:Resti} can be deduced from general principles on regularized cost functionals over Hilbert spaces, as recalled in the next theorem.

    \begin{theorem}[Representer Theorem on Hilbert Spaces] \label{theo:RTHilbert}
    Let $\mathcal{H}$ be a Hilbert space with inner product $\langle \cdot , \cdot \rangle_{\mathcal{H}}$ and norm 
    $\| \cdot \|_{\mathcal{H}}$, $\lambda > 0$, $L \geq 1$, $\boldsymbol{y} = (y_1,\ldots , y_L) \in \R^L$, and $\nu_1, \ldots , \nu_L$ be linearly independent elements in $\mathcal{H}$. Then, the optimization problem 
    \begin{equation} \label{eq:newoptipbabstract}
        \min_{f \in \mathcal{H}} \quad \sum_{\ell=1}^L (y_\ell - \langle \nu_\ell , f \rangle_{\mathcal{H}} )^2 + \lambda \| f \|_{\mathcal{H}}^2
    \end{equation}
    has a unique solution $\widehat{f}$ such that
    \begin{equation}
        \widehat{f} = \sum_{\ell=1}^L \alpha_\ell \nu_\ell \quad \text{where} \quad \boldsymbol{\alpha} = \left( \boldsymbol{\mathrm{G}} + \lambda \boldsymbol{\mathrm{I}}_L \right)^{-1} \boldsymbol{y}
    \end{equation}
    where $\boldsymbol{\alpha}= (\alpha_1,\ldots , \alpha_L) \in \R^L$ and $\boldsymbol{\mathrm{G}} \in \R^{L \times L}$ is such that $G_{\ell_1,\ell_2} = \langle \nu_{\ell_1}, \nu_{\ell_2}\rangle_{\mathcal{H}}$ for $1\leq \ell_1, \ell_2 \leq L$.     
    \end{theorem}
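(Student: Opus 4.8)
The plan is to use the standard orthogonal-decomposition argument behind representer theorems, followed by an elementary finite-dimensional minimization. Set $V := \mathrm{span}\{\nu_1,\dots,\nu_L\}$. Being finite-dimensional, $V$ is a closed subspace of $\mathcal{H}$, so $\mathcal{H} = V \oplus V^\perp$ and every $f \in \mathcal{H}$ decomposes uniquely as $f = f_V + f_\perp$ with $f_V \in V$, $f_\perp \in V^\perp$. Since each $\nu_\ell \in V$, we have $\langle \nu_\ell, f \rangle_{\mathcal{H}} = \langle \nu_\ell, f_V \rangle_{\mathcal{H}}$, so the data-fidelity sum in \eqref{eq:newoptipbabstract} depends only on $f_V$; and $\|f\|_{\mathcal{H}}^2 = \|f_V\|_{\mathcal{H}}^2 + \|f_\perp\|_{\mathcal{H}}^2 \ge \|f_V\|_{\mathcal{H}}^2$, with equality precisely when $f_\perp = 0$. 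Hence the value of the objective at $f$ is always at least its value at $f_V$, and strictly larger when $f_\perp \neq 0$; consequently \emph{every} minimizer must lie in $V$, and it suffices to solve the problem over $V$.

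Next I would parametrize $V$ by $\boldsymbol{\alpha} \in \R^L$ through $f = \sum_{\ell=1}^L \alpha_\ell \nu_\ell$; linear independence of the $\nu_\ell$ makes this a linear isomorphism, so uniqueness of the minimizer $\widehat f$ in $V$ is equivalent to uniqueness of the coefficient vector $\boldsymbol{\alpha}$. With $\boldsymbol{\mathrm{G}}$ the Gram matrix ($G_{k\ell} = \langle \nu_k, \nu_\ell\rangle_{\mathcal{H}}$), one has $\langle \nu_k, f\rangle_{\mathcal{H}} = (\boldsymbol{\mathrm{G}}\boldsymbol{\alpha})_k$ and $\|f\|_{\mathcal{H}}^2 = \boldsymbol{\alpha}^\top \boldsymbol{\mathrm{G}} \boldsymbol{\alpha}$, so \eqref{eq:newoptipbabstract} reduces to the finite-dimensional problem
$$\min_{\boldsymbol{\alpha}\in\R^L}\; \|\boldsymbol{y} - \boldsymbol{\mathrm{G}}\boldsymbol{\alpha}\|^2 + \lambda\, \boldsymbol{\alpha}^\top \boldsymbol{\mathrm{G}}\boldsymbol{\alpha}.$$
Linear independence makes $\boldsymbol{\mathrm{G}}$ symmetric positive definite (hence invertible), and since $\lambda>0$ the Hessian of this quadratic, namely $2\boldsymbol{\mathrm{G}}^2 + 2\lambda\boldsymbol{\mathrm{G}} = 2\boldsymbol{\mathrm{G}}(\boldsymbol{\mathrm{G}} + \lambda\boldsymbol{\mathrm{I}}_L)$, is positive definite; the objective is thus strictly convex and coercive, so a unique minimizer exists and is characterized by the stationarity equation.

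Finally, setting the gradient to zero gives $\boldsymbol{\mathrm{G}}(\boldsymbol{\mathrm{G}} + \lambda\boldsymbol{\mathrm{I}}_L)\boldsymbol{\alpha} = \boldsymbol{\mathrm{G}}\boldsymbol{y}$; left-multiplying by $\boldsymbol{\mathrm{G}}^{-1}$ yields the normal equations $(\boldsymbol{\mathrm{G}} + \lambda\boldsymbol{\mathrm{I}}_L)\boldsymbol{\alpha} = \boldsymbol{y}$. Because $\boldsymbol{\mathrm{G}}$ is positive semidefinite and $\lambda > 0$, the matrix $\boldsymbol{\mathrm{G}} + \lambda\boldsymbol{\mathrm{I}}_L$ is positive definite and therefore invertible, so $\boldsymbol{\alpha} = (\boldsymbol{\mathrm{G}} + \lambda\boldsymbol{\mathrm{I}}_L)^{-1}\boldsymbol{y}$; combining this with the first step gives the unique global minimizer $\widehat f = \sum_{\ell=1}^L \alpha_\ell \nu_\ell$ over all of $\mathcal{H}$. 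No step presents a real obstacle --- the statement is classical --- but the point deserving care is the passage to $V$: one must verify that $V$ is closed (so that the orthogonal projection is well defined) and that the inequality on the penalty term is \emph{strict} off $V$, since this is exactly what forces every minimizer into $V$ and thereby upgrades uniqueness-within-$V$ to uniqueness over all of $\mathcal{H}$.
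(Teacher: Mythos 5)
Your proposal is correct and follows essentially the same route as the paper, which sketches exactly this two-step argument (orthogonal projection onto $\mathrm{span}\{\nu_\ell\}$ to obtain the form $\widehat f=\sum_\ell \alpha_\ell\nu_\ell$, then reduction to the finite-dimensional quadratic $\min_{\boldsymbol{\alpha}}\|\boldsymbol{y}-\boldsymbol{\mathrm{G}}\boldsymbol{\alpha}\|_2^2+\lambda\langle\boldsymbol{\alpha},\boldsymbol{\mathrm{G}}\boldsymbol{\alpha}\rangle$ and cancellation of the invertible Gram matrix), delegating the first step to a citation. Your version simply makes that first step explicit and self-contained, and all the details (closedness of $V$, strictness of the penalty inequality off $V$, positive definiteness of $\boldsymbol{\mathrm{G}}$ and of the Hessian $2\boldsymbol{\mathrm{G}}(\boldsymbol{\mathrm{G}}+\lambda\boldsymbol{\mathrm{I}}_L)$) are handled correctly.
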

    
    \begin{remark}[On Theorem~\ref{theo:RTHilbert}]
    The existence and uniqueness of the solution of \eqref{eq:newoptipbabstract} can be deduced from the Hilbert projection theorem. 
    The exact form \eqref{eq:newoptipbabstract} of the solution is classical. It
    is for instance proved in~\cite[Section 3.2]{unser2021unifying} in two steps. First, the optimization problem~\eqref{eq:newoptipbabstract} is shown to admit a unique solution of the form 
    $\widehat{f} = \sum_{\ell=1}^L \alpha_\ell \nu_\ell$. Second, injecting the form of this solution in \eqref{eq:newoptipbabstract}, we observe that $\| \widehat{f} \|_{\mathcal{H}} = \langle \boldsymbol{\alpha}, \boldsymbol{\mathrm{G}} \boldsymbol{\alpha} \rangle$ and $\langle \nu_\ell , \widehat{f} \rangle = ( \boldsymbol{\mathrm{G}} \boldsymbol{\alpha} )_\ell$. Hence, $\boldsymbol{\alpha}$ is solution of the optimization problem 
    \begin{equation}\label{eq:findalpha}
        \min_{\boldsymbol{\alpha} \in \R^L} \| \boldsymbol{y} - \boldsymbol{\mathrm{G}} \boldsymbol{\alpha} \|_2^2 + \lambda \langle \boldsymbol{\alpha}, \boldsymbol{\mathrm{G}} \boldsymbol{\alpha} \rangle. 
    \end{equation}
    Then, we easily show that the optimizer of the finite-dimensional quadratic optimization problem \eqref{eq:findalpha} is $\boldsymbol{\alpha} = \left( \boldsymbol{\mathrm{G}} + \lambda \boldsymbol{\mathrm{I}}_L \right)^{-1} \boldsymbol{y}$. Note that this requires the invertibility of $\boldsymbol{\mathrm{G}}$, which is true because the $\nu_\ell$ are assumed to be linearly independent. 
    \end{remark}
    
        \begin{proof}[Proof of Theorem~\ref{theo:Resti}]
    The proof is divided in two steps. We first prove that the unique solution is given by \eqref{eq:RTformR} and then show that $R_\eta$ is symmetric.  \\
    
    \vspace{-0.6cm}
    \textit{Form of the solution.}
        We show that Theorem~\ref{theo:Resti} is a particular case of Theorem~\ref{theo:RTHilbert}. 
   According to Proposition~\ref{prop:Disgood}, 
   the norms $\|
(\mathscr{D}\otimes \mathscr{D}) \cdot \|_{L^2(\Sph^2 \times \Sph^2)}$ and $\|\cdot \|_\rangleHH$ being equivalent,
   $ (\mathbb{H}_p, \| (\mathscr{D}\otimes \mathscr{D}) \cdot \|_\rangleLL )$ is a Hilbert space. 
   
Let $\psi_{\mathscr{D}^*\mathscr{D}}$ the zonal Green's kernel of $\mathscr{D}^* \mathscr{D}$ (see \eqref{eq:zonalgreen}). For any $1 \leq i \leq n$, $1 \leq j\neq k \leq r_i$, we set 
$\nu_{ijk} =   \psi_{\mathscr{D}^* \mathscr{D}} ( \langle \cdot , u_{ij}\rangle) \otimes \psi_{\mathscr{D}^* \mathscr{D}} ( \langle \cdot , u_{ik}\rangle)$ and we  observe that 
\begin{equation*}
    (\mathscr{D} \otimes \mathscr{D})^* (\mathscr{D} \otimes \mathscr{D}) \{\nu_{ijk}\} = 
    ( \mathscr{D}^* \mathscr{D} \{ \psi_{\mathscr{D}^* \mathscr{D}} ( \langle \cdot , u_{ij}\rangle) \} ) \otimes ( \mathscr{D}^* \mathscr{D} \{ \psi_{\mathscr{D}^* \mathscr{D}} ( \langle \cdot , u_{ik}\rangle) \}  )  = \delta_{u_{ij}} \otimes \delta_{u_{ik}}. 
\end{equation*}
This implies that, for any $g \in \mathcal{H} = \Hprod$,
\begin{align*}
    \langle g, \nu_{ijk} \rangle_{\mathcal{H}} 
    &= 
    \langle (\mathscr{D} \otimes \mathscr{D}) \{g\}, (\mathscr{D} \otimes \mathscr{D}) \{ \nu_{ijk} \} )\rangle_\rangleLL
    = g(u_{ij},u_{ik}).
\end{align*}
    We deduce that \eqref{eq::R-estbis} can be recast as
    \begin{equation} \label{eq:recastsecondmoment}
         \underset{g \in \mathcal{H}}{\min} \sum_{i=1}^n \sum_{1 \leq j \neq k \leq r_i} 
         \left( \frac{w_{ij}w_{ik} }{\sqrt{r_i(r_i-1)}} - \left\langle g , \frac{\nu_{ijk}}{\sqrt{r_i(r_i-1)}} \right\rangle_{\mathcal{H}} \right)^2 + \lambda \| g\|_{\mathcal{H}}
    \end{equation}
    with $\lambda = \frac{n \eta}{(4\pi)^2}$. 
    
    The optimization problem \eqref{eq:recastsecondmoment} corresponds to \eqref{eq:newoptipbabstract} where, for any $1 \leq \ell \leq L = \sum_{i=1}^n r_i(r_i-1)$ and its corresponding $(i,j,k)$ in the vectorization, we set 
    $\nu_\ell =  \nu_{ijk} / \sqrt{r_i(r_i-1)}$. 
    Note that the $\nu_\ell$ are linearly independent since the Dirac impulses $\delta_{u_{ij}}\otimes \delta_{u_{ik}} = (\mathscr{D}^* \mathscr{D} \otimes \mathscr{D}^* \mathscr{D}) \nu_{ijk}$ are.
    We moreover observe that 
    \begin{align*}
        \langle \nu_{i_1j_1k_1}, \nu_{i_2j_2k_2}\rangle_{\mathcal{H}} 
        &= 
        \langle   \nu_{i_1j_1k_1} , (\mathscr{D} \otimes \mathscr{D})^*(\mathscr{D} \otimes \mathscr{D}) \nu_{i_2j_2k_2}\rangle_\rangleLL
        = \langle   \nu_{i_1j_1k_1} , \delta_{u_{i_2 j_2}} \otimes \delta_{u_{i_2k_2}} \rangle_\rangleLL \\
        &=  \nu_{i_1j_1k_1} (u_{i_2 j_2},  u_{i_2k_2}) 
        = \psi_{\mathscr{D}^* \mathscr{D}} ( \langle u_{i_1 j_1} , u_{i_2 j_2} \rangle) \times 
        \psi_{\mathscr{D}^* \mathscr{D}} ( \langle u_{i_1 k_1} , u_{i_2 k_2} \rangle),
    \end{align*}
    which implies that
    \begin{align*}
        H_{\ell_1,\ell_2} &= \langle \nu_{\ell_1} , \nu_{\ell_2} \rangle_{\mathcal{H}} = 
        \frac{\langle \nu_{i_1j_1k_1}, \nu_{i_2j_2k_2}\rangle_{\mathcal{H}}}{\sqrt{r_{i_1}(r_{i_1}-1) r_{i_2}(r_{i_2}-1)}}
        =   \frac{\psi_{\mathscr{D}^* \mathscr{D}} ( \langle u_{i_1 j_1} , u_{i_1 k_1} \rangle) \times 
        \psi_{\mathscr{D}^* \mathscr{D}} ( \langle u_{i_2 j_2} , u_{i_2 k_2} \rangle)}{\sqrt{r_{i_1}(r_{i_1}-1) r_{i_2}(r_{i_2}-1)}}
    \end{align*}   
%    \begin{align*}
%        G_{\ell_1,\ell_2} &= \langle \nu_{\ell_1}, \nu_{\ell_2}\rangle_{\mathcal{H}}
%        = \langle \mathscr{D} \nu_{\ell_1}, \mathscr{D} \nu_{\ell_2}\rangle_\rangleL
%        = \langle  \nu_{\ell_1}, \mathscr{D}^* \mathscr{D} \nu_{\ell_2}\rangle_\rangleL \\
%        &= \langle \psi_{\mathscr{D}^* \mathscr{D}} (\langle \cdot , u_{i_1j_1} \rangle) , \delta_{u_{i_2j_2}}  \rangle_\rangleL 
%        =  \psi_{\mathscr{D}^* \mathscr{D}} (\langle u_{i_1j_1}, u_{i_2j_2} \rangle),
%    \end{align*}
    where $\ell_1$  (resp. $\ell_2$) corresponds to $(i_1,j_1,k_1)$  (resp. $(i_2,j_2,k_2)$) in the vectorization. With these identifications, Theorem~\ref{theo:RTHilbert} gives \eqref{eq:RTformR} and \eqref{eq:betas}.\\
   
       \vspace{-0.6cm}
    \textit{Symmetry of $R_\eta$.} Any $g \in \Hprod$ can be uniquely decomposed as
    \begin{equation}
        g(u,v) = g_s (u,v) + g_a (u,v)  = \left( \frac{g(u,v) + g(v,u)}{2} \right)  + \left( \frac{g(u,v) - g(v,u)}{2} \right) 
    \end{equation}
    where $g_s$ is symmetric ($g_s(u,v) = g_s(v,u)$) and 
    $g_a$ is antisymmetric ($g_a(u,v) = - g_a(v,u)$). Then, we have that $(\mathscr{D} \otimes \mathscr{D}) g_s$ (resp. $(\mathscr{D} \otimes \mathscr{D}) g_a$) is symmetric (resp. antisymmetric).
    Indeed, the operator $(\mathscr{D} \otimes \mathscr{D})$ maps symmetric (resp. antisymmetric) functions to symmetric (resp. antisymmetric) ones. This fact is obvious for functions $g = f_1 \otimes f_2$ since $(\mathscr{D} \otimes \mathscr{D}) g = \mathscr{D} f_1 \otimes \mathscr{D} f_2$ and extended to any function by density of the span of separable functions in $\Hprod$. 
    Therefore 
    \begin{align} \label{eq:firstbrique}
        \| (\mathscr{D} \otimes \mathscr{D}) g  \|_\rangleLL^2 
        = 
         \| (\mathscr{D} \otimes \mathscr{D}) g_a  \|_\rangleLL^2
         +  \| (\mathscr{D} \otimes \mathscr{D}) g_s  \|_\rangleLL^2
         \end{align}
    due to the fact that the inner product between symmetric and antisymmetric bivariate functions is $0$, hence $\langle (\mathscr{D} \otimes \mathscr{D}) g_s, (\mathscr{D} \otimes \mathscr{D}) g_a \rangle_\rangleLL = 0$. 
    
    For $i=1,\ldots , n$, let $\Sigma_i \in \R^{r_i \times r_i}$ be the matrix such that $\Sigma_i[j,k] =  {w_{ij} w_{ik}}  \delta_j^k$. 
    We also define the operator $\Phi_i : \Hprod \rightarrow \R^{r_i\times r_i}$ such that $\Phi_i(g)[j,k] = g(u_{ij}, u_{ik}) \delta_j^k$. Then, the data fidelity term in \eqref{eq::R-estbis} can be rewritten as 
    \begin{equation}
        \sum_{i=1}^n \frac{1}{r_i(r_i-1)} \sum_{1\le j \ne k \le r_i } (\W_{ij}\W_{ik}  - g(U_{ij}, U_{ik}))^2
        = \sum_{i=1}^n \frac{1}{r_i(r_i-1)} \| \Sigma_i - \Phi_i(g) \|_2^2. 
    \end{equation}
    Then, the matrix $\Sigma_i - \Phi_i(g_s)$ (resp. $\Phi_i (g_a)$) is symmetric (resp. antisymmetric), which implies that, for any $i =1, \ldots , n$, 
    \begin{equation} \label{eq:secondbrique}
        \| \Sigma_i - \Phi_i(g) \|_2^2 = \| (\Sigma_i - \Phi_i(g_s)) - \Phi_i(g_a) \|_2^2
        = \| (\Sigma_i - \Phi_i(g_s)) \|^2_2 + \| \Phi_i(g_a) \|_2^2,
    \end{equation}
    the inner product between one symmetric and one antisymmetric matrix being $0$. 
    
    If we denote by $J(g)$ the cost functional to be optimized in \eqref{eq::R-estbis}, we deduce from \eqref{eq:firstbrique} and \eqref{eq:secondbrique} that 
    \begin{equation}
        J(g) = J(g_s) + \frac{(4\pi)^2}{n} \sum_{i=1}^n \frac{\| \Phi_i(g_a) \|_2^2}{r_i(r_i-1)}  + \eta  \| (\mathscr{D} \otimes \mathscr{D}) g_a  \|_\rangleLL^2 \geq J(g_s). 
    \end{equation}
    In other term, for any $g \in \Hprod$, there exists $g_s \in \Hprod$ symmetric such that $J(g_s) \leq J(g)$. This ensures that the unique minimizer of \eqref{eq::R-estbis} is symmetric, as expected. 
%    Finally, the estimator $R_\eta$ in \eqref{eq:RTformR} is symmetric due to the fact that, according to \eqref{eq:betas}, $\beta_{ijk} = \beta_{ikj}$ for any $1 \leq i \leq n$ and $1 \leq j \neq k \leq r_i$. Consequently,
%    for any $1 \leq i \leq n$, 
%    \begin{equation}
%        \left( \sum_{1\leq j \neq k\leq r_i}
%         \green_{u_{ij}}^{\mathscr{D}^* \mathscr{D}} \otimes \green_{u_{ik}}^{\mathscr{D}^* \mathscr{D}} \right) (u,v)  = 
%         \left( \sum_{1\leq j \neq k\leq r_i}
%         \green_{u_{ij}}^{\mathscr{D}^* \mathscr{D}} \otimes \green_{u_{ik}}^{\mathscr{D}^* \mathscr{D}} \right) (v,u)
%    \end{equation}
%    by inverting the summation over $j$ and $k$. 
%    \ju{Not sure this argument is fully correct and my previous argument, decomposing $g$ into its symmetric and antisymmetric part is more elegand. ToDo.}
    \end{proof}
    
\subsection{Proofs of Section~\ref{sec:asymptoticanalysis}}\label{sec:proofs3}

The following lemma will be used extensively for proving the rates of the mean and covariance estimators.

\begin{lemma}\label{lemma::sup}
Let $ p > 1$. 
There exists $B_1>0$ such that, for any $f \in \Hp$,
\begin{equation}
    \label{eq:controluniformf}
\sup_{u \in \mathbb{S}^2} |f(u)| \le B_1 \| f \|_\rangleH.
\end{equation}
Similarly, there exists $B_2>0$ such that, for any $g \in \Hprod$,
\begin{equation}
    \label{eq:controluniformg}
\sup_{(u,v) \in \mathbb{S}^2 \times \mathbb{S}^2} |g(u,v)| \le B_2 \| g \|_\rangleHH.
\end{equation}
\end{lemma}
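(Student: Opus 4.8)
The plan is to exploit the RKHS structure established in Proposition~\ref{prop:RKHSprop}. Since $p>1$, both $\Hp$ and $\Hprod$ are reproducing kernel Hilbert spaces, so the evaluation functionals $f\mapsto f(u)$ and $g \mapsto g(u,v)$ are continuous; the content of the lemma is that the operator norms of these functionals can be bounded \emph{uniformly} in $u$ (resp. $(u,v)$), which then yields the claimed supremum bounds by the Cauchy--Schwarz inequality for the dual pairing.

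First I would write, for $f \in \Hp$ and $u \in \Sph^2$, the identity $f(u) = \langle f, \delta_u\rangle$ with $\delta_u \in \Hp'$ the evaluation distribution (legitimate precisely because $\Hp$ is an RKHS for $p>1$), and apply Cauchy--Schwarz in the dual pairing to get $|f(u)| \le \|f\|_\rangleH \,\|\delta_u\|_{\Hp'}$. It then suffices to show $\sup_{u\in\Sph^2}\|\delta_u\|_{\Hp'} < \infty$, and to take $B_1$ equal to this supremum. From the explicit form of the dual norm (the single-sphere version of the computation recorded in~\eqref{eq:normdirac}),
\[
\|\delta_u\|_{\Hp'}^2 = \sum_{\ell=0}^\infty \frac{1}{(1+\ell(\ell+1))^p}\sum_{m=-\ell}^\ell Y_{\ell,m}(u)^2 .
\]
The key step is the addition theorem for spherical harmonics, $\sum_{m=-\ell}^\ell Y_{\ell,m}(u)^2 = \tfrac{2\ell+1}{4\pi}$ for every $u\in\Sph^2$, which makes the right-hand side independent of $u$:
\[
\|\delta_u\|_{\Hp'}^2 = \frac{1}{4\pi}\sum_{\ell=0}^\infty \frac{2\ell+1}{(1+\ell(\ell+1))^p} =: B_1^2 .
\]
The general term is of order $\ell^{1-2p}$, so the series converges exactly when $2p-1>1$, i.e. $p>1$; hence $B_1<\infty$ and~\eqref{eq:controluniformf} follows.

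For the tensorial statement I would use directly the factorization already established in~\eqref{eq:normdirac}: for any $(u,v)\in\Sph^2\times\Sph^2$ one has $\|\delta_{(u,v)}\|_{\Hprod'}^2 = \|\delta_u\|_{\Hp'}^2\,\|\delta_v\|_{\Hp'}^2 = B_1^4$. Writing $g(u,v) = \langle g, \delta_{(u,v)}\rangle$ and applying Cauchy--Schwarz in the dual pairing on $\Hprod$ gives $|g(u,v)| \le \|g\|_\rangleHH\,\|\delta_{(u,v)}\|_{\Hprod'} = B_1^2\,\|g\|_\rangleHH$, so~\eqref{eq:controluniformg} holds with $B_2 = B_1^2$.

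The only genuine point where the hypothesis $p>1$ is used is the convergence of $\sum_{\ell\ge 0}(2\ell+1)(1+\ell(\ell+1))^{-p}$; everything else is the duality inequality and the addition theorem, so I do not expect any real obstacle. If one prefers to avoid the language of distributions, an equivalent route is to bound $|f(u)| \le \sum_{\ell,m}|f_{\ell,m}|\,|Y_{\ell,m}(u)|$, insert and extract the weights $(1+\ell(\ell+1))^{p/2}$, and apply Cauchy--Schwarz to the product, arriving at the same constant $B_1$; I would present whichever formulation is cleaner given the conventions fixed in Section~\ref{sec:background}.
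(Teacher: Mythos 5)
Your proof is correct and follows essentially the same route as the paper's: both express $f(u)$ via the reproducing-kernel/duality pairing and apply Cauchy--Schwarz, reducing the claim to a uniform bound on $\|\delta_u\|_{\Hp'}$. The only difference is cosmetic — the paper deduces that $\|K(\cdot,u)\|_{\Hp}$ is independent of $u$ from the rotation invariance of the $\Hp$-norm, whereas you compute the constant explicitly via the addition theorem, which simultaneously re-derives the finiteness already guaranteed by Proposition~\ref{prop:RKHSprop}.
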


\begin{proof}
According to Proposition~\ref{prop:RKHSprop}, the condition $p > 1$ ensures that $\Hp$ is a RKHS.
For any $u \in \Sph^2$, we denote by $K : \Sph^2 \times \Sph^2 \rightarrow \R$ the reproducing kernel such that $f(u) = \langle K(\cdot , u) , f \rangle_{\rangleH} $. Then, the Cauchy-Schwarz inequality implies that
\begin{equation} \label{eq:intermediatefu}
    |f(u)| = | \langle K(\cdot , u ), f \rangle_{\rangleH} | \leq \| K(\cdot , u) \|_{\rangleH} \| f \|_{\rangleH}.
\end{equation}
The norm of $\Hp$ is isotropic in the sense that $\| \mathcal{R} f \|_{\rangleH} = \| f \|_{\rangleH}$ for any rotation $\mathcal{R}$. This implies that $\| K(\cdot , u) \|_{\rangleH}$ does not depend on $u\in \Sph^2$. Hence, \eqref{eq:intermediatefu} implies \eqref{eq:controluniformf}.
The proof for Equation \eqref{eq:controluniformg} is similar.
\end{proof}

\begin{proof}[Proof of Theorem \ref{th::mu}]
Without loss of generality, we will prove the theorem for $D_\ell= (1+\ell(\ell+1))^{p/2}$, which leads to a penalization term in the $\Hp$ norm. However, all the following steps can be generalized to every admissible operator with spectral growth order $p$ in Definition \ref{sph-pseudo-diff}.

Define
\begin{equation*}
    F_{ \pen} (g) := \frac{4\pi}{n} \sum_{i=1}^n \frac{1}{r_i} \sum_{j=1}^{r_i} (\W_{ij}  - g(U_{ij}))^2 + \pen \|g \|^2_\rangleH,
\end{equation*}
$$\bar{F}_{  \pen}(g) := \mathbb{E}[F_{ \pen} (g)] = 4\pi \mathbb{E}|\W_{11} - \mu(U_{11}) |^2 + \|\mu - g\|^2_\rangleL + \pen \|g \|^2_\rangleH,$$ and let
\begin{equation*}
    \bar{\mu}_\pen := \argmin_{g \in \Hp}  \bar{F}_{  \pen} (g).
\end{equation*}
Also, define 
\begin{equation}\label{eq:mutilda}
\Tilde{\mu}_\pen= \bar{\mu}_\pen - (\bar{F}''_{  \pen})^{-1} F'_{ \pen}(\bar{\mu}_\pen).
\end{equation}
The definitions of $F'_{ \pen}$ and $\bar{F}''_{  \pen}$ will be given in Lemma \ref{lemma::1}, while the existence of $(\bar{F}''_{  \pen})^{-1}$ will be discussed in Lemma \ref{lemma::2}.

Now, write $
\mu_\pen  - \mu  = \mu_\pen - \Tilde{\mu}_\pen + \Tilde{\mu}_\pen - \bar{\mu}_\pen +  \bar{\mu}_\pen - \mu.
$
We shall prove that
\begin{enumerate}
    \item $\|\bar{\mu}_\pen - \mu\|^2_\rangleL \le M_1 \left ((n r)^{-p/(p+1)} + n^{-1}\right)$, \label{proof1}
    \item $\mathbb{E}\|\Tilde{\mu}_\pen - \bar{\mu}_\pen\|^2_\rangleL \le M_2 \left ((n r)^{-p/(p+1)} + n^{-1}\right)$, \label{proof2}
   \item  $\forall \varepsilon>0$, $\lim_{n\to \infty} \sup_{\mathbb{P}_X \in \pio} \mathbb{P}\left(  \|\mu_\pen - \Tilde{\mu}_\pen\|^2_\rangleL > \varepsilon \left ( (n r)^{-p/(p+1)} + n^{-1}\right )\right) = 0$,  \label{proof3}
\end{enumerate}
whenever $\pen \asymp (n r)^{-p/(p+1)}$, for any choice of the sampling distribution in $\pio$.

%All these three imply $\left ((n r)^{-p/(p+1)} + n^{-1}\right)$, hence the claim \underline{in the book} is proved.

Note that, for $t>0$, 
\begin{align*}
\mathbb{P}(\| \mu_\pen  - \mu \|^2_\rangleL > t ) &= \mathbb{P}(\| \mu_\pen  - \mu \|_\rangleL > \sqrt{t} )\\
&\le \mathbb{P}(\| \mu_\pen  - \Tilde{\mu}_\pen \|_\rangleL > \sqrt{t}/2 ) + \mathbb{P}(\| \Tilde{\mu}_\pen - \mu \|_\rangleL > \sqrt{t}/2 ).
\end{align*}
The second term satisfies
$$
\mathbb{P}(\| \Tilde{\mu}_\pen - \mu \|_\rangleL > t' ) \le \frac{\mathbb{E}\| \Tilde{\mu}_\pen - \mu \|_\rangleL}{t'} \le \frac{\mathbb{E}\|\Tilde{\mu}_\pen - \bar{\mu}_\pen\|_\rangleL}{t'} + \frac{\|\bar{\mu}_\pen - \mu\|_\rangleL}{t'}
$$
where $t'=\sqrt{t}/2$. By choosing $t=D\left ((n r)^{-p/(p+1)} + n^{-1}\right)$, then
$$
\mathbb{P}(\| \Tilde{\mu}_\pen - \mu \|_\rangleL > t' ) \le \frac{c_0}{\sqrt{D}},
$$
where $c_0$ is a positive constant not depending on the choice of $\mathbb{P}_X \in \pio$.
Moreover, from \ref{proof3}, $\forall \varepsilon >0$,
$$
\lim_{n\to \infty} \sup_{\mathbb{P}_X \in \pio}  \mathbb{P}(\|\mu_\pen - \Tilde{\mu}_\pen\|^2_\rangleL > \varepsilon \left ((n r)^{-p/(p+1)} + n^{-1}\right))=0.
$$
Hence, by taking $\varepsilon=D/4$ and $t=D\left ((n r)^{-p/(p+1)} + n^{-1}\right)$,
$$
\lim_{n\to \infty} \sup_{\mathbb{P}_X \in \pio}  \mathbb{P}(\| \mu_\pen  - \Tilde{\mu}_\pen \|_\rangleL > \sqrt{t}/2 ) = \lim_{n\to \infty} \sup_{\mathbb{P}_X \in \pio}  \mathbb{P}(\| \mu_\pen  - \Tilde{\mu}_\pen \|^2_\rangleL > t/4 ) =0.
$$
%We can conclude that
%$$
%\lim_{D\to \infty} \sup_{\mathbb{P}_X \in \pio} \limsup_{n\to\infty}  \mathbb{P}(\| \mu_\pen  - \mu \|^2_\rangleL > D((n r)^{-p/(p+1)} + n^{-1}) ) = 0.
%$$

For the rest of the proof, it is useful to define an \emph{intermediate} norm $\|\cdot\|_{\alpha}$, $\alpha \in [0,1]$, between $\|\cdot\|_\rangleL$ and $\|\cdot\|_\rangleH$. Let $g \in L^2(\Sph^2)$, 
$$
\|g\|^2_\alpha := \sum_{\ell=0}^\infty \sum_{m=-\ell}^\ell D_\ell^{2\alpha} \langle g, \Y_{\ell,m} \rangle^2_\rangleL.$$
Since $|D_\ell|\ge 1$, for all $\ell \in \mathbb{N}$,
$$
\|g\|_\rangleL = \|g\|_0 \le \|g\|_\alpha \le \|g\|_1 = \|g\|_\rangleH;
$$
moreover, $g \mapsto \|g\|_\alpha$ specifies a norm on $\mathcal{H}_{p\alpha}$ which is equivalent to $\|\cdot\|_{\mathcal{H}_{p\alpha}}$. Note that, since we are considering $D_\ell = (1+\ell(\ell+1))^{p/2}$, the two norms $\|\cdot\|_\alpha$ and $\|\cdot\|_{\mathcal{H}_{p\alpha}}$ are actually identical; however, for generality purposes, we will maintain such distinction in our notation.

%Proof of \ref{proof1} is immediate. Indeed, by definition of $\bar{\mu}_\pen$, we have
%$$
%4\pi \mathbb{E}|\W_{11}-\mu(U_{11})|^2 + \|\mu - \bar{\mu}_\pen\|^2_\rangleL \le \bar{F}_{   \pen} (\bar{\mu}_\pen) \le \bar{F}_{ \pen} (g),
%$$
%for all $g \in \Hp$; in particular, for $g=\mu$, it holds
%\begin{equation}\label{eq::mu-mupen}
%\|\mu - \bar{\mu}_\pen\|^2_\rangleL \le \pen %\|\mu\|^2_\rangleH\le K \pen,
%\end{equation}
%which gives the claimed result under the assumptions on $\pen$.
Proof of \ref{proof1} follows immediately from the fact that, for $\alpha \in [0,1]$,
\begin{equation}\label{eq::mubar-alphabound}
\| \mu - \bar{\mu}_\pen \|^2_\alpha \le \pen^{1-\alpha} \|\mu\|^2_\rangleH.
\end{equation}
Indeed, for $g \in \Hp$,
$$
\|\mu - g\|_\rangleL^2 + \pen \|g\|_\rangleH^2 = \sum_{\ell=0}^\infty\sum_{m=-\ell}^\ell |\mu_{\ell,m} - g_{\ell,m}|^2 + \pen  \sum_{\ell=0}^\infty\sum_{m=-\ell}^\ell D_\ell^{2} |g_{\ell,m}|^2,
$$
where $\mu_{\ell,m}= \langle \mu, \Y_{\ell,m} \rangle_\rangleL$ and $g_{\ell,m}= \langle g, \Y_{\ell,m} \rangle_\rangleL$. The minimizer is then given by
$$
\bar{\mu}_\pen = \sum_{\ell=0}^\infty\sum_{m=-\ell}^\ell \frac{\mu_{\ell,m}}{1+\pen D_\ell^2}Y_{\ell,m}. 
$$
Hence,
\begin{align*}
    \|\mu - \bar{\mu}_\pen\|^2_\alpha &=\sum_{\ell=0}^\infty\sum_{m=-\ell}^\ell D_\ell^{2\alpha} \left | \mu_{\ell,m} -  \mu_{\ell,m} (1+\pen D_\ell^2)^{-1} \right |^2\\
    &\le \pen^{1-\alpha} \sum_{\ell=0}^\infty\sum_{m=-\ell}^\ell D_\ell^2 |\mu_{\ell,m}|^2 (\pen D_\ell^2)^{1+\alpha} (1+\pen D_\ell^2)^{-2}\\
    &\le \pen^{1-\alpha} \|\mu\|^2_\rangleH.
\end{align*}
and, setting $\alpha=0$,
\begin{equation*}%\label{eq::mu-mupen}
\|\mu - \bar{\mu}_\pen\|^2_\rangleL \le \pen \|\mu\|^2_\rangleH\le K \pen,
\end{equation*}
which gives the claimed result under the assumptions on $\pen$.

In order to prove \ref{proof2}, we first show that 
\begin{equation}\label{eq::tilde-bar}
\|\Tilde{\mu}_\pen - \bar{\mu}_\pen \|^2_\alpha = \frac14 \sum_{\ell=0}^\infty \sum_{m=-\ell}^\ell \frac{D_\ell^{2\alpha}}{(1+ \pen D_\ell^2)^2} ( F'_{ \pen}(\bar{\mu}_\pen)\Y_{\ell,m})^2.
\end{equation}
By the definitions of $\Tilde{\mu}_\pen$ and $\|\cdot\|_\alpha$,
\begin{align*}
\|\Tilde{\mu}_\pen - \bar{\mu}_\pen \|^2_\alpha &= \|(\bar{F}''_{\pen})^{-1}F'_{ \pen}(\bar{\mu}_\pen) \|^2_\alpha\\
& =\sum_{\ell=0}^\infty \sum_{m=-\ell}^\ell D_\ell^{2\alpha} \langle (\bar{F}''_{  \pen})^{-1}F'_{ \pen}(\bar{\mu}_\pen), \Y_{\ell,m}\rangle_\rangleL^2.
\end{align*}
However,
\begin{align*}
    \langle (\bar{F}''_{  \pen})^{-1}F'_{ \pen}(\bar{\mu}_\pen), \Y_{\ell,m}\rangle_\rangleL &= \frac{1}{D_\ell^2}  \langle (\bar{F}''_{  \pen})^{-1}F'_{ \pen}(\bar{\mu}_\pen), \Y_{\ell,m}\rangle_\rangleH\\
     &=\frac{1}{D_\ell^2}  \langle \mathcal{Q}F'_{ \pen}(\bar{\mu}_\pen), (\Tilde{F}''_{  \pen})^{-1}\Y_{\ell,m}\rangle_\rangleH,
\end{align*}
since $\mathcal{Q}F'_{  \pen}(\bar{\mu}_\pen)$ is the representer of $F'_{  \pen}(\bar{\mu}_\pen)$ and $(\Tilde{F}''_{  \pen})^{-1}$ is self-adjoint.
From Lemma \ref{lemma::2},
\begin{align*}
    (\Tilde{F}''_{  \pen})^{-1}\Y_{\ell,m} = \frac{1}{2}\frac{D_\ell^2}{1+\pen D_{\ell}^2} \Y_{\ell,m};
\end{align*}
thus,
\begin{align*}
    \langle (\bar{F}''_{  \pen})^{-1}F'_{ \pen}(\bar{\mu}_\pen), \Y_{\ell,m}\rangle_\rangleL
     &=\frac{1}{2(1+\pen D_{\ell}^2)}  \langle \mathcal{Q}F'_{ \pen}(\bar{\mu}_\pen), \Y_{\ell,m}\rangle_\rangleH \\&= \frac{1}{2(1+\pen D_{\ell}^2)} F'_{ \pen}(\bar{\mu}_\pen)\Y_{\ell,m}.
\end{align*}
Now observe that $\bar{F}'_{   \pen} (\bar{\mu}_\pen) = 0$ (see \cite[Theorem 3.6.3]{Hsing}). Then, an application of Lemma \ref{lemma::1} reveals that, for any $g \in \Hp$,
\begin{align}\label{eq::Fprime}
   \notag F'_{  \pen} (\bar{\mu}_\pen) g &= F'_{  \pen} (\bar{\mu}_\pen)g - \bar{F}'_{   \pen} (\bar{\mu}_\pen) g\\
    &= -\frac{8\pi}{n} \sum_{i=1}^n  \frac{1}{r_i} \sum_{j=1}^{r_i} (\W_{ij} - \bar{\mu}_\pen(U_{ij}))g(U_{ij}) + 2 \langle \mu - \bar{\mu}_\pen, g \rangle_\rangleL.
\end{align}
Consequently,
$$
\mathbb{E}[F'_{  \pen} (\bar{\mu}_\pen)\Y_{\ell,m}] = \mathbb{E}_\U \mathbb{E}[F'_{  \pen} (\bar{\mu}_\pen)\Y_{\ell,m}|\U] = 0
$$
and
\begin{align*}
    \sum_{m=-\ell}^\ell \mathbb{E}|F'_{  \pen} (\bar{\mu}_\pen)\Y_{\ell,m}|^2 &= \sum_{m=-\ell}^\ell \operatorname{Var}[F'_{  \pen} (\bar{\mu}_\pen)\Y_{\ell,m}]\\
    &=  \frac{(8\pi)^2}{n^2} \sum_{i=1}^n  \frac{1}{r_i^2} \sum_{m=-\ell}^\ell \operatorname{Var}\left[ \sum_{j=1}^{r_i} (\W_{1j} - \bar{\mu}_\pen(U_{1j}))\Y_{\ell,m}(U_{1j})  \right] .
\end{align*}
Using the law of total variance, for a generic $i$ we can write 
\begin{align*}
\sum_{m=-\ell}^\ell \operatorname{Var}\left[ \sum_{j=1}^{r_i} (\W_{1j} - \bar{\mu}_\pen(U_{1j}))\Y_{\ell,m}(U_{1j})  \right] &= \sum_{m=-\ell}^\ell \operatorname{Var} \left [\sum_{j=1}^{r_i} (\mu(U_{1j}) - \bar{\mu}_\pen(U_{1j}))\Y_{\ell,m}(U_{1j}) \right] \\&+ \sum_{m=-\ell}^\ell \mathbb{E}_\U\!\left[ \operatorname{Var}\left [\sum_{j=1}^{r_i} (\W_{1j} - \bar{\mu}_\pen(U_{1j}))\Y_{\ell,m}(U_{1j}) \Bigg| \U \right]\right].
\end{align*}
For the first term on the right hand side of this expression, we have
\begin{align*}
 \sum_{m=-\ell}^\ell \operatorname{Var}\left [\sum_{j=1}^{r_i} (\mu(U_{1j}) - \bar{\mu}_\pen(U_{1j}))\Y_{\ell,m}(U_{1j}) \right] &= r_i \sum_{m=-\ell}^\ell \operatorname{Var}\left [ (\mu(U_{11}) - \bar{\mu}_\pen(U_{11}))\Y_{\ell,m}(U_{11}) \right]  \\
&\le \frac{r_i}{4\pi} \sum_{m=-\ell}^\ell  \int_{\mathbb{S}^2} |\mu(u) - \bar{\mu}_\pen(u) |^2 |\Y_{\ell,m}(u)|^2 du \\
  &\le \frac{B r_i}{4\pi} (2\ell+1) \|\mu - \bar{\mu}_\pen \|^2_\rangleH \\
 &\le  \frac{BK r_i}{4\pi}  (2\ell+1),
\end{align*}
where the last two inequalities are justified by Lemma \ref{lemma::sup}, for some $B>0$, and Equation \eqref{eq::mubar-alphabound} with $\alpha=1$. Then, for the second term,
\begin{align*}
    &\sum_{m=-\ell}^\ell \mathbb{E}_\U\!\left[ \operatorname{Var}\left [\sum_{j=1}^{r_i} (\W_{1j} - \bar{\mu}_\pen(U_{1j}))\Y_{\ell,m}(U_{1j}) \Bigg| \U \right]\right] \\
    \le& \sum_{m=-\ell}^\ell \sum_{j=1}^{r_i} \sum_{j'=1}^{r_i} \mathbb{E}_\U\!\left[ \Y_{\ell,m}(U_{1j}) \Y_{\ell,m}(U_{1j'}) \mathbb{E}[\W_{1j}\W_{1j'}| \U] \right]\\
    =&\,  \frac{r_i(r_i-1)}{(4\pi)^2} \sum_{m=-\ell}^\ell \mathbb{E}\langle X, \Y_{\ell, m}  \rangle^2_\rangleL  \\
    +& \, \frac{r_i}{4\pi} \sum_{m=-\ell}^\ell  \int_{\mathbb{S}^2}  R(u,u)  |\Y_{\ell,m}(u)|^2 du +  \frac{r_i}{4\pi} (2\ell+1) \sigma^2  \\
    \le &\,  \frac{r_i(r_i-1)}{(4\pi)^2} \sum_{m=-\ell}^\ell \mathbb{E}\langle X, \Y_{\ell, m}  \rangle^2_\rangleL  \\
    +& \, \frac{B' r_i}{4\pi} (2\ell+1) \mathbb{E}\|X\|^2_\Hq \, + \,  \frac{r_i}{4\pi} (2\ell+1) \sigma^2 ,
\end{align*}
again by applying Lemma \ref{lemma::sup}, for some $B'>0$. Hence, combining all the bounds,
$$
\sum_{m=-\ell}^\ell \mathbb{E}|F'_{  \pen} (\bar{\mu}_\pen)\Y_{\ell,m}|^2 \le \frac{4}{n} \sum_{m=-\ell}^\ell \mathbb{E}\langle X, \Y_{\ell, m}  \rangle^2_\rangleL + (2\ell+1) O\left( \frac{1}{nr} \right),
$$
and
\begin{equation*}
\mathbb{E}\|\Tilde{\mu}_\pen - \bar{\mu}_\pen \|^2_\alpha \le \frac{1}{n} \sum_{\ell=0}^\infty \sum_{m=-\ell}^\ell \frac{D_\ell^{2\alpha}}{(1+ \pen D_\ell^2)^2} \mathbb{E}\langle X, \Y_{\ell, m}  \rangle^2_\rangleL + O\left( \frac{1}{nr} \right) \sum_{\ell=0}^\infty \frac{D_\ell^{2\alpha}}{(1+ \pen D_\ell^2)^2} (2\ell+1),
\end{equation*}
where $r$ is the harmonic mean of $r_1,\dots,r_n$.
Now, for $\alpha \le q/p$,
\begin{align*}
   \sum_{\ell,m}  \frac{D_\ell^{2\alpha} }{(1+ \pen D_\ell^2 )^2} \mathbb{E}\langle X, \Y_{\ell, m}  \rangle^2_\rangleL  &\le \mathbb{E} \left [\sum_{\ell,m} D_\ell^{2\alpha}  \langle X, \Y_{\ell, m}  \rangle^2_\rangleL \right]\le \mathbb{E}\|X\|^2_{q/p},
\end{align*}
which is bounded (possibly up to an arbitrary constant) by $\mathbb{E}\|X\|^2_{\Hq}$.
Moreover, from \cite{Lin2000},
\begin{align*}
\sum_{\ell=0}^\infty  \frac{D_\ell^{2\alpha} }{(1+ \pen D_\ell^2 )^2} (2\ell+1)  =O\left(1+ \pen^{-(\alpha+1/p)}   \right).
\end{align*}
Thus, we have that
$$
\mathbb{E}\|\Tilde{\mu}_\pen - \bar{\mu}_\pen \|^2_\alpha \le M_2 \left( (nr)^{-1}\pen^{-(\alpha+1/p)}+  n^{-1}  \right),
$$
where $M_2$ is a positive constant not depending on the choice of $\mathbb{P}_X \in \pio$. By choosing $\alpha=0$, we obtain the claimed rate.

Now, let us prove \ref{proof3}.
The first step is to obtain a useful analytic form for $\mu_\pen - \Tilde{\mu}_\pen$, by observing that
\begin{align*}
\mu_\pen - \Tilde{\mu}_\pen &= \mu_\pen - \bar{\mu}_\pen +  (\bar{F}''_{  \pen})^{-1}F'_{ \pen}(\bar{\mu}_\pen) \\
&= (\bar{F}''_{  \pen})^{-1} \left [\bar{F}''_{  \pen}(\mu_\pen - \bar{\mu}_\pen) + F'_{ \pen}(\bar{\mu}_\pen)\right].
\end{align*}
Since $\mu_\pen$ minimizes $F'_{ \pen}$, it holds $F'_{ \pen}(\mu_\pen) = 0$ (see \cite[Theorem 3.6.3]{Hsing}).
Then, for any $g \in \Hp$,
\begin{align*}
     \left [\bar{F}''_{  \pen}(\mu_\pen - \bar{\mu}_\pen) + F'_{ \pen}(\bar{\mu}_\pen)\right] g&=  \left [\bar{F}''_{  \pen}(\mu_\pen - \bar{\mu}_\pen) + F'_{ \pen}(\bar{\mu}_\pen) - F'_{ \pen}(\mu_\pen)\right]g\\
    &=\left[\bar{F}''_{   0}(\mu_\pen - \bar{\mu}_\pen) - F''_{  0}(\mu_\pen - \bar{\mu}_\pen)  \right]g.
\end{align*}
where we used Lemma \ref{lemma::1}; in other words,
$$
\mu_\pen - \Tilde{\mu}_\pen  = (\bar{F}''_{  \pen})^{-1}  \left[\bar{F}''_{   0}(\mu_\pen - \bar{\mu}_\pen) - F''_{  0}(\mu_\pen - \bar{\mu}_\pen)  \right].
$$
Now, the same argument that leads to \eqref{eq::tilde-bar} gives us
$$
\| \mu_\pen - \Tilde{\mu}_\pen \|^2_\alpha = \frac14 \sum_{\ell=0}^\infty \sum_{m=-\ell}^\ell \frac{D_\ell^{2\alpha}}{(1+ \pen D_\ell^2)^2} \left ( \left[\bar{F}''_{   0}(\mu_\pen - \bar{\mu}_\pen) - F''_{  0}(\mu_\pen - \bar{\mu}_\pen)  \right]\Y_{\ell,m}\right)^2,
$$
with
\begin{align*}
\left[\bar{F}''_{   0}(\mu_\pen - \bar{\mu}_\pen) - F''_{  0}(\mu_\pen - \bar{\mu}_\pen)  \right]\Y_{\ell,m} &= 2\langle\mu_\pen - \bar{\mu}_\pen, \Y_{\ell,m}\rangle_\rangleL \\
&- \frac{8\pi}{n} \sum_{i=1}^n \frac{1}{r_i} \sum_{j=1}^{r_i} (\mu_\pen(U_{ij}) - \bar{\mu}_\pen(U_{ij})) \Y_{\ell,m}(U_{ij}).
\end{align*}
Now since $\mu_\pen - \bar{\mu}_\pen \in \Hp$, we can write $\mu_\pen - \bar{\mu}_\pen = \sum_{\ell',m'} h_{\ell',m'} \Y_{\ell',m'}$, where the convergence is both in $L^2(\mathbb{S}^2)$ and pointwise.
Then,
\begin{align*}
    \| \mu_\pen - \Tilde{\mu}_\pen \|^2_\alpha =  \sum_{\ell,m} \frac{D_\ell^{2\alpha}}{(1+ \pen D_\ell^2)^2} \left ( \sum_{\ell',m'} h_{\ell',m'} V_{\ell,\ell',m,m'} \right)^2,
\end{align*}
where $$
V_{\ell,\ell',m,m'} = \delta_\ell^{\ell'} \delta_m^{m'} - \frac{4\pi}{n} \sum_{i=1}^n  \frac{1}{r_i} \sum_{j=1}^{r_i} \Y_{\ell,m} (U_{ij} ) \Y_{\ell',m'} (U_{ij} ).
$$
By applying the Cauchy–Schwarz inequality for arbitrary $\theta \in (1/p, 1]$, we obtain
$$
\| \mu_\pen - \Tilde{\mu}_\pen \|^2_\alpha \le \| \mu_\pen - \bar{\mu}_\pen \|^2_\theta \sum_{\ell,m} \frac{D_\ell^{2\alpha}}{(1+ \pen D_\ell^2)^2} \sum_{\ell',m'} D_{\ell'}^{-2\theta} V^2_{\ell,\ell',m,m'}.
$$
It is readily seen that $\mathbb{E}[V_{\ell,\ell',m,m'}]=0$ and 
\begin{align*}
\sum_{m,m'}\mathbb{E}[V^2_{\ell,\ell',m,m'}] &= \sum_{m,m'} \operatorname{Var}[V_{\ell,\ell',m,m'}] = \frac{(4\pi)^2}{nr} \sum_{m,m'} \operatorname{Var}[\Y_{\ell,m} (U_{11})  \Y_{\ell',m'} (U_{11})] \\
&\le \frac{4\pi}{nr} \sum_{m,m'} \int_{\mathbb{S}^2} |\Y_{\ell,m} (u)|^2   |\Y_{\ell',m'} (u)|^2 du\\
&\le D_{\ell}^{2\theta}(2\ell+1)(2\ell'+1) O\left (\frac{1}{nr}\right),
\end{align*}
by Lemma \ref{lemma::sup}, since $\Y_{\ell,m} \in \mathcal{H}_{p\theta}$, and $p\theta >1$. Hence, we obtain
$$
\sum_{\ell,m} \frac{D_\ell^{2\alpha}}{(1+ \pen D_\ell^2)^2} \sum_{\ell',m'} D_{\ell'}^{-2\theta} \mathbb{E}[V^2_{\ell,\ell',m,m'}] \le \frac{M_3}{nr \pen^{\alpha+\theta+1/p}}.
$$
 Let us define
 $$A_{n} := \sum_{\ell,m} \frac{D_\ell^{2\alpha}}{(1+ \pen D_\ell^2)^2} \sum_{\ell',m'} D_{\ell'}^{-2\theta} V^2_{\ell,\ell',m,m'},$$ $a_n:= \frac{1}{nr \pen^{\alpha+\theta+1/p}}$ and $\gamma_n :=  \frac{1}{nr \pen^{\alpha+1/p}} + \frac1n$.
%clearly, $\forall \varepsilon > 0$,
%$$
%\sup_{\mathbb{P}_X \in \pio} \mathbb{P} \left( \frac{A_n}{a_n} > \varepsilon \right) \le \frac{M_1}{\varepsilon}.
 %$$
Note that $A_n = \sup_{\mathbb{P}_X \in \pio} A_n$ and $A_n = o_\mathbb{P}(1)$.
Then,
$
\| \mu_\pen - \Tilde{\mu}_\pen \|^2_\alpha \le  A_n \, \| \mu_\pen - \bar{\mu}_\pen \|^2_\theta
$
and therefore
\begin{align*}
\mathbb{P}\left( \| \mu_\pen - \Tilde{\mu}_\pen \|^2_\alpha > \varepsilon \, \gamma_n \right) &\le \mathbb{P}\left( A_n \, \| \mu_\pen - \bar{\mu}_\pen \|^2_\theta > \varepsilon \, \gamma_n \right)\\
&= \mathbb{P}\left( A_n \, \| \mu_\pen - \bar{\mu}_\pen \|^2_\theta > \varepsilon \, \gamma_n, A_n < 1 \right) + \mathbb{P}\left( A_n \, \| \mu_\pen - \bar{\mu}_\pen \|^2_\theta > \varepsilon \, \gamma_n , A_n \ge 1 \right) \\
&\le \mathbb{P}\left( A_n \, \| \mu_\pen - \bar{\mu}_\pen \|^2_\theta > \varepsilon \, \gamma_n, A_n < 1 \right) + \mathbb{P}\left( A_n \ge 1 \right).
\end{align*}
If $A_n < 1$,
\begin{align*}
\|  \Tilde{\mu}_\pen - \bar{\mu}_\pen \|_\theta &\ge \| \mu_\pen - \bar{\mu}_\pen \|_\theta - \| \mu_\pen - \Tilde{\mu}_\pen \|_\theta\\
& \ge (1-\sqrt{A_n})\| \mu_\pen - \bar{\mu}_\pen \|_\theta,
\end{align*}
which allows to write
\begin{align*}
\mathbb{P}\left( A_n \, \| \mu_\pen - \bar{\mu}_\pen \|^2_\theta > \varepsilon \, \gamma_n, A_n < 1 \right)
&\le \mathbb{P}\left( A_n |1-\sqrt{A_n}|^{-2} \|  \Tilde{\mu}_\pen - \bar{\mu}_\pen \|^2_\theta > \varepsilon \, \gamma_n, A_n < 1 \right) \\
&\le  \mathbb{P}\left( A_n |1-\sqrt{A_n}|^{-2} \|  \Tilde{\mu}_\pen - \bar{\mu}_\pen \|^2_\theta> \varepsilon \, \gamma_n \right).
\end{align*}
Let us now define $B_n:= \| \Tilde{\mu}_\pen - \bar{\mu}_\pen \|^2_\theta$ and $b_n:=\frac{1}{nr \pen^{\theta+1/p}} + \frac1n$. Recall that $\mathbb{E}[B_n] \le M_2 b_n$, for $\theta \le q/p$. Moreover, $|1-\sqrt{A_n}|^{-2}=O_\mathbb{P}(1)$.
We can observe that
$$
a_n b_n = \frac{1}{nr \pen^{2\theta + 1/p}} \left ( \frac{1}{nr \pen^{ \alpha + 1/p}} + \frac{\eta^{\theta-\alpha}}{n} \right),
$$
so that $c_n:= a_nb_n/\gamma_n \to 0$, assuming $\theta < 1/2$ (recall that $p>2$) and $\alpha \in [0, \theta]$. Then,
\begin{align*}
\mathbb{P}\left( A_n |1-\sqrt{A_n}|^{-2}   B_n > \varepsilon \, \gamma_n \right) &= \mathbb{P}\left( \frac{A_n |1-\sqrt{A_n}|^{-2}  B_n}{a_n b_n} > \frac{\varepsilon}{c_n} \right)\\
&\le \mathbb{P}\left( \frac{B_n}{b_n} > \frac{\varepsilon^{1/3}}{c_n^{1/3}} \right)+ \mathbb{P}\left( \frac{A_n}{a_n} > \frac{\varepsilon^{1/3}}{c_n^{1/3}} \right) +  \mathbb{P}\left( |1-\sqrt{A_n}|^{-2}   > \frac{\varepsilon^{1/3}}{c_n^{1/3}} \right) \\
&\le M_2 \frac{c_n^{1/3}}{\varepsilon^{1/3}} + M_3 \frac{c_n^{1/3}}{\varepsilon^{1/3}} + \mathbb{P}\left( |1-\sqrt{A_n}|^{-2}   > \frac{\varepsilon^{1/3}}{c_n^{1/3}} \right).
\end{align*}
Clearly $c_n^{1/3}|1-\sqrt{A_n}|^{-2}=o_\mathbb{P}(1)$, hence
$$
\lim_{n\to\infty} \sup_{\mathbb{P}_X \in \pio} \mathbb{P}\left( \| \mu_\pen - \Tilde{\mu}_\pen \|^2_\alpha > \varepsilon \, \gamma_n \right) = 0.
$$
By taking $\alpha=0$ we obtain the claimed result.
\end{proof}

The next two lemmas refer to 
\begin{equation*}
    F_{ \pen} (g) := \frac{4\pi}{n} \sum_{i=1}^n  \frac{1}{r_i} \sum_{j=1}^{r_i} (\W_{ij}  - g(U_{ij}))^2 + \pen \|\mathscr{D} g \|^2_\rangleL,
\end{equation*}
$$\bar{F}_{  \pen}(g) := \mathbb{E}[F_{ \pen} (g)] = 4\pi \mathbb{E}|\W_{11} - \mu(U_{11}) |^2 + \|\mu - g\|^2_\rangleL + \pen \| \mathscr{D} g \|^2_\rangleL.$$

Let $\mathbb{X}_1$ and $\mathbb{X}_2$ be normed spaces. We will use $\mathcal{B}(\mathbb{X}_1, \mathbb{X}_2)$
to denote the set of all linear and bounded operators
from $\mathbb{X}_1$ to $\mathbb{X}_2$.
Here, we consider $\Hp$ endowed with $\|\mathscr{D}\cdot \|_\rangleL$.
\begin{lemma}\label{lemma::1}
Let $f,g,g_1,g_2$ be arbitrary elements of $\Hp$. 
\begin{enumerate}
    \item 
The Fréchet derivative of $F_{\pen}$ at $f$ is the element $F'_{\pen}(f)$ of $\mathcal{B}(\Hp,\mathbb{R})$ characterized by
\begin{equation*}
    F'_{\pen}(f)g = -\frac{8\pi}{nr} \sum_{i=1}^n \sum_{j=1}^r (\W_{ij} - f(U_{ij}))g(U_{ij}) + 2 \pen \langle \mathscr{D} f, \mathscr{D}g \rangle_\rangleL
\end{equation*}
The second Fréchet derivative $F''_{\pen}\in \mathcal{B}(\Hp, \mathcal{B}(\Hp,\mathbb{R}))$ is
characterized by
\begin{equation*}
        F''_{\pen}g_1g_2 = \frac{8\pi}{nr} \sum_{i=1}^n \sum_{j=1}^r g_1(U_{ij}) g_2(U_{ij}) + 2 \pen \langle \mathscr{D} g_1, \mathscr{D}g_2 \rangle_\rangleL.
\end{equation*}

    \item 
The Fréchet derivative of $\bar{F}_{\pen}$ at $f$ is the element $\bar{F}'_{\pen}(f)$ of $\mathcal{B}(\Hp,\mathbb{R})$ characterized by
\begin{equation*}
    \bar{F}'_{\pen}(f)g = - 2 \langle \mu-f,g \rangle_\rangleL + 2 \pen \langle \mathscr{D} f, \mathscr{D}g \rangle_\rangleL
\end{equation*}
The second Fréchet derivative $\bar{F}''_{\pen}\in \mathcal{B}(\Hp, \mathcal{B}(\Hp,\mathbb{R}))$ is
characterized by
\begin{equation}\label{eq::2dv}
        \bar{F}''_{\pen}g_1g_2 = 2 \langle g_1,g_2 \rangle_\rangleL + 2 \pen \langle \mathscr{D} g_1, \mathscr{D}g_2 \rangle_\rangleL.
\end{equation}
\end{enumerate}
\end{lemma}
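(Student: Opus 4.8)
The plan is to exploit the fact that both $F_\eta$ and $\bar F_\eta$ are inhomogeneous quadratic functionals on the Hilbert space $\Hp$ equipped with the norm $\|\mathscr{D}\,\cdot\,\|_{\rangleL}$ (which by Proposition~\ref{prop:Disgood} is equivalent to $\|\cdot\|_{\Hp}$), so that a plain algebraic expansion of $F_\eta(f+g)$ in powers of $g$ already \emph{is} its Fréchet expansion — provided the linear and bilinear maps that show up are bounded. So I would first record the boundedness facts that legitimise the expansion, then expand, then read off the first and second derivatives.

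First I would note that, since $p>1$, Proposition~\ref{prop:RKHSprop} guarantees that $\Hp$ is an RKHS, so each point-evaluation $e_{ij}\colon g\mapsto g(U_{ij})$ is a bounded linear functional on $\Hp$; the inclusion $\Hp\hookrightarrow \rangleL$ is bounded because $\|g\|_{\rangleL}\le\|g\|_{\Hp}$; and $\mathscr{D}\colon\Hp\to\rangleL$ is bounded by Proposition~\ref{prop:Disgood}. Consequently $(g_1,g_2)\mapsto e_{ij}(g_1)e_{ij}(g_2)$, $(g_1,g_2)\mapsto\langle g_1,g_2\rangle_{\rangleL}$ and $(g_1,g_2)\mapsto\langle\mathscr{D}g_1,\mathscr{D}g_2\rangle_{\rangleL}$ are bounded bilinear forms on $\Hp\times\Hp$.

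Then I would carry out the expansions. Using $(\W_{ij}-(f+g)(U_{ij}))^2=(\W_{ij}-f(U_{ij}))^2-2(\W_{ij}-f(U_{ij}))g(U_{ij})+g(U_{ij})^2$ and $\eta\|\mathscr{D}(f+g)\|_{\rangleL}^2=\eta\|\mathscr{D}f\|_{\rangleL}^2+2\eta\langle\mathscr{D}f,\mathscr{D}g\rangle_{\rangleL}+\eta\|\mathscr{D}g\|_{\rangleL}^2$, and summing over $i,j$ against the weights $4\pi/(nr)$, the term linear in $g$ is exactly the claimed $F'_\eta(f)g$, while the remainder $\frac{4\pi}{nr}\sum_{i,j}g(U_{ij})^2+\eta\|\mathscr{D}g\|_{\rangleL}^2$ is $O(\|g\|_{\Hp}^2)=o(\|g\|_{\Hp})$ by the boundedness of the $e_{ij}$ and of $\mathscr{D}$; this shows $F_\eta$ is Fréchet differentiable at $f$ with the stated derivative, an element of $\mathcal{B}(\Hp,\mathbb{R})$. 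For the second derivative I would observe that $f\mapsto F'_\eta(f)$ is itself affine in $f$ (a bounded linear part plus the constant functional $g\mapsto-\tfrac{8\pi}{nr}\sum_{i,j}\W_{ij}g(U_{ij})$), hence its Fréchet derivative is constant and equals the bounded bilinear map $F''_\eta g_1g_2=\tfrac{8\pi}{nr}\sum_{i,j}g_1(U_{ij})g_2(U_{ij})+2\eta\langle\mathscr{D}g_1,\mathscr{D}g_2\rangle_{\rangleL}$, an element of $\mathcal{B}(\Hp,\mathcal{B}(\Hp,\mathbb{R}))$. For $\bar F_\eta$ the same scheme applies: the summand $4\pi\,\mathbb{E}|\W_{11}-\mu(U_{11})|^2$ is a constant, and $\|\mu-(f+g)\|_{\rangleL}^2=\|\mu-f\|_{\rangleL}^2-2\langle\mu-f,g\rangle_{\rangleL}+\|g\|_{\rangleL}^2$ yields $\bar F'_\eta(f)g=-2\langle\mu-f,g\rangle_{\rangleL}+2\eta\langle\mathscr{D}f,\mathscr{D}g\rangle_{\rangleL}$, and again by affineness $\bar F''_\eta g_1g_2=2\langle g_1,g_2\rangle_{\rangleL}+2\eta\langle\mathscr{D}g_1,\mathscr{D}g_2\rangle_{\rangleL}$.

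There is no genuine obstacle here; the computation is routine once the right functional-analytic setting is fixed. The only point requiring care is to verify the boundedness of the evaluation functionals and of $\mathscr{D}$ on $\Hp$, so that the algebraic expansions qualify as Fréchet expansions — that the quadratic remainders are indeed $o(\|g\|_{\Hp})$ and that the identified derivatives land in $\mathcal{B}(\Hp,\mathbb{R})$, respectively $\mathcal{B}(\Hp,\mathcal{B}(\Hp,\mathbb{R}))$. These ingredients are exactly what Proposition~\ref{prop:RKHSprop} (RKHS for $p>1$) and Proposition~\ref{prop:Disgood} provide.
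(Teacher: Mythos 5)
Your proposal is correct. The paper itself offers no written argument for this lemma beyond citing Theorem 3.6.4 of Hsing and Eubank (a general toolbox for Fr\'echet differentiation of such functionals), so your explicit verification is a self-contained version of exactly what that citation delegates: because $F_\pen$ and $\bar F_\pen$ are inhomogeneous quadratic functionals, the algebraic expansion in powers of the increment $g$ is already the Taylor expansion, and the only substantive point is that the linear term is a bounded functional and the quadratic remainder is $O(\|g\|_{\Hp}^2)=o(\|g\|_{\Hp})$ — which you correctly ground in the RKHS property of $\Hp$ for $p>1$ (Proposition~\ref{prop:RKHSprop}, boundedness of the evaluations $g\mapsto g(U_{ij})$) and the boundedness of $\mathscr{D}:\Hp\to L^2(\Sph^2)$ (Proposition~\ref{prop:Disgood}). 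Your observation that $f\mapsto F'_\pen(f)$ is affine, so that the second derivative is the constant bilinear form read off from its linear part, is also the standard and correct way to obtain $F''_\pen$ and $\bar F''_\pen$. In short, you prove from first principles what the paper outsources to a reference; both routes are sound and mathematically identical in content.
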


\begin{proof}
The proof is a direct application of Theorem 3.6.4 in \cite{Hsing}. See also Lemma 8.3.3.
\end{proof}

The evaluation of $\Tilde{\mu}_\pen$, given in Equation \eqref{eq:mutilda}, involves the inverse of the operator $\bar{F}''_{  \pen}$. To this purpose, it is convenient to invoke the Riesz representation theorem (see \cite[Theorem 3.2.1]{Hsing}), which tells us that there is an invertible norm-preserving mapping $\mathscr{Q}$ such that $\mathscr{Q}\mathcal{B}(\Hp, \mathbb{R})= \Hp$. Thus,
\begin{equation}\label{eq::ftilde}
\Tilde{F}''_{   \pen} := \mathscr{Q}\bar{F}''_{  \pen}
\end{equation}
is an element of $\mathcal{B}(\Hp, \Hp)$ and it is invertible if and only if $\bar{F}''_{  \pen}$ is invertible. 

\begin{lemma}\label{lemma::2}
The operator $\Tilde{F}''_{   \pen}$ in \eqref{eq::ftilde} is an invertible element of $\mathcal{B}(\Hp, \Hp)$ and, for any $g \in \Hp$,
$$
(\Tilde{F}''_{   \pen})^{-1}g = \frac12  \sum_{\ell=0}^\ell \sum_{m=-\ell}^\ell \frac{D_\ell^2}{1+\pen D_\ell^2} \langle g, \Y_{\ell,m} \rangle_\rangleL \Y_{\ell,m}.
$$
\end{lemma}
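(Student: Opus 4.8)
The plan is to diagonalize everything in the real spherical harmonic basis, where both $\bar{F}''_{\pen}$ and the Riesz isomorphism $\mathscr{Q}$ act in a transparent way.

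First I would fix notation. By the convention stated just before Lemma~\ref{lemma::1}, we view $\Hp$ as a Hilbert space with inner product $\langle f_1,f_2\rangle_{\mathscr{D}}:=\langle \mathscr{D}f_1,\mathscr{D}f_2\rangle_\rangleL$, which by Proposition~\ref{prop:Disgood} is equivalent to $\langle\cdot,\cdot\rangle_\rangleH$; hence $\mathscr{Q}:\mathcal{B}(\Hp,\R)\to\Hp$ is a well-defined norm-preserving bijection characterized by $\phi(g)=\langle\mathscr{Q}\phi,g\rangle_{\mathscr{D}}$ for all $g\in\Hp$. Combining this with the expression for $\bar{F}''_{\pen}$ from Lemma~\ref{lemma::1}, the element $h:=\Tilde{F}''_{\pen}g=\mathscr{Q}(\bar{F}''_{\pen}g)\in\Hp$ is characterized by the variational identity
$$\langle \mathscr{D}h,\mathscr{D}\phi\rangle_\rangleL \;=\; 2\langle g,\phi\rangle_\rangleL + 2\pen\langle \mathscr{D}g,\mathscr{D}\phi\rangle_\rangleL,\qquad \forall\,\phi\in\Hp.$$

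Second, I would test this identity against $\phi=\Y_{\ell,m}$. Since $\mathscr{D}\Y_{\ell,m}=D_\ell \Y_{\ell,m}$, relations \eqref{eq:useful1}--\eqref{eq:useful2} give that the left-hand side equals $D_\ell^2\langle h,\Y_{\ell,m}\rangle_\rangleL$ and the right-hand side equals $2(1+\pen D_\ell^2)\langle g,\Y_{\ell,m}\rangle_\rangleL$. Therefore $\langle h,\Y_{\ell,m}\rangle_\rangleL=\tfrac{2(1+\pen D_\ell^2)}{D_\ell^2}\langle g,\Y_{\ell,m}\rangle_\rangleL$ for every $(\ell,m)$. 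Because $\{\Y_{\ell,m}\}$ is a complete orthogonal system in $\Hp$ (it is an orthogonal basis of $(\Hp,\|\cdot\|_\rangleH)$, hence also of $(\Hp,\|\mathscr{D}\cdot\|_\rangleL)$ by norm equivalence), this shows that $\Tilde{F}''_{\pen}$ is the diagonal operator $g\mapsto\sum_{\ell,m}\lambda_\ell\langle g,\Y_{\ell,m}\rangle_\rangleL\Y_{\ell,m}$ with $\lambda_\ell=\tfrac{2(1+\pen D_\ell^2)}{D_\ell^2}=2\pen+\tfrac{2}{D_\ell^2}$.

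Third, the growth condition \eqref{eq:conditionDn} with $p\ge0$ yields $|D_\ell|\ge C_1>0$, so $2\pen\le\lambda_\ell\le 2\pen+2/C_1^2$ uniformly in $\ell$; thus $\Tilde{F}''_{\pen}$ is a bounded, self-adjoint, strictly positive operator on $\Hp$ whose spectrum is bounded away from $0$, hence boundedly invertible (equivalently, $\bar{F}''_{\pen}$ is invertible). Its inverse is the diagonal operator with eigenvalues $\lambda_\ell^{-1}=\tfrac{D_\ell^2}{2(1+\pen D_\ell^2)}$, that is,
$$(\Tilde{F}''_{\pen})^{-1}g=\frac12\sum_{\ell=0}^\infty\sum_{m=-\ell}^\ell\frac{D_\ell^2}{1+\pen D_\ell^2}\,\langle g,\Y_{\ell,m}\rangle_\rangleL\,\Y_{\ell,m},$$
the series converging in $\Hp$ since the coefficient multipliers are bounded. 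The computation is essentially routine; the only point needing care is the bookkeeping between the two inner products $\langle\cdot,\cdot\rangle_\rangleL$ and $\langle\cdot,\cdot\rangle_{\mathscr{D}}$ on $\Hp$ — in particular that $\{\Y_{\ell,m}\}$ is orthogonal but not orthonormal for the latter — together with checking that the diagonalization extends from the dense span of the $\Y_{\ell,m}$ to all of $\Hp$, which follows from the boundedness of $\lambda_\ell$ and $\lambda_\ell^{-1}$.
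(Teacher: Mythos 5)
Your proposal is correct and follows essentially the same route as the paper: both use the Riesz representer characterization $\langle \mathscr{D}\,\Tilde{F}''_{\pen}g,\mathscr{D}\phi\rangle_\rangleL = 2\langle g,\phi\rangle_\rangleL + 2\pen\langle\mathscr{D}g,\mathscr{D}\phi\rangle_\rangleL$ and diagonalize in the spherical harmonic basis to read off the coefficients $2(1+\pen D_\ell^2)/D_\ell^2$ and hence the inverse. Your explicit verification that the eigenvalues are uniformly bounded above and below away from zero (giving bounded invertibility) is a welcome addition the paper only gestures at, but it does not constitute a different argument.
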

Notice that $(\Tilde{F}''_{  \pen})^{-1}$ is a self-adjoint operator.

\begin{proof}
Take $g_1 \in \Hp$, so that $\bar{F}''_{  \pen}g_1$ belongs to $\mathcal{B}(\Hp,\mathbb{R})$, with representer $\Tilde{F}''_{  \pen}g_1 \in \Hp$. Then, for any $g_2 \in \Hp$,
$$
\bar{F}''_{  \pen}g_1g_2 = \langle \mathscr{D} \Tilde{F}''_{  \pen}g_1, \mathscr{D} g_2 \rangle_\rangleL = 2 \langle g_1,g_2 \rangle_\rangleL + 2 \pen \langle \mathscr{D} g_1, \mathscr{D}g_2 \rangle_\rangleL,
$$
where the last equality comes from \eqref{eq::2dv}. We can hence write the expansion in $\Hp$
$$
\Tilde{F}''_{  \pen}g_1 = 2\sum_{\ell=0}^\infty \sum_{m=-\ell}^\ell \frac{1+\pen D_\ell^2}{D_\ell^2}  \langle g_1, \Y_{\ell,m} \rangle_\rangleL \Y_{\ell,m},
$$
which suggests that $\Tilde{F}''_{  \pen}$ is invertible and 
$$
(\Tilde{F}''_{   \pen})^{-1}g_1 = \frac12 \sum_{\ell=0}^\ell \sum_{m=-\ell}^\ell \frac{D_\ell^2}{1+\pen D_\ell^2} \langle g_1, \Y_{\ell,m} \rangle_\rangleL \Y_{\ell,m}.
$$
\end{proof}

%%%%%%%%%%%%%%%%%%%% Covariance

\begin{proof}[Proof of Theorem \ref{th::R}]

As for Theorem \ref{th::mu}, without loss of generality, we will consider $D_\ell= (1+\ell(\ell+1))^{p/2}$, which leads to a penalization term in the $\Hprod$ norm. However, all the following steps can be generalized to every spherical pseudo-differential operator in Definition \ref{sph-pseudo-diff}.

%For simplicity we assume that $\mathbb{E}[X(u)] = 0$, for all $u \in\mathbb{S}^2$.
In keeping with the notation that was used for proving Theorem \ref{th::mu}, we first define
%\begin{equation*}
%    R_{\lag;\pen} := \argmin_{g \in \Hprod}  F_{ \pen} (g),
%\end{equation*}
%where
\begin{equation*}
    F_{ \pen} (g) := \frac{(4\pi)^2}{n} \sum_{i=1}^n \frac{1}{r_i(r_i-1)} \sum_{1\le j \ne k \le r_i } (\W_{ij}\W_{ik}  - g(U_{ij}, U_{ik}))^2 + \pen \|g\|^2_\rangleHH,
\end{equation*}
$$\bar{F}_{  \pen}(g) := \mathbb{E}[F_{ \pen} (g)] = (4\pi)^2 \operatorname{Var}[\W_{11}\W_{12}] + \|R - g\|^2_\rangleLL + \pen \|g \|^2_\rangleHH,$$
and we let
\begin{equation*}
    \bar{R}_\pen := \argmin_{g \in \Hprod}  \bar{F}_{  \pen} (g).
\end{equation*}
We also define 
\begin{equation}\label{eq:Rtilda}
\Tilde{R}_\pen= \bar{R}_\pen - (\bar{F}''_{  \pen})^{-1} F'_{ \pen}(\bar{R}_\pen).
\end{equation}
The definitions of $F'_{ \pen}$ and $\bar{F}''_{  \pen}$ will be given in Lemma \ref{lemma::1-cov}, while the existence of $(\bar{F}''_{  \pen})^{-1}$ will be discussed in Lemma \ref{lemma::2-cov}.

Now, we can write $
R_\pen  - R  = R_\pen - \Tilde{R}_\pen + \Tilde{R}_\pen - \bar{R}_\pen +  \bar{R}_\pen - R.
$
In parallel to Proof of Theorem \ref{th::mu}, we must show the following
\begin{enumerate}
    \item $\|\bar{R}_\pen - R\|^2_\rangleLL \le M_1 \left ((n r/\log n)^{-p/(p+1)} + n^{-1}\right)$, \label{proof1-R}
    \item $\mathbb{E}\|\Tilde{R}_\pen - \bar{R}_\pen\|^2_\rangleLL \le M_2 \left ((n r/\log n)^{-p/(p+1)} + n^{-1}\right)$, \label{proof2-R}
   \item  $\forall \varepsilon>0$, $\lim_{n\to \infty} \sup_{\mathbb{P}_X \in \pit} \mathbb{P}\left(  \|R_\pen - \Tilde{R}_\pen\|^2_\rangleLL > \varepsilon \left ( (n r/\log n)^{-p/(p+1)} + n^{-1}\right )\right) = 0$, \label{proof3-R}
\end{enumerate}
whenever $\pen \asymp (n r/\log n)^{-p/(p+1)}$, for any choice of the sampling distribution in $\pit$.

At this point, we define the \emph{intermediate} norm $\|\cdot\|_\alpha$, $\alpha \in [0,1]$, between $\|\cdot\|_\rangleLL$ and $\|\cdot\|_\rangleHH$. Let $g \in L^2(\Sph^2 \times \Sph^2)$, then
$$
\|g\|^2_\alpha := \sum_{\ell=0}^\infty \sum_{m=-\ell}^\ell \sum_{\ell'=0}^\infty \sum_{m'=-\ell'}^{\ell'} D_\ell^{2\alpha} D_{\ell'}^{2\alpha} \langle g, \Y_{\ell,m} \otimes \Y_{\ell',m'} \rangle^2_\rangleLL
,$$
which satisfies
$$
\|g\|_\rangleLL = \|g\|_0 \le \|g\|_\alpha \le \|g\|_1 = \|g\|_\rangleHH.
$$
Similarly as in Proof of Theorem \ref{th::mu}, $g \mapsto \|g\|_\alpha$ specifies a norm on $\mathbb{H}_{p\alpha}$ which is equivalent to $\|\cdot\|_{\mathbb{H}_{p\alpha}}$.

An argument analogous to that used for proving Equation \eqref{eq::mubar-alphabound} shows that, for $\alpha \in [0,1]$,
%Indeed, by definition of $\bar{R}_\pen$, we have
%$$
%(4\pi)^2 \operatorname{Var}[\W_{11}\W_{12}] + \|R - \bar{R}_\pen\|^2_\rangleLL \le \bar{F}_{   \pen} (\bar{R}_\pen) \le \bar{F}_{   \pen} (g),
%$$
%for all $g \in \Hprod$; in particular, for $g=R$, it holds
\begin{equation}\label{eq::Rbar-alphabound}
\| R - \bar{R}_\pen \|^2_\alpha \le \pen^{1-\alpha} \|R\|^2_\rangleHH.
\end{equation}
Hence, setting $\alpha=0$,
\begin{equation*}%\label{eq::R-Rpen}
\|R - \bar{R}_\pen\|^2_\rangleLL \le \pen \|R\|^2_\rangleHH \le K_1 \pen,
\end{equation*}
which gives the claimed result under the assumptions on $\pen$.

We now prove \ref{proof2-R}, by first showing that
\begin{equation}\label{eq::tilde-bar-cov}
\|\Tilde{R}_\pen - \bar{R}_\pen \|^2_\alpha = \frac14 \sum_{\ell,m} \sum_{\ell',m'} \frac{D_\ell^{2\alpha} D_{\ell'}^{2\alpha}}{(1+ \pen D_\ell^2 D_{\ell'}^2)^2} ( F'_{ \pen}(\bar{R}_\pen)\Y_{\ell,m} \otimes \Y_{\ell',m'})^2.
\end{equation}
By the definitions of $\Tilde{R}_\pen$ and $\|\cdot\|_\alpha$,
\begin{align*}
\|\Tilde{R}_\pen - \bar{R}_\pen \|^2_\alpha &= \|(\bar{F}''_{  \pen})^{-1}F'_{ \pen}(\bar{R}_\pen) \|^2_\alpha\\
&= \sum_{\ell,m} \sum_{\ell',m'} D_\ell^{2\alpha} D_{\ell'}^{2\alpha} \langle (\bar{F}''_{  \pen})^{-1}F'_{ \pen}(\bar{R}_\pen), \Y_{\ell,m} \otimes\Y_{\ell',m'} \rangle_\rangleLL^2.
\end{align*}
However,
\begin{align*}
    \langle (\bar{F}''_{  \pen})^{-1}F'_{ \pen}(\bar{R}_\pen), \Y_{\ell,m}\otimes \Y_{\ell',m'}\rangle_\rangleLL &= \frac{1}{D_\ell^2 D_{\ell'}^2}  \langle (\bar{F}''_{  \pen})^{-1}F'_{ \pen}(\bar{R}_\pen), \Y_{\ell,m} \otimes \Y_{\ell',m'}\rangle_\rangleHH\\
     &=\frac{1}{D_\ell^2 D_{\ell'}^2} \langle \mathcal{Q}F'_{ \pen}(\bar{R}_\pen), (\Tilde{F}''_{  \pen})^{-1}\Y_{\ell,m} \otimes \Y_{\ell',m'}\rangle_\rangleHH,
\end{align*}
since $\mathcal{Q}F'_{  \pen}(\bar{R}_\pen)$ is the representer of $F'_{  \pen}(\bar{R}_\pen)$ and $(\Tilde{F}''_{  \pen})^{-1}$ is self-adjoint.
From Lemma \ref{lemma::2-cov},
\begin{align*}
    (\Tilde{F}''_{  \pen})^{-1}\Y_{\ell,m} \otimes \Y_{\ell',m'} = \frac{1}{2} \frac{D_\ell^2 D_{\ell'}^2}{1+\pen D_{\ell}^2D_{\ell'}^2} \Y_{\ell,m}\otimes \Y_{\ell',m'};
\end{align*}
thus,
\begin{align*}
    \langle (\bar{F}''_{  \pen})^{-1}F'_{ \pen}(\bar{R}_\pen), \Y_{\ell,m}\otimes \Y_{\ell',m'}\rangle_\rangleLL
     &=\frac{1}{2(1+\pen D_{\ell}^2D_{\ell'}^2)}  \langle \mathcal{Q}F'_{ \pen}(\bar{R}_\pen), \Y_{\ell,m}\otimes \Y_{\ell',m'}\rangle_\rangleHH \\&= \frac{1}{2(1+\pen D_{\ell}^2D_{\ell'}^2)} F'_{ \pen}(\bar{R}_\pen)\Y_{\ell,m}\otimes \Y_{\ell',m'}.
\end{align*}
Now observe that $\bar{F}'_{   \pen} (\bar{R}_\pen) = 0$ (see \cite[Theorem 3.6.3]{Hsing}). Then, an application of Lemma \ref{lemma::1-cov} reveals that, for any $g \in \Hprod$,
\begin{align*}
    &F'_{  \pen} (\bar{R}_\pen) g = F'_{  \pen} (\bar{R}_\pen)g - \bar{F}'_{   \pen} (\bar{R}_\pen) g\\
    =& -\frac{2(4\pi)^2}{n} \sum_{i=1}^n \frac{1}{r_i(r_i-1)} \sum_{1\le j \ne k \le r_i } (\W_{ij}\W_{ik} - \bar{R}_\pen(U_{ij}, U_{ik}))g(U_{ij},U_{ik}) + 2 \langle R - \bar{R}_\pen, g \rangle_\rangleLL.
\end{align*}
Consequently,
$$
\mathbb{E}[F'_{  \pen} (\bar{R}_\pen)\Y_{\ell,m} \otimes \Y_{\ell',m'}] = \mathbb{E}_\U \mathbb{E}[F'_{  \pen} (\bar{R}_\pen)\Y_{\ell,m} \otimes \Y_{\ell',m'}|\U] = 0
$$
and
\begin{align*}
    &\sum_{m,m'} \mathbb{E}|F'_{  \pen} (\bar{R}_\pen)\Y_{\ell,m} \otimes \Y_{\ell',m'}|^2\\ =& \sum_{m,m'} \operatorname{Var}[F'_{  \pen} (\bar{R}_\pen)\Y_{\ell,m}\otimes \Y_{\ell',m'}]\\
    =&  \frac{4(4\pi)^4}{n^2}  \sum_{i=1}^n \frac{1}{r^2_i(r_i-1)^2}  \sum_{m,m'} \operatorname{Var}\left[\sum_{1 \le j \ne k \le r_i} (\W_{1j}\W_{1k} - \bar{R}_\pen(U_{1j}, U_{1k}))\Y_{\ell,m}(U_{1j})\Y_{\ell',m'}(U_{1k}) \right] .
\end{align*}

Using the law of total variance, for a generic $i$ we can write 
\begin{align}
&\sum_{m,m'} \operatorname{Var}\left[\sum_{1 \le j \ne k \le r_i} (\W_{1j}\W_{1k} - \bar{R}_\pen(U_{1j}, U_{1k}))\Y_{\ell,m}(U_{1j})\Y_{\ell',m'}(U_{1k}) \right] \notag \\=& \sum_{m,m'} \operatorname{Var} \left [\sum_{1 \le j \ne k \le r_i} (R(U_{1j},U_{1k}) - \bar{R}_\pen(U_{1j}, U_{1k}))\Y_{\ell,m}(U_{1j})\Y_{\ell',m'}(U_{1k}) \right] \notag \\+& \sum_{m,m'} \mathbb{E}_\U\!\left[ \operatorname{Var}\left [\sum_{1 \le j \ne k \le r_i} \W_{1j}\W_{1k} \Y_{\ell,m}(U_{1j})\Y_{\ell',m'}(U_{1k})\Bigg| \U \right]\right]. \label{eq::ltv-1}
\end{align}

In what follows, we will handle sums over four indices $j,k,j',k'$.
It is then useful to identify the distinct cases which lead to terms of different orders. Recall that $j\ne k,\ j' \ne k'$, then we have
\begin{enumerate}
\item terms of order $r_i(r_i-1)$:
\begin{enumerate}
    \item $j=j', \ k=k'$
    \item $j=k', \ j'=k$
    \end{enumerate}
\item terms of order $r_i(r_i-1)(r_i-2)$:
\begin{enumerate}
    \item $j=j', \ k\ne k'$
    \item $j\ne j', \ k=k'$
    \item $j\ne j', \ k\ne k', \ j=k', \ j'\ne k$
    \item $j\ne j', \ k\ne k', \ j\ne k', \ j'= k$
\end{enumerate}
\item terms of order $r_i(r_i-1)(r_i-2)(r_i-3)$:
\begin{enumerate}
    \item $j\ne j', \ k\ne k', \ j\ne k', \ j'\ne k$
\end{enumerate}
\end{enumerate}

Now, for the first term on the right hand side of Equation \eqref{eq::ltv-1}, we obtain
\begin{align*}
&\sum_{m,m'} \operatorname{Var} \left [\sum_{1 \le j \ne k \le r_i} (R(U_{1j},U_{1k}) - \bar{R}_\pen(U_{1j}, U_{1k}))\Y_{\ell,m}(U_{1j})\Y_{\ell',m'}(U_{1k}) \right]\\ 
= & \sum_{m,m'} \mathbb{E}\left (\sum_{1 \le j \ne k \le r_i} (R(U_{1j},U_{1k}) - \bar{R}_\pen(U_{1j}, U_{1k}))\Y_{\ell,m}(U_{1j})\Y_{\ell',m'}(U_{1k}) \right)^2  \\
-& \frac{r_i^2(r_i-1)^2}{(4\pi)^4} \sum_{m,m'} \langle R- \bar{R}_\pen, \Y_{\ell,m} \otimes \Y_{\ell',m'} \rangle^2_\rangleLL\\
\le& B  \frac{ 2r_i(r_i-1) + 4r_i(r_i-1)(r_i-2)}{(4\pi)^2}(2\ell+1)(2\ell'+1)\|R - \bar{R}_\pen\|_\rangleHH^2  \\
+&  \frac{r_i(r_i-1)(r_i-2)(r_i-3) - r_i^2(r_i-1)^2}{(4\pi)^4} \sum_{m,m'} \langle R- \bar{R}_\pen, \Y_{\ell,m} \otimes \Y_{\ell',m'} \rangle^2_\rangleLL\\
\le& B K_1 \frac{2r_i(r_i-1) + 4r_i(r_i-1)(r_i-2)}{(4\pi)^2}(2\ell+1)(2\ell'+1),
%\le&  \frac{6 K_1 \pen r_i(r_i-1)^2 }{(4\pi^4)}(2\ell+1)(2\ell'+1) ,
\end{align*}
where the last two inequalities are justified by Lemma \ref{lemma::sup}, for some $B>0$, and Equation \eqref{eq::Rbar-alphabound} with $\alpha=1$. For the second term, write
\begin{align*}
\sum_{1 \le j \ne k \le r_i} \W_{1j}\W_{1k} \Y_{\ell,m}(U_{1j})\Y_{\ell',m'}(U_{1k}) &= \sum_{1 \le j \ne k \le r_i} X_1(U_{1j})X_1(U_{1k}) \Y_{\ell,m}(U_{1j})\Y_{\ell',m'}(U_{1k}) \\
&+ \sum_{1 \le j \ne k \le r_i} \epsilon_{1j} X_1(U_{1k}) \Y_{\ell,m}(U_{1j})\Y_{\ell',m'}(U_{1k}) \\
&+ \sum_{1 \le j \ne k \le r_i} X_1(U_{1j})\epsilon_{1k}\Y_{\ell,m}(U_{1j})\Y_{\ell',m'}(U_{1k})\\
&+ \sum_{1 \le j \ne k \le r_i} \epsilon_{1j} \epsilon_{1k} \Y_{\ell,m}(U_{1j})\Y_{\ell',m'}(U_{1k}).
\end{align*}
Denote the four terms in this last expression by $S_1,S_2,S_3$, and $S_4$ with indices corresponding to their location in the sum. Then, by an application of the Cauchy-Schwartz inequality,
$$
\mathbb{E}_\U\!\left[ \operatorname{Var}\left [S_1+S_2+S_3+S_4| \U \right]\right] \le 4  \left (\mathbb{E}|S_1|^2 +  \mathbb{E}|S_2|^2 + \mathbb{E}|S_3|^2 + \mathbb{E}|S_4|^2 \right).
$$
We will illustrate how to derive the bound for $S_1$, since the other three are somewhat simpler to handle and of order at most $r_i^3$.
Thus, following the scheme previously described, we obtain
\begin{align*}
    &\sum_{m,m'} \mathbb{E}\left( \sum_{1 \le j \ne k \le r_i} X_1(U_{1j})X_1(U_{1k}) \Y_{\ell,m}(U_{1j})\Y_{\ell',m'}(U_{1k}) \right)^2\\
    =& \sum_{m,m'} \sum_{j,k,j',k'} \mathbb{E}_\U[\Y_{\ell,m}(U_{1j})\Y_{\ell',m'}(U_{1k}) \Y_{\ell,m}(U_{1j'})\Y_{\ell',m'}(U_{1k'})  \mathbb{E}[X_1(U_{1j})X_1(U_{1k})X_1(U_{1j'})X_1(U_{1k'})|\U]\,]\\
    %&\sum_{m,m'} \mathbb{E}_U\!\left[ \operatorname{Var}\left[\sum_{1 \le j \ne k \le r_i} X_1(U_{1j})X_1(U_{1k}) \Y_{\ell,m}(U_{1j})\Y_{\ell',m'}(U_{1k}) \Bigg | U\right] \right]\\
    %=& \sum_{m,m'} \sum_{j,k,j',k'} \mathbb{E}_U[\Y_{\ell,m}(U_{1j})\Y_{\ell',m'}(U_{1k}) \Y_{\ell,m}(U_{1j'})\Y_{\ell',m'}(U_{1k'})  \operatorname{Cov}[X_1(U_{1j})X_1(U_{1k}),X_1(U_{1j'})X_1(U_{1k'})|U]\,]\\
    =& \frac{r_i(r_i-1)(r_i-2)(r_i-3)}{(4\pi)^4} \sum_{m,m'} \mathbb{E}\langle X, \Y_{\ell,m} \rangle_\rangleL^2 \langle X, \Y_{\ell',m'} \rangle_\rangleL^2\\
    +& \frac{r_i(r_i-1)}{(4\pi)^2}\sum_{m,m'}  \int_{\mathbb{S}^2} \int_{\mathbb{S}^2}  \mathbb{E} \left [ |X(u)|^2|X(v)|^2 \right]  |\Y_{\ell,m}(u)|^2  |\Y_{\ell',m'}(v)|^2 du dv \\
    +& \frac{r_i(r_i-1)}{(4\pi)^2} \sum_{m,m'} \int_{\mathbb{S}^2} \int_{\mathbb{S}^2} \mathbb{E} \left[ |X(u)|^2|X(v)|^2 \right]  \Y_{\ell,m}(u)\Y_{\ell',m'}(v) \Y_{\ell,m}(v) \Y_{\ell',m'}(u)  du dv \\
    +& \frac{r_i(r_i-1)(r_i-2)}{(4\pi)^3} \sum_{m,m'}  \int_{\mathbb{S}^2} \int_{\mathbb{S}^2} \int_{\mathbb{S}^2} \mathbb{E} \left[ X(u) X(v) |X(w)|^2\right] | \Y_{\ell,m} (w)|^2 \Y_{\ell',m'} (u) \Y_{\ell',m'} (v) dudvdw \\
    +& \frac{r_i(r_i-1)(r_i-2)}{(4\pi)^3} \sum_{m,m'}   \int_{\mathbb{S}^2} \int_{\mathbb{S}^2} \int_{\mathbb{S}^2} \mathbb{E} \left[ X(u) X(v) |X(w)|^2  \right] \Y_{\ell,m} (u) \Y_{\ell,m} (v)  | \Y_{\ell',m'} (w)|^2 dudvdw \\
    +&2 \frac{r_i(r_i-1)(r_i-2)}{(4\pi)^3} \sum_{m,m'}   \int_{\mathbb{S}^2} \int_{\mathbb{S}^2} \int_{\mathbb{S}^2}\mathbb{E} \left[ X(u) X(v) |X(w)|^2\right] \Y_{\ell,m} (w)  \Y_{\ell',m'} (u) \Y_{\ell,m} (v)  \Y_{\ell',m'} (w) dudvdw \\
   \le&\frac{r_i(r_i-1)(r_i-2)(r_i-3)}{(4\pi)^4} \sum_{m,m'} \mathbb{E}\langle X, \Y_{\ell,m} \rangle_\rangleL^2 \langle X, \Y_{\ell',m'} \rangle_\rangleL^2\\
   +&B' \frac{2r_i(r_i-1) + 4r_i(r_i-1)(r_i-2)}{(4\pi)^2}(2\ell+1)(2\ell'+1) \mathbb{E}\|X\|^4_\Hq,
\end{align*}
for some $B'>0$.
Indeed, for instance,
\begin{align*}
    %&\sum_{m, m'} \mathbb{E}\left [ \int_{\mathbb{S}^2} X(u) \Y_{\ell,m} (u) du \int_{\mathbb{S}^2} X(v) \Y_{\ell',m'} (v) dv \int_{\mathbb{S}^2} X^2(s) \Y_{\ell,m} (s) \Y_{\ell',m'} (s) ds \right] \\=
    &\left | \sum_{m,m'} \int_{\mathbb{S}^2} \int_{\mathbb{S}^2} \int_{\mathbb{S}^2}\mathbb{E} \left[ X(u) X(v) |X(w)|^2\right] \Y_{\ell,m} (w)  \Y_{\ell',m'} (u) \Y_{\ell,m} (v)  \Y_{\ell',m'} (w) dudvdw \right| \\
       \le& \sum_{m,m'} \int_{\mathbb{S}^2} \int_{\mathbb{S}^2} \int_{\mathbb{S}^2}\mathbb{E} \left[ |X(u)| |X(v)| |X(w)|^2\right] |\Y_{\ell,m} (w)| | \Y_{\ell',m'} (u)| |\Y_{\ell,m} (v) | |\Y_{\ell',m'} (w)| dudvdw  \\
       \le& B'\, \mathbb{E}\|X\|^2_\Hq  \sum_{m,m'}  \int_{\mathbb{S}^2}  | \Y_{\ell',m'} (u)| du \int_{\mathbb{S}^2} |\Y_{\ell,m} (v) | dv \int_{\mathbb{S}^2}  |\Y_{\ell,m} (w)| |\Y_{\ell',m'} (w)| dw\\ 
         \le&  B'(4\pi)(2\ell+1)(2\ell'+1) \mathbb{E}\|X\|^4_\Hq ,
    %\le&\frac{1}{(4\pi)^2} (2\ell+1)(2\ell'+1)\,\mathbb{E} \left [ \int_{\mathbb{S}^2} |X(u)| du \int_{\mathbb{S}^2}  |X(v)| dv \int_{\mathbb{S}^2}  |X(w)|^2 dw \right]  \\
   %=& \frac{1}{(4\pi)^2} (2\ell+1)(2\ell'+1) \mathbb{E} \left [ \left ( \int_{\mathbb{S}^2} |X(u)| du  \right)^2 \int_{\mathbb{S}^2}  |X(w)|^2 dw \right]  \\
    %\le& \frac{1}{4\pi} (2\ell+1)(2\ell'+1) \mathbb{E} \left [ \left ( \int_{\mathbb{S}^2} |X(u)|^2 du  \right)^2 \right].
\end{align*}
where again we have used Lemma \ref{lemma::sup}.
Hence, combining all the bounds,
\begin{align*}
\sum_{m,m'} \mathbb{E}|F'_{  \pen} (\bar{R}_\pen)\Y_{\ell,m} \otimes\Y_{\ell',m'}|^2 &\le \frac{4}{n} \sum_{m,m'}  \mathbb{E}\langle X, \Y_{\ell, m}  \rangle^2_\rangleL \langle X, \Y_{\ell', m'}  \rangle^2_\rangleL \\&+ (2\ell+1)(2\ell'+1) O\left( \frac{1}{nr} \right),
\end{align*}
and
\begin{align*}
\mathbb{E}\|\Tilde{R}_\pen - \bar{R}_\pen \|^2_\alpha &\le \frac{1}{n} \sum_{\ell,m} \sum_{\ell',m'} \frac{D_\ell^{2\alpha} D_{\ell'}^{2\alpha}}{(1+ \pen D_\ell^2 D_{\ell'}^2)^2} \mathbb{E}\langle X, \Y_{\ell, m}  \rangle^2_\rangleL \langle X, \Y_{\ell', m'}  \rangle^2_\rangleL \\&+ O\left( \frac{1}{nr} \right) \sum_{\ell,\ell'} \frac{D_\ell^{2\alpha} D_{\ell'}^{2\alpha}}{(1+ \pen D_\ell^2 D_{\ell'}^2)^2} (2\ell+1)(2\ell'+1),
\end{align*}
where again $r$ is the harmonic mean of $r_1,\dots,r_n$.
Now, for $\alpha \le q/p$,
\begin{align*}
   \sum_{\ell,m} \sum_{\ell',m'} \frac{D_\ell^{2\alpha} D_{\ell'}^{2\alpha}}{(1+ \pen D_\ell^2 D_{\ell'}^2)^2} \mathbb{E}\langle X, \Y_{\ell, m}  \rangle^2_\rangleL \langle X, \Y_{\ell', m'}  \rangle^2_\rangleL &\le \mathbb{E} \left (\sum_{\ell,m} D_\ell^{2\alpha}  \langle X, \Y_{\ell, m}  \rangle^2_\rangleL \right)^2\\
   &\le \mathbb{E}\|X\|^4_{p/q},
\end{align*}
which is bounded (possibly up to an arbitrary constant) by $\mathbb{E}\|X\|^4_{\Hq}$. Moreover, from \cite{Lin2000},
\begin{align*}
\sum_{\ell=0}^\infty \sum_{\ell'=0}^\infty \frac{D_\ell^{2\alpha} D_{\ell'}^{2\alpha}}{(1+ \pen D_\ell^2 D_{\ell'}^2)^2} (2\ell+1)(2\ell'+1)  =O\left( \pen^{-(\alpha+1/p)} \log(1/\pen) + 1 \right).
\end{align*}
Thus, we have that
$$ 
\mathbb{E}\|\Tilde{R}_\pen - \bar{R}_\pen \|^2_\alpha \le M_2 \left( (nr)^{-1}\pen^{-(\alpha+1/p)} \log(1/\pen)+  n^{-1}  \right),
$$
where $M_2$ is a positive constant not depending on the choice of $\mathbb{P}_X \in \pit$. By choosing $\alpha=0$, we obtain the claimed rate.

Now, let us prove \ref{proof3-R}.
The first step is to obtain a useful analytic form for $R_\pen - \Tilde{R}_\pen$, by observing that
\begin{align*}
R_\pen - \Tilde{R}_\pen &= R_\pen - \bar{R}_\pen +  (\bar{F}''_{  \pen})^{-1}F'_{ \pen}(\bar{R}_\pen) \\
&= (\bar{F}''_{  \pen})^{-1} \left [\bar{F}''_{  \pen}(R_\pen - \bar{R}_\pen) + F'_{ \pen}(\bar{R}_\pen)\right].
\end{align*}
Since $R_\pen$ minimizes $F'_{ \pen}$, it holds $F'_{ \pen}(R_\pen) = 0$ (see \cite[Theorem 3.6.3]{Hsing}).
Then, for any $g \in \Hprod$,
\begin{align*}
     \left [\bar{F}''_{  \pen}(R_\pen - \bar{R}_\pen) + F'_{ \pen}(\bar{R}_\pen)\right] g&=  \left [\bar{F}''_{  \pen}(R_\pen - \bar{R}_\pen) + F'_{ \pen}(\bar{R}_\pen) - F'_{ \pen}(R_\pen)\right]g\\
    &=\left[\bar{F}''_{   0}(R_\pen - \bar{R}_\pen) - F''_{  0}(R_\pen - \bar{R}_\pen)  \right]g.
\end{align*}
where we used Lemma \ref{lemma::1-cov}; in other words,
$$
R_\pen - \Tilde{R}_\pen  = (\bar{F}''_{  \pen})^{-1}  \left[\bar{F}''_{   0}(R_\pen - \bar{R}_\pen) - F''_{  0}(R_\pen - \bar{R}_\pen)  \right].
$$
Now, the same argument that leads to \eqref{eq::tilde-bar-cov} gives us
$$
\| R_\pen - \Tilde{R}_\pen \|^2_\alpha = \frac14 \sum_{\ell,m} \sum_{\ell',m'} \frac{D_\ell^{2\alpha} D_{\ell'}^{2\alpha}}{(1+ \pen D_\ell^2D_{\ell'}^2)^2} \left ( \left[\bar{F}''_{   0}(R_\pen - \bar{R}_\pen) - F''_{  0}(R_\pen - \bar{R}_\pen)  \right]\Y_{\ell,m} \otimes \Y_{\ell',m'}\right)^2,
$$
with
\begin{align*}
&\left[\bar{F}''_{   0}(R_\pen - \bar{R}_\pen) - F''_{  0}(R_\pen - \bar{R}_\pen)  \right]\Y_{\ell,m} \otimes \Y_{\ell',m'} \\=& 2\langle R_\pen - \bar{R}_\pen, \Y_{\ell,m} \otimes \Y_{\ell',m'}\rangle_\rangleLL 
\\-& \frac{2(4\pi)^2}{n} \sum_{i=1}^n \frac{1}{r_i(r_i-1)} \sum_{1\le j \ne k \le r_i} (R_\pen(U_{ij}, U_{ik}) - \bar{R}_\pen(U_{ij}, U_{ik})) \Y_{\ell,m}(U_{ij}) \Y_{\ell',m'}(U_{ik}).
\end{align*}
Now, $R_\pen - \bar{R}_\pen = \sum_{\ell,m} \sum_{\ell',m'} h_{\ell,\ell',m,m'} \Y_{\ell,m} \Y_{\ell',m'}$. Then,
\begin{align*}
    \| R_\pen - \Tilde{R}_\pen \|^2_\alpha =  \sum_{\ell_1,m_1}  \sum_{\ell_2,m_2} \frac{D_{\ell_1}^{2\alpha}D_{\ell_2}^{2\alpha} }{(1+ \pen D_{\ell_1}^2 D_{\ell_2}^2)^2} \left ( \sum_{\ell_3,m_3} \sum_{\ell_4, m_4} h_{\ell_3,\ell_4,m_3,m_4} V_{\ell_{1:4},m_{1:4}} \right)^2,
\end{align*}
where $$
V_{\ell_{1:4},m_{1:4}} = \delta_{\ell_1}^{\ell_3} \delta_{m_1}^{m_3} \delta_{\ell_2}^{\ell_4} \delta_{m_2}^{m_4} - \frac{(4\pi)^2}{n} \sum_{i=1}^n \frac{1}{r_i(r_i-1)} \sum_{1\le j \ne k \le r_i} \Y_{\ell_1,m_1} (U_{ij} ) \Y_{\ell_2,m_2} (U_{ik} ) \Y_{\ell_3,m_3} (U_{ij} ) \Y_{\ell_4,m_4} (U_{ik} ).
$$
By applying the Cauchy–Schwarz inequality for arbitrary $\theta \in (1/p, 1]$, we obtain
$$
\| R_\pen - \Tilde{R}_\pen \|^2_\alpha \le \| R_\pen - \bar{R}_\pen \|^2_\theta \sum_{\ell_1,m_1} \sum_{\ell_2,m_2} \frac{D_{\ell_1}^{2\alpha}D_{\ell_2}^{2\alpha} }{(1+ \pen D_{\ell_1}^2 D_{\ell_2}^2)^2} \sum_{\ell_3,m_3} \sum_{\ell_4,m_4} D_{\ell_3}^{-2\theta} D_{\ell_4}^{-2\theta} V^2_{\ell_{1:4},m_{1:4}}.
$$
It is readily seen that $\mathbb{E}[V_{\ell_{1:4},m_{1:4}}]=0$ and 
\begin{align*}
\mathbb{E}[V^2_{\ell_{1:4}, m_{1:4}}]  %&=\operatorname{Var}[V_{\ell_{1:4},m_{1:4}}] \\
%&= \frac{(4\pi)^4}{nr^2(r-1)^2}  \operatorname{Var}\left [  \sum_{1\le j \ne k \le r_i} \Y_{\ell_1,m_1} (U_{ij} ) \Y_{\ell_2,m_2} (U_{ik} ) \Y_{\ell_3,m_3} (U_{ij} ) \Y_{\ell_4,m_4} (U_{ik} )\right] \\
&= \frac{(4\pi)^4}{n^2} \sum_{i=1}^n \frac{1}{r^2_i(r_i-1)^2}\mathbb{E}\left( \sum_{1\le j \ne k \le r_i} \Y_{\ell_1,m_1} (U_{ij} ) \Y_{\ell_2,m_2} (U_{ik} ) \Y_{\ell_3,m_3} (U_{ij} ) \Y_{\ell_4,m_4} (U_{ik} ) \right)^2 \\&- \delta_{\ell_1}^{\ell_3} \delta_{m_1}^{m_3} \delta_{\ell_2}^{\ell_4} \delta_{m_2}^{m_4}.
%&\le \frac{4\pi}{nr}  \int_{\mathbb{S}^2}\sum_{m} |\Y_{\ell,m} (u)|^2  \sum_{m'} |\Y_{\ell',m'} (u)|^2 du\\
\end{align*}
Thus, it is possible to show that
$$
\sum_{m_1,m_2,m_4,m_4}\mathbb{E}[V^2_{\ell_{1:4}, m_{1:4}}] \le D_{\ell_1}^{2\theta} D_{\ell_2}^{2\theta} (2\ell_1+1)(2\ell_2 + 1)(2\ell_3 + 1)(2\ell_4 + 1) \, O\!\left ( \frac{1}{nr}\right),
$$
and hence
$$
\sum_{\ell_1,m_1} \sum_{\ell_2,m_2} \frac{D_{\ell_1}^{2\alpha}D_{\ell_2}^{2\alpha} }{(1+ \pen D_{\ell_1}^2 D_{\ell_2}^2)^2} \sum_{\ell_3,m_3} \sum_{\ell_4,m_4} D_{\ell_3}^{-2\theta} D_{\ell_4}^{-2\theta} \mathbb{E}[V^2_{\ell_{1:4},m_{1:4}}] = O\!\left (  \frac{\log(1/\pen)}{nr \pen^{\alpha+\theta+1/p}}\right) .
$$
 Let us define $$A_{n} := \sum_{\ell_1,m_1} \sum_{\ell_2,m_2} \frac{D_{\ell_1}^{2\alpha}D_{\ell_2}^{2\alpha} }{(1+ \pen D_{\ell_1}^2 D_{\ell_2}^2)^2} \sum_{\ell_3,m_3} \sum_{\ell_4,m_4} D_{\ell_3}^{-2\theta} D_{\ell_4}^{-2\theta} V^2_{\ell_{1:4},m_{1:4}},$$  $a_n:= \frac{\log(1/\pen)}{nr \pen^{\alpha+ \theta +1/p}}$ and $\gamma_n:= \frac{\log(1/\pen)}{nr \pen^{\alpha+1/p}}+ \frac1n$ .
%clearly, $\forall \varepsilon > 0$,
%$$
%\sup_{\mathbb{P}_X \in \pio} \mathbb{P} \left( \frac{A_n}{a_n} > \varepsilon \right) \le \frac{M_1}{\varepsilon}.
 %$$
Note that $A_n = \sup_{\mathbb{P}_X \in \pit} A_n$ and $A_n = o_\mathbb{P}(1)$.

Then,
$
\| R_\pen - \Tilde{R}_\pen \|^2_\alpha \le A_n \, \| R_\pen - \bar{R}_\pen \|^2_\theta
$
and therefore
\begin{align*}
\mathbb{P}\left( \| R_\pen - \Tilde{R}_\pen \|^2_\alpha > \varepsilon \, \gamma_n \right) &\le \mathbb{P}\left( A_n \, \| R_\pen - \bar{R}_\pen \|^2_\theta > \varepsilon \, \gamma_n \right)\\
&= \mathbb{P}\left( A_n \, \| R_\pen - \bar{R}_\pen \|^2_\theta > \varepsilon \, \gamma_n, A_n < 1 \right) + \mathbb{P}\left( A_n \, \| R_\pen - \bar{R}_\pen \|^2_\theta > \varepsilon \, \gamma_n , A_n \ge 1 \right) \\
&\le \mathbb{P}\left( A_n \, \| R_\pen - \bar{R}_\pen \|^2_\theta > \varepsilon \, \gamma_n, A_n < 1 \right) + \mathbb{P}\left( A_n \ge 1 \right).
\end{align*}
If $A_n < 1$,
\begin{align*}
\|  \Tilde{R}_\pen - \bar{R}_\pen \|_\theta &\ge \| R_\pen - \bar{R}_\pen \|_\theta - \| R_\pen - \Tilde{R}_\pen \|_\theta\\
& \ge (1-\sqrt{A_n})\| R_\pen - \bar{R}_\pen \|_\theta,
\end{align*}
which allows to write
\begin{align*}
\mathbb{P}\left( A_n \, \| R_\pen - \bar{R}_\pen \|^2_\theta > \varepsilon \, \gamma_n, A_n < 1 \right)
&\le \mathbb{P}\left( A_n |1-\sqrt{A_n}|^{-2} \|  \Tilde{R}_\pen - \bar{R}_\pen \|^2_\theta > \varepsilon \, \gamma_n, A_n < 1 \right) \\
&\le  \mathbb{P}\left( A_n |1-\sqrt{A_n}|^{-2} \|  \Tilde{R}_\pen - \bar{R}_\pen \|^2_\theta> \varepsilon \, \gamma_n \right).
\end{align*}
Let us now define $B_n:= \| \Tilde{R}_\pen - \bar{R}_\pen \|^2_\theta$ and $b_n:=\frac{\log(1/\pen)}{nr \pen^{\theta+1/p}} + \frac1n$. Recall that $\mathbb{E}[B_n] \le M_2 b_n$, for $\theta \le q/p$. Moreover, $|1-\sqrt{A_n}|^{-2}=O_\mathbb{P}(1)$.
We can observe that
$$
a_n  b_n = \frac{\log(1/\pen)}{nr \pen^{2\theta + 1/p}} \left ( \frac{\log(1/\pen)}{nr \pen^{\alpha + 1/p}} + \frac{\pen^{\theta - \alpha}}{n} \right),
$$
so that $c_n:= a_n  b_n/\gamma_n \to 0$, assuming $\theta < 1/2$ (recall that $p>2$) and $\alpha \in [0,\theta]$. Then,
\begin{align*}
\mathbb{P}\left( A_n |1-\sqrt{A_n}|^{-2}   B_n > \varepsilon \, \gamma_n \right) &= \mathbb{P}\left( \frac{A_n |1-\sqrt{A_n}|^{-2}  B_n}{a_n b_n} > \frac{\varepsilon}{c_n} \right)\\
&\le \mathbb{P}\left( \frac{B_n}{b_n} > \frac{\varepsilon^{1/3}}{c_n^{1/3}} \right)+ \mathbb{P}\left( \frac{A_n}{a_n} > \frac{\varepsilon^{1/3}}{c_n^{1/3}} \right) +  \mathbb{P}\left( |1-\sqrt{A_n}|^{-2}   > \frac{\varepsilon^{1/3}}{c_n^{1/3}} \right) \\
&\le M_2 \frac{c_n^{1/3}}{\varepsilon^{1/3}} + M_3 \frac{c_n^{1/3}}{\varepsilon^{1/3}} + \mathbb{P}\left( |1-\sqrt{A_n}|^{-2}   > \frac{\varepsilon^{1/3}}{c_n^{1/3}} \right).
\end{align*}
Clearly $c_n^{1/3}|1-\sqrt{A_n}|^{-2}=o_\mathbb{P}(1)$, hence
$$
\lim_{n\to\infty} \sup_{\mathbb{P}_X \in \pit} \mathbb{P}\left( \| R_\pen - \Tilde{R}_\pen \|^2_\alpha > \varepsilon \, \gamma_n \right) = 0.
$$
By taking $\alpha=0$ we obtain the claimed result.

Recall now that, when the mean is $\mu \ne 0$, an estimate of the complete covariance kernel $C(u,v)=R(u,v) - \mu(u)\mu(v)$ is given by
$$
C_\pen(u,v) = R_\pen(u,v)  - \mu_\pen(u) \mu_\pen(v).
$$
Moreover, observe that
$$
\|C_\pen - C\|_\rangleLL \le \|R_\pen - R\|_\rangleLL + \|\mu_\pen \otimes \mu_\pen - \mu \otimes \mu\|_\rangleLL,
$$
by the triangle inequality; hence,
\begin{align*}
\mathbb{P} \left ( \|C_\pen - C\|^2_\rangleLL > t \right) \le \mathbb{P} \left ( \|R_\pen - R\|^2_\rangleLL > t \right) + \mathbb{P} \left ( \|\mu_\pen \otimes \mu_\pen - \mu \otimes \mu\|^2_\rangleLL > t \right).
\end{align*}
For the second term on the right-hand side, we have
\begin{align*}
 \|\mu_\pen \otimes \mu_\pen - \mu \otimes \mu\|_\rangleLL &=  \|\mu_\pen \otimes \mu_\pen  \pm \mu_\pen \otimes \mu  - \mu \otimes \mu\|_\rangleLL\\
 &\le \|  \mu_\pen \|_\rangleL \| \mu_\pen  - \mu \|_\rangleL + \| \mu_\pen  - \mu\|_\rangleL \|  \mu\|_\rangleL\\
  &\le \| \mu_\pen - \mu\|^2_\rangleL + 2\| \mu_\pen  - \mu\|_\rangleL \|  \mu\|_\rangleL\\
  &= O_\mathbb{P}\left ( \| \mu_\pen  - \mu\|_\rangleL \right).
\end{align*}
If we repeat the same steps in Proof of Theorem \ref{th::mu}, but this time with $\pen \asymp (nr/\log n)^{-p/(p+1)}$, we see that the (uniform) rate for $\mu_\pen$ is exactly $(nr/\log n)^{-p/(p+1)} + n^{-1}$, which concludes the proof.
\end{proof}

The next two lemmas are referred to
\begin{equation*}
    F_{ \pen} (g) := \frac{(4\pi)^2}{n} \sum_{i=1}^n \frac{1}{r_i(r_i-1)} \sum_{1\le j \ne k \le r_i } (\W_{ij}\W_{ik}  - g(U_{ij}, U_{ik}))^2 + \pen \|(\mathscr{D} \otimes \mathscr{D})  g\|^2_\rangleLL,
\end{equation*}
$$\bar{F}_{  \pen}(g) := \mathbb{E}[F_{ \pen} (g)] = (4\pi)^2 \operatorname{Var}[\W_{11}\W_{12}] + \|R - g\|^2_\rangleLL + \pen \|(\mathscr{D} \otimes \mathscr{D})  g \|^2_\rangleLL.$$

Recall that $\mathcal{B}(\mathbb{X}_1, \mathbb{X}_2)$ denote the set of all linear and bounded operators
from $\mathbb{X}_1$ to $\mathbb{X}_2$, being normed spaces.
Here, we consider $\Hprod$ endowed with $\|(\mathscr{D} \otimes \mathscr{D}) \cdot \|_\rangleLL$.
\begin{lemma}\label{lemma::1-cov}
Let $f,g,g_1,g_2$ be arbitrary elements of $\Hprod$. 
\begin{enumerate}
    \item 
The Fréchet derivative of $F_{ \pen}$ at $f$ is the element $F'_{ \pen}(f)$ of $\mathcal{B}(\Hprod,\mathbb{R})$ characterized by
\begin{align*}
    F'_{ \pen}(f)g &= -\frac{2(4\pi)^2}{n} \sum_{i=1}^n \frac{1}{r_i(r_i-1)} \sum_{1\le j \ne k \le r_i } (\W_{ij}\W_{ik} - f(U_{ij}, U_{ik}))g(U_{ij}, U_{ik}) \\&+ 2 \pen \langle (\mathscr{D} \otimes \mathscr{D}) f, (\mathscr{D} \otimes \mathscr{D}) g \rangle_\rangleLL
    \end{align*}
The second Fréchet derivative $F''_{ \pen}\in \mathcal{B}(\Hprod, \mathcal{B}(\Hprod,\mathbb{R}))$ is
characterized by
\begin{align*}
        F''_{ \pen}g_1g_2 &= \frac{2(4\pi)^2}{n} \sum_{i=1}^n \frac{1}{r_i(r_i-1)} \sum_{1\le j \ne k \le r_i } g_1(U_{ij},U_{ik}) g_2(U_{ij}, U_{ik}) \\&+ 2 \pen \langle (\mathscr{D} \otimes \mathscr{D})  g_1, (\mathscr{D} \otimes \mathscr{D}) g_2 \rangle_\rangleLL.
\end{align*}

    \item 
The Fréchet derivative of $\bar{F}_{  \pen}$ at $f$ is the element $\bar{F}'_{  \pen}(f)$ of $\mathcal{B}(\Hprod,\mathbb{R})$ characterized by
\begin{equation*}
    \bar{F}'_{  \pen}(f)g = - 2 \langle R-f,g \rangle_\rangleLL + 2 \pen \langle (\mathscr{D} \otimes \mathscr{D})  f, (\mathscr{D} \otimes \mathscr{D}) g \rangle_\rangleLL
\end{equation*}
The second Fréchet derivative $\bar{F}''_{  \pen}\in \mathcal{B}(\Hprod, \mathcal{B}(\Hprod,\mathbb{R}))$ is
characterized by
\begin{equation}\label{eq::2dv-cov}
        \bar{F}''_{  \pen}g_1g_2 = 2 \langle g_1,g_2 \rangle_\rangleLL + 2 \pen \langle (\mathscr{D} \otimes \mathscr{D}) g_1, (\mathscr{D} \otimes \mathscr{D}) g_2 \rangle_\rangleLL.
\end{equation}
\end{enumerate}
\end{lemma}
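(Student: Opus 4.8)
The plan is to treat Lemma~\ref{lemma::1-cov} exactly as the mean-case Lemma~\ref{lemma::1} was treated, namely as a direct application of \cite[Theorem 3.6.4]{Hsing} on Fréchet derivatives of quadratic functionals over a Hilbert space; the only work is to put the two functionals in the hypotheses of that theorem. So first I would record the structural facts about the energy space: by Proposition~\ref{prop:Disgood} the norm $\|(\mathscr{D}\otimes\mathscr{D})\cdot\|_\rangleLL$ is equivalent to $\|\cdot\|_\rangleHH$, so $(\Hprod,\|(\mathscr{D}\otimes\mathscr{D})\cdot\|_\rangleLL)$ is a Hilbert space; since the spectral growth order satisfies $p>1$, Proposition~\ref{prop:RKHSprop} shows $\Hprod$ is an RKHS, hence for every $(u,v)\in\Sph^2\times\Sph^2$ the evaluation functional $g\mapsto g(u,v)$ is a \emph{bounded} linear functional on $\Hprod$, and $g\mapsto (\mathscr{D}\otimes\mathscr{D})g$ is a bounded linear map $\Hprod\to\rangleLL$. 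Consequently $F_\pen$ is a finite sum of squares of bounded affine functionals of $g$ plus $\pen$ times a bounded quadratic form, and $\bar F_\pen=\mathbb{E}[F_\pen]$ is a constant plus $\|R-g\|^2_\rangleLL$ (continuous on $\Hprod\hookrightarrow\rangleLL$, using $R\in\rangleLL$) plus the same regularizer; both are therefore twice continuously (in fact polynomially) differentiable on $\Hprod$.

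Next I would differentiate term by term. For a fixed triple $(i,j,k)$ with $j\ne k$, the map $g\mapsto (w_{ij}w_{ik}-g(u_{ij},u_{ik}))^2$ has Gâteaux derivative in direction $g$ equal to $-2(w_{ij}w_{ik}-f(u_{ij},u_{ik}))\,g(u_{ij},u_{ik})$ at the base point $f$, and second derivative $(g_1,g_2)\mapsto 2\,g_1(u_{ij},u_{ik})g_2(u_{ij},u_{ik})$; weighting by $\tfrac{(4\pi)^2}{n\,r_i(r_i-1)}$, summing over $1\le j\ne k\le r_i$ and $i$, and adding the derivatives of $\pen\|(\mathscr{D}\otimes\mathscr{D})g\|^2_\rangleLL$, which are $2\pen\langle(\mathscr{D}\otimes\mathscr{D})f,(\mathscr{D}\otimes\mathscr{D})g\rangle_\rangleLL$ and $2\pen\langle(\mathscr{D}\otimes\mathscr{D})g_1,(\mathscr{D}\otimes\mathscr{D})g_2\rangle_\rangleLL$, yields precisely the stated expressions for $F'_\pen(f)$ and $F''_\pen$. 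For $\bar F_\pen$, the constant term drops out, the derivatives of $\|R-g\|^2_\rangleLL$ are $g\mapsto-2\langle R-f,g\rangle_\rangleLL$ and $(g_1,g_2)\mapsto 2\langle g_1,g_2\rangle_\rangleLL$, and adding the regularizer derivatives gives the stated $\bar F'_\pen(f)$ and $\bar F''_\pen$ (Equation~\eqref{eq::2dv-cov}). Since each of these Gâteaux derivatives depends on the base point through the bounded functionals identified in the first step, the Gâteaux derivatives are continuous in the base point and hence are Fréchet derivatives, which belong to $\mathcal{B}(\Hprod,\R)$ resp.\ $\mathcal{B}(\Hprod,\mathcal{B}(\Hprod,\R))$; this is exactly the content packaged by \cite[Theorem 3.6.4]{Hsing}.

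The computation itself is routine and entirely parallel to Lemma~\ref{lemma::1}; the one point I would flag as the crux is the boundedness of the evaluation functionals $g\mapsto g(u,v)$ on the energy space, which is what guarantees that $F'_\pen(f)$ and $\bar F'_\pen(f)$ are genuinely \emph{bounded} linear functionals and not merely linear, and likewise for the bilinear forms. This is where the hypothesis $p>1$ enters through Proposition~\ref{prop:RKHSprop}, together with the norm equivalence of Proposition~\ref{prop:Disgood}, and it is the tensorial analogue of the argument used in the mean case. Everything else reduces to expanding squares and collecting terms.
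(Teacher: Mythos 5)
Your proposal is correct and takes essentially the same route as the paper, whose proof of this lemma is simply a citation of \cite[Theorem 3.6.4]{Hsing} (mirroring Lemma~\ref{lemma::1} in the mean case); you merely fill in the routine term-by-term differentiation and correctly identify the RKHS property (Propositions~\ref{prop:RKHSprop} and~\ref{prop:Disgood}) as the ingredient that makes the evaluation functionals bounded on the energy space. Nothing further is needed.
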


\begin{proof}
The proof is a direct application of Theorem 3.6.4 in \cite{Hsing}. See also Lemma 8.3.3.
\end{proof}

The evaluation of $\Tilde{R}_\pen$ involves the inverse of the operator $\bar{F}''_{  \pen}$. To this purpose, it is convenient to invoke the Riesz representation theorem (see \cite[Theorem 3.2.1]{Hsing}), which tells us that there is an invertible norm-preserving mapping $\mathscr{Q}$ such that $\mathscr{Q}\mathcal{B}(\Hprod, \mathbb{R})= \Hprod$. Thus,
\begin{equation}\label{eq::ftilde-cov}
\Tilde{F}''_{   \pen} := \mathscr{Q}\bar{F}''_{  \pen}
\end{equation}
is an element of $\mathcal{B}(\Hprod, \Hprod)$ and it is invertible if and only if $\bar{F}''_{  \pen}$ is invertible. 

\begin{lemma}\label{lemma::2-cov}
The operator $\Tilde{F}''_{   \pen}$ in \eqref{eq::ftilde-cov} is an invertible element of $\mathcal{B}(\Hprod, \Hprod)$ and, for any $g \in \Hprod$,
$$
(\Tilde{F}''_{   \pen})^{-1}g  = \frac12 \sum_{\ell=0}^\infty \sum_{\ell'=0}^\infty \sum_{m=-\ell}^\ell \sum_{m'=-\ell'}^{\ell'} \frac{D_\ell^2 D_{\ell'}^2}{1+\pen D_\ell^2 D_{\ell'}^2 }  \langle g, \Y_{\ell,m} \otimes \Y_{\ell',m'} \rangle_\rangleLL \Y_{\ell,m} \Y_{\ell'm'}.
$$
\end{lemma}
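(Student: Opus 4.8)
The plan is to mirror the proof of Lemma~\ref{lemma::2}, transposed to the tensorial space $\Hprod$ endowed with the inner product $\langle a,b\rangle := \langle (\mathscr{D}\otimes\mathscr{D})a,(\mathscr{D}\otimes\mathscr{D})b\rangle_\rangleLL$ (equivalent to $\langle\cdot,\cdot\rangle_\rangleHH$ by Proposition~\ref{prop:Disgood}, and a genuine RKHS inner product since $p>1$ by Proposition~\ref{prop:RKHSprop}). The structural fact that makes everything explicit is that the spherical‑harmonic tensor family $\{Y_{\ell,m}\otimes Y_{\ell',m'}\}$ simultaneously diagonalises all the operators in sight: by \eqref{eq:useful1}, $\mathscr{D}Y_{\ell,m}=D_\ell Y_{\ell,m}$ and $\langle \mathscr{D}Y_{\ell,m},\mathscr{D}Y_{\ell',m'}\rangle_\rangleL=D_\ell^2\,\delta_\ell^{\ell'}\delta_m^{m'}$, so $\{Y_{\ell,m}\otimes Y_{\ell',m'}/(D_\ell D_{\ell'})\}_{\ell,\ell',m,m'}$ is an orthonormal basis of $(\Hprod,\|(\mathscr{D}\otimes\mathscr{D})\cdot\|_\rangleLL)$.

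Fix $g_1\in\Hprod$. By construction $\Tilde{F}''_{\pen}g_1=\mathscr{Q}(\bar{F}''_{\pen}g_1)$ is the Riesz representer in $(\Hprod,\langle\cdot,\cdot\rangle)$ of the bounded linear functional $g_2\mapsto \bar{F}''_{\pen}g_1g_2$, so by \eqref{eq::2dv-cov} it is characterised by
\begin{equation*}
\langle (\mathscr{D}\otimes\mathscr{D})\Tilde{F}''_{\pen}g_1,(\mathscr{D}\otimes\mathscr{D})g_2\rangle_\rangleLL = 2\langle g_1,g_2\rangle_\rangleLL + 2\pen\langle (\mathscr{D}\otimes\mathscr{D})g_1,(\mathscr{D}\otimes\mathscr{D})g_2\rangle_\rangleLL, \qquad g_2\in\Hprod.
\end{equation*}
Taking $g_2=Y_{\ell,m}\otimes Y_{\ell',m'}$ and evaluating each inner product with $\mathscr{D}Y_{\ell,m}=D_\ell Y_{\ell,m}$, the left side reduces to $D_\ell^2 D_{\ell'}^2\,\langle \Tilde{F}''_{\pen}g_1, Y_{\ell,m}\otimes Y_{\ell',m'}\rangle_\rangleLL$ and the right side to $2(1+\pen D_\ell^2 D_{\ell'}^2)\,\langle g_1, Y_{\ell,m}\otimes Y_{\ell',m'}\rangle_\rangleLL$. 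Solving for the coefficient gives the diagonal expansion
\begin{equation*}
\Tilde{F}''_{\pen}g_1 = 2\sum_{\ell,\ell'}\sum_{m,m'} \frac{1+\pen D_\ell^2 D_{\ell'}^2}{D_\ell^2 D_{\ell'}^2}\,\langle g_1, Y_{\ell,m}\otimes Y_{\ell',m'}\rangle_\rangleLL\, Y_{\ell,m}\otimes Y_{\ell',m'},
\end{equation*}
so $\Tilde{F}''_{\pen}$ acts on $Y_{\ell,m}\otimes Y_{\ell',m'}$ by the scalar $\lambda_{\ell\ell'}=2(\pen+(D_\ell D_{\ell'})^{-2})$.

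It then remains to observe that $\Tilde{F}''_{\pen}\in\mathcal{B}(\Hprod,\Hprod)$ is boundedly invertible. By \eqref{eq:conditionDn}, $|D_\ell|\ge C_1>0$, hence $2\pen\le \lambda_{\ell\ell'}\le 2(\pen+C_1^{-4})$ uniformly in $\ell,\ell'$; a diagonal operator in an orthonormal basis with eigenvalues in this range is bounded, self‑adjoint, and invertible with bounded inverse obtained by inverting each eigenvalue, which is exactly the claimed formula
\begin{equation*}
(\Tilde{F}''_{\pen})^{-1}g = \frac12 \sum_{\ell,\ell'}\sum_{m,m'} \frac{D_\ell^2 D_{\ell'}^2}{1+\pen D_\ell^2 D_{\ell'}^2}\,\langle g, Y_{\ell,m}\otimes Y_{\ell',m'}\rangle_\rangleLL\, Y_{\ell,m}\otimes Y_{\ell',m'}.
\end{equation*}
This also confirms, a posteriori, the invertibility of $\bar{F}''_{\pen}$ needed to define $\Tilde{R}_{\pen}$ in \eqref{eq:Rtilda}. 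There is no real obstacle here: the only thing to be careful about is keeping track of which inner product ($\langle\cdot,\cdot\rangle_\rangleLL$ versus the $(\mathscr{D}\otimes\mathscr{D})$‑inner product) is in force at each step of the Riesz identification, and a routine check that the termwise manipulations of the series are legitimate — both immediate because $\{Y_{\ell,m}\otimes Y_{\ell',m'}\}$ diagonalises all operators involved.
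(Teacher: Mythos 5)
Your proof is correct and follows essentially the same route as the paper's: identify $\Tilde{F}''_{\pen}g_1$ as the Riesz representer of $\bar{F}''_{\pen}g_1$ in $(\Hprod,\|(\mathscr{D}\otimes\mathscr{D})\cdot\|_\rangleLL)$, test against $Y_{\ell,m}\otimes Y_{\ell',m'}$ to read off the diagonal action, and invert termwise. The only (welcome) addition is your explicit eigenvalue bound $2\pen\le\lambda_{\ell\ell'}\le 2(\pen+C_1^{-4})$ justifying bounded invertibility, a point the paper passes over with ``which suggests that $\Tilde{F}''_{\pen}$ is invertible.''
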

Notice that $(\Tilde{F}''_{  \pen})^{-1}$ is a self-adjoint operator.

\begin{proof}
Take $g_1 \in \Hprod$, so that $\bar{F}''_{  \pen}g_1$ belongs to $\mathcal{B}(\Hprod,\mathbb{R})$, with representer $\Tilde{F}''_{  \pen}g_1 \in \Hprod$. Then, for any $g_2 \in \Hprod$,
\begin{align*}
\bar{F}''_{  \pen}g_1g_2 &= \langle (\mathscr{D} \otimes \mathscr{D})  \Tilde{F}''_{  \pen}g_1, (\mathscr{D} \otimes \mathscr{D})  g_2 \rangle_\rangleLL \\&= 2 \langle g_1,g_2 \rangle_\rangleLL + 2 \pen \langle (\mathscr{D} \otimes \mathscr{D})  g_1, (\mathscr{D} \otimes \mathscr{D}) g_2 \rangle_\rangleLL,
\end{align*}
where the last equality comes from \eqref{eq::2dv-cov}. We can hence write the expansion in $\Hprod$
$$
\Tilde{F}''_{  \pen}g_1 = 2 \sum_{\ell=0}^\infty \sum_{\ell'=0}^\infty \sum_{m=-\ell}^\ell \sum_{m'=-\ell'}^{\ell'} \frac{1+\pen D_\ell^2 D_{\ell'}^2 }{D_\ell^2 D_{\ell'}^2}  \langle g_1, \Y_{\ell,m} \otimes \Y_{\ell',m'} \rangle_\rangleLL \Y_{\ell,m} \Y_{\ell'm'},
$$
which suggests that $\Tilde{F}''_{  \pen}$ is invertible and 
$$
(\Tilde{F}''_{   \pen})^{-1}g_1 = \frac12 \sum_{\ell=0}^\infty \sum_{\ell'=0}^\infty \sum_{m=-\ell}^\ell \sum_{m'=-\ell'}^{\ell'} \frac{D_\ell^2 D_{\ell'}^2}{1+\pen D_\ell^2 D_{\ell'}^2 }  \langle g_1, \Y_{\ell,m} \otimes \Y_{\ell',m'} \rangle_\rangleLL \Y_{\ell,m} \Y_{\ell'm'}.
$$
\end{proof}

We now obtain the asymptotic performance for the two classes of spherical random fields introduced in Section~\ref{sec:examples}.

\begin{proof}[Proof of Proposition \ref{prop:filternoise}]
We fix $q \leq p$ such that $2 < p < \beta - 1/2$ and $1 < q < \beta - 1$. Note that $p$ and $q$ exist due to the asumption that $\beta > 5/2$. The proof is divided in two parts. \\

\textit{1. $\mathcal{C}_\beta \subset \pit$.}
Let $\X \in \mathcal{C}_\beta$. 
The Gaussian white noise over the $d$-dimension hypersphere is almost surely in the Sobolev space with negative smoothness 
$W \in \mathcal{H}_{-d/2-\epsilon}$ for any $\epsilon > 0$. This result is well-known for Gaussian white noises over $\Sph^1$ or more generally the $d$-dimensional torus $\mathbb{T}^d = \Sph^1 \times \cdots \times \Sph^1$; see for instance~\cite[Theorem 3.4]{Veraar2010regularity} or~\cite[Theorem 5]{fageot2017besov}. 
More generally, it is true for the Gaussian white noise over a compact Riemannian manifold with no boundary, as is demonstrated for instance  in the proof of Proposition 3.8 in~\cite{dello2020discovery}. 
In our setting, we deduce that $W \in \mathcal{H}_{-1-\epsilon}$ almost surely for any $\epsilon > 0$. 
Moreover, any admissible operator $\mathscr{D}$ with spectral growth order $\beta$ is such that  
$\mathscr{D}^{-1} \mathcal{H}_{-1-\epsilon} = \mathcal{H}_{\beta -1-\epsilon}$, hence $X \in  \mathcal{H}_{\beta -1-\epsilon}$ almost surely. 
For $\epsilon >0$ small enough, we have that $\beta - 1 - \epsilon \geq q$, hence $\mathcal{H}_{\beta -1-\epsilon} \subset \mathcal{H}_q$. In particular, 
$X  \in \mathcal{H}_{q}$ almost surely.

Our goal is now to prove that the covariance $C$ of $X$ is in $\mathbb{H}_p$. 
We first prove that $C(u,v) = K(u,v)$, for any $u,v \in \mathbb{S}^2$, where $K:  \mathbb{S}^2 \times  \mathbb{S}^2 \to \mathbb{R}$ is the reproducing kernel of $(\mathcal{H}_\beta, \|\mathscr{D} \cdot \|_{L^2(\mathbb{S}^2)})$.   
%Without loss of generality, consider $\mathscr{D}=(\operatorname{Id}-\Delta_{\mathbb{S}^2})^{\beta/2}$. With a slightly abuse of notation, for $u,v \in \mathbb{S}^2$, we can write
Using that $X = \mathscr{D}^{-1} W$ and $X(u) = \langle X , \delta_{u} \rangle_{\rangleL}$, we have that, for $u,v \in \mathbb{S}^2$,
\begin{align*}
\mathbb{E}[X(u) X(v)] &= \mathbb{E}[\langle \mathscr{D}^{-1}W, \delta_u \rangle_{L^2(\mathbb{S}^2)} \langle \mathscr{D}^{-1}W, \delta_v \rangle_{L^2(\mathbb{S}^2)}]\\
&=
\mathbb{E}[\langle W,  (\mathscr{D}^{-1})^* \delta_u \rangle_{L^2(\mathbb{S}^2)} \langle W, (\mathscr{D}^{-1})^{*} \delta_v \rangle_{L^2(\mathbb{S}^2)}]\\
&= \langle (\mathscr{D}^{-1})^* \delta_u, (\mathscr{D}^{-1})^* \delta_v\rangle_{L^2(\mathbb{S}^2)}\\
&= \langle \mathscr{D} \mathscr{D}^{-1} (\mathscr{D}^{-1})^* \delta_u, \mathscr{D} \mathscr{D}^{-1}(\mathscr{D}^{-1})^* \delta_v\rangle_{L^2(\mathbb{S}^2)}\\
&= \langle (\mathscr{D}^* \mathscr{D})^{-1} \delta_u, (\mathscr{D}^* \mathscr{D})^{-1}  \delta_v\rangle_{\mathcal{H}_\beta}.
\end{align*}
We then observe that $(\mathscr{D}^* \mathscr{D})^{-1} \delta_u = K(\cdot,u)$, $u \in \mathbb{S}^2$ since it satisfy the reproducing property
$$
 \langle (\mathscr{D}^* \mathscr{D})^{-1} \delta_u, f \rangle_{\mathcal{H}_\beta} =  \langle \mathscr{D} (\mathscr{D}^* \mathscr{D})^{-1} \delta_u, \mathscr{D} f \rangle_{L^2(\mathbb{S}^2)} =   \langle \delta_u, f \rangle_{L^2(\mathbb{S}^2)} =f(u),
$$
for every $f \in \mathcal{H}_\beta$. Thus we have that
\begin{align*}
C(u,v) = \mathbb{E}[X(u) X(v)] &= \langle K(\cdot,u),K(\cdot,v) \rangle_{\mathcal{H}_\beta} = K(u,v).
\end{align*}
We therefore deduce that, since the Fourier coefficients of $K$ are given by $K_{\ell,m} = 1 / |D_\ell|^2$, we have that 
\begin{equation*}
\|C\|^2_{\mathbb{H}_p} = \|K\|^2_{\mathbb{H}_p} = \sum_{\ell=0}^\infty \sum_{m= -\ell}^{\ell} (1+\ell(\ell+1))^{2p} |K_{\ell,m}|^2  = 
\sum_{\ell=0}^\infty (2\ell+1)(1+\ell(\ell+1))^{2p} D_\ell^{-4}, % \sum_{\ell=0}^\infty \ell^{4p - 4\beta + 1},
\end{equation*}
Finally, using that the $D_\ell$ satisfies \eqref{eq:conditionDn} (with $p = \beta$), the previous sum is finite if and only if $4 \beta - 4p - 1 > 1$, which is true due to the assumption that $p < \beta - 1/2$. Finally, we have shown that $X \in \pit$. \\

\textit{2. Existence of $C_\eta$.}
Let $\mathscr{D}$ be an admissible operator of order $p$ such that $2<p<\beta - 1/2$. Then, we have seen that, for such $p$ and for $q \leq p$, $1 < q < \beta-1$, $X \in \pit$. Then, the estimator $C_\eta = R_\eta$ associated to the operator $\mathscr{D}$ verifies the assumptions of Theorem~\ref{th::R} and the bound of Corollary \ref{coro:asymptoticCetabigO} is achieved. This is true for $p$ arbitrarily close to $\beta - 1/2$, in particular for $p$ such that $\frac{p}{p-1} - \frac{\beta - 1/2}{\beta+1/2} = \epsilon > 0$, which gives \eqref{eq:firstclass}
and concludes the proof. 
\end{proof}

\begin{proof}[Proof of Proposition \ref{prop:filterdiracstream}]
%Pikachu
Let $p < \beta - 1$ Then, for any $u\in \Sph^2$, the Green's function $\psi_u^{\mathscr{D}}$ of $\mathscr{D}$ is in $\mathcal{H}_{\beta - 1 - \epsilon}$. This follows from that, $\delta_u \in \mathcal{H}_{-1-\epsilon}$ and the fact that, as we have seen in the proof of Proposition~\ref{prop:filternoise}, $\psi_u^{\mathscr{D}} = \mathscr{D}^{-1} \delta_u \in \mathcal{H}_{\beta - 1 - \epsilon} \subset \mathcal{H}_p$ for $\epsilon  >0$ small enough.
In particular, $X \in \Hp$ as a (random) linear combination of Green's functions of $\mathscr{D}$. We moreover easily see that the covariance  $C$ of $X$ is a linear combinations of tensorial functions $\psi_u^{\mathscr{D}}\otimes \psi_v^{\mathscr{D}}$ for $u,v \in \Sph^2$ and is therefore in $\Hprod$. This shows that $X \in \pi(p,p) \subset \pit$ for any $1 \leq q\leq p$ and concludes the first part of Proposition~\ref{prop:filterdiracstream}. The existence of $C_\eta$ is then obtained in same way as for the proof of Proposition~\ref{prop:filternoise}, except that $p < \beta - 1$ and therefore \eqref{eq:secondclass} follows with $\epsilon = \frac{p}{p+1} - \frac{\beta-1}{\beta}$. 
\end{proof}

%We moreover refered to the following proposition, which characterizes the Sobolev regularity of the spherical random fields considered in Section~\ref{sec:examples}. 

%\begin{proposition} \label{prop:criticalsobo}
%Let  $X$ be a spherical random field in the class $\mathcal{C}_\beta$ or in the class $\mathcal{B}_\beta$ for some $\beta > 1$. Then, the critical Sobolev regularity of $X$ is $\beta - 1$ in the sense that $X \in \Hp$ almost surely for any $p < \beta-1$ and $X \notin \Hp$ almost surely for any $p \geq \beta+1$. 
%\end{proposition}

%\ju{ToDo. Find a better description.} This is well-known for random fields $X = \mathscr{D}_0 W$ that are filtered version of the Gaussian white noise over the domain $\mathbb{T}^d$~\cite[Corollary 1]{fageot2020n} using known results for the Gaussian white noise itself~\cite{Veraar2010regularity,aziznejad2020wavelet}. The generalization to random field over compact Riemannian manifolds with no boundary, including the 2D-sphere, is discussed in the proof of Proposition 3.8 of~\cite{dello2020discovery}.

\subsection{Proofs of Section~\ref{sec:temporal}}\label{sec:proofs4}

\begin{proof}[Proof of Theorem \ref{th::mu-time}]
For simplicity, we consider the case $r_1=r_2=\cdots=r_n$. 

This proof follows the same lines of Proof of Theorem \ref{th::mu} in Section \ref{sec:proofs3}. The only different part is the one referred to Point \ref{proof2}. Starting from Equation \eqref{eq::Fprime}, we have that
$$
\mathbb{E}[F'_{  \pen} (\bar{\mu}_\pen)\Y_{\ell,m}] = \mathbb{E}_\U \mathbb{E}[F'_{  \pen} (\bar{\mu}_\pen)\Y_{\ell,m}|\U] = 0
$$
and
\begin{align*}
    \sum_{m=-\ell}^\ell \mathbb{E}|F'_{  \pen} (\bar{\mu}_\pen)\Y_{\ell,m}|^2 &= \sum_{m=-\ell}^\ell \operatorname{Var}[F'_{  \pen} (\bar{\mu}_\pen)\Y_{\ell,m}]\\
    &=  \frac{(8\pi)^2}{(nr)^2} \sum_{m=-\ell}^\ell \operatorname{Var}\left[ \sum_{t=1}^n \sum_{j=1}^r (\W_{tj} - \bar{\mu}_\pen(U_{tj}))\Y_{\ell,m}(U_{tj})  \right] .
\end{align*}
Using the law of total variance, we can write 
\begin{align*}
\sum_{m=-\ell}^\ell \operatorname{Var}\left[ \sum_{t=1}^n\sum_{j=1}^r (\W_{tj} - \bar{\mu}_\pen(U_{tj}))\Y_{\ell,m}(U_{tj})  \right] &= \sum_{m=-\ell}^\ell \operatorname{Var} \left [\sum_{t=1}^n \sum_{j=1}^r (\mu(U_{tj}) - \bar{\mu}_\pen(U_{tj}))\Y_{\ell,m}(U_{tj}) \right] \\&+ \sum_{m=-\ell}^\ell \mathbb{E}_\U\!\left[ \operatorname{Var}\left [\sum_{t=1}^n \sum_{j=1}^r (\W_{tj} - \bar{\mu}_\pen(U_{tj}))\Y_{\ell,m}(U_{tj}) \Bigg| \U \right]\right].
\end{align*}
For the first term on the right hand side of this expression, we have
\begin{align*}
 \sum_{m=-\ell}^\ell \operatorname{Var}\left [\sum_{t=1}^n \sum_{j=1}^r (\mu(U_{tj}) - \bar{\mu}_\pen(U_{tj}))\Y_{\ell,m}(U_{tj}) \right] &= nr \sum_{m=-\ell}^\ell \operatorname{Var}\left [ (\mu(U_{11}) - \bar{\mu}_\pen(U_{11}))\Y_{\ell,m}(U_{11}) \right]  \\
 &\le \frac{nr}{4\pi} \sum_{m=-\ell}^\ell  \int_{\mathbb{S}^2} |\mu(u) - \bar{\mu}_\pen(u) |^2 |\Y_{\ell,m}(u)|^2 du \\
 &\le \frac{B nr}{4\pi} (2\ell+1) \|\mu - \bar{\mu}_\pen \|^2_\rangleH \\
 &\le \frac{B K nr}{4\pi}  (2\ell+1),
\end{align*}
where the last two inequalities are justified by Lemma \ref{lemma::sup}, for some $B>0$, and Equation \eqref{eq::mubar-alphabound} with $\alpha=1$. Then, for the second term,
%sistemare 4\pi
\begin{align*}
    &\sum_{m=-\ell}^\ell \mathbb{E}_\U\!\left[ \operatorname{Var}\left [\sum_{t=1}^n \sum_{j=1}^r (\W_{tj} - \bar{\mu}_\pen(U_{tj}))\Y_{\ell,m}(U_{tj}) \Bigg| \U \right]\right] \\
       % \le& \sum_{m=-\ell}^\ell \sum_{i=1}^n \sum_{i'=1}^n\sum_{j=1}^r \sum_{j'=1}^r \mathbb{E}_U\!\left[ \Y_{\ell,m}(U_{ij}) \Y_{\ell,m}(U_{i'j'}) \mathbb{E}[\W_{ij}\W_{i'j'}| U] \right]\\
   =& \sum_{m=-\ell}^\ell \sum_{t=1}^n \sum_{t'=1}^n\sum_{j=1}^r \sum_{j'=1}^r \mathbb{E}_\U\!\left[ \Y_{\ell,m}(U_{tj}) \Y_{\ell,m}(U_{t'j'}) \operatorname{Cov}[\W_{tj},\W_{t'j'}| \U] \right]\\
    =&\,  \frac{r(r-1)}{(4\pi)^2} \sum_{m=-\ell}^\ell \sum_{t=1}^n \operatorname{Var}[\langle X_t, \Y_{\ell,m} \rangle_\rangleL ] \\
    +&\, \frac{r^2}{(4\pi)^2}  \sum_{m=-\ell}^\ell \sum_{t \ne t'}  \operatorname{Cov}[\langle X_t, \Y_{\ell,m} \rangle_\rangleL, \langle X_{t'}, \Y_{\ell,m} \rangle_\rangleL] \\
    +& \, \frac{nr}{4\pi} \sum_{m=-\ell}^\ell \int_{\mathbb{S}^2}  C_0(u,u)  |\Y_{\ell,m}(u)|^2 du +  \frac{nr}{4\pi} (2\ell+1) \sigma^2   \\
    \le&\, \frac{r^2}{(4\pi)^2}  \sum_{m=-\ell}^\ell \sum_{t =1}^n \sum_{t'=1}^n  \operatorname{Cov}[\langle X_t, \Y_{\ell,m} \rangle_\rangleL, \langle X_{t'}, \Y_{\ell,m} \rangle_\rangleL]  \\
    %+&\,  \frac{nr}{(4\pi)^2} \sum_{m=-\ell}^\ell  \int_{\mathbb{S}^2} \int_{\mathbb{S}^2} C_0(u,v) \Y_{\ell,m}(u) \Y_{\ell,m}(v) du dv \\
    +& \, \frac{B' nr}{4\pi} (2\ell+1) \mathbb{E}\|X_0\|^2_{\Hq}  +  \frac{nr}{4\pi} (2\ell+1) \sigma^2 ,
\end{align*}
again by applying Lemma \ref{lemma::sup}, for some $B'>0$.
%sistemare -mu
Now, observe that, by stationarity,
\begin{align*}
&\frac{1}{n}\sum_{t =1}^n \sum_{t'=1}^n | \operatorname{Cov}[\langle X_t, \Y_{\ell,m} \rangle_\rangleL, \langle X_{t'}, \Y_{\ell,m} \rangle_\rangleL]| \\=& \frac{1}{n} \sum_{t =1}^n \sum_{t'=1}^n | \langle \CL_{t-t'}\Y_{\ell,m},\Y_{\ell,m}\rangle_\rangleL |   \\=& \sum_{|\xi|< n} \left (1- \frac{|\xi|}{n} \right)|\langle \CL_\xi\Y_{\ell,m},\Y_{\ell,m}\rangle_\rangleL|\\
\le& \sum_{\xi \in \mathbb{Z}} |\langle \CL_\xi\Y_{\ell,m},\Y_{\ell,m}\rangle_\rangleL|.
\end{align*}
Hence, combining all the bounds,
$$
\sum_{m=-\ell}^\ell \mathbb{E}|F'_{  \pen} (\bar{\mu}_\pen)\Y_{\ell,m}|^2 \le \frac{4}{n} \sum_{m=-\ell}^\ell \sum_{\xi \in \mathbb{Z}} |\langle \CL_\xi\Y_{\ell,m},\Y_{\ell,m}\rangle_\rangleL| + (2\ell+1) O\left( \frac{1}{nr} \right),
$$
and
\begin{align*}
\mathbb{E}\|\Tilde{\mu}_\pen - \bar{\mu}_\pen \|^2_\alpha &\le \frac{1}{n} \sum_{\ell=0}^\infty \sum_{m=-\ell}^\ell \frac{D_\ell^{2\alpha}}{(1+ \pen D_\ell^2)^2} \sum_{\xi \in \mathbb{Z}} |\langle \CL_\xi\Y_{\ell,m},\Y_{\ell,m}\rangle_\rangleL| \\&+ O\left( \frac{1}{nr} \right) \sum_{\ell=0}^\infty \frac{D_\ell^{2\alpha}}{(1+ \pen D_\ell^2)^2} (2\ell+1)\\
&\le  \frac{1}{n} \sum_{\ell=0}^\infty \sum_{m=-\ell}^\ell D_\ell^{2\alpha} \sum_{\xi \in \mathbb{Z}} |\langle \CL_\xi\Y_{\ell,m},\Y_{\ell,m}\rangle_\rangleL|\\ &+ O\left( \frac{1}{nr} \right) \sum_{\ell=0}^\infty \frac{D_\ell^{2\alpha}}{(1+ \pen D_\ell^2)^2} (2\ell+1).
\end{align*}
Then, for $\alpha \le q/p$ and using the property in Equation \eqref{eq::covops}, we obtain
\begin{align*}
 \sum_{\xi \in \mathbb{Z}} \sum_{\ell=0}^\infty D_\ell^{2\alpha}   \sum_{m=-\ell}^\ell |\langle \CL_\xi\Y_{\ell,m},\Y_{\ell,m}\rangle_\rangleL|&=  \sum_{\xi \in \mathbb{Z}} \sum_{\ell=0}^\infty \frac{1}{D_\ell^{2\alpha}} \sum_{m=-\ell}^\ell  |\langle \CHq_\xi \Y_{\ell,m}, \Y_{\ell,m} \rangle_\Hq| \\&\le \sum_{\xi \in \mathbb{Z}} \|\CHq_\xi\|_{\operatorname{TR}, \Hq}
\end{align*}
(possibly up to an arbitrary constant). The rest of the proof follows exactly as in Proof of Theorem \ref{th::mu} in Section \ref{sec:proofs3}.
\end{proof}

\begin{proof}[Proof of Theorem \ref{th::R-time}]

For simplicity, we consider the case $r_1=r_2=\cdots=r_n$ and we define
$$
\num = \begin{cases}
nr(r-1) & \lag=0\\
(n-\lag)r^2 &\lag\ne0
\end{cases}.
$$
Recall that the estimator for $R_\lag$ at fixed lag $\lag \in \{0, 1,\dots,n-1\}$ is given by
\begin{equation*}
    R_{\lag;\pen} := \argmin_{g \in \Hprod}  F_{\lag; \pen} (g),
\end{equation*}
\begin{equation*}
    F_{\lag; \pen} (g) := \frac{(4\pi)^2}{N_\lag} \sum_{t=1}^{n-\lag} \sum_{j=1}^r \sum_{k=1}^r (\W_{t+\lag,j}\W_{tk}  - g(U_{t+\lag,j}, U_{tk}))^2 + \pen \|g\|^2_\rangleHH.
\end{equation*}

In keeping with the notation that was used in the previous proof, we first define
$$\bar{F}_{\lag;\pen}(g) := \mathbb{E}[F_{\lag; \pen} (g)] = (4\pi)^2 \operatorname{Var}[\W_{\lag+1, 1}\W_{1 2}] + \|R_{\lag} - g\|^2_\rangleLL + \pen \|g \|^2_\rangleHH$$ and we let
\begin{equation*}
    \bar{R}_{\lag;\pen} := \argmin_{g \in \Hprod}  \bar{F}_{\lag;\pen} (g).
\end{equation*}
Also, define $\Tilde{R}_{\lag;\pen}= \bar{R}_{\lag;\pen} - (\bar{F}''_{\lag;\pen})^{-1} F'_{\lag; \pen}(\bar{R}_{\lag;\pen}).$
Then, write $$
R_{\lag;\pen}  - R_{\lag}  = R_{\lag;\pen} - \Tilde{R}_{\lag;\pen} + \Tilde{R}_{\lag;\pen} - \bar{R}_{\lag;\pen} +  \bar{R}_{\lag;\pen} - R_{\lag}.
$$
% By definition of $\bar{R}_{\lag;\pen}$, we have
% $$
% (4\pi)^2 \operatorname{Var}[\W_{\lag+1,1}\W_{12}] + \|R_{\lag} - \bar{R}_{\lag;\pen}\|^2_\rangleLL \le \bar{F}_{  \lag; \pen} (\bar{R}_{\lag;\pen}) \le \bar{F}_{  \lag; \pen} (g),
% $$
% for all $g \in \Hprod$; in particular, for $g=R_{\lag}$, it holds
% \begin{equation}\label{eq::R-Rpen-time}
% \|R_{\lag} - \bar{R}_{\lag;\pen}\|^2_\rangleLL \le \pen \|R_{\lag}\|^2_\rangleHH \le M^2 \pen,
% \end{equation}
% which gives the claimed result under the assumptions on $\pen$.

% At this point, it is useful to define an alternative norm $\|\cdot\|_{\alpha}$, $\alpha \in [0,1]$, which gives
% $$
% \|g\|^2_\alpha = \sum_{\ell=0}^\infty \sum_{m=-\ell}^\ell \sum_{\ell'=0}^\infty \sum_{m'=-\ell'}^{\ell'} D_\ell^{2\alpha} D_{\ell'}^{2\alpha} \langle g, \Y_{\ell,m} \otimes \Y_{\ell',m'} \rangle^2_\rangleLL
% , \qquad g \in \Hprod.$$
% Since $|D_\ell|\ge 1$ for all $\ell \in \mathbb{N}$, 
% $$
% \|g\|_\rangleLL = \|g\|_0 \le \|g\|_\alpha \le \|g\|_1 = \|g\|_\rangleHH.
% $$

The first part of the proof follows the same lines of Proof of Theorem \ref{th::R} in Section \ref{sec:proofs3}. We first define the intermediate norm $\|\cdot\|_\alpha, \ \alpha \in [0,1]$. We then show that 
\begin{equation}\label{eq::Rbar-alphabound-time}
\| R_\lag - \bar{R}_{\lag;\pen} \|^2_\alpha \le \pen^{1-\alpha} \|R_\lag\|^2_\rangleHH,
\end{equation}
and hence that
\begin{equation*}%\label{eq::R-Rpen}
\|R_\lag - \bar{R}_{\lag\pen}\|^2_\rangleLL \le \pen \|R_\lag\|^2_\rangleHH \le K_1 \pen,
\end{equation*}
which proves the analogous of Point \ref{proof1-R}.

Afterwards, we show that 
\begin{equation}\label{eq::tilde-bar-cov-time2}
\|\Tilde{R}_{\lag;\pen} - \bar{R}_{\lag;\pen} \|^2_\alpha = \frac14 \sum_{\ell,m} \sum_{\ell',m'} \frac{D_\ell^{2\alpha} D_{\ell'}^{2\alpha}}{(1+ \pen D_\ell^2 D_{\ell'}^2)^2} ( F'_{ \lag;\pen}(\bar{R}_{\lag;\pen})\Y_{\ell,m} \otimes \Y_{\ell',m'})^2.
\end{equation}
Thus, we consider the following quantity
\begin{align*}
    &F'_{ \lag; \pen} (\bar{R}_{\lag;\pen}) g = F'_{ \lag; \pen} (\bar{R}_{\lag;\pen})g - \bar{F}'_{  \lag; \pen} (\bar{R}_{\lag;\pen}) g\\
    =& -\frac{2(4\pi)^2}{\num} \sum_{t=1}^{n-\lag} \sumjk (\W_{t+\lag,j}\W_{tk} - \bar{R}_{\lag;\pen}(U_{t+\lag,j}, U_{tk}))g(U_{t+\lag,j},U_{tk}) + 2 \langle R_{\lag} - \bar{R}_{\lag;\pen}, g \rangle_\rangleLL,
\end{align*}
$g \in \Hprod$.
When $g=\Y_{\ell,m} \otimes \Y_{\ell',m'}$, we have
$$
\mathbb{E}[F'_{ \lag; \pen} (\bar{R}_{\lag;\pen}) \Y_{\ell,m} \otimes \Y_{\ell',m'}] = \mathbb{E}_\U \mathbb{E}[F'_{ \lag; \pen} (\bar{R}_{\lag;\pen})\Y_{\ell,m} \otimes \Y_{\ell',m'}|\U] = 0
$$
and
\begin{align*}
    &\sum_{m,m'} \mathbb{E}|F'_{ \lag; \pen} (\bar{R}_{\lag;\pen})\Y_{\ell,m} \otimes \Y_{\ell',m'}|^2\\ =& \sum_{m,m'} \operatorname{Var}[F'_{ \lag; \pen} (\bar{R}_{\lag;\pen})\Y_{\ell,m}\otimes \Y_{\ell',m'}]\\
    =&  \frac{4(4\pi)^4}{\num^2} \sum_{m,m'} \operatorname{Var}\left[\sum_{t=1}^{n-\lag} \sumjk (\W_{t+\lag,j}\W_{tk} - \bar{R}_{\lag;\pen}(U_{t+\lag,j}, U_{tk}))\Y_{\ell,m}(U_{t+\lag,j})\Y_{\ell',m'}(U_{tk}) \right] .
\end{align*}

Using the law of total variance, we can write 
\begin{align}
&\sum_{m,m'} \operatorname{Var}\left[\sum_{t=1}^{n-\lag} \sumjk (\W_{t+\lag,j}\W_{tk} - \bar{R}_{\lag;\pen}(U_{t+\lag,j}, U_{tk}))\Y_{\ell,m}(U_{t+\lag,j})\Y_{\ell',m'}(U_{tk}) \right] \notag \\=& \sum_{m,m'} \operatorname{Var} \left [ \sum_{t=1}^{n-\lag} \sumjk (R_{\lag}(U_{t+\lag,j},U_{tk}) - \bar{R}_{\lag;\pen}(U_{t+\lag,j}, U_{tk}))\Y_{\ell,m}(U_{t+\lag,j})\Y_{\ell',m'}(U_{tk}) \right] \notag \\+& \sum_{m,m'} \mathbb{E}_\U\!\left[ \operatorname{Var}\left [\sum_{t=1}^{n-\lag} \sumjk \W_{t+\lag,j}\W_{tk} \Y_{\ell,m}(U_{t+\lag,j})\Y_{\ell',m'}(U_{tk})\Bigg| \U \right]\right]. \label{eq::ltv-2}
\end{align}

In what follows, we will handle sums over four indices $j,k,j',k'$.
It is then useful to identify the distinct cases which lead to terms of different orders, that is,
\begin{enumerate}
\item terms of order $r$:
\begin{enumerate}
    \item $j=j'=k=k'$
    \end{enumerate}
\item terms of order $r(r-1)$:
\begin{enumerate}
    \item $j=j'\ne  k=k'$
    \item $j=k'\ne j'=k$
    \item $j=k \ne  j'= k'$
    \item $j=k=j'\ne k'$
    \item $j=k=k' \ne j'$
    \item $j'=k'=j \ne k$
    \item $j'=k'=k \ne j$
    \end{enumerate}
\item terms of order $r(r-1)(r-2)$:
\begin{enumerate}
    \item $j=j', \ k\ne k', \ j \ne k, \ j' \ne k'$
    \item $j\ne j', \ k=k', \ j \ne k, \ j' \ne k'$
    \item $j\ne j', \ k\ne k', \ j=k', \ j'\ne k, \ j \ne k, \ j' \ne k'$
    \item $j\ne j', \ k\ne k', \ j\ne k', \ j'= k, \ j \ne k, \ j' \ne k'$
    \item $j\ne j', \ k\ne k', \ j\ne k', \ j'\ne k, \ j = k, \ j' \ne k'$
    \item $j\ne j', \ k\ne k', \ j\ne k', \ j'\ne k, \ j \ne k, \ j' = k'$
\end{enumerate}
\item terms of order $r(r-1)(r-2)(r-3)$:
\begin{enumerate}
    \item $j\ne j', \ k\ne k', \ j\ne k', \ j'\ne k, \ j \ne k, \ j' \ne k'$
\end{enumerate}
\end{enumerate}
Recall that, when $h=0$, we have $j\ne k,\ j' \ne k'$. Thus, cases 1.a, 2.c--2.g, 3.e and 3.f have not to be considered.

For the first term on the right hand side of Equation \eqref{eq::ltv-2}, when $h=0$, the sum over $i$ is composed of independent terms, hence it follows exactly as in Theorem \ref{th::R} that
\begin{align*}
&\sum_{m,m'} \operatorname{Var} \left [ \sum_{t=1}^{n} \sum_{1 \le j \ne k \le r} (R_0(U_{tj},U_{tk}) - \bar{R}_{\lag;\pen}(U_{tj}, U_{tk}))\Y_{\ell,m}(U_{tj})\Y_{\ell',m'}(U_{tk}) \right]\\
 = & n \sum_{m,m'} \operatorname{Var} \left [\sum_{1 \le j \ne k \le r} (R_0(U_{1j},U_{1k}) - \bar{R}_{\lag;\pen}(U_{1j}, U_{1k}))\Y_{\ell,m}(U_{1j})\Y_{\ell',m'}(U_{1k}) \right]\\ 
\le&  (2\ell+1)(2\ell'+1) \,O(n r^3).
\end{align*}

When $\lag \ne 0$, for the last case 4.a, we have 
$$
\sum_{m,m'} \langle R_{\lag}- \bar{R}_{\lag;\pen}, \Y_{\ell,m} \otimes \Y_{\ell',m'} \rangle^2_\rangleLL,
$$
regardless of $t$ and $t'$ (cardinality $(n-\lag)^2$).
The same for the other cases when jointly $t\ne t', t\ne t'+\lag,$ $t'\ne t+\lag$ (cardinality $(n-\lag)^2 - (n-\lag) - 2(n-2\lag) \le (n-\lag)^2$ ).
Moreover, we have
\begin{itemize}
\item terms of order $(n-\lag)r ,\ (n-\lag)r(r-1) , \ (n-\lag)r(r-1)(r-2)$, when $t=t'$ 
\item terms of order $(n-2\lag)r ,\ (n-2\lag)r(r-1) , \ (n-2\lag)r(r-1)(r-2)$, when $t=t'+\lag$
\item terms of order $(n-2\lag)r ,\ (n-2\lag)r(r-1) , \ (n-2\lag)r(r-1)(r-2)$, when $t'=t+\lag$
\end{itemize}
Hence,
\begin{align*}
& \sum_{m,m'} \operatorname{Var} \left [ \sum_{t=1}^{n-\lag} \sum_{j=1}^r \sum_{k=1}^r (R_{\lag}(U_{t+\lag,j},U_{tk}) - \bar{R}_{\lag;\pen}(U_{t+\lag,j}, U_{tk}))\Y_{\ell,m}(U_{t+\lag,j})\Y_{\ell',m'}(U_{tk}) \right]\\ 
= & \sum_{m,m'} \mathbb{E}\left ( \sum_{t=1}^{n-\lag} \sum_{j=1}^r \sum_{k=1}^r (R_{\lag}(U_{t+\lag,j},U_{tk}) - \bar{R}_{\lag;\pen}(U_{t+\lag,j}, U_{tk}))\Y_{\ell,m}(U_{t+\lag,j})\Y_{\ell',m'}(U_{tk})  \right)^2  \\
-& \frac{(n-\lag)^2 r^4}{(4\pi)^4} \sum_{m,m'} \langle R_{\lag}- \bar{R}_{\lag;\pen}, \Y_{\ell,m} \otimes \Y_{\ell',m'} \rangle^2_\rangleLL\\
\le& B \frac{3(n-\lag)(r + 7r(r-1)  + 6r(r-1)(r-2))}{(4\pi)^2}(2\ell+1)(2\ell'+1)\|R_{\lag} - \bar{R}_{\lag;\pen}\|_\rangleHH^2  \\
+&  \frac{(n-\lag)^2 r (r-1)(r-2)(r-3)}{(4\pi)^4} \sum_{m,m'} \langle R_{\lag}- \bar{R}_{\lag;\pen}, \Y_{\ell,m} \otimes \Y_{\ell',m'} \rangle^2_\rangleLL\\
+&  \frac{(n-\lag)^2  (r + 7r(r-1) + 6r(r-1)(r-2))}{(4\pi)^4} \sum_{m,m'} \langle R_{\lag}- \bar{R}_{\lag;\pen}, \Y_{\ell,m} \otimes \Y_{\ell',m'} \rangle^2_\rangleLL\\
-&  \frac{(n-\lag)^2r^4}{(4\pi)^4} \sum_{m,m'} \langle R_{\lag}- \bar{R}_{\lag;\pen}, \Y_{\ell,m} \otimes \Y_{\ell',m'} \rangle^2_\rangleLL\\
\le&  (2\ell+1)(2\ell'+1) \,O(n r^3),
\end{align*}
where the last two inequalities are justified by Lemma \ref{lemma::sup}, for some $B>0$, and Equation \eqref{eq::Rbar-alphabound-time} with $\alpha=1$. 
For the second term, write
\begin{align*}
\sum_{t=1}^{n-\lag} \sumjk \W_{t+\lag,j}\W_{tk} \Y_{\ell,m}(U_{t+\lag,j})\Y_{\ell',m'}(U_{tk}) &= \sum_{t=1}^{n-\lag} \sumjk X_{t+\lag}(U_{t+\lag,j})X_t(U_{tk}) \Y_{\ell,m}(U_{t+\lag,j})\Y_{\ell',m'}(U_{tk}) \\
&+\sum_{t=1}^{n-\lag} \sumjk \epsilon_{t+\lag,j} X_t(U_{tk}) \Y_{\ell,m}(U_{t+\lag,j})\Y_{\ell',m'}(U_{tk}) \\
&+ \sum_{t=1}^{n-\lag} \sumjk X_{t+\lag}(U_{t+\lag,j})\epsilon_{tk}\Y_{\ell,m}(U_{t+\lag,j})\Y_{\ell',m'}(U_{tk})\\
&+ \sum_{t=1}^{n-\lag} \sumjk \epsilon_{t+\lag,j} \epsilon_{tk} \Y_{\ell,m}(U_{t+\lag,j})\Y_{\ell',m'}(U_{tk}).
\end{align*}
Denote the four terms in this last expression by $S_1,S_2,S_3$, and $S_4$ with indices corresponding to their location in the sum. Then,
\begin{align*}
&\mathbb{E}_\U\!\left[ \operatorname{Var}\left [S_1+S_2+S_3+S_4 | \U \right]\right] \\\le& 4 \left( \mathbb{E}_\U[ \operatorname{Var}[S_1 | \U]] + \mathbb{E}_\U[ \operatorname{Var}[S_2 | \U]] + \mathbb{E}_\U[ \operatorname{Var}[S_3 | \U]] + \mathbb{E}_\U[ \operatorname{Var}[S_4 | \U]] \right).
\end{align*}
We will illustrate how to derive the bound for $S_1$, since the other three are somewhat simpler to handle and of order at most $n r^3$.

%\begin{align*}
%&\mathbb{E}_U\left[\operatorname{Var}\left[\sum_{i=1}^n\sumjk X_{i+\lag}(U_{i+\lag,j}) X_i(U_{ik}) \Y_{\ell,m}(U_{i+\lag,j})\Y_{\ell',m'}(U_{ik}) \Bigg | U \right]\right] \\ =&  \sum_{i,i'} \sum_{j,j'} \sum_{k,k'}  \mathbb{E}_U [\Y_{\ell,m}(U_{i+\lag,j})\Y_{\ell',m'}(U_{ik})\Y_{\ell,m}(U_{i'+\lag,j'})\Y_{\ell',m'}(U_{i'k'})\\ \cdot& \operatorname{Cov}[ X_{i+\lag}(U_{i+\lag,j}) X_i(U_{ik}), X_{i'+\lag}(U_{i'+\lag,j'}) X_{i'}(U_{i'k'}) | U ]].
%\end{align*}
%\begin{align*}
%&\mathbb{E}_U\left[\operatorname{Var}\left[\sum_{i=1}^n\sum_{1 \le j \ne k \le r} \W_{ij}\W_{ik} \Y_{\ell,m}(U_{ij})\Y_{\ell',m'}(U_{ik}) \Bigg | U \right]\right]\\=& \sum_{i=1}^n \sum_{i'=1}^n \sum_{1 \le j \ne k \le r} \sum_{1 \le j' \ne k' \le r} \mathbb{E}_U \left[\Y_{\ell,m}(U_{ij})\Y_{\ell',m'}(U_{ik})\Y_{\ell,m}(U_{i'j'})\Y_{\ell',m'}(U_{i'k'}) \operatorname{Cov}\left[ \W_{ij}\W_{ik}, \W_{i'j'}\W_{i'k'} | U \right] \right].
%\end{align*}
%Recall that, for generic $i,j,k$,
%\begin{align*}
%    \W_{ij}\W_{ik} &= (X_i(U_{ij}) + \epsilon_{ij})(X_i(U_{ik}) + \epsilon_{ik})\\
%    &=  X_i(U_{ij})X_i(U_{ik}) +  \epsilon_{ij} X_i(U_{ik}) + X_i(U_{ij})\epsilon_{ik} + \epsilon_{ij} \epsilon_{ik}.
%\end{align*}
%Let us first study the term in which it appears $\operatorname{Cov}\left[ X_i(U_{ij})X_i(U_{ik}), X_{i'}(U_{i'j'})X_{i'}(U_{i'k'}) | U \right]$. 

We start by considering the quantity
$$
\mathbb{E}_\U [\Y_{\ell,m}(U_{t+\lag,j})\Y_{\ell',m'}(U_{tk})\Y_{\ell,m}(U_{t'+\lag,j'})\Y_{\ell',m'}(U_{t'k'}) \operatorname{Cov}[ X_{t+\lag}(U_{t+\lag,j}) X_t(U_{tk}), X_{t'+\lag}(U_{t'+\lag,j'}) X_{t'}(U_{t'k'}) | \U ]]
$$
and the forms it takes for different configurations of indices $t,t',j,j',k,k'$.

For 4.a, regardless of $t$ and $t'$, and for 1-2-3 when jointly $t\ne t'$, $t\ne t'+\lag$, $t'\ne t + \lag$, we have
\begin{align*}
&\mathbb{E}_\U \left[\Y_{\ell,m}(U_{t+\lag,j})\Y_{\ell',m'}(U_{tk})\Y_{\ell,m}(U_{t'+\lag,j'})\Y_{\ell',m'}(U_{t'k'}) \operatorname{Cov}\left[ X_{t+\lag}(U_{t+\lag,j})X_t (U_{tk}),X_{t'+\lag} (U_{t'+\lag,j'}) X_{t'}(U_{t'k'}) | \U \right] \right]\\
%=&\int_{\mathbb{S}^2} \int_{\mathbb{S}^2} \int_{\mathbb{S}^2} \int_{\mathbb{S}^2} \Y_{\ell,m}(u)\Y_{\ell',m'}(v)\Y_{\ell,m}(w)\Y_{\ell',m'}(z)\operatorname{Cov}\left[ X_{i+\lag}(u)X_i(v), X_{i'+\lag}(w)X_{i'}(z) \right] du dv dw dz\\
=&\operatorname{Cov}[\langle X_{t+\lag}, \Y_{\ell,m} \rangle_\rangleL \langle X_{t}, \Y_{\ell',m'} \rangle_\rangleL , \langle X_{t'+\lag}, \Y_{\ell,m} \rangle_\rangleL \langle X_{t'}, \Y_{\ell',m'} \rangle_\rangleL ].
\end{align*}
It is possible to show that for non-centered (real-valued) random variables $X_1,X_2,X_3,X_4$, with means $\mu_1,\mu_2,\mu_3,\mu_4$, the following holds
\begin{align*}
\operatorname{Cov}[X_1X_2, X_3X_4] &= \operatorname{Cum}[X_1,X_2,X_3,X_4] \\
&+ \mu_4 \operatorname{Cum}[X_3,X_1,X_2] + \mu_3 \operatorname{Cum}[X_4,X_1,X_2]\\
&+\mu_2 \operatorname{Cum}[X_1,X_3,X_4]
+\mu_1 \operatorname{Cum}[X_2,X_3,X_4]\\
&+ \operatorname{Cov}[X_1,X_3]\operatorname{Cov}[X_2,X_4]\\
&+ \operatorname{Cov}[X_1,X_4]\operatorname{Cov}[X_2,X_3]\\
&+ \mu_2 \mu_4 \operatorname{Cov}[X_1,X_3] + \mu_1\mu_3 \operatorname{Cov}[X_2,X_4]\\
&+ \mu_2\mu_3 \operatorname{Cov}[X_1,X_4] + \mu_1\mu_4 \operatorname{Cov}[X_2,X_3].
\end{align*}
Hence, we have
\begin{align*}
&\operatorname{Cov}[\langle X_{t+\lag}, \Y_{\ell,m} \rangle_\rangleL \langle X_{t}, \Y_{\ell',m'} \rangle_\rangleL , \langle X_{t'+\lag}, \Y_{\ell,m} \rangle_\rangleL \langle X_{t'}, \Y_{\ell',m'} \rangle_\rangleL ]\\
=&    \operatorname{Cum}[\langle X_{t+\lag}, \Y_{\ell,m} \rangle_\rangleL , \langle X_{t}, \Y_{\ell',m'} \rangle_\rangleL , \langle X_{t'+\lag}, \Y_{\ell,m} \rangle_\rangleL ,\langle X_{t'}, \Y_{\ell',m'} \rangle_\rangleL ] \\
+&  \langle \mu, \Y_{\ell',m'} \rangle_\rangleL  \operatorname{Cum}[\langle X_{t'+\lag}, \Y_{\ell,m} \rangle_\rangleL, \langle X_{t+\lag}, \Y_{\ell,m} \rangle_\rangleL , \langle X_{t}, \Y_{\ell',m'} \rangle_\rangleL  ] \\
+& \langle \mu, \Y_{\ell,m} \rangle_\rangleL  \operatorname{Cum}[\langle X_{t'}, \Y_{\ell',m'} \rangle_\rangleL, \langle X_{t+\lag}, \Y_{\ell,m} \rangle_\rangleL , \langle X_{t}, \Y_{\ell',m'} \rangle_\rangleL  ]\\
+&  \langle \mu, \Y_{\ell',m'} \rangle_\rangleL  \operatorname{Cum}[\langle X_{t+\lag}, \Y_{\ell,m} \rangle_\rangleL  , \langle X_{t'+\lag}, \Y_{\ell,m} \rangle_\rangleL ,\langle X_{t'}, \Y_{\ell',m'} \rangle_\rangleL ] \\
+& \langle \mu, \Y_{\ell,m} \rangle_\rangleL  \operatorname{Cum}[ \langle X_{t}, \Y_{\ell',m'} \rangle_\rangleL , \langle X_{t'+\lag}, \Y_{\ell,m}, \rangle_\rangleL ,\langle X_{t'}, \Y_{\ell',m'} \rangle_\rangleL  ]\\
+&  \operatorname{Cov}[\langle X_{t+\lag}, \Y_{\ell,m} \rangle_\rangleL , \langle X_{t'+\lag}, \Y_{\ell,m} \rangle_\rangleL]  \operatorname{Cov}[\langle X_{t}, \Y_{\ell',m'} \rangle_\rangleL, \langle X_{t'}, \Y_{\ell',m'} \rangle_\rangleL ] \\
+&  \operatorname{Cov}[\langle X_{t+\lag}, \Y_{\ell,m} \rangle_\rangleL , \langle X_{t'}, \Y_{\ell',m'} \rangle_\rangleL]  \operatorname{Cov}[\langle X_{t}, \Y_{\ell',m'} \rangle_\rangleL, \langle X_{t'+\lag}, \Y_{\ell,m} \rangle_\rangleL ] \\
+& \langle \mu, \Y_{\ell',m'} \rangle_\rangleL^2  \operatorname{Cov}[\langle X_{t+\lag}, \Y_{\ell,m} \rangle_\rangleL , \langle X_{t'+\lag}, \Y_{\ell,m} \rangle_\rangleL]    \\
+& \langle \mu, \Y_{\ell,m} \rangle_\rangleL^2  \operatorname{Cov}[\langle X_{t}, \Y_{\ell',m'} \rangle_\rangleL, \langle X_{t'}, \Y_{\ell',m'} \rangle_\rangleL ] \\
+& \langle \mu, \Y_{\ell,m} \rangle_\rangleL  \langle \mu, \Y_{\ell',m'} \rangle_\rangleL  \operatorname{Cov}[\langle X_{t+\lag}, \Y_{\ell,m} \rangle_\rangleL , \langle X_{t'}, \Y_{\ell',m'} \rangle_\rangleL] \\
+&\langle \mu, \Y_{\ell,m} \rangle_\rangleL  \langle \mu, \Y_{\ell',m'} \rangle_\rangleL  \operatorname{Cov}[\langle X_{t}, \Y_{\ell',m'} \rangle_\rangleL, \langle X_{t'+\lag}, \Y_{\ell,m} \rangle_\rangleL ].
\end{align*}
In the following, we show how to bound one of the terms where third-order cumulant appears. We can observe that, by stationarity,
\begin{align*}
    &\frac{1}{n-\lag} \sum_{t=1}^{n-\lag} \sum_{t'=1}^{n-\lag}| \langle \mu, \Y_{\ell',m'} \rangle_\rangleL  \operatorname{Cum}[\langle X_{t+\lag}, \Y_{\ell,m} \rangle_\rangleL  , \langle X_{t'+\lag}, \Y_{\ell,m} \rangle_\rangleL ,\langle X_{t'}, \Y_{\ell',m'} \rangle_\rangleL ]|\\
    =& |\langle \mu, \Y_{\ell',m'} \rangle_\rangleL | \sum_{|\xi| < n-\lag} \left (1-\frac{|\xi|}{n-\lag}\right)|
    \langle \CL_{\xi+\lag,\lag} \Y_{\ell,m} \otimes \Y_{\ell',m'}, \Y_{\ell,m} \rangle_\rangleL|
    \\ \le& |\langle \mu, \Y_{\ell',m'} \rangle_\rangleL|  \sum_{\xi \in \mathbb{Z}} | \langle \CL_{\xi+\lag,\lag} \Y_{\ell,m} \otimes \Y_{\ell',m'}, \Y_{\ell,m}  \rangle_\rangleL |.
\end{align*}
Then, for $\alpha \le p/q$ and using the property in Equation \eqref{eq::covops}, we obtain
\begin{align*}
   &\sum_{\xi \in \mathbb{Z}}  \sum_{\ell,m} \sum_{\ell',m'} \frac{D_\ell^{2\alpha} D_{\ell'}^{2\alpha}}{(1+ \pen D_\ell^2 D_{\ell'}^2)^2} |\langle \mu, \Y_{\ell',m'} \rangle_\rangleL|  | \langle \CL_{\xi+\lag,\lag} \Y_{\ell,m} \otimes \Y_{\ell',m'}, \Y_{\ell,m}  \rangle_\rangleL |\\
   \le& \sum_{\xi\in \mathbb{Z}}  \sum_{\ell,m} \sum_{\ell',m'} D_\ell^{2\alpha} D_{\ell'}^{2\alpha} |\langle \mu, \Y_{\ell',m'} \rangle_\rangleL|  | \langle\CL_{\xi+\lag,\lag} \Y_{\ell,m} \otimes \Y_{\ell',m'}, \Y_{\ell,m}  \rangle_\rangleL |\\ 
   \le& \|\mu\|_{\Hq} \sum_{\xi \in \mathbb{Z}} \|\CHq_{\xi+\lag,\lag}\|_{\operatorname{TR},\Hqprod}
\end{align*}
(possibly up to an arbitrary constant). The last inequality is justified by the fact that, if  $\CL_{h_1,h_2}$ is trace class (and hence compact), it has singular value decomposition
$$
\CL_{h_1,h_2}g = \sum_{j} \lambda_j \langle g, f_{1j} \rangle_\Hqprod  f_{2j}, \qquad g \in \Hqprod,
$$
see \cite[Theorem 4.3.1]{Hsing}. Then,
\begin{align*}
  &\sum_{\ell,m} \sum_{\ell',m'} |\langle \mu, Y_{\ell'm'} \rangle_\Hq||\langle \CL_{h_1,h_2} Y_{\ell,m} \otimes Y_{\ell',m'}, Y_{\ell,m} \rangle_\Hq| \\ 
    =& \sum_j \lambda_j \sum_{\ell,m} \sum_{\ell',m'} |\langle \mu, Y_{\ell'm'} \rangle_\Hq| |\langle Y_{\ell,m}\otimes Y_{\ell',m'}, f_{1j} \rangle_\Hqprod  | |\langle f_{2j}, Y_{\ell,m} \rangle_\Hq| \\
    \le& \sum_j \lambda_j \left ( \sum_{\ell,m} \sum_{\ell',m'} |\langle \mu, Y_{\ell'm'} \rangle_\Hq|^2 |\langle f_{2j}, Y_{\ell,m} \rangle_\Hq|^2 \right)^{1/2} \| f_{1j}\|_\Hqprod\\
    =& \|\mu\|_\Hq \sum_j \lambda_j \|f_{2j}\|_\Hq \| f_{1j}\|_\Hqprod.
 \end{align*}
Since $\{f_{1j}\}$ and $\{f_{2j}\}$ are orthonormal, we have 
\begin{align*}
  \sum_{\ell,m} \sum_{\ell',m'} |\langle \mu, Y_{\ell'm'} \rangle_\Hq||\langle \CL_{h_1,h_2} Y_{\ell,m} \otimes Y_{\ell',m'}, Y_{\ell,m} \rangle_\Hq| \le  \|\mu\|_\Hq \|\CL_{h_1,h_2}\|_{\operatorname{TR}, \Hqprod} .
 \end{align*}
Similarly it holds for the other terms in the sum.

Now, following Proof of Theorem \ref{th::R} in Section \ref{sec:proofs3}, we can bound the terms corresponding to cases 1-2-3 when $t= t'$, $t= t'+\lag$, $t'= t + \lag$. Indeed, for instance, when $t =t'+\lag$ for $h \ne 0$ and $j'=k$,
\begin{align*}
&\mathbb{E}_\U \left[\Y_{\ell,m}(U_{t+\lag,j})\Y_{\ell',m'}(U_{tk})\Y_{\ell,m}(U_{t'+\lag,j'})\Y_{\ell',m'}(U_{t'k'}) \operatorname{Cov}\left[ X_{t+\lag}(U_{t+\lag,j})X_t (U_{tk}),X_{t'+\lag} (U_{t'+\lag,j'}) X_{t'}(U_{t'k'}) | \U \right] \right]\\
=& \int_{\mathbb{S}^2} \int_{\mathbb{S}^2} \int_{\mathbb{S}^2} \Y_{\ell,m}(u) \Y_{\ell',m'}(w) \Y_{\ell,m}(w) \Y_{\ell',m'}(v) \operatorname{Cov}\left[ X_{t+\lag}(u)X_t(w), X_{t}(w)X_{t-\lag}(v) \right] du dv dw ,
\end{align*}
and
\begin{align*}
    |\operatorname{Cov}[X_{t+\lag}(u) X_t(w), X_t(w) X_{t-\lag}(v)]| &\le  \left ( \operatorname{Var} \left [ X_{t+\lag}(u)  X_t(w) \right] \right)^{1/2} \left( \operatorname{Var}\left[  X_t(w) X_{t-\lag}(v) \right] \right)^{1/2}\\
    &\le \left ( \mathbb{E}[ | X_{t+\lag}(u)|^2  |X_t(w)|^2 ] \right)^{1/2}   \left (  \mathbb{E}[|X_t(w)|^2 |X_{t-\lag}(u)|^2 ] \right)^{1/2} \\
    &\le \left (\mathbb{E}  |X_{t+\lag} (u)|^4 \right)^{1/4} \left (\mathbb{E}|X_t(w)|^4 \right) ^{1/2} \left ( \mathbb{E} |X_{t-\lag}(v)|^4 \right)^{1/4}.
\end{align*}
Thus,
\begin{align*}
&\left |\sum_{m,m'} \int_{\mathbb{S}^2} \int_{\mathbb{S}^2} \int_{\mathbb{S}^2} \operatorname{Cov}[X_{t+\lag}(u) X_t(w), X_t(w) X_{t-\lag}(v)] \Y_{\ell,m} (u) \Y_{\ell',m'} (w) \Y_{\ell,m} (w)  \Y_{\ell',m'} (v)  dudvdw \right|\\
\le&  \int_{\mathbb{S}^2} \int_{\mathbb{S}^2} \int_{\mathbb{S}^2} \left | \operatorname{Cov} \left [ X_{t+\lag}(u) \, X_t(w), X_t(w) X_{t-\lag}(v)  \right] \right | |\Y_{\ell,m} (u)| |\Y_{\ell',m'} (w)| |\Y_{\ell,m} (w)|  |\Y_{\ell',m'} (v)|  dudvdw \\
   \le& B' (4\pi) (2\ell+1)(2\ell'+1) \mathbb{E}\|X_0\|_\Hq^4 ,
\end{align*}
for all $t \in \mathbb{Z}$, again by applying Lemma \ref{lemma::sup}, for some $B'>0$.
In conclusion, for $\lag =0$ we have
\begin{align*}
    &\sum_{m,m'} \mathbb{E}_\U\!\left[ \operatorname{Var}\left [\sum_{t=1}^n \sum_{1 \le j \ne k \le r}  X_t(U_{tj})X_t(U_{tk}) \Y_{\ell,m}(U_{tj})\Y_{\ell',m'}(U_{tk})\Bigg| \U \right]\right]\\
   \le& \frac{nr^2(r-1)^2}{(4\pi)^4} \Bigg \{ \sum_{\xi_1,\xi_2,\xi_3 \in \mathbb{Z}} \|\CHq_{\xi_1,\xi_2,\xi_3}\|_{\operatorname{TR},\Hqprod} + 4\|\mu\|_\Hq \sum_{\xi_1,\xi_2 \in \mathbb{Z}} \|\CHq_{\xi_1,\xi_2}\|_{\operatorname{TR},\Hqprod } \\
   +&  2 \left (\sum_{\xi \in \mathbb{Z}} \|\CHq_\xi \|_{\operatorname{TR},\Hq}\right)^2  + 4 \|\mu\|^2_\Hq \sum_{\xi \in \mathbb{Z}} \|\CHq_\xi\|_{\operatorname{TR},\Hq} \Bigg\}\\
   +& B' \frac{n(2r(r-1) + 4r(r-1)(r-2))}{(4\pi)^2}(2\ell+1)(2\ell'+1) \mathbb{E}\|X_0\|^4_\Hq;
\end{align*}
while, for $\lag\ne0$,
\begin{align*}
&\sum_{m,m'} \mathbb{E}_\U\!\left[ \operatorname{Var}\left [\sum_{t=1}^{n-\lag} \sum_{j=1}^r \sum_{k=1}^r X_{t+\lag}(U_{t+\lag,j}) X_t(U_{tk}) \Y_{\ell,m}(U_{t+\lag,j})\Y_{\ell',m'}(U_{tk})\Bigg| \U \right]\right]\\
    %=& \sum_{1 \le j \ne k \le r} \sum_{1 \le j' \ne k' \le r} \mathbb{E}_U[\Y_{\ell,m}(U_{1j})\Y_{\ell',m'}(U_{1k}) \Y_{\ell,m}(U_{1j'})\Y_{\ell',m'}(U_{1k'})  \mathbb{E}[X_1(U_{1j})X_1(U_{1k})X_1(U_{1j'})X_1(U_{1k'})|U]\,]\\
       \le&\frac{(n-\lag) r^4}{(4\pi)^4} \Bigg \{ \sum_{\xi_1,\xi_2,\xi_3 \in \mathbb{Z}} \|\CHq_{\xi_1,\xi_2,\xi_3}\|_{\operatorname{TR},\Hqprod} + 4\|\mu\|_\Hq \sum_{\xi_1,\xi_2 \in \mathbb{Z}} \|\CHq_{\xi_1,\xi_2}\|_{\operatorname{TR},\Hqprod } \\
   +&  2 \left (\sum_{\xi \in \mathbb{Z}} \|\CHq_\xi \|_{\operatorname{TR},\Hq}\right)^2  + 4 \|\mu\|^2_\Hq \sum_{\xi \in \mathbb{Z}} \|\CHq_\xi\|_{\operatorname{TR},\Hq} \Bigg\}\\
   +&B' \frac{3(n-\lag)(r + 7r(r-1) + 6r(r-1)(r-2))}{(4\pi)^2}(2\ell+1)(2\ell'+1) \mathbb{E}\|X_0\|^4_\Hq.
\end{align*}

Now, let us prove 3.
The first step is to obtain a useful analytic form for $R_{\lag;\pen} - \Tilde{R}_{\lag;\pen}$, by observing that
\begin{align*}
R_{\lag;\pen} - \Tilde{R}_{\lag;\pen} &= R_{\lag;\pen} - \bar{R}_{\lag;\pen} +  (\bar{F}''_{  \lag;\pen})^{-1}F'_{ \lag;\pen}(\bar{R}_{\lag;\pen}) \\
&= (\bar{F}''_{  \lag;\pen})^{-1} \left [\bar{F}''_{  \lag;\pen}(R_{\lag;\pen} - \bar{R}_{\lag;\pen}) + F'_{ \lag;\pen}(\bar{R}_{\lag;\pen})\right].
\end{align*}
Since $R_{\lag;\pen}$ minimizes $F'_{ \lag;\pen}$, it holds $F'_{ \lag;\pen}(R_{\lag;\pen}) = 0$ (see \cite[Theorem 3.6.3]{Hsing}).
Then, for any $g \in \Hprod$,
\begin{align*}
     \left [\bar{F}''_{  \lag;\pen}(R_{\lag;\pen} - \bar{R}_{\lag;\pen}) + F'_{ \lag;\pen}(\bar{R}_{\lag;\pen})\right] g&=  \left [\bar{F}''_{  \lag;\pen}(R_{\lag;\pen} - \bar{R}_{\lag;\pen}) + F'_{ \lag;\pen}(\bar{R}_{\lag;\pen}) - F'_{ \lag;\pen}(R_{\lag;\pen})\right]g\\
    &=\left[\bar{F}''_{  \lag; 0}(R_{\lag;\pen} - \bar{R}_{\lag;\pen}) - F''_{ \lag; 0}(R_{\lag;\pen} - \bar{R}_{\lag;\pen})  \right]g.
\end{align*}
where we used Lemma \ref{lemma::1-cov}; in other words,
$$
R_{\lag;\pen} - \Tilde{R}_{\lag;\pen}  = (\bar{F}''_{  \lag;\pen})^{-1}  \left[\bar{F}''_{  \lag; 0}(R_{\lag;\pen} - \bar{R}_{\lag;\pen}) - F''_{ \lag; 0}(R_{\lag;\pen} - \bar{R}_{\lag;\pen})  \right].
$$
Now, the same argument that leads to \eqref{eq::tilde-bar-cov-time2} gives us
$$
\| R_{\lag;\pen} - \Tilde{R}_{\lag;\pen} \|^2_\alpha = \frac14 \sum_{\ell,m} \sum_{\ell',m'} \frac{D_\ell^{2\alpha} D_{\ell'}^{2\alpha}}{(1+ \pen D_\ell^2D_{\ell'}^2)^2} \left ( \left[\bar{F}''_{  \lag; 0}(R_{\lag;\pen} - \bar{R}_{\lag;\pen}) - F''_{ \lag; 0}(R_{\lag;\pen} - \bar{R}_{\lag;\pen})  \right]\Y_{\ell,m} \otimes \Y_{\ell',m'}\right)^2,
$$
with
\begin{align*}
&\left[\bar{F}''_{  \lag; 0}(R_{\lag;\pen} - \bar{R}_{\lag;\pen}) - F''_{ \lag; 0}(R_{\lag;\pen} - \bar{R}_{\lag;\pen})  \right]\Y_{\ell,m} \otimes \Y_{\ell',m'} \\=& 2\langle R_{\lag;\pen} - \bar{R}_{\lag;\pen}, \Y_{\ell,m} \otimes \Y_{\ell',m'}\rangle_\rangleLL 
\\-& \frac{2(4\pi)^2}{\num} \sum_{t=1}^{n-\lag} \sumjk (R_{\lag;\pen}(U_{t+\lag,j}, U_{tk}) - \bar{R}_{\lag;\pen}(U_{t+\lag,j}, U_{tk})) \Y_{\ell,m}(U_{t+\lag,j}) \Y_{\ell',m'}(U_{tk}).
\end{align*}
Now, $R_{\lag;\pen} - \bar{R}_{\lag;\pen} = \sum_{\ell,m} \sum_{\ell',m'} h_{\ell,\ell',m,m'} \Y_{\ell,m} \Y_{\ell',m'}$. Then,
\begin{align*}
    \| R_{\lag;\pen} - \Tilde{R}_{\lag;\pen} \|^2_\alpha =  \sum_{\ell_1,m_1}  \sum_{\ell_2,m_2} \frac{D_{\ell_1}^{2\alpha}D_{\ell_2}^{2\alpha} }{(1+ \pen D_{\ell_1}^2 D_{\ell_2}^2)^2} \left ( \sum_{\ell_3,m_3} \sum_{\ell_4, m_4} h_{\ell_3,\ell_4,m_3,m_4} V_{\ell_{1:4},m_{1:4}} \right)^2,
\end{align*}
where $$
V_{\ell_{1:4},m_{1:4}} = \delta_{\ell_1}^{\ell_3} \delta_{m_1}^{m_3} \delta_{\ell_2}^{\ell_4} \delta_{m_2}^{m_4} - \frac{(4\pi)^2}{\num} \sum_{t=1}^{n-\lag} \sumjk \Y_{\ell_1,m_1} (U_{t+\lag,j} ) \Y_{\ell_2,m_2} (U_{tk} ) \Y_{\ell_3,m_3} (U_{t+\lag,j} ) \Y_{\ell_4,m_4} (U_{tk} ).
$$
By applying the Cauchy–Schwarz inequality for arbitrary $\theta \in (1/p, 1]$, we obtain
$$
\| R_{\lag;\pen} - \Tilde{R}_{\lag;\pen} \|^2_\alpha \le \| R_\pen - \bar{R}_\pen \|^2_\theta \sum_{\ell_1,m_1} \sum_{\ell_2,m_2} \frac{D_{\ell_1}^{2\alpha}D_{\ell_2}^{2\alpha} }{(1+ \pen D_{\ell_1}^2 D_{\ell_2}^2)^2} \sum_{\ell_3,m_3} \sum_{\ell_4,m_4} D_{\ell_3}^{-2\theta} D_{\ell_4}^{-2\theta} V^2_{\ell_{1:4},m_{1:4}}.
$$
It is readily seen that $\mathbb{E}[V_{\ell_{1:4},m_{1:4}}]=0$ and 
\begin{align*}
\mathbb{E}[V^2_{\ell_{1:4}, m_{1:4}}]  %&=\operatorname{Var}[V_{\ell_{1:4},m_{1:4}}] \\
%&= \frac{(4\pi)^4}{nr^2(r-1)^2}  \operatorname{Var}\left [  \sum_{1\le j \ne k \le r} \Y_{\ell_1,m_1} (U_{ij} ) \Y_{\ell_2,m_2} (U_{ik} ) \Y_{\ell_3,m_3} (U_{ij} ) \Y_{\ell_4,m_4} (U_{ik} )\right] \\
&= \frac{(4\pi)^4}{\num^2} \mathbb{E}\left( \sum_{t=1}^{n-\lag} \sumjk \Y_{\ell_1,m_1} (U_{t+\lag,j} ) \Y_{\ell_2,m_2} (U_{tk} ) \Y_{\ell_3,m_3} (U_{t+\lag,j} ) \Y_{\ell_4,m_4} (U_{tk} ) \right)^2 \\&- \delta_{\ell_1}^{\ell_3} \delta_{m_1}^{m_3} \delta_{\ell_2}^{\ell_4} \delta_{m_2}^{m_4}.
%&\le \frac{4\pi}{nr}  \int_{\mathbb{S}^2}\sum_{m} |\Y_{\ell,m} (u)|^2  \sum_{m'} |\Y_{\ell',m'} (u)|^2 du\\
\end{align*}
Thus, it is possible to show that
$$
\sum_{m_1,m_2,m_4,m_4}\mathbb{E}[V^2_{\ell_{1:4}, m_{1:4}}] \le D_{\ell_1}^{2\theta} D_{\ell_2}^{2\theta} (2\ell_1+1)(2\ell_2 + 1)(2\ell_3 + 1)(2\ell_4 + 1)\, O\!\left ( \frac{1}{nr}\right),
$$
and hence
$$
\sum_{\ell_1,m_1} \sum_{\ell_2,m_2} \frac{D_{\ell_1}^{2\alpha}D_{\ell_2}^{2\alpha} }{(1+ \pen D_{\ell_1}^2 D_{\ell_2}^2)^2} \sum_{\ell_3,m_3} \sum_{\ell_4,m_4} D_{\ell_3}^{-2\theta} D_{\ell_4}^{-2\theta} \mathbb{E}[V^2_{\ell_{1:4},m_{1:4}}] = O\!\left (  \frac{\log(1/\pen)}{nr \pen^{\alpha+\theta+1/p}}\right) .
$$
The rest of the proof follows exactly as in Proof of Theorem \ref{th::R} in Section \ref{sec:proofs3}.
\end{proof}

\bibliographystyle{imsart-number}
\bibliography{mybiblio}

\begin{thebibliography}{63}
% BibTex style file: imsart-number.bst, 2017-11-03
% Default style options (sort=1,type=number).
% Used options (sort=1,type=number).

\bibitem{abbasi2018sparse}
\begin{binproceedings}[author]
\bauthor{\bsnm{Abbasi},~\bfnm{Hameer}\binits{H.}}
(\byear{2018}).
\btitle{Sparse: a more modern sparse array library}.
In \bbooktitle{Proceedings of the 17th Python in Science Conference}
\bpages{27--30}.
\end{binproceedings}
\endbibitem

\bibitem{argo2000argo}
\begin{barticle}[author]
\bauthor{\bsnm{Argo}}
(\byear{2000}).
\btitle{Argo float data and metadata from Global Data Assembly Centre (Argo
  GDAC)}.
\bjournal{SEANOE}.
\end{barticle}
\endbibitem

\bibitem{aronszajn1950theory}
\begin{barticle}[author]
\bauthor{\bsnm{Aronszajn},~\bfnm{Nachman}\binits{N.}}
(\byear{1950}).
\btitle{Theory of reproducing kernels}.
\bjournal{Transactions of the American mathematical society}
\bvolume{68}
\bpages{337--404}.
\end{barticle}
\endbibitem

\bibitem{badoual2018periodic}
\begin{barticle}[author]
\bauthor{\bsnm{Badoual},~\bfnm{Ana{\"\i}s}\binits{A.}},
  \bauthor{\bsnm{Fageot},~\bfnm{Julien}\binits{J.}} \AND
  \bauthor{\bsnm{Unser},~\bfnm{Michael}\binits{M.}}
(\byear{2018}).
\btitle{Periodic splines and Gaussian processes for the resolution of linear
  inverse problems}.
\bjournal{IEEE Transactions on Signal Processing}
\bvolume{66}
\bpages{6047--6061}.
\end{barticle}
\endbibitem

\bibitem{bergporcu}
\begin{barticle}[author]
\bauthor{\bsnm{Berg},~\bfnm{C.}\binits{C.}} \AND
  \bauthor{\bsnm{Porcu},~\bfnm{E.}\binits{E.}}
(\byear{2017}).
\btitle{From {S}choenberg coefficients to {S}choenberg functions}.
\bjournal{Constructive Approximation}
\bvolume{45}
\bpages{217--241}.
\end{barticle}
\endbibitem

\bibitem{berlinet2011reproducing}
\begin{bbook}[author]
\bauthor{\bsnm{Berlinet},~\bfnm{Alain}\binits{A.}} \AND
  \bauthor{\bsnm{Thomas-Agnan},~\bfnm{Christine}\binits{C.}}
(\byear{2011}).
\btitle{Reproducing kernel Hilbert spaces in probability and statistics}.
\bpublisher{Springer Science \& Business Media}.
\end{bbook}
\endbibitem

\bibitem{caiYuan2}
\begin{barticle}[author]
\bauthor{\bsnm{Cai},~\bfnm{T.~T.}\binits{T.~T.}} \AND
  \bauthor{\bsnm{Yuan},~\bfnm{M.}\binits{M.}}
(\byear{2010}).
\btitle{Nonparametric covariance function estimation for functional and
  longitudinal data}.
\bjournal{Technical report}.
\end{barticle}
\endbibitem

\bibitem{caiYuan1}
\begin{barticle}[author]
\bauthor{\bsnm{Cai},~\bfnm{T.~Tony}\binits{T.~T.}} \AND
  \bauthor{\bsnm{Yuan},~\bfnm{Ming}\binits{M.}}
(\byear{2011}).
\btitle{{Optimal estimation of the mean function based on discretely sampled
  functional data: Phase transition}}.
\bjournal{The Annals of Statistics}
\bvolume{39}
\bpages{2330--2355}.
\bdoi{10.1214/11-AOS898}
\end{barticle}
\endbibitem

\bibitem{SPHARMA}
\begin{barticle}[author]
\bauthor{\bsnm{Caponera},~\bfnm{A.}\binits{A.}}
(\byear{2021}).
\btitle{{SPHARMA} approximations for stationary functional time series on the
  sphere}.
\bjournal{Statistical Inference for Stochastic Processes}
\bvolume{24}
\bpages{609--634}.
\end{barticle}
\endbibitem

\bibitem{cdv19}
\begin{barticle}[author]
\bauthor{\bsnm{Caponera},~\bfnm{Alessia}\binits{A.}},
  \bauthor{\bsnm{Durastanti},~\bfnm{Claudio}\binits{C.}} \AND
  \bauthor{\bsnm{Vidotto},~\bfnm{Anna}\binits{A.}}
(\byear{2021}).
\btitle{{LASSO} estimation for spherical autoregressive processes}.
\bjournal{Stochastic Processes and their Applications}
\bvolume{137}
\bpages{167--199}.
\end{barticle}
\endbibitem

\bibitem{cm19}
\begin{barticle}[author]
\bauthor{\bsnm{Caponera},~\bfnm{Alessia}\binits{A.}} \AND
  \bauthor{\bsnm{Marinucci},~\bfnm{Domenico}\binits{D.}}
(\byear{2021}).
\btitle{{Asymptotics for spherical functional autoregressions}}.
\bjournal{The Annals of Statistics}
\bvolume{49}
\bpages{346--369}.
\end{barticle}
\endbibitem

\bibitem{castruccio2013}
\begin{barticle}[author]
\bauthor{\bsnm{Castruccio},~\bfnm{Stefano}\binits{S.}} \AND
  \bauthor{\bsnm{Stein},~\bfnm{Michael~L.}\binits{M.~L.}}
(\byear{2013}).
\btitle{Global space-time models for climate ensembles}.
\bjournal{Annals of Applied Statistics}
\bvolume{7}
\bpages{1593-1611}.
\end{barticle}
\endbibitem

\bibitem{dello2020discovery}
\begin{barticle}[author]
\bauthor{\bsnm{Dello~Schiavo},~\bfnm{Lorenzo}\binits{L.}},
  \bauthor{\bsnm{Kopfer},~\bfnm{Eva}\binits{E.}} \AND
  \bauthor{\bsnm{Sturm},~\bfnm{Karl-Theodor}\binits{K.-T.}}
(\byear{2020}).
\btitle{A Discovery Tour in Random Riemannian Geometry}.
\bjournal{arXiv e-prints}
\bpages{arXiv--2012}.
\end{barticle}
\endbibitem

\bibitem{fageot2017besov}
\begin{barticle}[author]
\bauthor{\bsnm{Fageot},~\bfnm{Julien}\binits{J.}},
  \bauthor{\bsnm{Unser},~\bfnm{Michael}\binits{M.}} \AND
  \bauthor{\bsnm{Ward},~\bfnm{John~Paul}\binits{J.~P.}}
(\byear{2017}).
\btitle{On the Besov regularity of periodic L{\'e}vy noises}.
\bjournal{Applied and Computational Harmonic Analysis}
\bvolume{42}
\bpages{21--36}.
\end{barticle}
\endbibitem

\bibitem{gneiting}
\begin{barticle}[author]
\bauthor{\bsnm{Gneiting},~\bfnm{Tilmann}\binits{T.}}
(\byear{2002}).
\btitle{Nonseparable, Stationary Covariance Functions for Space-Time Data}.
\bjournal{Journal of the American Statistical Association}
\bvolume{97}
\bpages{590-600}.
\end{barticle}
\endbibitem

\bibitem{gorski2005healpix}
\begin{barticle}[author]
\bauthor{\bsnm{Gorski},~\bfnm{Krzysztof~M}\binits{K.~M.}},
  \bauthor{\bsnm{Hivon},~\bfnm{Eric}\binits{E.}},
  \bauthor{\bsnm{Banday},~\bfnm{Anthony~J}\binits{A.~J.}},
  \bauthor{\bsnm{Wandelt},~\bfnm{Benjamin~D}\binits{B.~D.}},
  \bauthor{\bsnm{Hansen},~\bfnm{Frode~K}\binits{F.~K.}},
  \bauthor{\bsnm{Reinecke},~\bfnm{Mstvos}\binits{M.}} \AND
  \bauthor{\bsnm{Bartelmann},~\bfnm{Matthia}\binits{M.}}
(\byear{2005}).
\btitle{HEALPix: A framework for high-resolution discretization and fast
  analysis of data distributed on the sphere}.
\bjournal{The Astrophysical Journal}
\bvolume{622}
\bpages{759}.
\end{barticle}
\endbibitem

\bibitem{green1993nonparametric}
\begin{bbook}[author]
\bauthor{\bsnm{Green},~\bfnm{Peter~J}\binits{P.~J.}} \AND
  \bauthor{\bsnm{Silverman},~\bfnm{Bernard~W}\binits{B.~W.}}
(\byear{1993}).
\btitle{Nonparametric regression and generalized linear models: a roughness
  penalty approach}.
\bpublisher{Chapman and Hall/CRC}.
\end{bbook}
\endbibitem

\bibitem{gupta2018continuous}
\begin{barticle}[author]
\bauthor{\bsnm{Gupta},~\bfnm{Harshit}\binits{H.}},
  \bauthor{\bsnm{Fageot},~\bfnm{Julien}\binits{J.}} \AND
  \bauthor{\bsnm{Unser},~\bfnm{Michael}\binits{M.}}
(\byear{2018}).
\btitle{Continuous-domain solutions of linear inverse problems with Tikhonov
  versus generalized TV regularization}.
\bjournal{IEEE Transactions on Signal Processing}
\bvolume{66}
\bpages{4670--4684}.
\end{barticle}
\endbibitem

\bibitem{hall2006}
\begin{barticle}[author]
\bauthor{\bsnm{Hall},~\bfnm{Peter}\binits{P.}},
  \bauthor{\bsnm{M{{\"u}}ller},~\bfnm{H.~G.}\binits{H.~G.}} \AND
  \bauthor{\bsnm{Wang},~\bfnm{J.~L.}\binits{J.~L.}}
(\byear{2006}).
\btitle{Properties of principal component methods for functional and
  longitudinal data analysis}.
\bjournal{The Annals of Statistics}
\bvolume{34}
\bpages{1493-1517}.
\end{barticle}
\endbibitem

\bibitem{Hoerl1962ridge}
\begin{barticle}[author]
\bauthor{\bsnm{Hoerl},~\bfnm{A.~E.}\binits{A.~E.}}
(\byear{1962}).
\btitle{Application of the Ridge Analysis to Regression Problems}.
\bjournal{Chemical Engineering Progress}
\bvolume{58}
\bpages{54--59}.
\end{barticle}
\endbibitem

\bibitem{Hsing}
\begin{bbook}[author]
\bauthor{\bsnm{Hsing},~\bfnm{Tailen}\binits{T.}} \AND
  \bauthor{\bsnm{Eubank},~\bfnm{Randall}\binits{R.}}
(\byear{2015}).
\btitle{Theoretical Foundations of Functional Data Analysis, with an
  Introduction to Linear Operators}.
\bpublisher{John Wiley \& Sons}.
\end{bbook}
\endbibitem

\bibitem{ito2014inverse}
\begin{bbook}[author]
\bauthor{\bsnm{Ito},~\bfnm{Kazufumi}\binits{K.}} \AND
  \bauthor{\bsnm{Jin},~\bfnm{Bangti}\binits{B.}}
(\byear{2014}).
\btitle{Inverse problems: Tikhonov theory and algorithms}
\bvolume{22}.
\bpublisher{World Scientific}.
\end{bbook}
\endbibitem

\bibitem{JJ:15}
\begin{barticle}[author]
\bauthor{\bsnm{Jeong},~\bfnm{Jaehong}\binits{J.}} \AND
  \bauthor{\bsnm{Jun},~\bfnm{Mikyoung}\binits{M.}}
(\byear{2015}).
\btitle{A class of {M}at\'{e}rn-like covariance functions for smooth processes
  on a sphere}.
\bjournal{Spatial Statistics}
\bvolume{11}
\bpages{1-18}.
\end{barticle}
\endbibitem

\bibitem{jun}
\begin{barticle}[author]
\bauthor{\bsnm{Jun},~\bfnm{M.}\binits{M.}}
(\byear{2014}).
\btitle{Mat\'{e}rn-based nonstationary cross-covariance models for global
  processes}.
\bjournal{Journal of Multivariate Analysis}
\bvolume{128}
\bpages{134--146}.
\end{barticle}
\endbibitem

\bibitem{kimeldorf1971some}
\begin{barticle}[author]
\bauthor{\bsnm{Kimeldorf},~\bfnm{George}\binits{G.}} \AND
  \bauthor{\bsnm{Wahba},~\bfnm{Grace}\binits{G.}}
(\byear{1971}).
\btitle{Some results on Tchebycheffian spline functions}.
\bjournal{Journal of mathematical analysis and applications}
\bvolume{33}
\bpages{82--95}.
\end{barticle}
\endbibitem

\bibitem{kuusela2018locally}
\begin{barticle}[author]
\bauthor{\bsnm{Kuusela},~\bfnm{Mikael}\binits{M.}} \AND
  \bauthor{\bsnm{Stein},~\bfnm{Michael~L}\binits{M.~L.}}
(\byear{2018}).
\btitle{Locally stationary spatio-temporal interpolation of Argo profiling
  float data}.
\bjournal{Proceedings of the Royal Society A}
\bvolume{474}
\bpages{20180400}.
\end{barticle}
\endbibitem

\bibitem{li2010}
\begin{barticle}[author]
\bauthor{\bsnm{Li},~\bfnm{Yehua}\binits{Y.}} \AND
  \bauthor{\bsnm{Hsing},~\bfnm{Tailen}\binits{T.}}
(\byear{2010}).
\btitle{Uniform convergence rates for nonparametric regression and principal
  component analysis in functional/longitudinal data}.
\bjournal{The Annals of Statistics}
\bvolume{38}
\bpages{3321--3351}.
\end{barticle}
\endbibitem

\bibitem{linthesis}
\begin{bphdthesis}[author]
\bauthor{\bsnm{Lin},~\bfnm{Yi}\binits{Y.}}
(\byear{1998}).
\btitle{Tensor product space {ANOVA} models in multivariate function
  estimation},
\btype{PhD thesis},
\bpublisher{University of Pennsylvania}.
\end{bphdthesis}
\endbibitem

\bibitem{Lin2000}
\begin{barticle}[author]
\bauthor{\bsnm{Lin},~\bfnm{Yi}\binits{Y.}}
(\byear{2000}).
\btitle{{Tensor product space ANOVA models}}.
\bjournal{The Annals of Statistics}
\bvolume{28}
\bpages{734 -- 755}.
\end{barticle}
\endbibitem

\bibitem{MP:11}
\begin{bbook}[author]
\bauthor{\bsnm{Marinucci},~\bfnm{D.}\binits{D.}} \AND
  \bauthor{\bsnm{Peccati},~\bfnm{G.}\binits{G.}}
(\byear{2011}).
\btitle{Random Fields on the Sphere: Representation, Limit Theorems and
  Cosmological Applications}.
\bseries{London Mathematical Society Lecture Note Series}.
\bpublisher{Cambridge University Press}.
\end{bbook}
\endbibitem

\bibitem{michel2012lectures}
\begin{bbook}[author]
\bauthor{\bsnm{Michel},~\bfnm{Volker}\binits{V.}}
(\byear{2012}).
\btitle{Lectures on constructive approximation: Fourier, spline, and wavelet
  methods on the real line, the sphere, and the ball}.
\bpublisher{Springer Science \& Business Media}.
\end{bbook}
\endbibitem

\bibitem{odenreddy}
\begin{bbook}[author]
\bauthor{\bsnm{Oden},~\bfnm{J.~T.}\binits{J.~T.}} \AND
  \bauthor{\bsnm{Reddy},~\bfnm{J.~N.}\binits{J.~N.}}
(\byear{1976}).
\btitle{An Introduction to the Mathematical Theory of Finite Elements}.
\bpublisher{Wiley, New York}.
\end{bbook}
\endbibitem

\bibitem{Panaretos2}
\begin{barticle}[author]
\bauthor{\bsnm{Panaretos},~\bfnm{Victor~M}\binits{V.~M.}} \AND
  \bauthor{\bsnm{Tavakoli},~\bfnm{Shahin}\binits{S.}}
(\byear{2013}).
\btitle{Fourier analysis of stationary time series in function space}.
\bjournal{The Annals of Statistics}
\bvolume{41}
\bpages{568--603}.
\end{barticle}
\endbibitem

\bibitem{Panaretos}
\begin{barticle}[author]
\bauthor{\bsnm{Panaretos},~\bfnm{Victor~M}\binits{V.~M.}} \AND
  \bauthor{\bsnm{Tavakoli},~\bfnm{Shahin}\binits{S.}}
(\byear{2013}).
\btitle{Cram{\'e}r--{K}arhunen--{L}o{\`e}ve representation and harmonic
  principal component analysis of functional time series}.
\bjournal{Stochastic Processes and their Applications}
\bvolume{123}
\bpages{2779--2807}.
\end{barticle}
\endbibitem

\bibitem{Porcu18}
\begin{barticle}[author]
\bauthor{\bsnm{Porcu},~\bfnm{E.}\binits{E.}},
  \bauthor{\bsnm{Alegr\'ia},~\bfnm{A.}\binits{A.}} \AND
  \bauthor{\bsnm{Furrer},~\bfnm{R.}\binits{R.}}
(\byear{2018}).
\btitle{Modeling temporally evolving and spatially globally dependent data}.
\bjournal{International Statistical Review}
\bvolume{86}
\bpages{344--377}.
\end{barticle}
\endbibitem

\bibitem{porcu}
\begin{barticle}[author]
\bauthor{\bsnm{Porcu},~\bfnm{E.}\binits{E.}},
  \bauthor{\bsnm{Bevilacqua},~\bfnm{M.}\binits{M.}} \AND
  \bauthor{\bsnm{Genton},~\bfnm{M.~G.}\binits{M.~G.}}
(\byear{2016}).
\btitle{Spatio-temporal covariance and cross-covariance functions of the great
  circle distance on a sphere}.
\bjournal{Journal of the American Statistical Association}
\bvolume{111}
\bpages{888--898}.
\end{barticle}
\endbibitem

\bibitem{axial}
\begin{barticle}[author]
\bauthor{\bsnm{Porcu},~\bfnm{Emilio}\binits{E.}},
  \bauthor{\bsnm{Castruccio},~\bfnm{Stefano}\binits{S.}},
  \bauthor{\bsnm{Alegria},~\bfnm{Alfredo}\binits{A.}} \AND
  \bauthor{\bsnm{Crippa},~\bfnm{Paola}\binits{P.}}
(\byear{2019}).
\btitle{Axially symmetric models for global data: A journey between
  geostatistics and stochastic generators: Axially symmetric models}.
\bjournal{Environmetrics}
\bvolume{30}
\bpages{e2555}.
\bdoi{10.1002/env.2555}
\end{barticle}
\endbibitem

\bibitem{PFN:20}
\begin{barticle}[author]
\bauthor{\bsnm{Porcu},~\bfnm{Emilio}\binits{E.}},
  \bauthor{\bsnm{Furrer},~\bfnm{Reinhard}\binits{R.}} \AND
  \bauthor{\bsnm{Nychka},~\bfnm{Douglas}\binits{D.}}
(\byear{2020}).
\btitle{30 years of space-time covariance functions}.
\bjournal{WIREs Computational Statistics}
\bpages{e1512}.
\end{barticle}
\endbibitem

\bibitem{RamSIl}
\begin{bbook}[author]
\bauthor{\bsnm{Ramsay},~\bfnm{J.~O.}\binits{J.~O.}} \AND
  \bauthor{\bsnm{Silverman},~\bfnm{B.~W.}\binits{B.~W.}}
(\byear{2005}).
\btitle{Functional Data Analysis}.
\bpublisher{Springer, New-York}.
\end{bbook}
\endbibitem

\bibitem{rocklin2015dask}
\begin{binproceedings}[author]
\bauthor{\bsnm{Rocklin},~\bfnm{Matthew}\binits{M.}}
(\byear{2015}).
\btitle{Dask: Parallel computation with blocked algorithms and task
  scheduling}.
In \bbooktitle{Proceedings of the 14th python in science conference}
\bvolume{130}
\bpages{136}.
\bpublisher{Citeseer}.
\end{binproceedings}
\endbibitem

\bibitem{rubin2020}
\begin{barticle}[author]
\bauthor{\bsnm{Rub{\'\i}n},~\bfnm{Tom{\'a}{\v{s}}}\binits{T.}} \AND
  \bauthor{\bsnm{Panaretos},~\bfnm{Victor~M}\binits{V.~M.}}
(\byear{2020}).
\btitle{Sparsely observed functional time series: estimation and prediction}.
\bjournal{Electronic Journal of Statistics}
\bvolume{14}
\bpages{1137--1210}.
\end{barticle}
\endbibitem

\bibitem{saleh2019theory}
\begin{bbook}[author]
\bauthor{\bsnm{Saleh},~\bfnm{AK~Md~Ehsanes}\binits{A.~M.~E.}},
  \bauthor{\bsnm{Arashi},~\bfnm{Mohammad}\binits{M.}} \AND
  \bauthor{\bsnm{Kibria},~\bfnm{BM~Golam}\binits{B.~G.}}
(\byear{2019}).
\btitle{Theory of ridge regression estimation with applications}
\bvolume{285}.
\bpublisher{John Wiley \& Sons}.
\end{bbook}
\endbibitem

\bibitem{scholkopf2001generalized}
\begin{binproceedings}[author]
\bauthor{\bsnm{Sch{\"o}lkopf},~\bfnm{Bernhard}\binits{B.}},
  \bauthor{\bsnm{Herbrich},~\bfnm{Ralf}\binits{R.}} \AND
  \bauthor{\bsnm{Smola},~\bfnm{Alex~J}\binits{A.~J.}}
(\byear{2001}).
\btitle{A generalized representer theorem}.
In \bbooktitle{International conference on computational learning theory}
\bpages{416--426}.
\bpublisher{Springer}.
\end{binproceedings}
\endbibitem

\bibitem{scholkopf2018learning}
\begin{bbook}[author]
\bauthor{\bsnm{Scholkopf},~\bfnm{Bernhard}\binits{B.}} \AND
  \bauthor{\bsnm{Smola},~\bfnm{Alexander~J}\binits{A.~J.}}
(\byear{2018}).
\btitle{Learning with kernels: support vector machines, regularization,
  optimization, and beyond}.
\bpublisher{Adaptive Computation and Machine Learning Series}.
\end{bbook}
\endbibitem

\bibitem{shaikh2015efficient}
\begin{binproceedings}[author]
\bauthor{\bsnm{Shaikh},~\bfnm{Md~Abu~Hanif}\binits{M.~A.~H.}} \AND
  \bauthor{\bsnm{Hasan},~\bfnm{KM~Azharul}\binits{K.~A.}}
(\byear{2015}).
\btitle{Efficient storage scheme for n-dimensional sparse array: GCRS/GCCS}.
In \bbooktitle{2015 International Conference on High Performance Computing \&
  Simulation (HPCS)}
\bpages{137--142}.
\bpublisher{IEEE}.
\end{binproceedings}
\endbibitem

\bibitem{simeoni2020functional}
\begin{btechreport}[author]
\bauthor{\bsnm{Simeoni},~\bfnm{Matthieu}\binits{M.}}
(\byear{2020}).
\btitle{Functional Inverse Problems on Spheres: Theory, Algorithms and
  Applications}
\btype{Technical Report},
\bpublisher{EPFL}.
\end{btechreport}
\endbibitem

\bibitem{simeoni2021functional}
\begin{barticle}[author]
\bauthor{\bsnm{Simeoni},~\bfnm{Matthieu}\binits{M.}}
(\byear{2021}).
\btitle{Functional penalised basis pursuit on spheres}.
\bjournal{Applied and Computational Harmonic Analysis}
\bvolume{53}
\bpages{1--53}.
\end{barticle}
\endbibitem

\bibitem{simeoni2019graph}
\begin{binproceedings}[author]
\bauthor{\bsnm{Simeoni},~\bfnm{Matthieu}\binits{M.}} \AND
  \bauthor{\bsnm{Hurley},~\bfnm{Paul}\binits{P.}}
(\byear{2019}).
\btitle{Graph spectral clustering of convolution artefacts in radio
  interferometric images}.
In \bbooktitle{ICASSP 2019-2019 IEEE International Conference on Acoustics,
  Speech and Signal Processing (ICASSP)}
\bpages{4260--4264}.
\bpublisher{IEEE}.
\end{binproceedings}
\endbibitem

\bibitem{simeoni2021siml}
\begin{binproceedings}[author]
\bauthor{\bsnm{Simeoni},~\bfnm{Matthieu}\binits{M.}} \AND
  \bauthor{\bsnm{Hurley},~\bfnm{Paul}\binits{P.}}
(\byear{2021}).
\btitle{SIML: Sieved Maximum Likelihood for Array Signal Processing}.
In \bbooktitle{ICASSP 2021-2021 IEEE International Conference on Acoustics,
  Speech and Signal Processing (ICASSP)}
\bpages{4535--4539}.
\bpublisher{IEEE}.
\end{binproceedings}
\endbibitem

\bibitem{simeoni2019deepwave}
\begin{barticle}[author]
\bauthor{\bsnm{Simeoni},~\bfnm{Matthieu Martin Jean-Andre}\binits{M.~M.
  J.-A.}}, \bauthor{\bsnm{Kashani},~\bfnm{Sepand}\binits{S.}},
  \bauthor{\bsnm{Hurley},~\bfnm{Paul}\binits{P.}} \AND
  \bauthor{\bsnm{Vetterli},~\bfnm{Martin}\binits{M.}}
(\byear{2019}).
\btitle{DeepWave: a recurrent neural-network for real-time acoustic imaging}.
\bjournal{Advances In Neural Information Processing Systems 32 (Nips 2019)}
\bvolume{32}.
\end{barticle}
\endbibitem

\bibitem{stone1980}
\begin{barticle}[author]
\bauthor{\bsnm{Stone},~\bfnm{Charles~J.}\binits{C.~J.}}
(\byear{1980}).
\btitle{{Optimal Rates of Convergence for Nonparametric Estimators}}.
\bjournal{The Annals of Statistics}
\bvolume{8}
\bpages{1348 -- 1360}.
\bdoi{10.1214/aos/1176345206}
\end{barticle}
\endbibitem

\bibitem{Tikhonov1963solution}
\begin{barticle}[author]
\bauthor{\bsnm{Tikhonov},~\bfnm{A.}\binits{A.}}
(\byear{1963}).
\btitle{On the solution of ill-posed problems and the regularization method}.
\bjournal{Soviet Meth. Dokl.}
\bvolume{4}
\bpages{1035--1038}.
\end{barticle}
\endbibitem

\bibitem{unser2021unifying}
\begin{barticle}[author]
\bauthor{\bsnm{Unser},~\bfnm{Michael}\binits{M.}}
(\byear{2021}).
\btitle{A unifying representer theorem for inverse problems and machine
  learning}.
\bjournal{Foundations of Computational Mathematics}
\bvolume{21}
\bpages{941--960}.
\end{barticle}
\endbibitem

\bibitem{Unser2016representer}
\begin{barticle}[author]
\bauthor{\bsnm{Unser},~\bfnm{M.}\binits{M.}},
  \bauthor{\bsnm{Fageot},~\bfnm{J.}\binits{J.}} \AND
  \bauthor{\bsnm{Gupta},~\bfnm{H.}\binits{H.}}
(\byear{2016}).
\btitle{Representer Theorems for Sparsity-Promoting $\ell_{1}$ Regularization}.
\bjournal{{IEEE} Transactions on Information Theory}
\bvolume{62}
\bpages{5167--5180}.
\end{barticle}
\endbibitem

\bibitem{unser2012introduction}
\begin{bbook}[author]
\bauthor{\bsnm{Unser},~\bfnm{Michael}\binits{M.}} \AND
  \bauthor{\bsnm{Tafti},~\bfnm{Pouya~D}\binits{P.~D.}}
(\byear{2014}).
\btitle{An introduction to sparse stochastic processes}.
\bpublisher{Cambridge University Press}.
\end{bbook}
\endbibitem

\bibitem{Veraar2010regularity}
\begin{barticle}[author]
\bauthor{\bsnm{Veraar},~\bfnm{Mark~Christiaan}\binits{M.~C.}}
(\byear{2011}).
\btitle{Regularity of {G}aussian white noise on the $d$-dimensional torus}.
\bjournal{Marcinkiewicz centenary volume}
\bvolume{95}
\bpages{385--398}.
\end{barticle}
\endbibitem

\bibitem{virtanen2020scipy}
\begin{barticle}[author]
\bauthor{\bsnm{Virtanen},~\bfnm{Pauli}\binits{P.}},
  \bauthor{\bsnm{Gommers},~\bfnm{Ralf}\binits{R.}},
  \bauthor{\bsnm{Oliphant},~\bfnm{Travis~E}\binits{T.~E.}},
  \bauthor{\bsnm{Haberland},~\bfnm{Matt}\binits{M.}},
  \bauthor{\bsnm{Reddy},~\bfnm{Tyler}\binits{T.}},
  \bauthor{\bsnm{Cournapeau},~\bfnm{David}\binits{D.}},
  \bauthor{\bsnm{Burovski},~\bfnm{Evgeni}\binits{E.}},
  \bauthor{\bsnm{Peterson},~\bfnm{Pearu}\binits{P.}},
  \bauthor{\bsnm{Weckesser},~\bfnm{Warren}\binits{W.}},
  \bauthor{\bsnm{Bright},~\bfnm{Jonathan}\binits{J.}} \betal{et~al.}
(\byear{2020}).
\btitle{SciPy 1.0: fundamental algorithms for scientific computing in Python}.
\bjournal{Nature methods}
\bvolume{17}
\bpages{261--272}.
\end{barticle}
\endbibitem

\bibitem{wahba1990spline}
\begin{bbook}[author]
\bauthor{\bsnm{Wahba},~\bfnm{Grace}\binits{G.}}
(\byear{1990}).
\btitle{Spline models for observational data}.
\bpublisher{SIAM}.
\end{bbook}
\endbibitem

\bibitem{wasserman2006all}
\begin{bbook}[author]
\bauthor{\bsnm{Wasserman},~\bfnm{Larry}\binits{L.}}
(\byear{2006}).
\btitle{All of nonparametric statistics}.
\bpublisher{Springer Science \& Business Media}.
\end{bbook}
\endbibitem

\bibitem{white2019}
\begin{barticle}[author]
\bauthor{\bsnm{White},~\bfnm{Philip}\binits{P.}} \AND
  \bauthor{\bsnm{Porcu},~\bfnm{Emilio}\binits{E.}}
(\byear{2019}).
\btitle{Towards a complete picture of stationary covariance functions on
  spheres cross time}.
\bjournal{Electronic Journal of Statistics}
\bvolume{13}
\bpages{2566--2594}.
\end{barticle}
\endbibitem

\bibitem{williams2006gaussian}
\begin{bbook}[author]
\bauthor{\bsnm{Williams},~\bfnm{Christopher~K}\binits{C.~K.}} \AND
  \bauthor{\bsnm{Rasmussen},~\bfnm{Carl~Edward}\binits{C.~E.}}
(\byear{2006}).
\btitle{Gaussian processes for machine learning}
\bvolume{2}.
\bpublisher{MIT press Cambridge, MA}.
\end{bbook}
\endbibitem

\bibitem{yao:2005b}
\begin{barticle}[author]
\bauthor{\bsnm{Yao},~\bfnm{F.}\binits{F.}},
  \bauthor{\bsnm{M{{\"u}}ller},~\bfnm{H.~G.}\binits{H.~G.}} \AND
  \bauthor{\bsnm{Wang},~\bfnm{J.~L.}\binits{J.~L.}}
(\byear{2005}).
\btitle{Functional Data Analysis of Sparse Longitudinal Data}.
\bjournal{Journal of the American Statistical Association}
\bvolume{100}
\bpages{577-590}.
\end{barticle}
\endbibitem

\bibitem{zhang2016sparse}
\begin{barticle}[author]
\bauthor{\bsnm{Zhang},~\bfnm{Xiaoke}\binits{X.}} \AND
  \bauthor{\bsnm{Wang},~\bfnm{Jane-Ling}\binits{J.-L.}}
(\byear{2016}).
\btitle{From sparse to dense functional data and beyond}.
\bjournal{The Annals of Statistics}
\bvolume{44}
\bpages{2281--2321}.
\end{barticle}
\endbibitem

\end{thebibliography}
\end{document}